\DeclareMathOperator{\divv}{div}
\DeclareMathOperator{\curl}{curl}
\begin{document}
\title{Global classical solution for three-dimensional compressible isentropic magneto-micropolar fluid equations with Coulomb force
and slip boundary conditions in bounded domains
\thanks{
This research was partially supported by National Natural Science Foundation of China (No. 12371227), Scientific Research Foundation of Jilin Provincial Education Department (No. JJKH20210873KJ), Postdoctoral Science Foundation of China (No. 2021M691219), and Exceptional Young Talents Project of Chongqing Talent (No. cstc2021ycjh-bgzxm0153).}
}
\author{Yang Liu$\,^{\rm 1}\,$,\ Xin Zhong$\,^{\rm 2}\,${\thanks{Corresponding author. E-mail addresses: liuyang0405@ccsfu.edu.cn (Y. Liu), xzhong1014@amss.ac.cn (X. Zhong).}}
\date{}\\
\footnotesize $^{\rm 1}\,$
College of Mathematics, Changchun Normal
University, Changchun 130032, P. R. China\\
\footnotesize $^{\rm 2}\,$ School of Mathematics and Statistics, Southwest University, Chongqing 400715, P. R. China} \maketitle
\newtheorem{theorem}{Theorem}[section]
\newtheorem{definition}{Definition}[section]
\newtheorem{lemma}{Lemma}[section]
\newtheorem{proposition}{Proposition}[section]
\newtheorem{corollary}{Corollary}[section]
\newtheorem{remark}{Remark}[section]
\renewcommand{\theequation}{\thesection.\arabic{equation}}
\catcode`@=11 \@addtoreset{equation}{section} \catcode`@=12
\maketitle{}

\begin{abstract}
We study an initial-boundary value problem of three-dimensional (3D) compressible isentropic magneto-micropolar fluid equations with Coulomb force and slip boundary conditions in a bounded simply connected domain, whose boundary has a finite number of two-dimensional connected components. We derive the global existence and uniqueness of classical solutions provided that the initial total energy is suitably small. Our result generalizes the Cauchy problems of compressible Navier-Stokes equations with Coulomb force (J. Differential Equations 269: 8468--8508, 2020) and compressible MHD equations (SIAM J. Math. Anal. 45: 1356--1387, 2013) to the case of bounded domains although tackling many surface integrals caused by the slip boundary condition are complex. The main ingredient of this paper is to overcome the strong nonlinearity caused by Coulomb force, magnetic field, and rotation effect of micro-particles by applying piecewise-estimate method and delicate analysis based on the effective viscous fluxes involving velocity and micro-rotational velocity.
\end{abstract}

\textit{Key words and phrases}. Compressible magneto-micropolar fluid equations; Coulomb force; global classical solution; slip boundary condition; large oscillations.

2020 \textit{Mathematics Subject Classification}.  35Q35; 76N10.

\tableofcontents


\section{Introduction}
Let $\Omega\subset \mathbb{R}^3$ be a bounded domain, we consider the compressible isentropic magneto-micropolar fluid equations with Coulomb force (see \cite{JTZ19}) in $\Omega\times(0,T)$:
\begin{align}\label{a1}
\begin{cases}
 \rho_t+\divv(\rho u)=0,\\
\rho u_t+\rho u\cdot\nabla u+\nabla P
=(\lambda+\mu-\mu_r)\nabla \divv u+(\mu+\mu_r)\Delta u+2\mu_r\curl w
+(\nabla \times b)\times b+\rho\nabla\Phi,\\
\rho w_t+\rho u\cdot\nabla w+4\mu_rw
=(c_0+c_d-c_a)\nabla\divv w+(c_a+c_d)\Delta w+2\mu_r\curl u,\\
b_t-\nabla\times(u\times b)=-\nabla\times(\nabla\times b),\\
\Delta \Phi=\rho-\tilde{\rho},\\
\divv b=0.
\end{cases}
\end{align}
The unknown functions $\rho$, $u=(u^1, u^2, u^3)$, $w=(w^1, w^2, w^3)$, $b=(b^1, b^2, b^3)$, and $\Phi$ are the density, velocity, micro-rotational velocity, magnetic field, and the electrostatic potential, respectively. $P(\rho)=a\rho^\gamma\ (a>0, \gamma>1)$ is the pressure. $\mu$ and $\lambda$ are the coefficients of viscosity, while $\mu_r$, $c_0$, $c_d$, and $c_a$ are the coefficients of micro-viscosity, and they satisfy
\begin{align*}
\mu>0,\ 3\lambda+2\mu\ge 0, \ \mu_r>0, \
c_d>0,\ 3c_0+2c_d\ge 0,\ c_a>0.
\end{align*}
The given smooth function $\tilde{\rho}(x)$ is the non-flat doping profile and satisfies
\begin{align*}
0<\underline{\rho}\le \tilde{\rho}(x)\le \bar{\rho}
\end{align*}
for some positive constants $\underline{\rho}$ and $\bar{\rho}$.

The system \eqref{a1} is supplemented with
the initial condition
\begin{align}
(\rho, \rho u, \rho w, b_0)(x, 0)=(\rho_0, \rho_0u_0, \rho_0w_0, b_0)(x), \quad x\in\Omega,
\end{align}
and slip boundary conditions
\begin{align}\label{a6}
u\cdot n=0,~ \curl u\times n=0, ~ w\cdot n=0,~\curl w\times n=0, ~
b\cdot n=0, ~\curl b\times n=0, ~\nabla\Phi\cdot n=0, &~{\rm on}~ \partial\Omega,
\end{align}
where $n=(n^1, n^2, n^3)$ is the unit outward normal vector to $\partial\Omega$.

It should be noted that \eqref{a1} reduces to the compressible micropolar fluid equations with Coulomb force in the absence of the magnetic field. The theory of micropolar fluids introduced by Eringen in the 1960s (see \cite{E1964,E1966}) is a significant step towards the generalization of the classical Navier-Stokes model. The micropolar fluids describe a class of microstructure related fluids, for example, liquid crystals,
blood, or polymeric suspensions. The model has many potential applications in both mathematics and engineering (see \cite{E1999,L1999}). Due to the profound physical background and important mathematical significance, the compressible micropolar fluid equations have been extensively studied. Based on the spectral analysis of the linearized system, Liu and Zhang \cite{LZ2016} studied the optimal time decay rate problem for solutions to the constant equilibrium state. Later on, Tong-Pan-Tan \cite{TPT2021} improved the result of \cite{LZ2016} in the sense that the solution to the 3D compressible micropolar fluid equations has the exactly same decay rates as the linearized problem. By constructing global weak solutions as limits of smooth solutions, Chen-Xu-Zhang \cite{CXZ2015} proved global existence of weak solutions to the three-dimensional compressible micropolar fluid system with initial data which may be discontinuous and may contain vacuum states. Wu and Wang \cite{WW2018} studied pointwise estimates of the compressible micropolar fluid system in three dimensions. However, one has to include electromagnetic effects when considering some flows such as flows of electroconducting fluids in a magnetic field or polarized fluids in an electric field. Thus, coupling the conservation laws for micropolar fluids with the equations of electrodynamics, one comes to magnetohydrodynamics of micropolar fluids (see \cite{AS1974}). As a couple system, the compressible magneto-micropolar fluid equations contain much richer structures than the classical compressible fluid equations. It is not merely a combination of the conservation laws for micropolar fluids with the equations of electrodynamics but an interactive system. Their distinctive features make analytical studies a great challenge but offer new opportunities. Next, we only briefly recall some related results about the well-posedness theory. In the Lions-Feireisl framework for compressible Navier-Stokes equations (see \cite{L1998,F2004}), Amirat and Hamdache \cite{AH09} proved the global existence of weak solutions in 3D bounded domains with initial vacuum. Wei-Guo-Li \cite{WGL2017} obtained global existence and optimal convergence rates of smooth solutions in $\mathbb{R}^3$ if the initial perturbation is small in $H^3$-norm. Based on the spectral analysis and energy estimates, Tang and Tan \cite{TT2019} considered the 3D Cauchy problem with initial data close to some constant steady state, and they proved the stabilization of the solution in the sense that the solution converges to its constant equilibrium state, as well as that the convergence rate is optimal.

When the rotation effect of micro-particles $w$ is ignored, the system \eqref{a1} reduces to the compressible isentropic magnetohydrodynamic (MHD) equations with Coulomb force. Tan and Wang \cite{TW09} studied the initial boundary value problem and established the existence and large-time behavior of global weak solutions via the weak convergence method.
Very recently, with the help of the techniques of anti-symmetric matrix and an induction argument on the order of the space derivatives of solutions in energy estimates, Feng-Li-Wang \cite{FLW21} proved global smooth solutions near the non-constant equilibrium solutions, and they also showed the asymptotic behavior of solutions when the time goes to infinity.

Recently, Jia-Tan-Zhou \cite{JTZ19} derived the global existence and large time behaviour of solutions near a constant state for the system \eqref{a1} in $\mathbb{R}^3$ under the assumption that the $H^3$-norm of the initial data is small. It should be noted that all the results mentioned above were only concerned with the whole space or with non-slip boundary conditions. It is rather complicated to investigate global well-posedness and dynamical behaviors of the compressible fluid equations with slip boundary condition due to the compatibility issues of the nonlinear terms with the slip boundary condition. In particular, whether the classical solution in bounded domains with density containing vacuum initially to the system \eqref{a1} exists globally in time remains open. Very recently, by introducing some new techniques in dealing with the boundary estimates, Cai and Li \cite{CJ21} proved the global well-posedness of classical solutions for the compressible isentropic Navier-Stokes equations with slip boundary condition and small initial energy, which extended the famous work on the Cauchy problem \cite{HLX12} to the case of bounded domains. Later on, Chen-Huang-Shi \cite{CHS21} generalized the result \cite{CJ21} to the compressible isentropic MHD equations (that is, \eqref{a1} with no rotation effect of micro-particles and Coulomb force). The other interesting studies on the global well-posedness for multi-dimensional compressible isentropic Navier-Stokes equations and MHD equations with vacuum can be referred to
\cite{LXZ13,HW10,SH12,HHPZ,LS19,FL20,LSX16,LX19,JZ01,FNP01,D95} and references therein.
Motivated by \cite{CHS21,CJ21}, the main purpose of the present paper is to investigate the global existence of classical solutions with large oscillations and vacuum to the problem \eqref{a1}--\eqref{a6} subject to large and non-flat doping profiles, when the initial energy is suitably small. In comparison with \cite{CJ21,CHS21}, there are two main difficulties on the analysis of this paper. One is that the large and non-flat doping profile will significantly affect the dynamic motion of compressible flows and cause some serious difficulties in the energy estimates, thus the crucial techniques of proofs in \cite{CJ21,CHS21} cannot be adapted directly. To overcome this obstacle, we need to establish the uniform \textit{a priori} estimates of the local smooth solutions with respect to the time via the piecewise-estimate method based on \cite{YZ17}. The other difficulty is that the system \eqref{a1} has the strong nonlinearity caused by the rotation effect of micro-particles which makes the \textit{a priori} estimates more complicated and tedious. We will obtain the desired estimates based on delicate analysis of the effective viscous flux (see \eqref{2.7} for the definition).

We consider the steady-state solutions of \eqref{a1} with zero velocity, micro-rotational velocity, and zero magnetic field. Let $(\rho_s, u_s, w_s, b_s, \Phi_s)$ be such a solution of variable $x$ with $u_s=0$, $w_s=0$, and $b_s=0$.
Then, one has
\begin{align}\label{a7}
\begin{cases}
\nabla P(\rho_s)=\rho_s\nabla\Phi_s\quad &{\rm in}~\Omega,\\
\Delta\Phi_s=\rho_s-\tilde{\rho}\quad &{\rm in}~\Omega,\\
\nabla\Phi_s\cdot n=0\quad &{\rm on}~\partial\Omega.
\end{cases}
\end{align}
We denote the initial total energy of \eqref{a1} by
\begin{align}
E_0\triangleq\int\Big(\frac12\rho_0|u_0|^2+\frac12\rho_0|w_0|^2+\frac12|b_0|^2+\frac12|\nabla(\Phi_0-\Phi_s)|^2+G(\rho_0)\Big)dx,
\end{align}
where
\begin{align}
G(\rho)\triangleq\int_{\rho_s}^\rho\int_{\rho_s}^\xi\frac{P'(r)}{r}dr d\xi=\rho\int_{\rho_s}^\rho\frac{P(\xi)-P(\rho_s)}{\xi^2}d\xi.
\end{align}
Moreover, we write
\begin{align*}
H^2_{\omega} \triangleq\left\{v\in H^{2}(\Omega): v\cdot n=0\ \text{and}\ \curl v\times n=0\ \text{on}\ \partial\Omega\right\}.
\end{align*}

Now we state our main result concerning the global classical solutions to the problem \eqref{a1}--\eqref{a6}.
\begin{theorem}\label{thm1}
Let $\Omega$ be a bounded simply connected smooth domain in $\mathbb{R}^3$ and its boundary $\partial\Omega$ has a finite number
of two-dimensional connected components. Let $(\rho_s, \Phi_s)$ be the stationary solution of \eqref{a7}.
For some given positive constants $M_1$, $M_2$, $M_3$ (not necessarily small) and $\hat{\rho}\ge \bar{\rho}+1$,
suppose that the initial data $(\rho_0, u_0, w_0, b_0)$ satisfying, for $q\in (3, 6)$,
\begin{align}\label{a10}
\begin{cases}
\underline{\rho}\le\rho_0\le \hat{\rho}, \ (\rho_0, P(\rho_0))\in W^{2, q},\ (u_0, w_0, b_0)\in H^2_{\omega}, \\
\|\nabla u_0\|_{L^2}^2\le M_1, \ \|\nabla w_0\|_{L^2}^2\le M_2,\ \|\nabla b_0\|_{L^2}^2\le M_3,\\
\end{cases}
\end{align}
and the compatibility conditions
\begin{align}\label{a11}
\int_{\Omega}(\rho_0-\tilde{\rho})dx=0,
\end{align}
and
\begin{align}\label{a12}
\begin{cases}
-(\mu+\mu_r)\Delta u_0-(\lambda+\mu-\mu_r)\nabla\divv u_0+\nabla P(\rho_0)-(\nabla \times b_0)\times b_0-2\mu_r\curl w_0=\sqrt{\rho_0}g_1,\\
-(c_a+c_d)\Delta w_0-(c_0+c_d-c_a)\nabla\divv w_0+4\mu_rw_0
-2\mu_r\curl u_0=\sqrt{\rho_0}g_2,
\end{cases}
\end{align}
with $g_1, g_2\in L^2$.
There exists a positive constant $\varepsilon$ depending only on $\mu$, $\lambda$, $u_r$, $c_0$, $c_a$, $c_d$,
$\gamma$, $a$, $\hat{\rho}$, $\Omega$, $M_1$, $M_2$, $M_3$, and $\tilde{\rho}$ such that if
\begin{align}
E_0\le \varepsilon,
\end{align}
the problem \eqref{a1}--\eqref{a6} has a unique global classical solution $(\rho, u, w, b, \Phi)$ in $\Omega\times(0, \infty)$
satisfying, for any $0<\tau<T<\infty$,
\begin{align}
0< \rho(x, t)\le 2\hat{\rho}, \quad (x, t)\in \Omega\times(0, \infty),
\end{align}
and
\begin{align}
\begin{cases}
(\rho, P)\in C([0, T]; W^{2, q}),\\
(\nabla u,\nabla w)\in C([0, T]; H^1)\cap L^\infty(\tau, T; W^{2, q}),\\
b\in C([0, T]; H^2)\cap L^\infty(\tau, T; H^4),\\
(u_t,w_t)\in L^\infty(\tau, T; H^2)\cap H^1(\tau, T; H^1),\\
b_t\in C([0, T];L^2)\cap H^1(\tau, T; H^1)\cap L^\infty(\tau, T; H^2),\\
\nabla\Phi\in C([0, T]; W^{2, q})\cap L^\infty(0, T; W^{3, q}),\\
\nabla\Phi_t\in C([0, T]; L^2)\cap L^\infty(0, T; H^2).
\end{cases}
\end{align}
\end{theorem}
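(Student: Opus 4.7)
The plan is to follow the continuation--bootstrap strategy of Huang--Li--Xin, as adapted to bounded domains with slip boundary by Cai--Li \cite{CJ21} and Chen--Huang--Shi \cite{CHS21}, combined with the piecewise-estimate method of Yang--Zhu \cite{YZ17} to accommodate the non-flat doping profile. First, I would invoke (or construct via a standard linearization/contraction argument) the local existence of a unique classical solution on some $[0,T_0]$ with the regularity asserted in the conclusion, together with a Serrin-type blow-up criterion governed by $\sup_t\|\rho\|_{L^\infty}$. On any $[0,T]$ in the maximal existence interval, I would then impose the a priori hypotheses
\begin{equation*}
0<\rho(x,t)\le 2\hat\rho,\qquad \sup_{0\le t\le T}\bigl(\|\nabla u\|_{L^2}^2+\|\nabla w\|_{L^2}^2+\|\nabla b\|_{L^2}^2\bigr)\le K(M_1,M_2,M_3),
\end{equation*}
and reduce the theorem to improving these bounds strictly whenever $E_0\le\varepsilon$.

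The a priori estimates proceed in two weighted stages. For the basic energy, I would multiply $\eqref{a1}_2$, $\eqref{a1}_3$, $\eqref{a1}_4$ by $u$, $w$, $b$ respectively, combine with $\Delta\Phi=\rho-\tilde\rho$ and the stationary identity $\nabla P(\rho_s)=\rho_s\nabla\Phi_s$, and obtain dissipation of $\|\nabla u\|_{L^2}^2+\|w\|_{L^2}^2+\|\nabla w\|_{L^2}^2+\|\nabla b\|_{L^2}^2$ against $E(t)$; the cross terms caused by the non-flat $\tilde\rho$ would be tamed by decomposing $\rho-\rho_s$ into regions $\{|\rho-\rho_s|\le\delta\}$ and $\{|\rho-\rho_s|>\delta\}$ as in \cite{YZ17}. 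Introducing the vacuum-compatible weight $\sigma(t)=\min\{1,t\}$, I would then test the momentum, angular-momentum and induction equations against $u_t,w_t,b_t$ (and the induction equation additionally against $-\Delta b$) to produce bounds on $\sigma\|\nabla u\|_{L^2}^2$, $\sigma\|\nabla w\|_{L^2}^2$, $\sigma^2\|\sqrt\rho u_t\|_{L^2}^2$, $\sigma^2\|\sqrt\rho w_t\|_{L^2}^2$, and the analogous magnetic quantities. Throughout, the boundary identity $\int_{\partial\Omega}(v\cdot\nabla v)\cdot n\,dS=-\int_{\partial\Omega}(v\cdot\nabla n)\cdot v\,dS$ (valid for $v\cdot n=0$), together with $\curl v\times n=0$, would be used to convert surface integrals into curvature-weighted interior terms that are absorbable by Young's inequality.

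The crux is controlling $\sup_{x,t}\rho$, for which I would apply a Zlotnik-type particle-trajectory argument to the continuity equation written as $\tfrac{d}{dt}\log\rho=-\divv u$, together with the effective viscous flux
\begin{equation*}
F\triangleq(2\mu+\lambda)\divv u-(P(\rho)-P(\rho_s))-\tfrac12|b|^2
\end{equation*}
(modulo a correction absorbing the Coulomb term $\rho\nabla\Phi$ via the Poisson equation) and an analogous auxiliary quantity built from $\divv w$ that absorbs the $2\mu_r\curl u$ source in the angular-momentum equation. Elliptic regularity for the Lam\'e system with slip boundary then yields $\|F\|_{L^p}$ in terms of $\|\rho\dot u\|_{L^p}$, $\|b\cdot\nabla b\|_{L^p}$ and controllable boundary corrections, which feeds back to close the density bound under smallness of $E_0$. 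The main obstacle I anticipate, beyond the boundary bookkeeping, is the symmetric first-order coupling $2\mu_r\curl w$ in the momentum equation and $2\mu_r\curl u$ in the angular-momentum equation: only the zeroth-order damping $4\mu_r w$ together with a \emph{combined} effective flux for $(u,w)$ prevents the micro-rotation dissipation from degenerating. Once $\|\rho\|_{L^\infty}$ is under control, a routine $L^p$ elliptic bootstrap for $(\rho,P,\nabla\Phi)\in W^{2,q}$ and $b\in H^4$, combined with Gronwall's inequality for the higher time-integrated norms, yields the stated regularity; uniqueness in this class is then standard.
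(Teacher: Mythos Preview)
Your overall architecture (local existence, bootstrap on $\sup\rho$ and gradient norms, effective viscous fluxes, Zlotnik argument, then higher-order bootstrap) matches the paper, but there is a genuine gap in how you intend to handle the non-flat doping profile.

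You describe the ``piecewise-estimate method'' of \cite{YZ17} as a decomposition of $\rho-\rho_s$ into the regions $\{|\rho-\rho_s|\le\delta\}$ and $\{|\rho-\rho_s|>\delta\}$. That is not what the method is, and that decomposition would not close the argument here. The difficulty caused by the non-flat profile is that the weighted differential inequalities for $\|\nabla u\|_{L^2}^2$, $\|\sqrt{\rho}\dot u\|_{L^2}^2$, etc.\ carry an inhomogeneous source $CE_0$ (coming from $\|\rho-\rho_s\|_{L^2}^2$, $\|\nabla(\Phi-\Phi_s)\|_{L^2}^2$) that does not decay; integrating naively over $[0,T]$ produces $CE_0T$, which destroys any time-uniform bound. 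The paper's remedy, following \cite{YZ17}, is \emph{piecewise in time}: one introduces $\sigma_i(t)=\sigma(t+1-i)$ and integrates the weighted inequalities over $[i-1,i+1]$ for each integer $i$, so the inhomogeneous source contributes only $O(E_0)$ on each piece, and the conclusion is then taken as a supremum over $i$. Your bootstrap hypotheses also reflect this omission: the paper carries \emph{two} quantities, $A_1(T)=\sup_{[0,T]}\sigma(\|\nabla u\|_{L^2}^2+\|\nabla w\|_{L^2}^2+\|\nabla b\|_{L^2}^2)$ (weighted, global) and $A_2(\sigma(T))$ (unweighted, short-time), and closes $A_1\le E_0^{1/3}$ precisely via the time-piecewise argument. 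A single unweighted gradient bound as you propose cannot be improved uniformly in $T$.

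Two smaller points. First, the paper tests the momentum and angular-momentum equations against the material derivatives $\dot u,\dot w$, not $u_t,w_t$; this is what produces the clean coupling structure $4\mu_r\|\tfrac12\curl\dot u-\dot w\|_{L^2}^2$ needed to absorb the cross terms. Second, you do not mention the boundary integral $\int_{\partial\Omega}\dot u\cdot(\dot w\times n)\,dS$ arising from the cross-curl terms; in the paper this is controlled only by invoking the lower bound $\rho_0\ge\underline\rho$ (so that $\|\dot u\|_{L^2}\le C\|\sqrt\rho\dot u\|_{L^2}$), and it is the sole place in the proof where that hypothesis is used. Without it the micro-rotation coupling does not close at the $\dot u$-level.
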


Several remarks are in order.

\begin{remark}
The compatibility condition \eqref{a11} is necessary for solvability of the Poisson equation with Neumann boundary. The solution obtained in Theorem \ref{thm1} becomes a classical one for positive time. Although it has small energy, yet its oscillations could be arbitrarily large.
\end{remark}

\begin{remark}
Our Theorem \ref{thm1} generalizes the Cauchy problem \cite{JTZ19} to the case of bounded domains. However, this is a non-trivial generalization
since we need to deal with many surface integrals caused by the slip boundary condition \eqref{a6}.
\end{remark}

\begin{remark}
We emphasize that the positive lower bound of the initial density is a technical assumption. This condition is only used to estimate the boundary integral term $\int_{\partial\Omega}\eta(t)\dot{u}\cdot(\dot{w}\times n)dS$ (see \eqref{t3.36}), which is important to derive the estimates \eqref{3.75} and \eqref{3.76} (see Lemma \ref{l36}), and in turn plays a crucial role in the treatment of the upper bound of the density. In other words, if the rotation effect of micro-particles is ignored, the initial density can allow to vanish. Thus, the micro-particles acts some significant roles on the global well-posedness of solutions.
\end{remark}

\begin{remark}
When there is no rotation effect of micro-particles, the system \eqref{a1} turns to be compressible MHD equations with Coulomb force, and Theorem \ref{thm1} also holds in this case. Thus we extend the global classical solutions of \cite{FLW21} to the initial boundary value problem.
\end{remark}


\begin{remark}
It should be pointed out that the exponential decay rates of the solution can be established when we consider \eqref{a1} with no Coulomb force. This can be proved by similar arguments as those in \cite{CJ21}. This means that the Coulomb force acts as some significant roles on the large time behavior of the solutions.
\end{remark}


Let us sketch the strategy for the proof of Theorem \ref{thm1}. As mentioned in many related works (see, for example, \cite{HLX12,LXZ13,CHS21}), the key issue is to derive the uniform-in-time lower-order estimates and uniform upper bound of the density as well as the time-dependent higher-order estimates of $(\rho, u, w, b, \Phi)$.
To deal with many surface integrals (see \eqref{z3.21}, \eqref{3.24}, \eqref{z3.19}, \eqref{z3.22}, and \eqref{z3.24} for example) caused by the slip boundary condition \eqref{a6}, the observation $(v\cdot\nabla v\cdot n)|_{\partial\Omega}=-(v\cdot\nabla n\cdot v)|_{\partial\Omega}$ for any smooth vector field $v$ satisfying $(v\cdot n)|_{\partial\Omega}=0$ is very important. Moreover, the trace theorem (see Lemma \ref{l23}) and Poincar{\'e}'s inequality (see Lemma \ref{l21}) as well as $L^p$-estimates based on  the effective viscous flux (see Lemma \ref{l28}) play crucial roles. Next, we borrow some ideas developed in \cite{LXZ20,YZ17,XZZ22} to derive time-spatial estimates of material derivatives and gradients of velocity and micro-rotational velocity (see Lemmas \ref{l35} and \ref{l36}). Meanwhile, the strong nonlinearity and coupled terms due to the rotation effect of micro-particles can be settled by some careful analysis of the solutions based on a full use of the structure of \eqref{a1}. Then it allows us to obtain the uniform upper bound of the density provided that the initial energy is properly small (see Lemma \ref{l38}).
Next, we need to bound time-dependent higher-order estimates of the solutions. The key step is to derive the $L^q$-estimate of the gradient of density, which strongly relies on the bound for the $L^1(0,T;L^\infty(\Omega))$-norm of
the gradient of velocity and will be achieved by solving a logarithm Gronwall's inequality based on a Beale-Kato-Majda type inequality (see Lemma \ref{l211}) and the \textit{a priori} estimates we have just derived. It should be noted here that we do not require smallness of the gradient of the initial density, which prevents the appearance of vacuum \cite{JTZ19}.

The rest of the paper is arranged as follows. In Section \ref{sec2}, we collect some known facts and give crucial $L^p$-estimates involving the effective viscous flux and the vorticity. In Subsection \ref{sec3.1}, we make some \textit{a priori} assumptions and show the uniformly \textit{a priori} estimates of local smooth solutions independent of the time, while the energy estimates for the higher order derivatives are obtained in Subsection \ref{sec3.2}. Finally, we give the proof of Theorem \ref{thm1} in Section \ref{sec4}.

\section{Preliminaries}\label{sec2}
In this section, we recall some known facts and inequalities which will be used later.

First, similar to the proof of \cite{GS06}, we have the following existence and uniqueness of solutions to \eqref{a7}.
\begin{lemma}
Assume that the smooth function $\tilde{\rho}(x)$ satisfies $0<\underline{\rho}\le \tilde{\rho}(x)\le \bar{\rho}$.
Then the problem \eqref{a7} has a unique classical solution $(\rho_s, \Phi_s)$ satisfying
\begin{gather}
\underline{\rho}\le \rho_s\le \bar{\rho},\\
\|\nabla \rho_s\|_{H^3}+\|\nabla\Phi_s\|_{H^4}\le C, \label{2.2}
\end{gather}
where $C$ depends only on $a$, $\gamma$, $\tilde{\rho}(x)$, and $\Omega$.
\end{lemma}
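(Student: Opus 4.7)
The plan is to reduce the coupled system \eqref{a7} to a single semilinear Neumann problem, solve it by the direct method of the calculus of variations, and recover the pointwise and Sobolev bounds from standard elliptic tools. To begin, define
\[
h(\rho) := \frac{a\gamma}{\gamma-1}\rho^{\gamma-1},
\]
so that $\nabla P(\rho_s)/\rho_s = \nabla h(\rho_s)$. The first equation in \eqref{a7} becomes $\nabla h(\rho_s) = \nabla \Phi_s$, and since $\Omega$ is connected this forces $h(\rho_s) = \Phi_s + c$ for some constant $c\in\mathbb{R}$, i.e.\ $\rho_s = h^{-1}(\Phi_s + c)$. Setting $\Psi := \Phi_s + c$, the Poisson equation together with the Neumann condition reduces to
\begin{equation*}
-\Delta \Psi + h^{-1}(\Psi) = \tilde{\rho}\quad\text{in }\Omega, \qquad \nabla \Psi\cdot n = 0\quad\text{on }\partial\Omega.
\end{equation*}

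For existence and uniqueness of this reduced problem I would use the direct method. Let $H$ be the antiderivative of $h^{-1}$ with $H(0)=0$ (extended suitably for negative arguments). Since $h^{-1}(s)\sim s^{1/(\gamma-1)}$ at infinity, $H$ is strictly convex with superlinear growth. The functional
\[
J[\Psi] \;=\; \int_\Omega \Big(\tfrac{1}{2}|\nabla \Psi|^{2} + H(\Psi) - \tilde{\rho}\,\Psi\Big)\,dx
\]
is strictly convex, weakly lower semicontinuous, and coercive on $H^{1}(\Omega)$, so it admits a unique minimizer whose Euler-Lagrange equation (with the Neumann boundary condition appearing as a natural condition) is exactly the reduced PDE. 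The solvability condition $\int_\Omega(\rho_s-\tilde{\rho})\,dx=0$ is recovered by testing the equation against $1$. Setting $\rho_s:=h^{-1}(\Psi)$ and fixing $c$ by, say, the normalization $\int_\Omega\Phi_s\,dx=0$ produces a unique weak solution $(\rho_s,\Phi_s)$ of \eqref{a7}, with uniqueness inherited from the strict convexity of $J$.

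The pointwise bounds $\underline{\rho}\le\rho_s\le\bar{\rho}$ would follow from the weak maximum principle applied to $\Psi$. At an interior maximum point $x_0$ one has $-\Delta \Psi(x_0)\ge 0$, so $h^{-1}(\Psi(x_0))\le \tilde{\rho}(x_0)\le\bar{\rho}$; Hopf's lemma combined with the Neumann condition rules out strict boundary maxima, so monotonicity of $h^{-1}$ yields $\rho_s(x)=h^{-1}(\Psi(x))\le h^{-1}(\Psi(x_0))\le \bar{\rho}$ for all $x$. The mirror argument at a minimum gives $\rho_s\ge \underline{\rho}$.

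For the regularity bound \eqref{2.2}, I would bootstrap. With $\rho_s\in L^{\infty}$, standard elliptic $L^p$ theory for the Neumann problem $\Delta\Phi_s=\rho_s-\tilde{\rho}$ yields $\Phi_s\in W^{2,p}(\Omega)$ for every $p<\infty$, hence $\Phi_s\in C^{1,\alpha}$ by Morrey embedding. Because $\underline{\rho}\le\rho_s\le\bar{\rho}$, the function $h^{-1}$ is smooth on the relevant range, so $\rho_s=h^{-1}(\Phi_s+c)$ inherits the regularity of $\Phi_s$; iterating and using the smoothness assumed on $\tilde{\rho}$ gives $\Phi_s\in H^{5}(\Omega)$ and $\rho_s\in H^{4}(\Omega)$, which is \eqref{2.2}. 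The main obstacle is the composition step in the bootstrap: preserving Sobolev regularity under the nonlinear map $\Phi\mapsto h^{-1}(\Phi+c)$ requires the uniform strictly-positive lower bound on $\rho_s$ established in the previous stage, which ensures $h^{-1}$ is analytic on the range of $\Psi$ and allows the use of Moser-type composition estimates in $H^{k}$.
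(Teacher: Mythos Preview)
The paper does not supply its own proof of this lemma; it simply cites \cite{GS06} (Guo--Strauss). Your argument---reducing via the enthalpy $h(\rho)=\tfrac{a\gamma}{\gamma-1}\rho^{\gamma-1}$ to the single semilinear Neumann problem $-\Delta\Psi+h^{-1}(\Psi)=\tilde\rho$, solving it by the direct method, and then recovering the pointwise bounds by the maximum principle and the Sobolev bounds by elliptic bootstrap---is correct and is the standard route to this result (and indeed close to what the cited reference does). One minor technical point: strict convexity of $J$ on all of $H^{1}(\Omega)$ depends on how you extend $h^{-1}$ to nonpositive arguments, but this is harmless, since uniqueness can be read off directly from the monotonicity of $h^{-1}$ by subtracting two solutions and testing against their difference, or simply by first establishing via your maximum-principle step that any solution satisfies $h(\underline\rho)\le\Psi\le h(\bar\rho)$, where $h^{-1}$ is genuinely strictly increasing.
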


Next, by arguments as those in \cite{TG16, TZ10} and contraction mapping principle, we can obtain the following local existence theorem of classical solutions of \eqref{a1}--\eqref{a6}. Here we omit the details for simplicity.
\begin{lemma}\label{l22}
Assume that the initial data $(\rho_0, u_0, w_0, b_0)$ satisfies the conditions \eqref{a10} and \eqref{a12}. Then there exists
a positive time $T_0>0$ and a unique classical solution $(\rho, u, w, b, \Phi)$ of the problem \eqref{a1}--\eqref{a6} in $\Omega\times(0, T_0]$.
\end{lemma}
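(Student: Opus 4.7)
The plan is to construct the local solution by the standard linearization-plus-contraction scheme that has become canonical for compressible models allowing vacuum (as in \cite{TG16,TZ10}). First, I would mollify and extend the initial data if necessary to get smooth approximate data satisfying the same regularity bounds and the compatibility relations \eqref{a11}--\eqref{a12}, together with a uniformly positive density; the final solution with possibly vanishing $\rho_0$ is recovered by passing to a limit.

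Given an iterate $(\rho^k,u^k,w^k,b^k,\Phi^k)$, I would define the next iterate by decoupling the system. The continuity equation, treated as a linear transport equation driven by $u^k$, produces $\rho^{k+1}\in C([0,T];W^{2,q})$ with a uniform upper and lower bound on a short time interval, using the standard transport estimate and the slip condition $u^k\cdot n=0$ to prevent boundary flux. Solving the Neumann problem $\Delta\Phi^{k+1}=\rho^{k+1}-\tilde\rho$ with $\nabla\Phi^{k+1}\cdot n=0$ is well posed thanks to \eqref{a11} (which is preserved along the transport flow because $u^k\cdot n=0$), and elliptic regularity yields $\nabla\Phi^{k+1}\in W^{3,q}$. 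The magnetic equation is a linear parabolic system in $b^{k+1}$ whose coefficients depend on $u^k$; with the perfect-conductor slip data $b\cdot n=0$, $\curl b\times n=0$ and $\divv b_0=0$, the divergence-free condition propagates, and one obtains $H^2\cap L^\infty(H^4)$ bounds by the usual energy method combined with the $L^p$ estimate for the effective viscous flux/vorticity pair recalled in Lemma~\ref{l28}. Finally, the pair $(u^{k+1},w^{k+1})$ is produced by the linear Lam\'e-type system with given coefficients $\rho^{k+1}$ and source terms involving $\nabla P(\rho^{k+1})$, $(\nabla\times b^{k+1})\times b^{k+1}$, $\rho^{k+1}\nabla\Phi^{k+1}$ and the cross terms $2\mu_r\curl w^{k+1}$, $2\mu_r\curl u^{k+1}$; here I would absorb the cross coupling by treating $(u,w)$ as a single unknown and using $\mu_r>0$, $c_a+c_d>0$ to obtain a coercive bilinear form, with the boundary terms closing because of $\curl u\times n=0$, $\curl w\times n=0$.

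To close the iteration I would derive estimates uniform in $k$ on some small time interval $[0,T_0]$. The crucial input is the compatibility condition \eqref{a12}, which supplies the \emph{initial} $L^2$ bounds for $\sqrt{\rho_0}u_t(\cdot,0)$ and $\sqrt{\rho_0}w_t(\cdot,0)$; differentiating the momentum/angular-momentum equations in time and testing against $u_t$, $w_t$ then yields $H^2$ bounds for $u,w$ that survive vacuum. Combined with parabolic regularity for $b$ and elliptic regularity for $\Phi$, and transport regularity for $\rho$ and $P(\rho)$, one gets a uniform bound of the iterates in the solution class of the lemma. Then I would estimate differences $(\rho^{k+1}-\rho^k,u^{k+1}-u^k,\ldots)$ in the weaker norm $L^\infty(L^2)\cap L^2(H^1)$ and show that, on a possibly smaller $T_0$, the map $(\rho^k,u^k,w^k,b^k,\Phi^k)\mapsto(\rho^{k+1},\ldots)$ is a contraction. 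The limit is a strong solution; lower semicontinuity upgrades it to the stated regularity, and it is classical for $t>0$.

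The main obstacle I anticipate is controlling the coupling between $u$ and $w$ at the level of $H^2$ in the presence of vacuum and slip boundary conditions: the cross terms $2\mu_r\curl w$ and $2\mu_r\curl u$ cannot be treated perturbatively because $\mu_r$ need not be small. The right move is to test the time-differentiated equations simultaneously against $(u_t,w_t)$ so that the skew-symmetric combination $2\mu_r(\curl w,u_t)+2\mu_r(\curl u,w_t)$ reduces, after integration by parts, to $2\mu_r\tfrac{d}{dt}(\curl u,w)$ modulo a boundary term that vanishes thanks to $\curl w\times n=0$ and $u\cdot n=0$. A secondary difficulty is the Coulomb source $\rho\nabla\Phi$: here the bound $\nabla\Phi\in W^{3,q}$ from the elliptic estimate makes it a lower-order perturbation, but one must check that the Neumann compatibility $\int(\rho-\tilde\rho)\,dx=0$ is preserved in time by the iteration, which follows from integrating the continuity equation and using $u\cdot n=0$. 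Uniqueness is then an energy estimate on the difference of two solutions of the same regularity class, carried out exactly as in the contraction step.
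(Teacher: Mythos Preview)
Your approach is exactly the one the paper indicates: the paper does not give a proof but simply states that the lemma follows ``by arguments as those in \cite{TG16, TZ10} and contraction mapping principle'' and omits the details, so your linearization-plus-contraction sketch is the intended strategy. One minor point: under \eqref{a10} the initial density satisfies $\underline{\rho}\le\rho_0$, so there is no vacuum and the mollification/limit step you describe for possibly vanishing $\rho_0$ is unnecessary here, which actually simplifies your argument (in particular the compatibility condition \eqref{a12} directly gives $u_t(\cdot,0),\,w_t(\cdot,0)\in L^2$ without any weight).
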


Next, the well-known Gagliardo-Nirenberg inequality (see \cite{N59}) will be used frequently.
\begin{lemma}
Assume that $\Omega$ is a bounded Lipschitz domain in $\mathbb{R}^3$. For $p\in [2, 6]$, $q\in (1, \infty)$, and $r\in (3, \infty)$,
there exist two generic constants $C_1, C_2>0$, which may depend on $p$, $q$, $r$, and $\Omega$ such that,
for any $f\in H^1(\Omega)$ and $g\in L^q(\Omega)\cap W^{1, r}(\Omega)$,
\begin{align}
&\|f\|_{L^p}\le C_1\|f\|_{L^2}^\frac{6-p}{2p}\|\nabla f\|_{L^2}^\frac{3p-6}{2p}+C_2\|f\|_{L^2},\label{2.3}\\[3pt]
&\|g\|_{L^\infty}\le C_1\|g\|_{L^q}^\frac{q(r-3)}{3r+q(r-3)}\|\nabla g\|_{L^r}^\frac{3r}{3r+q(r-3)}+C_2\|g\|_{L^2}.
\end{align}
Moreover, if $f\cdot n|_{\partial\Omega}=0$ and $g\cdot n|_{\partial\Omega}=0$, then $C_2=0$.
\end{lemma}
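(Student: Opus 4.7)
Both inequalities are classical Gagliardo--Nirenberg interpolation estimates on bounded domains, and the plan is to reduce them to the whole--space versions on $\mathbb{R}^3$ via a Stein--type extension operator, keeping track of where the additive $C_2$ terms come from and then showing that they can be dropped under the normal--trace condition. Concretely, on $\mathbb{R}^3$ the Sobolev embedding $\|F\|_{L^6(\mathbb{R}^3)}\le C\|\nabla F\|_{L^2(\mathbb{R}^3)}$ combined with H\"older interpolation between $L^2$ and $L^6$ at the exponent $1/p=\theta/2+(1-\theta)/6$ gives
\[
\|F\|_{L^p(\mathbb{R}^3)}\le C\|F\|_{L^2(\mathbb{R}^3)}^{(6-p)/(2p)}\|\nabla F\|_{L^2(\mathbb{R}^3)}^{(3p-6)/(2p)},\qquad p\in[2,6].
\]
Applying this to a Stein extension $Ef\in H^1(\mathbb{R}^3)$ with $\|Ef\|_{L^2(\mathbb{R}^3)}\le C\|f\|_{L^2(\Omega)}$ and $\|\nabla Ef\|_{L^2(\mathbb{R}^3)}\le C(\|f\|_{L^2(\Omega)}+\|\nabla f\|_{L^2(\Omega)})$, using the subadditivity $(a+b)^\alpha\le a^\alpha+b^\alpha$ for $\alpha\in(0,1)$, and recognising $(6-p)/(2p)+(3p-6)/(2p)=1$, the extension constant generates precisely the advertised additive $C_2\|f\|_{L^2}$ term.

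\textbf{The $L^\infty$ estimate.} For the second inequality I would similarly extend $g$ to $\mathbb{R}^3$ and then invoke the classical $L^\infty$ Gagliardo--Nirenberg bound
\[
\|G\|_{L^\infty(\mathbb{R}^3)}\le C\|G\|_{L^q(\mathbb{R}^3)}^{q(r-3)/(3r+q(r-3))}\|\nabla G\|_{L^r(\mathbb{R}^3)}^{3r/(3r+q(r-3))},\qquad r>3,
\]
whose exponents are forced by the scaling relation $\alpha/q+\beta(1/r-1/3)=0$ together with $\alpha+\beta=1$. It follows from Morrey's embedding $W^{1,r}(\mathbb{R}^3)\hookrightarrow L^\infty(\mathbb{R}^3)$ for $r>3$ via a standard dilation/optimization argument in scale (applied to $G(\lambda\,\cdot\,)$ and minimizing in $\lambda$), or alternatively by truncation and iteration using the first estimate. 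Extension back to the bounded domain then contributes the additive $C_2\|g\|_{L^2}$ summand in exactly the same way as before.

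\textbf{Removing $C_2$ via Poincar\'e.} Under the hypothesis $f\cdot n|_{\partial\Omega}=0$, Poincar\'e's inequality (Lemma \ref{l21}) yields $\|f\|_{L^2}\le C\|\nabla f\|_{L^2}$, which permits one to write
\[
\|f\|_{L^2}=\|f\|_{L^2}^{(6-p)/(2p)}\|f\|_{L^2}^{(3p-6)/(2p)}\le C\|f\|_{L^2}^{(6-p)/(2p)}\|\nabla f\|_{L^2}^{(3p-6)/(2p)},
\]
so that the additive term is absorbed into the constant $C_1$ in front of the interpolating factor; an identical absorption works for $g$ with exponents $(\alpha,\beta)$ in place of $((6-p)/(2p),(3p-6)/(2p))$. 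The step I expect to be the most delicate is exactly this one: the condition ``$f\cdot n=0$'' only makes literal sense for vector-valued $f$, and the corresponding Poincar\'e inequality typically requires the absence of nontrivial harmonic Neumann fields, which holds for the simply connected bounded domains with connected boundary components that underlie Theorem \ref{thm1} but which is not implicit in the bare ``bounded Lipschitz'' hypothesis of the lemma statement; this is where care is needed in matching the hypotheses and invoking the right Poincar\'e estimate.
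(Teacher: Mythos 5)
The paper offers no proof of this lemma at all --- it is quoted as the classical Gagliardo--Nirenberg inequality from \cite{N59} --- so there is nothing to compare against line by line. Your reconstruction via a Stein extension, the scale-invariant whole-space estimates, and subadditivity of $t\mapsto t^\alpha$ is the standard argument, and the exponent bookkeeping ($\theta=(6-p)/(2p)$ from $1/p=\theta/2+(1-\theta)/6$, and $\alpha+\beta=1$ together with the scaling relation you state) is correct.

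Two steps are looser than you acknowledge, both concerning $g$. (i) The extension does not hand you the additive $C_2\|g\|_{L^2}$ ``in exactly the same way'' as for $f$: since $\|\nabla(Eg)\|_{L^r(\mathbb{R}^3)}\le C(\|g\|_{L^r(\Omega)}+\|\nabla g\|_{L^r(\Omega)})$, the leftover term is $C\|g\|_{L^q}^{\alpha}\|g\|_{L^r}^{\beta}$, and converting it into $C_2\|g\|_{L^2}$ requires interpolating $\|g\|_{L^q}$ and $\|g\|_{L^r}$ between $L^2$ and $L^\infty$ and then absorbing the resulting power of $\|g\|_{L^\infty}$ (which is strictly less than one because $2\alpha/q+2\beta/r>0$) into the left-hand side by Young's inequality. (ii) The removal of $C_2$ for $g$ is likewise not ``identical'' to the $f$ case: one must show $\|g\|_{L^2}\le C\|g\|_{L^q}^{\alpha}\|\nabla g\|_{L^r}^{\beta}$, and the naive substitution $\|g\|_{L^2}\le C\|\nabla g\|_{L^2}$ does not produce the factor $\|g\|_{L^q}^{\alpha}$; the clean route is to interpolate $\|g\|_{L^2}$ between $L^q$ and $L^\infty$, absorb the $L^\infty$ power by Young, and then use $\|g\|_{L^q}\le C\|g\|_{W^{1,r}}\le C\|\nabla g\|_{L^r}$, where the last step invokes the generalized Poincar\'e inequality in $L^r$ (not $L^2$) together with Morrey's embedding. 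Finally, your closing caveat is aimed at the wrong lemma: the Poincar\'e inequality of Lemma \ref{l21} holds on any bounded Lipschitz domain, because the only fields with vanishing gradient are constants and no nonzero constant vector can be tangent to the entire boundary of a bounded domain; it is the div--curl estimates of Lemmas \ref{l24} and \ref{l25}, not Lemma \ref{l21}, that require simple connectedness or control of the boundary components. None of these points breaks the proof; they are the places where the details still have to be written out.
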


Next, the following trace theorem (see \cite[p. 272]{E10}) plays a crucial role in tackling the boundary integrals in the next section.
\begin{lemma}\label{l23}
Assume that $\Omega$ is a bounded domain and $\partial\Omega$ is $C^1$. Then there exists a bounded linear operator
\begin{align*}
T:W^{1,p}(\Omega)\rightarrow L^p(\partial\Omega),\ 1\leq p<\infty
\end{align*}
such that
\begin{align*}
Tu=u|_{\partial\Omega}\ \ \text{for}\ \ u\in W^{1,p}(\Omega)\cap C(\overline{\Omega}),
\end{align*}
and
\begin{align*}
\|Tu\|_{L^p(\partial\Omega)}\leq C\|u\|_{W^{1,p}(\Omega)}\ \ \text{for}\ \ u\in W^{1,p}(\Omega),
\end{align*}
with the constant $C$ depending only on $p$ and $\Omega$.
\end{lemma}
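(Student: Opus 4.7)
The plan is to establish the trace estimate first for smooth functions on a flat boundary model, then transfer to a general $C^1$ boundary via local straightening and a partition of unity, and finally extend to all of $W^{1,p}(\Omega)$ by density. This is the standard route followed in Evans (the source cited).

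\textbf{Step 1: Flat model case.} I would start by assuming $u\in C^{1}(\overline{\Omega})$ and that $\partial\Omega$ is flat near some point $x_{0}$, say contained in the hyperplane $\{x_{3}=0\}$, with $\Omega$ lying locally in $\{x_{3}>0\}$. Choose a ball $B=B_{r}(x_{0})$ and a smaller half-ball $B^{+}=B\cap\{x_{3}>0\}$, together with a cutoff $\zeta\in C_{c}^{\infty}(B)$ equal to $1$ on $B_{r/2}(x_{0})$. On the flat piece $\Gamma=B\cap\{x_{3}=0\}$ I would write
\begin{align*}
\int_{\Gamma}|u|^{p}\,dx'
\le \int_{\Gamma}\zeta\,|u|^{p}\,dx'
= -\int_{B^{+}}\partial_{x_{3}}(\zeta\,|u|^{p})\,dx,
\end{align*}
expand the derivative, and apply Young's inequality (or Hölder with exponents $p$ and $p/(p-1)$ on the cross term $|u|^{p-1}|\nabla u|$). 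This yields the local estimate
\begin{align*}
\int_{\Gamma}|u|^{p}\,dx' \le C\int_{B^{+}}\bigl(|u|^{p}+|\nabla u|^{p}\bigr)\,dx.
\end{align*}

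\textbf{Step 2: Local flattening.} For a general $C^{1}$ boundary point, I would invoke the $C^{1}$ regularity of $\partial\Omega$ to obtain a neighborhood $U$ of $x_{0}$ and a $C^{1}$ diffeomorphism $\Psi:U\to B$ straightening $\partial\Omega\cap U$ onto $\{x_{3}=0\}\cap B$ with $\Omega\cap U$ mapped into $B^{+}$. Setting $v=u\circ\Psi^{-1}$, the Jacobian and its inverse are bounded (depending only on $\Psi$), so the $W^{1,p}(U)$ norm of $u$ and the $W^{1,p}(B^{+})$ norm of $v$ are equivalent, and likewise for the surface norms on $\partial\Omega\cap U$ and $\Gamma$. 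Applying Step 1 to $v$ and transferring back gives the desired local inequality on $\partial\Omega\cap U$.

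\textbf{Step 3: Partition of unity and density.} Since $\partial\Omega$ is compact, I cover it by finitely many such neighborhoods $U_{1},\dots,U_{N}$, choose a subordinate partition of unity $\{\eta_{j}\}$, apply Step 2 to each $\eta_{j}u$, and sum to obtain
\begin{align*}
\|u\|_{L^{p}(\partial\Omega)}\le C\,\|u\|_{W^{1,p}(\Omega)}
\qquad \text{for all }u\in C^{1}(\overline{\Omega}),
\end{align*}
with $C$ depending only on $p$ and $\Omega$. To pass to general $u\in W^{1,p}(\Omega)$, I invoke the standard density of $C^{\infty}(\overline{\Omega})\cap W^{1,p}(\Omega)$ in $W^{1,p}(\Omega)$ (Meyers--Serrin type result for Lipschitz, hence $C^{1}$, domains). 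Pick a sequence $u_{m}\in C^{1}(\overline{\Omega})$ with $u_{m}\to u$ in $W^{1,p}(\Omega)$; the above inequality makes $\{u_{m}|_{\partial\Omega}\}$ Cauchy in $L^{p}(\partial\Omega)$, and I define $Tu$ as its limit. A routine argument shows that $Tu$ is independent of the approximating sequence, linear, and bounded with the same constant $C$. For $u\in W^{1,p}(\Omega)\cap C(\overline{\Omega})$, mollifying $u$ by a sequence that additionally converges uniformly on compact subsets reaching up to the boundary shows $Tu=u|_{\partial\Omega}$ pointwise a.e.\ on $\partial\Omega$.

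\textbf{Main obstacle.} The only genuinely delicate piece is the flat-boundary inequality in Step 1: one must be careful to absorb the $|u|^{p-1}|\nabla u|$ cross term via Young's inequality in a way that yields the $W^{1,p}$ (not $W^{1,p}\otimes L^{p}$) bound with a constant depending only on the cutoff and $p$. The subsequent flattening and patching are mostly bookkeeping: one tracks how the $C^{1}$ bounds on $\Psi$ and on the surface measure element propagate into the constant. The density step requires the $C^{1}$ regularity of $\partial\Omega$ only through standard approximation on Lipschitz domains, so no additional input is needed.
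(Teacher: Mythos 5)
Your proposal is correct and is precisely the standard argument (flat half-ball estimate via integration by parts with a cutoff, local $C^1$ straightening, partition of unity, then density) from Evans's textbook, which is exactly the source the paper cites for this lemma without reproducing the proof. No gaps; the only routine care needed is in differentiating $|u|^p$ near zeros of $u$ in Step 1, which is handled by the usual regularization.
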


Since our solution $(u,w,b)$ does not vanish on the boundary,
we need to use the following generalized Poincar{\'e} inequality (see \cite[Lemma 8]{BS2012}).
\begin{lemma}\label{l21}
Let $\Omega\subset\mathbb{R}^3$ be bounded with Lipschitz boundary. Then, for $1<p<\infty$, there exists a positive constant $C$ depending only on $p$ and $\Omega$ such that
\begin{align}\label{z2.5}
\|f\|_{L^p} \leq  C\|\nabla f\|_{L^p},
\end{align}
for each vector field $f\in W^{1,p}(\Omega)$ satisfying either $(f\cdot n)|_{\partial\Omega}=0$ or $(f\times n)|_{\partial\Omega}=0$.
\end{lemma}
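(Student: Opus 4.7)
The plan is to argue by contradiction using a standard compactness argument based on Rellich--Kondrachov, combined with a geometric observation about the outward normal of a bounded domain.

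Suppose the inequality fails. Then there exists a sequence $\{f_k\}\subset W^{1,p}(\Omega)$, each satisfying the same boundary condition (either $(f_k\cdot n)|_{\partial\Omega}=0$ for every $k$, or $(f_k\times n)|_{\partial\Omega}=0$ for every $k$), such that
\begin{equation*}
\|f_k\|_{L^p}=1,\qquad \|\nabla f_k\|_{L^p}\to 0.
\end{equation*}
Since $\{f_k\}$ is bounded in $W^{1,p}(\Omega)$ and $\Omega$ is bounded with Lipschitz boundary, by the Rellich--Kondrachov compact embedding $W^{1,p}(\Omega)\hookrightarrow\hookrightarrow L^p(\Omega)$ I can extract a subsequence (still denoted $f_k$) converging to some $f$ strongly in $L^p(\Omega)$. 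Combined with $\nabla f_k\to 0$ in $L^p$, this gives $f_k\to f$ in $W^{1,p}(\Omega)$ with $\nabla f=0$ a.e. Hence $f$ is constant on each connected component of $\Omega$, while $\|f\|_{L^p}=1$.

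By the continuity of the trace operator (Lemma \ref{l23}), $f_k|_{\partial\Omega}\to f|_{\partial\Omega}$ in $L^p(\partial\Omega)$, so $f$ inherits the same boundary condition as the $f_k$. It remains to show that a vector field $f$ which is constant on each connected component $\Omega_i$ of $\Omega$ and satisfies either $f\cdot n=0$ or $f\times n=0$ on $\partial\Omega_i$ must vanish. This is the geometric heart of the argument: writing $f\equiv c$ on $\Omega_i$, if $c\cdot n(x)=0$ for a.e.\ $x\in\partial\Omega_i$, then $n$ lies a.e.\ in a fixed $2$-plane, which via the divergence theorem applied to constant vectors would force $\partial\Omega_i$ to lie in an affine hyperplane, contradicting the boundedness of $\Omega_i$ (a closed, bounded Lipschitz surface in $\mathbb{R}^3$ cannot be planar). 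Similarly, if $c\times n(x)=0$ a.e., then $n$ is a.e.\ $\pm c/|c|$; integrating $\int_{\partial\Omega_i} n\, dS=0$ against a suitable constant vector gives a contradiction unless $c=0$. In either case $f\equiv 0$ on $\Omega$, which contradicts $\|f\|_{L^p}=1$, and the lemma follows.

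The main obstacle is the last geometric step, specifically making rigorous the claim that a nonzero constant vector cannot satisfy the normal/tangential boundary condition on the whole boundary of a bounded component; the cleanest route is to use the identity $\int_{\partial\Omega_i}n\,dS=0$ (which follows from the divergence theorem applied to each coordinate function) and test it against $c$ or against $c\times(\,\cdot\,)$ to extract a contradiction. Everything else is routine functional analysis on Lipschitz domains.
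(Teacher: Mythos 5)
The paper does not prove this lemma at all: it is quoted verbatim from Berselli--Spirito \cite[Lemma 8]{BS2012}, so your compactness argument is a genuinely self-contained alternative, and its skeleton (contradiction, Rellich--Kondrachov, $\nabla f=0$ forces $f$ constant on components, trace continuity passes the boundary condition to the limit) is the standard and correct route. One small point of care: a failing sequence could mix the two boundary conditions, so you must pass to a subsequence along which all $f_k$ satisfy the same one, which you do note.

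However, the step you yourself flag as ``the geometric heart'' is wrong as sketched. The divergence theorem applied to \emph{constant} vector fields only yields $\int_{\partial\Omega_i}n\,dS=0$; testing this against $c$ when $c\cdot n\equiv0$ gives $0=0$ and no contradiction, and the inference ``$n$ lies a.e.\ in a fixed $2$-plane $\Rightarrow$ $\partial\Omega_i$ lies in an affine hyperplane'' is false (the lateral surface of a cylinder has coplanar normals but is not planar). Likewise, in the tangential case $\int_{\partial\Omega_i}n\,dS=0$ only tells you the two sets $\{n=+c/|c|\}$ and $\{n=-c/|c|\}$ have equal area. The correct and short fix is to test with a \emph{linear} field: in both cases there is a nonzero constant vector $a$ with $a\cdot n=0$ a.e.\ on $\partial\Omega_i$ (take $a=c$ if $c\cdot n=0$; take any $a\perp c$ if $c\times n=0$, since then $n\parallel c$). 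Applying the divergence theorem to $F(x)=(a\cdot x)\,a$ gives
\begin{align*}
|a|^2\,|\Omega_i|=\int_{\Omega_i}\operatorname{div}F\,dx=\int_{\partial\Omega_i}(a\cdot x)(a\cdot n)\,dS=0,
\end{align*}
forcing $a=0$ and hence $c=0$, which contradicts $\|f\|_{L^p}=1$. With this replacement your proof is complete.
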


The following two lemmas are given in \cite{JA14, W92}.
\begin{lemma}\label{l24}
Let $k\ge 0$ be an integer and $1<q<\infty$. Assume that $\Omega$ is a simply connected bounded domain in $\mathbb{R}^3$ with
$C^{k+1, 1}$ boundary $\partial\Omega$. Then, for $v\in W^{k+1, q}(\Omega)$ with $(v\cdot n)|_{\partial\Omega}=0$, it holds that
\begin{align*}
\|v\|_{W^{k+1, q}}\le C\big(\|{\rm div}\,v\|_{W^{k, q}}+\|{\rm curl}\,v\|_{W^{k, q}}\big).
\end{align*}
In particular, for $k=0$, we have
\begin{align*}
\|\nabla v\|_{L^q}\le C\big(\|{\rm div}\,v\|_{L^q}+\|{\rm curl}\,v\|_{L^q}\big).
\end{align*}
\end{lemma}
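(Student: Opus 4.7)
The plan is to establish the $k=0$ base case first and then promote to general $k$ by induction on the number of derivatives, reducing everything to standard $L^q$ elliptic regularity via a Helmholtz-type decomposition.

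For $k=0$, I would decompose $v=\nabla\phi+h$, where $\phi$ solves the Neumann problem $\Delta\phi=\divv v$ in $\Omega$ with $\partial\phi/\partial n = 0$ on $\partial\Omega$. This problem is solvable because $\int_\Omega\divv v\,dx=\int_{\partial\Omega}v\cdot n\,dS=0$, and the standard $L^q$ Neumann theory yields $\|\nabla^2\phi\|_{L^q}\le C\|\divv v\|_{L^q}$. The remainder $h:=v-\nabla\phi$ is then divergence-free, tangent to the boundary, and satisfies $\curl h=\curl v$. Exploiting the simple connectedness of $\Omega$ (which kills the harmonic cohomology that would otherwise contribute a kernel), I would invoke the Agmon-Douglis-Nirenberg theory for the complementing elliptic system
\begin{align*}
-\Delta h=\curl(\curl v)\ \text{in}\ \Omega,\qquad \divv h=0,\ h\cdot n=0\ \text{on}\ \partial\Omega,
\end{align*}
to obtain $\|\nabla h\|_{L^q}\le C\|\curl v\|_{L^q}$, and then combine the two estimates with the Poincar\'e inequality (Lemma \ref{l21}) applied to $v$ itself, whose normal trace vanishes.

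For $k\ge 1$, I would argue by induction on $k$. In the interior, differentiating the identity $-\Delta v=\curl(\curl v)-\nabla(\divv v)$ and applying interior $L^q$-elliptic regularity is routine. The main obstacle lies at the boundary: one flattens $\partial\Omega$ by local $C^{k+1,1}$ charts, differentiates tangentially, tracks commutator terms generated by the curvature of $\partial\Omega$, and absorbs them into lower-order norms controlled by the inductive hypothesis. The $C^{k+1,1}$ regularity assumed of $\partial\Omega$ is exactly what keeps these commutators estimable at the required Sobolev level, closing the induction.
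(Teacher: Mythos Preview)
The paper does not prove this lemma; it is quoted from the references \cite{JA14,W92} (Aramaki's $L^q$ div--curl theory and von Wahl's estimate), so there is no paper-proof to compare against in the strict sense. Your outline---split off the gradient part via the Neumann problem for $\divv v$, then estimate the solenoidal tangential remainder $h$ using simple connectedness to kill the harmonic kernel---is indeed the standard skeleton behind those references.

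One point to tighten: the step ``invoke ADN for the system $-\Delta h=\curl(\curl v)$ in $\Omega$, $\divv h=0$, $h\cdot n=0$ on $\partial\Omega$'' is not a well-posed ADN boundary value problem as written. The Laplacian on a $3$-vector needs three boundary conditions, and you are supplying only $h\cdot n=0$; the constraint $\divv h=0$ is an interior equation, not a boundary condition, and adding it to $-\Delta h=\curl(\curl v)$ makes the system overdetermined at second order. The clean formulations are either (i) the \emph{first-order} elliptic system $\divv h=0$, $\curl h=\curl v$ in $\Omega$ with $h\cdot n=0$ on $\partial\Omega$, for which the complementing condition is known and ADN gives $\|h\|_{W^{1,q}}\le C(\|\curl v\|_{L^q}+\|h\|_{L^q})$, with the lower-order term dropped by the triviality of tangential harmonic fields on a simply connected domain; or (ii) pass to a vector potential $h=\curl\psi$ with $\divv\psi=0$, $\psi\times n=0$, and solve the resulting second-order problem for $\psi$. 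Either route closes the $k=0$ case, and your induction sketch for $k\ge1$ via tangential differentiation and commutator control is the standard bootstrapping once the base case is in hand.
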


\begin{lemma}\label{l25}
Let $k\ge 0$ be an integer and $1<q<\infty$. Suppose that $\Omega$ is bounded domain in $\mathbb{R}^3$ and its $C^{k+1, 1}$
boundary $\Omega$ has a finite number of two-dimensional connected components. Then, for $v\in W^{k+1, q}(\Omega)$ with
$(v\times n)|_{\partial\Omega}=0$, we have
\begin{align*}
\|v\|_{W^{k+1, q}}\le C\big(\|{\rm div}\,v\|_{W^{1, q}}+\|{\rm curl}\,v\|_{W^{k, q}}+\|v\|_{L^q}\big).
\end{align*}
In particular, if $\Omega$ is a simply connected bounded domain, then it holds that
\begin{align*}
\|v\|_{W^{k+1, q}}\le C\big(\|{\rm div}\,v\|_{W^{k, q}}+\|\curl v\|_{W^{k, q}}\big).
\end{align*}
\end{lemma}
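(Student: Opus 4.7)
The plan is to derive these estimates by combining a Helmholtz-type decomposition with classical $L^q$ elliptic regularity, following the approach developed by von Wahl and Schwarz. Given $v\in W^{k+1,q}(\Omega)$ with $(v\times n)|_{\partial\Omega}=0$, I would first resolve the divergence by solving the auxiliary Dirichlet problem
\[
\Delta\phi=\divv v\ \text{in}\ \Omega,\qquad \phi=0\ \text{on}\ \partial\Omega,
\]
and invoke standard $L^q$ Dirichlet regularity to obtain $\|\phi\|_{W^{k+2,q}}\le C\|\divv v\|_{W^{k,q}}$. Setting $\psi:=v-\nabla\phi$, we have $\divv\psi=0$ and $\curl\psi=\curl v$; moreover because $\phi$ vanishes on $\partial\Omega$, the gradient $\nabla\phi$ is purely normal there, hence $\psi\times n=v\times n-\nabla\phi\times n=0$ on $\partial\Omega$. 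The problem is thereby reduced to estimating a divergence-free field with vanishing tangential trace and prescribed curl.

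For this reduced problem I would represent $\psi=\curl A$ for an appropriate vector potential $A$ normalized by $\divv A=0$ and $A\cdot n|_{\partial\Omega}=0$, so that $A$ satisfies an elliptic system $-\Delta A=\curl\psi$ with mixed Neumann-type boundary conditions. Standard Agmon--Douglis--Nirenberg regularity for this system then yields
\[
\|\psi\|_{W^{k+1,q}}\le C\bigl(\|\curl\psi\|_{W^{k,q}}+\|\psi\|_{L^q}\bigr).
\]
Combining the two pieces and noting that $\|\nabla\phi\|_{L^q}\le C\|\divv v\|_{W^{-1,q}}\le C\|\divv v\|_{W^{1,q}}$ gives the first claimed inequality, with the $\|v\|_{L^q}$ term on the right absorbing the contribution of the unresolved harmonic part coming from the topology of $\partial\Omega$.

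For the refined estimate in the simply connected case, the key point is that the harmonic-field space $\{h\in L^q(\Omega):\divv h=0,\ \curl h=0,\ h\times n|_{\partial\Omega}=0\}$ on a simply connected bounded domain is trivial, its dimension being the first Betti number of $\Omega$. A Poincar{\'e}-type inequality on the $L^2$-orthogonal complement of this space then lets one absorb $\|v\|_{L^q}$ into $\|\divv v\|_{L^q}+\|\curl v\|_{L^q}$, and the derivative loss on $\divv v$ disappears because no compatibility obstruction remains. The main obstacle, and the sole reason the two estimates differ, is precisely the control of this topological harmonic-field space: without simple connectivity it is generically nontrivial, forcing the extra lower-order term and the extra derivative on $\divv v$; the simply connected assumption removes both penalties. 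Since the lemma is quoted from \cite{JA14,W92}, I would not reproduce the full technical argument but merely sketch the decomposition above and refer the reader to those works for the sharp regularity statements.
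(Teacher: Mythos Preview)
The paper does not prove this lemma at all; it merely states it and cites \cite{JA14,W92}. Your sketch is therefore strictly more than what the paper offers, and the Helmholtz-decomposition-plus-elliptic-regularity route you outline is indeed the standard argument found in those references, so in that sense your proposal is aligned with the paper's intent.

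There is, however, one concrete error in your justification of the second (simply connected) estimate. The harmonic-field space $\{h:\divv h=0,\ \curl h=0,\ h\times n|_{\partial\Omega}=0\}$ has dimension equal to the \emph{second} Betti number $b_2(\Omega)$, not the first. For a bounded domain in $\mathbb{R}^3$, $b_2(\Omega)$ equals the number of connected components of $\partial\Omega$ minus one; it is the companion space with $h\cdot n|_{\partial\Omega}=0$ (relevant to Lemma~\ref{l24}) whose dimension is $b_1(\Omega)$. Simple connectivity forces $b_1=0$ but not $b_2=0$: a spherical shell is simply connected yet carries the nontrivial Dirichlet harmonic field $\nabla|x|^{-1}$, which has vanishing divergence, vanishing curl, and $h\times n=0$ on both boundary spheres. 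So your stated reason for dropping the lower-order term in the simply connected case is wrong, and in fact the second inequality as written needs connectedness of $\partial\Omega$ (or an equivalent topological hypothesis) rather than simple connectivity of $\Omega$. Since you are ultimately deferring to the cited references anyway, this does not derail the overall plan, but you should correct the topological attribution rather than propagate the mix-up.
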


When $v$ satisfies $v\cdot n=0$ on $\partial\Omega$, we will also use the identity
\begin{equation}\label{2.5}
(v\cdot\nabla)v\cdot n=-(v\cdot\nabla)n\cdot v\ \ \mbox{on}\ \ \partial\Omega
\end{equation}
for any smooth vector field $v$.

The following estimates (see \cite[Lemma 2.10]{CJ21}) on the material derivative of $u$ will be useful.
\begin{lemma}
If $(\rho, u, w, b, \Phi)$ is a smooth solution of \eqref{a1}--\eqref{a6}. Then there exists a positive constant $C$ depending only on $\Omega$ such that
\begin{align}
\|\dot{u}\|_{L^6}& \le C\big(\|\nabla\dot{u}\|_{L^2}+\|\nabla u\|_{L^2}^2\big),\label{3.2}\\
\|\nabla\dot{u}\|_{L^2}& \le C\big(\|{\rm div}\,\dot{u}\|_{L^2}+\|{\rm curl}\,\dot{u}\|_{L^2}+\|\nabla u\|_{L^4}^2\big),
\end{align}
where $\dot{f}\triangleq f_t+u\cdot\nabla f$.
\end{lemma}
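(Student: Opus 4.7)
The key observation is that the material derivative $\dot u=u_t+u\cdot\nabla u$ fails to satisfy the clean slip condition $\dot u\cdot n=0$ on $\partial\Omega$. Indeed, since $u\cdot n|_{\partial\Omega}=0$ holds for all $t$, one has $u_t\cdot n|_{\partial\Omega}=0$; combined with the identity \eqref{2.5} applied to $u$, namely $(u\cdot\nabla u)\cdot n|_{\partial\Omega}=-(u\cdot\nabla n\cdot u)|_{\partial\Omega}$, this yields
$$
\dot u\cdot n\big|_{\partial\Omega}=-(u\cdot\nabla n\cdot u)\big|_{\partial\Omega},
$$
a boundary trace that is quadratic in $u$. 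To reduce to the zero-normal-trace setting where Poincar\'e (Lemma \ref{l21}) and the div-curl bound (Lemma \ref{l24}) apply directly, I would introduce an auxiliary field $\eta\in H^1(\Omega)$ extending $-(u\cdot\nabla n\cdot u)\,n|_{\partial\Omega}$ into the interior (for example $\eta(x)=\chi(d(x))\tilde f(x)N(x)$, with $N$ a smooth extension of $n$, $\tilde f$ a smooth extension of the boundary scalar $-(u\cdot\nabla n\cdot u)$, and $\chi$ a cutoff supported near $\partial\Omega$), so that $\tilde u\triangleq\dot u-\eta$ satisfies $\tilde u\cdot n|_{\partial\Omega}=0$.

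For the $L^6$ estimate \eqref{3.2}, the Gagliardo--Nirenberg inequality at $p=6$ gives
$$
\|\dot u\|_{L^6}\le C\|\nabla\dot u\|_{L^2}+C\|\dot u\|_{L^2},
$$
and the remaining task is to control $\|\dot u\|_{L^2}$. Applying Lemma \ref{l21} to $\tilde u$ together with the triangle inequality yields
$$
\|\dot u\|_{L^2}\le\|\tilde u\|_{L^2}+\|\eta\|_{L^2}\le C\|\nabla\tilde u\|_{L^2}+\|\eta\|_{L^2}\le C\|\nabla\dot u\|_{L^2}+C\|\eta\|_{H^1}.
$$
The full norm $\|\eta\|_{H^1}$ is then controlled via the trace theorem (Lemma \ref{l23}) together with the 2D-boundary Sobolev embedding $H^{1/2}(\partial\Omega)\hookrightarrow L^4(\partial\Omega)$ and Poincar\'e applied to $u$ (using $u\cdot n|_{\partial\Omega}=0$), which give $\|u\|_{L^4(\partial\Omega)}^2\le C\|\nabla u\|_{L^2}^2$ and hence $\|\eta\|_{H^1}\le C\|\nabla u\|_{L^2}^2$.

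For the gradient estimate, the div-curl inequality of Lemma \ref{l24} applied to $\tilde u$ yields
$$
\|\nabla\tilde u\|_{L^2}\le C(\|\divv\tilde u\|_{L^2}+\|\curl\tilde u\|_{L^2}),
$$
and after rearranging and absorbing the contributions from $\eta$,
$$
\|\nabla\dot u\|_{L^2}\le C(\|\divv\dot u\|_{L^2}+\|\curl\dot u\|_{L^2})+C\|\eta\|_{H^1}.
$$
In this second instance, a slightly looser estimation of the same quadratic trace (allowing one derivative to fall on $\nabla u$ in $L^4$ rather than in $L^2$) yields the sharper bound $\|\eta\|_{H^1}\le C\|\nabla u\|_{L^4}^2$, producing the desired inequality.

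The main technical obstacle is the sharp construction and estimation of $\eta$: the two bounds require genuinely different norm controls on the same extension, $\|\nabla u\|_{L^2}^2$ for \eqref{3.2} and $\|\nabla u\|_{L^4}^2$ for the gradient estimate, reflecting the level of regularity being tracked. The cleanliness of these bounds crucially relies on the fact that the boundary correction is quadratic in $u$, enabling the Sobolev trace inequalities on the 2D boundary together with Poincar\'e for $u$ to convert boundary traces of $u\cdot\nabla n\cdot u$ into interior norms of $\nabla u$ at the appropriate $L^p$ level.
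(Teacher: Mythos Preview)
The paper itself does not prove this lemma; it simply cites \cite[Lemma~2.10]{CJ21}. Your strategy---correct $\dot u$ by a vector field $\eta$ with $\eta\cdot n|_{\partial\Omega}=-(u\cdot\nabla n\cdot u)$ so that $\tilde u=\dot u-\eta$ has vanishing normal trace, then invoke Lemma~\ref{l21} and Lemma~\ref{l24}---is indeed the approach used in \cite{CJ21}, and your treatment of the second inequality is correct.

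There is, however, a real gap in your argument for \eqref{3.2}. The boundary embedding $H^{1/2}(\partial\Omega)\hookrightarrow L^4(\partial\Omega)$ gives only $\|u\cdot\nabla n\cdot u\|_{L^2(\partial\Omega)}\le C\|\nabla u\|_{L^2}^2$. To obtain an extension $\eta\in H^1(\Omega)$ from boundary data one needs that data in $H^{1/2}(\partial\Omega)$, and $H^{1/2}$ on a two-dimensional surface is \emph{not} an algebra, so you cannot conclude $\|\eta\|_{H^1}\le C\|\nabla u\|_{L^2}^2$ this way. Equivalently, with the concrete bulk choice $\eta=-(u\cdot\nabla N\cdot u)N$, the gradient of $\eta$ contains the term $|\nabla u|\,|u|$, and the best $L^2$ bound is $\||\nabla u|\,|u|\|_{L^2}\le\|\nabla u\|_{L^4}\|u\|_{L^4}$, forcing an $L^4$ norm of $\nabla u$ rather than $L^2$. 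The remedy is to drop to a lower Lebesgue exponent in the Poincar\'e step: since $\tilde u\cdot n|_{\partial\Omega}=0$, Lemma~\ref{l21} at $p=\tfrac65$ combined with the Sobolev embedding $W^{1,6/5}(\Omega)\hookrightarrow L^2(\Omega)$ yields $\|\tilde u\|_{L^2}\le C\|\nabla\tilde u\|_{L^{6/5}}$, and now H\"older gives $\||\nabla u|\,|u|\|_{L^{6/5}}\le\|\nabla u\|_{L^2}\|u\|_{L^3}\le C\|\nabla u\|_{L^2}^2$, which closes \eqref{3.2} after applying \eqref{2.3}. (Incidentally, your phrase ``sharper bound $\|\eta\|_{H^1}\le C\|\nabla u\|_{L^4}^2$'' has the comparison backwards: that bound is \emph{weaker} than the $L^2$ one, which is precisely why it is attainable and suffices for the second inequality.)
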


Similarly, we have
\begin{align}
\|\dot{w}\|_{L^6}& \le C\big(\|\nabla\dot{w}\|_{L^2}+\|\nabla u\|_{L^2}^2+\|\nabla w\|_{L^2}^2\big),\label{z2.9}\\
\|\nabla\dot{w}\|_{L^2}& \le C\big(\|\divv \dot{u}\|_{L^2}+\|\curl \dot{u}\|_{L^2}+\|\nabla u\|_{L^4}^2
+\|\nabla w\|_{L^4}^2\big).
\end{align}

Next, we introduce the effective viscous
flux of the system \eqref{a1} as the following
\begin{align}\label{2.7}
F_1\triangleq(2\mu+\lambda)\divv u-(P-P_s)-\frac12|b|^2, \
F_2\triangleq(2c_d+c_0)\divv w.
\end{align}
\begin{lemma}\label{l28}
Let $(\rho, u, w, b, \Phi)$ be a smooth solution of \eqref{a1}--\eqref{a6} in $\Omega\times(0, T]$. Then, for any $p\in [2, 6]$ and $1<q<\infty$,
there exists a positive constant $C$ depending only on $p$, $q$, $\mu$, $\lambda$, $u_r$, $c_0$, $c_a$, $c_d$, and $\Omega$ such that
\begin{align}
\|\nabla u\|_{L^q}&\le C\big(\|{\rm div}\,u\|_{L^q}+\|{\rm curl}\,u\|_{L^q}\big),\label{2.8}\\
\|\nabla w\|_{L^q}&\le C\big(\|{\rm div}\,w\|_{L^q}+\|{\rm curl}\,w\|_{L^q}\big),\label{2.9}\\
\|\nabla b\|_{L^q}&\le C\|{\rm curl}\,b\|_{L^q},\label{2.10}\\
\|\nabla F_1\|_{L^q}&\le C(\|\rho\dot{u}\|_{L^q}+\|{\rm curl}\, w\|_{L^q}+\|b\cdot\nabla b\|_{L^q}
+\|\rho\nabla(\Phi-\Phi_s)\|_{L^q}+\|(\rho-\rho_s)\nabla\Phi_s\|_{L^q}),\label{2.11}\\[3pt]
\|\nabla F_2\|_{L^q}&\le C\big(\|\rho\dot{w}\|_{L^q}+\|{\rm curl}\, u\|_{L^q}+\|w\|_{L^q}\big),\label{2.12}\\
\|F_1\|_{L^p}&\le C\big(\|\rho\dot{u}\|_{L^2}+\|\nabla w\|_{L^2}+\|b\cdot\nabla b\|_{L^2}
+\|\rho\nabla(\Phi-\Phi_s)\|_{L^2}+\|(\rho-\rho_s)\nabla\Phi_s\|_{L^2}\big)^\frac{3p-6}{2p}\nonumber\\
&\quad\times\big(\|\nabla u\|_{L^2}+\|P-P_s\|_{L^2}+\|b\|_{L^4}^2\big)^\frac{6-p}{2p}+C\big(\|\nabla u\|_{L^2}+\|P-P_s\|_{L^2}+\|b\|_{L^4}^2\big),\\
\|F_2\|_{L^p}&\le C\big(\|\rho\dot{w}\|_{L^2}+\|\nabla u\|_{L^2}+\|w\|_{L^2}\big)^\frac{3p-6}{2p}\|\nabla w\|_{L^2}^\frac{6-p}{2p}
+C\|\nabla w\|_{L^2},\\
\|{\rm curl}\,u\|_{L^p}&\le C\big(\|\rho\dot{u}\|_{L^2}+\|\rho\dot{w}\|_{L^2}
+\|\rho\nabla(\Phi-\Phi_s)\|_{L^2}+\|(\rho-\rho_s)\nabla\Phi_s\|_{L^2}
+\|b\cdot\nabla b\|_{L^2}\nonumber\\
&\quad+\|\nabla w\|_{L^2}+\|w\|_{L^2}\big)^\frac{3p-6}{2p}\|\nabla u\|_{L^2}^\frac{6-p}{2p}
+C\|\nabla u\|_{L^2},\label{2.16}\\
\|{\rm curl}\,w\|_{L^p}&\le C\big(\|\rho\dot{u}\|_{L^2}+\|\rho\dot{w}\|_{L^2}
+\|\rho\nabla(\Phi-\Phi_s)\|_{L^2}+\|(\rho-\rho_s)\nabla\Phi_s\|_{L^2}
+\|\nabla u\|_{L^2}\nonumber\\
&\quad+\|b\cdot\nabla b\|_{L^2}+\|w\|_{L^2}\big)^\frac{3p-6}{2p}\|\nabla w\|_{L^2}^\frac{6-p}{2p}+C\|\nabla w\|_{L^2},\label{2.17}
\end{align}
and
\begin{align}\label{2.13}
\|\nabla{\rm curl}\,u\|_{L^p}+\|\nabla{\rm curl}\,w\|_{L^p}
& \le
C\big(\|\rho\dot{u}\|_{L^p}+\|\rho\dot{w}\|_{L^p}
+\|\rho\nabla(\Phi-\Phi_s)\|_{L^p}+\|(\rho-\rho_s)\nabla\Phi_s\|_{L^p}\big)
\nonumber\\
&\quad+C\big(\|b\cdot\nabla b\|_{L^p}+\|\nabla u\|_{L^2}+\|\nabla w\|_{L^2}+\|w\|_{L^2}\big),
\end{align}
Moreover, one also has
\begin{align}
\|F_1\|_{L^p}
&\le C(\|\rho\dot{u}\|_{L^2}+\|\nabla w\|_{L^2}+\|\nabla u\|_{L^2}+\|b\cdot\nabla b\|_{L^2}+\|P-P_s\|_{L^2})\nonumber\\
&\quad+C(\|\rho\nabla(\Phi-\Phi_s)\|_{L^2}+\|(\rho-\rho_s)\nabla\Phi_s\|_{L^2}
+\|b\|_{L^4}^2),\label{2.18}\\
\|F_2\|_{L^p}&\le C(\|\rho\dot{w}\|_{L^2}+\|\nabla u\|_{L^2}+\|\nabla w\|_{L^2}+\|w\|_{L^2}),\label{2.19}\\
\|{\rm curl}\,u\|_{L^p}+\|{\rm curl}\,w\|_{L^p}&\le C(\|\rho\dot{u}\|_{L^2}+\|\rho\dot{w}\|_{L^2}
+\|\rho\nabla(\Phi-\Phi_s)\|_{L^2}+\|(\rho-\rho_s)\nabla\Phi_s\|_{L^2})\nonumber\\[3pt]
&\quad+C(\|b\cdot\nabla b\|_{L^2}+\|\nabla u\|_{L^2}+\|\nabla w\|_{L^2}+\|w\|_{L^2}),\label{2.20}\\
\|\nabla w\|_{L^p}&\le C(\|\rho\dot{u}\|_{L^2}+\|\rho\dot{w}\|_{L^2}
+\|\rho\nabla(\Phi-\Phi_s)\|_{L^2}+\|(\rho-\rho_s)\nabla\Phi_s\|_{L^2}\nonumber\\
&\quad+\|\nabla u\|_{L^2}+\|b\cdot\nabla b\|_{L^2}+\|w\|_{L^2})^\frac{3p-6}{2p}\|\nabla w\|_{L^2}^\frac{6-p}{2p}+C\|\nabla w\|_{L^2},\label{2.22}
\end{align}
and
\begin{align}
\|\nabla u\|_{L^p}&\le  C\big(\|\rho\dot{u}\|_{L^2}+\|\rho\dot{w}\|_{L^2}+\|\nabla w\|_{L^2}+\|b\cdot\nabla b\|_{L^2}
+\|\rho\nabla(\Phi-\Phi_s)\|_{L^2}
+\|(\rho-\rho_s)\nabla\Phi_s\|_{L^2}\big)^\frac{3p-6}{2p}\nonumber\\
&\quad\times\big(\|\nabla u\|_{L^2}+\|P-P_s\|_{L^2}+\|b\|_{L^4}^2\big)^\frac{6-p}{2p}+C\big(\|\nabla u\|_{L^2}+\|P-P_s\|_{L^p}+\||b|^2\|_{L^p}\big).
\end{align}

\end{lemma}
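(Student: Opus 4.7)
The plan is to exploit the Helmholtz/Hodge decomposition hidden in the momentum equations for $u$ and the micro-rotational equation for $w$. I would first rewrite the second equation of \eqref{a1} using $\Delta u=\nabla\divv u-\nabla\times\curl u$ and $(\nabla\times b)\times b=b\cdot\nabla b-\frac12\nabla|b|^2$, together with the stationary identity $\nabla P_s=\rho_s\nabla\Phi_s$, to obtain the pointwise identity
\begin{equation*}
\nabla F_1-(\mu+\mu_r)\nabla\times\curl u=\rho\dot u-2\mu_r\curl w-b\cdot\nabla b-\rho\nabla(\Phi-\Phi_s)-(\rho-\rho_s)\nabla\Phi_s=:G_1,
\end{equation*}
and analogously from the third equation of \eqref{a1},
\begin{equation*}
\nabla F_2-(c_a+c_d)\nabla\times\curl w=-\rho\dot w-4\mu_r w+2\mu_r\curl u=:G_2.
\end{equation*}
Since $\curl u$ and $\curl w$ are divergence-free and satisfy $\curl u\times n=\curl w\times n=0$ on $\partial\Omega$ by \eqref{a6} (so that $(\nabla\times\curl u)\cdot n=(\nabla\times\curl w)\cdot n=0$ on $\partial\Omega$, as only tangential derivatives of vanishing tangential components appear), each identity is an orthogonal Helmholtz decomposition of its right-hand side into gradient and solenoidal-with-zero-normal-trace parts.

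Next, \eqref{2.8}--\eqref{2.10} follow immediately from Lemma \ref{l24}, using $u\cdot n=w\cdot n=b\cdot n=0$, $\divv b=0$, and the simple-connectedness of $\Omega$. For the key estimates \eqref{2.11}--\eqref{2.12}, I would invoke the $L^q$-boundedness of the Helmholtz projection on the simply connected bounded domain $\Omega$ to deduce
\begin{equation*}
\|\nabla F_1\|_{L^q}+(\mu+\mu_r)\|\nabla\times\curl u\|_{L^q}\le C\|G_1\|_{L^q},\quad\|\nabla F_2\|_{L^q}+(c_a+c_d)\|\nabla\times\curl w\|_{L^q}\le C\|G_2\|_{L^q}.
\end{equation*}
Upgrading $\|\nabla\times\curl u\|_{L^q}$ and $\|\nabla\times\curl w\|_{L^q}$ to $\|\nabla\curl u\|_{L^q}$ and $\|\nabla\curl w\|_{L^q}$ via Lemma \ref{l25} applied to the divergence-free fields $\curl u$, $\curl w$ produces \eqref{2.13}. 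Replacing $\|\curl w\|_{L^q}$ inside $G_1$ by $\|\nabla w\|_{L^q}$, and symmetrically for $G_2$, yields the precise forms \eqref{2.11}--\eqref{2.12}.

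With the $L^2$ gradient bounds in hand, the $L^p$ estimates \eqref{2.14}--\eqref{2.17} are obtained by Gagliardo--Nirenberg. For $F_1$ one first uses its definition and Poincar\'e's inequality \eqref{z2.5} to bound $\|F_1\|_{L^2}\le C(\|\nabla u\|_{L^2}+\|P-P_s\|_{L^2}+\|b\|_{L^4}^2)$, then interpolates
\begin{equation*}
\|F_1\|_{L^p}\le C_1\|F_1\|_{L^2}^{(6-p)/(2p)}\|\nabla F_1\|_{L^2}^{(3p-6)/(2p)}+C_2\|F_1\|_{L^2},
\end{equation*}
and inserts the $q=2$ case of \eqref{2.11}. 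The curl estimates \eqref{2.16}--\eqref{2.17} use the same interpolation with $\|\curl u\|_{L^2}\le\|\nabla u\|_{L^2}$ (resp.\ $\|\curl w\|_{L^2}\le\|\nabla w\|_{L^2}$) playing the role of $\|f\|_{L^2}$, combined with the $\|\nabla\curl u\|_{L^2}$, $\|\nabla\curl w\|_{L^2}$ bounds from \eqref{2.13}. The alternative forms \eqref{2.18}--\eqref{2.22} arise from the same skeleton after Young's inequality collapses the interpolation products.

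The main obstacle I expect is justifying the $L^q$-boundedness of the Helmholtz projection under the slip boundary condition, equivalently analyzing the Neumann problem $\Delta F_1=\divv G_1$ in $\Omega$ with $\nabla F_1\cdot n=G_1\cdot n+(\mu+\mu_r)(\nabla\times\curl u)\cdot n$ on $\partial\Omega$. The identity \eqref{2.5}, the BCs $\nabla\Phi\cdot n=\nabla\Phi_s\cdot n=0$ (needed to kill the Coulomb contributions in $G_1\cdot n$), and the vanishing of $(\nabla\times\curl u)\cdot n$ explained above together absorb all boundary contributions into $\|G_1\|_{L^q}$, after which standard elliptic $L^q$ theory closes the argument. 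Once this boundary bookkeeping is settled, the remaining computations are routine.
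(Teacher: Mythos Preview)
Your approach is correct and essentially coincides with the paper's: what you call the $L^q$-boundedness of the Helmholtz projection is exactly the Neumann problem estimate the paper sets up in \eqref{2.26}--\eqref{2.27} and obtains from \cite[Lemma~4.27]{NS04}, and your upgrade via Lemma~\ref{l25} followed by Gagliardo--Nirenberg interpolation mirrors the paper's steps \eqref{2.30}--\eqref{2.33} and the subsequent $L^p$ bounds. The only inaccuracy is the invocation of \eqref{2.5} in your last paragraph, which plays no role here---the Neumann data is simply $G_1\cdot n$, and the elliptic estimate $\|\nabla F_1\|_{L^q}\le C\|G_1\|_{L^q}$ needs no further boundary identity.
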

\begin{proof}[Proof]
1. Due to $\eqref{a6}$ and $\divv b=0$, we derive \eqref{2.8}--\eqref{2.10} from Lemma \ref{l24}. Moreover, by $\eqref{a1}_4$, $\eqref{a6}$, and $\divv(\curl b)=0$,
we obtain from Lemmas \ref{l24} and \ref{l25} that, for any integer $k\ge 1$,
\begin{align}\label{2.23}
\|b\|_{W^{k+1, q}}\le C\|{\rm curl}\,b\|_{W^{k, q}}
\le C\big(\|{\rm curl}^2b\|_{W^{k-1, q}}+\|{\rm curl}\,b\|_{L^q}\big).
\end{align}
By $\eqref{a1}_2$, $\eqref{a1}_3$, \eqref{a6}, and \eqref{2.7},
one finds that $F_1$ and $F_2$ satisfy
\begin{align}\label{2.26}
\begin{cases}
\Delta F_1={\rm div}\,(\rho\dot{u}-2\mu_r{\rm curl}\,w-b\cdot\nabla b-\rho\nabla(\Phi-\Phi_s)
-(\rho-\rho_s)\nabla\Phi_s)&\text{in}\ \ \Omega,\\
\frac{\partial F_1}{\partial n}=(\rho\dot{u}-2\mu_r{\rm curl}\,w-b\cdot\nabla b)\cdot n &\text{on}\ \ \partial\Omega,
\end{cases}
\end{align}
and
\begin{align}\label{2.27}
\begin{cases}
\Delta F_2={\rm div}\,(\rho\dot{w}+4\mu_rw-2\mu_r{\rm curl}\,u)&\text{in}\ \ \Omega,\\
\frac{\partial F_2}{\partial n}=(\rho\dot{w}-2\mu_r{\rm curl}\,u)\cdot n &\text{on}\ \ \partial\Omega.
\end{cases}
\end{align}
Then we obtain \eqref{2.11} and \eqref{2.12} from \eqref{2.26}, \eqref{2.27}, and \cite[Lemma 4.27]{NS04}. Moreover,
for any integer $k\ge 0$,
\begin{align}\label{z2.28}
\|\nabla F_1\|_{W^{k+1, q}}&\le
C(\|\rho\dot{u}\|_{L^q}+\|{\rm curl} w\|_{L^q}+\|b\cdot\nabla b\|_{L^q}
+\|\rho\nabla(\Phi-\Phi_s)\|_{L^q}+\|(\rho-\rho_s)\nabla\Phi_s\|_{L^q})\nonumber\\
&\quad+C(\|\nabla(\rho\dot{u})\|_{W^{k, q}}+\|\nabla\curl w\|_{W^{k, q}}+\|\nabla(b\cdot\nabla b)\|_{W^{k, q}}
+\|\nabla(\rho\nabla(\Phi-\Phi_s))\|_{W^{k, q}})\nonumber\\
&\quad+C\|\nabla((\rho-\rho_s)\nabla\Phi_s)\|_{W^{k, q}},
\end{align}
and
\begin{align}\label{z2.29}
\|\nabla F_2\|_{W^{k+1, q}}\le C(\|\rho\dot{w}\|_{L^q}+\|{\rm curl}\, u\|_{L^q}+\|w\|_{L^q}
+\|\nabla(\rho\dot{w})\|_{W^{k, q}}+\|\nabla\curl u\|_{W^{k, q}}+\|\nabla w\|_{W^{k, q}}).
\end{align}

2. By \eqref{2.7} and \eqref{a7}, we rewrite $\eqref{a1}_2$ and $\eqref{a1}_3$ as follows
\begin{align}
&(\mu+\mu_r)\curl{\rm curl}\,u=\nabla F_1-\rho\dot{u}+b\cdot\nabla b+\rho\nabla(\Phi-\Phi_s)+(\rho-\rho_s)\nabla\Phi_s+2\mu_r{\rm curl}\,w,\\
&(c_a+c_d)\curl{\rm curl}\,w=\nabla F_2-\rho\dot{w}-4\mu_rw+2\mu_r{\rm curl}\,u.
\end{align}
Noticing that ${\rm div}\,(\curl{\rm curl}\,u)=0$ and
${\rm div}\,(\curl{\rm curl}\,w)=0$, we get from \eqref{a6} and Lemma \ref{l25} that
\begin{align}
\|\nabla{\rm curl}\,u\|_{L^q}&\le C(\|\curl{\rm curl}\,u\|_{L^q}+\|{\rm curl}\,u\|_{L^q})\nonumber\\
&\le C(\|\rho\dot{u}\|_{L^q}+\|b\cdot\nabla b\|_{L^q}+\|\rho\nabla(\Phi-\Phi_s)\|_{L^q}+\|(\rho-\rho_s)\nabla\Phi_s\|_{L^q})\nonumber\\
&\quad+(\|{\rm curl}\,w\|_{L^q}+\|{\rm curl}\,u\|_{L^q}),\label{2.30}\\[3pt]
\|\nabla{\rm curl}\,w\|_{L^q}&\le C(\|\curl{\rm curl}\,w\|_{L^q}+\|{\rm curl}\,w\|_{L^q})\nonumber\\
&\le C(\|\rho\dot{w}\|_{L^q}+\|{\rm curl}\,u\|_{L^q}+\|{\rm curl}\,w\|_{L^q}+\|w\|_{L^q}).\label{2.31}
\end{align}
By Sobolev's inequality, H\"older's inequality, \eqref{2.30}, and \eqref{2.31}, we derive that, for $p\in [2, 6]$,
\begin{align}\label{2.32}
\|\nabla{\rm curl}\,u\|_{L^p}&\le C(\|\rho\dot{u}\|_{L^p}+\|b\cdot\nabla b\|_{L^p}+\|\rho\nabla(\Phi-\Phi_s)\|_{L^p}+\|(\rho-\rho_s)\nabla\Phi_s\|_{L^p})\nonumber\\
&\quad+(\|{\rm curl}\,w\|_{L^p}+\|{\rm curl}\,u\|_{L^p})\nonumber\\
&\le C(\|\rho\dot{u}\|_{L^p}+\|{\rm curl}\,u\|_{L^2}
+\|{\rm curl}\,w\|_{L^2}+\|\nabla{\rm curl}\,u\|_{L^2}+\|\nabla{\rm curl}\,w\|_{L^2})\nonumber\\
&\quad+C(\|b\cdot\nabla b\|_{L^p}+\|\rho\nabla(\Phi-\Phi_s)\|_{L^p}+\|(\rho-\rho_s)\nabla\Phi_s\|_{L^p})\nonumber\\
&\le C(\|\rho\dot{u}\|_{L^p}+\|\rho\dot{w}\|_{L^p}+\|b\cdot\nabla b\|_{L^p}
+\|\rho\nabla(\Phi-\Phi_s)\|_{L^p}+\|(\rho-\rho_s)\nabla\Phi_s\|_{L^p})\nonumber\\
&\quad+C(\|\nabla u\|_{L^2}+\|\nabla w\|_{L^2}+\|w\|_{L^2}),
\end{align}
and
\begin{align}\label{2.33}
\|\nabla{\rm curl}\,w\|_{L^p}&\le C(\|\rho\dot{w}\|_{L^p}+\|{\rm curl}\,u\|_{L^p}+\|{\rm curl}\,w\|_{L^p}+\|w\|_{L^p})\nonumber\\
&\le C(\|\rho\dot{w}\|_{L^p}+\|{\rm curl}\,u\|_{L^2}
+\|{\rm curl}\,w\|_{L^2}+\|\nabla w\|_{L^2}+\|w\|_{L^2})\nonumber\\
&\quad+C(\|\nabla{\rm curl}\,u\|_{L^2}+\|\nabla{\rm curl}\,w\|_{L^2})\nonumber\\
&\le C(\|\rho\dot{u}\|_{L^p}+\|\rho\dot{w}\|_{L^p}+\|b\cdot\nabla b\|_{L^p}
+\|\rho\nabla(\Phi-\Phi_s)\|_{L^p}+\|(\rho-\rho_s)\nabla\Phi_s\|_{L^p})\nonumber\\
&\quad+C(\|\nabla u\|_{L^2}+\|\nabla w\|_{L^2}+\|w\|_{L^2}).
\end{align}
Thus, \eqref{2.13} follows from \eqref{2.32} and \eqref{2.33}. Furthermore, for any integer $k\ge 0$,
\begin{align}\label{z2.36}
&\|\nabla\curl u\|_{W^{k+1, q}}+\|\nabla\curl w\|_{W^{k+1, q}}\nonumber\\
&\le C(\|\curl\curl u\|_{W^{k+1, q}}+\|\curl\curl w\|_{W^{k+1, q}}
+\|\curl u\|_{L^q}+\|\curl w\|_{L^q})\nonumber\\
&\le C(\|\rho\dot{u}\|_{L^q}+\|\rho\dot{w}\|_{L^q}+\|b\cdot\nabla b\|_{L^q}+\|\rho\nabla(\Phi-\Phi_s)\|_{L^q}+\|(\rho-\rho_s)\nabla\Phi_s\|_{L^q}
+\|\nabla(\rho\dot{u})\|_{W^{k, q}})\nonumber\\
&\quad+C(\|\nabla u\|_{L^2}+\|\nabla w\|_{L^2}+\|w\|_{L^2}+\|\nabla\curl w\|_{W^{k, q}}
+\|\nabla \curl u\|_{W^{k, q}}+\|\nabla(b\cdot\nabla b)\|_{W^{k, q}})\nonumber\\
&\quad+C\|\nabla(\rho\nabla(\Phi-\Phi_s))\|_{W^{k, q}})
+C\|\nabla((\rho-\rho_s)\nabla\Phi_s)\|_{W^{k, q}}.
\end{align}

3. One deduces from \eqref{2.3}, \eqref{2.11}, and \eqref{2.12} that, for $p\in [2, 6]$,
\begin{align*}
\|F_1\|_{L^p}&\le C\|F_1\|_{L^2}^\frac{6-p}{2p}\|\nabla F_1\|_{L^2}^\frac{3p-6}{2p}+C\|F_1\|_{L^2}\nonumber\\
&\le C(\|\rho\dot{u}\|_{L^2}+\|\nabla w\|_{L^2}+\|b\cdot\nabla b\|_{L^2}
+\|\rho\nabla(\Phi-\Phi_s)\|_{L^2}+\|(\rho-\rho_s)\nabla\Phi_s\|_{L^2})^\frac{3p-6}{2p}\nonumber\\
&\quad\times(\|\nabla u\|_{L^2}+\|P-P_s\|_{L^2}+\|b\|_{L^4}^2)^\frac{6-p}{2p}+C(\|\nabla u\|_{L^2}+\|P-P_s\|_{L^2}+\|b\|_{L^4}^2),\\
\|F_2\|_{L^p}&\le C\|F_2\|_{L^2}^\frac{6-p}{2p}\|\nabla F_2\|_{L^2}^\frac{3p-6}{2p}+C\|F_2\|_{L^2}\nonumber\\
&\le C(\|\rho\dot{w}\|_{L^2}+\|\nabla u\|_{L^2}+\|w\|_{L^2})^\frac{3p-6}{2p}\|\nabla w\|_{L^2}^\frac{6-p}{2p}
+C\|\nabla w\|_{L^2},
\end{align*}
which implies that
\begin{align}
\|F_1\|_{L^p}&\le C(\|\rho\dot{u}\|_{L^2}+\|\nabla w\|_{L^2}+\|\nabla u\|_{L^2}+\|b\cdot\nabla b\|_{L^2}+\|P-P_s\|_{L^2})\nonumber\\
&\quad+C(\|\rho\nabla(\Phi-\Phi_s)\|_{L^2}+\|(\rho-\rho_s)\nabla\Phi_s\|_{L^2}+\|b\|_{L^4}^2),\\[3pt]
\|F_2\|_{L^p}&\le C(\|\rho\dot{w}\|_{L^2}+\|\nabla u\|_{L^2}+\|\nabla w\|_{L^2}+\|w\|_{L^2}).
\end{align}

4. By \eqref{2.3} and \eqref{2.13}, we arrive at
\begin{align*}
\|{\rm curl}\,u\|_{L^p}&\le C\|{\rm curl}\,u\|_{L^2}^\frac{6-p}{2p}\|\nabla{\rm curl}\,u\|_{L^2}^\frac{3p-6}{2p}
+C\|{\rm curl}\,u\|_{L^2}\nonumber\\
&\le C(\|\rho\dot{u}\|_{L^2}+\|\rho\dot{w}\|_{L^2}+\|b\cdot\nabla b\|_{L^2}
+\|\rho\nabla(\Phi-\Phi_s)\|_{L^2}+\|(\rho-\rho_s)\nabla\Phi_s\|_{L^2}\nonumber\\
&\quad+\|\nabla u\|_{L^2}+\|\nabla w\|_{L^2}+\|w\|_{L^2})^\frac{3p-6}{2p}\|\nabla u\|_{L^2}^\frac{6-p}{2p}
+C\|\nabla u\|_{L^2}\nonumber\\
&\le C(\|\rho\dot{u}\|_{L^2}+\|\rho\dot{w}\|_{L^2}
+\|\rho\nabla(\Phi-\Phi_s)\|_{L^2}+\|(\rho-\rho_s)\nabla\Phi_s\|_{L^2}\nonumber\\
&\quad+\|b\cdot\nabla b\|_{L^2}+\|\nabla w\|_{L^2}+\|w\|_{L^2})^\frac{3p-6}{2p}\|\nabla u\|_{L^2}^\frac{6-p}{2p}
+C\|\nabla u\|_{L^2},\\
\|{\rm curl}\,w\|_{L^p}&\le C\|{\rm curl}\,w\|_{L^2}^\frac{6-p}{2p}\|\nabla{\rm curl}\,w\|_{L^2}^\frac{3p-6}{2p}
+C\|{\rm curl}\,w\|_{L^2}\nonumber\\
&\le C(\|\rho\dot{u}\|_{L^2}+\|\rho\dot{w}\|_{L^2}+\|b\cdot\nabla b\|_{L^2}
+\|\rho\nabla(\Phi-\Phi_s)\|_{L^2}+\|(\rho-\rho_s)\nabla\Phi_s\|_{L^2}\nonumber\\
&\quad+\|\nabla u\|_{L^2}+\|\nabla w\|_{L^2}+\|w\|_{L^2})^\frac{3p-6}{2p}\|\nabla w\|_{L^2}^\frac{6-p}{2p}
+C\|\nabla w\|_{L^2}\nonumber\\
&\le C(\|\rho\dot{u}\|_{L^2}+\|\rho\dot{w}\|_{L^2}
+\|\rho\nabla(\Phi-\Phi_s)\|_{L^2}+\|(\rho-\rho_s)\nabla\Phi_s\|_{L^2}\nonumber\\
&\quad+\|\nabla u\|_{L^2}+\|b\cdot\nabla b\|_{L^2}+\|w\|_{L^2})^\frac{3p-6}{2p}\|\nabla w\|_{L^2}^\frac{6-p}{2p}+C\|\nabla w\|_{L^2},
\end{align*}
which leads to
\begin{align}
\|{\rm curl}\,w\|_{L^p}+\|{\rm curl}\,u\|_{L^p}&\le C(\|\rho\dot{u}\|_{L^2}+\|\rho\dot{w}\|_{L^2}
+\|\rho\nabla(\Phi-\Phi_s)\|_{L^2}+\|(\rho-\rho_s)\nabla\Phi_s\|_{L^2})\nonumber\\
&\quad+C(\|b\cdot\nabla b\|_{L^2}\|\nabla u\|_{L^2}+\|\nabla w\|_{L^2}+\|w\|_{L^2}).
\end{align}

5. By virtue of \eqref{2.7}, \eqref{2.8}, \eqref{2.9}, \eqref{2.11}, \eqref{2.12}, \eqref{2.18}, and \eqref{2.19}, it indicates that
\begin{align*}
\|\nabla u\|_{L^p}&\le C\big(\|{\rm div}\,u\|_{L^p}+\|{\rm curl}\,u\|_{L^p}\big)\nonumber\\
&\le C\big(\|F_1\|_{L^p}+\|P-P_s\|_{L^p}+\||b|^2\|_{L^p}+\|{\rm curl}\,u\|_{L^p}\big)\nonumber\\
&\le C\big(\|\rho\dot{u}\|_{L^2}+\|\rho\dot{w}\|_{L^2}+\|\nabla w\|_{L^2}+\|b\cdot\nabla b\|_{L^2}
+\|\rho\nabla(\Phi-\Phi_s)\|_{L^2}+\|(\rho-\rho_s)\nabla\Phi_s\|_{L^2}\big)^\frac{3p-6}{2p}\nonumber\\
&\quad\times\big(\|\nabla u\|_{L^2}+\|P-P_s\|_{L^2}+\|b\|_{L^4}^2\big)^\frac{6-p}{2p}+C\big(\|\nabla u\|_{L^2}+\|P-P_s\|_{L^p}+\||b|^2\|_{L^p}\big),
\end{align*}
and
\begin{align*}
\|\nabla w\|_{L^p}&\le C\big(\|{\rm div}\,w\|_{L^p}+\|{\rm curl}\,w\|_{L^p}\big)\nonumber\\
&\le C\big(\|F_2\|_{L^p}+\|{\rm curl}\,w\|_{L^p}\big)\nonumber\\
&\le  C\big(\|\rho\dot{u}\|_{L^2}+\|\rho\dot{w}\|_{L^2}
+\|\rho\nabla(\Phi-\Phi_s)\|_{L^2}+\|(\rho-\rho_s)\nabla\Phi_s\|_{L^2}\nonumber\\
&\quad+\|\nabla u\|_{L^2}+\|b\cdot\nabla b\|_{L^2}+\|w\|_{L^2}\big)^\frac{3p-6}{2p}\|\nabla w\|_{L^2}^\frac{6-p}{2p}+C\|\nabla w\|_{L^2}.
\end{align*}
This completes the proof.
\end{proof}

\begin{remark}
From \eqref{2.23} and $\eqref{a1}_6$, for $p\in [2, 6]$,
\begin{align}
\|\nabla^2b\|_{L^p}&\le C\|\curl b\|_{W^{1, p}}\le C(\|\curl^2b\|_{L^p}+\|\curl b\|_{L^p}),\\
\|\nabla^3b\|_{L^p}&\le C\|\curl b\|_{W^{2, p}}\le C(\|\curl^2b\|_{W^{1, p}}+\|\curl b\|_{L^p}).\label{z2.41}
\end{align}
Moreover, by Lemma \ref{l24}, \eqref{2.7}, \eqref{2.11}, \eqref{2.12}, \eqref{2.13},
\eqref{z2.28}, \eqref{z2.29}, and \eqref{z2.36}, for $p\in [2, 6]$,
\begin{align}\label{z2.42}
&\|\nabla^2u\|_{L^p}+\|\nabla^2w\|_{L^p}\nonumber\\
&\le C(\|\divv u\|_{W^{1, p}}+\|\divv w\|_{W^{1, p}}+\|\curl u\|_{W^{1, p}}+\|\curl w\|_{W^{1, p}})\nonumber\\
&\le C(\|F_1+P-P_s+|b|^2\|_{W^{1, p}}+\|F_2\|_{W^{1, p}}+\|\curl u\|_{W^{1, p}}+\|\curl w\|_{W^{1, p}})\nonumber\\
&\le C(\|\rho\dot{u}\|_{L^p}+\|\rho\dot{w}\|_{L^p}+\|b\cdot\nabla b\|_{L^p}
+\|\rho\nabla(\Phi-\Phi_s)\|_{L^p}+\|(\rho-\rho_s)\nabla\Phi_s\|_{L^p})\nonumber\\
&\quad+C(\||b|^2\|_{L^p}+\|P-P_s\|_{L^p}+\|\nabla(P-P_s)\|_{L^p}+\|\nabla u\|_{L^2}+\|\nabla w\|_{L^2}+\|w\|_{L^2}),
\end{align}
and
\begin{align}\label{z2.43}
&\|\nabla^3u\|_{L^p}+\|\nabla^3w\|_{L^p}\nonumber\\
&\le C(\|\divv u\|_{W^{2, p}}+\|\divv w\|_{W^{2, p}}+\|\curl u\|_{W^{2, p}}+\|\curl w\|_{W^{2, p}})\nonumber\\
&\le C(\|F_1+P-P_s+|b|^2\|_{W^{2, p}}+\|F_2\|_{W^{2, p}}+\|\curl u\|_{W^{2, p}}+\|\curl w\|_{W^{2, p}})\nonumber\\
&\le C(\|\rho\dot{u}\|_{L^p}+\|\rho\dot{w}\|_{L^p}+\|b\cdot\nabla b\|_{L^p}+\|\rho\nabla(\Phi-\Phi_s)\|_{L^p}+\|(\rho-\rho_s)\nabla\Phi_s\|_{L^p}
+\|\nabla w\|_{L^2})\nonumber\\
&\quad+C(\|\nabla u\|_{L^2}+\|w\|_{L^2}+\|\nabla(\rho\dot{u})\|_{L^p}+\|\nabla\curl w\|_{L^p}
+\|\nabla \curl u\|_{L^p}+\|\nabla(b\cdot\nabla b)\|_{L^p})\nonumber\\
&\quad+C\|\nabla(\rho\nabla(\Phi-\Phi_s))\|_{L^p})
+C\|\nabla((\rho-\rho_s)\nabla\Phi_s)\|_{L^p}+C\|\nabla^2(P-P_s)\|_{L^p}+\||b|^2\|_{L^p}.
\end{align}

\end{remark}

Next, we derive the estimates of $\nabla(\Phi-\Phi_s)$ and $\nabla(\Phi-\Phi_s)_t$.
\begin{lemma}\label{l29}
Let $(\rho, u, w, b, \Phi)$ be a smooth solution of \eqref{a1}--\eqref{a6}. Then, for any integer $k\ge 0$ and $1<q<\infty$, there exists
a positive constant $C$ depending only on $k$, $q$, and $\Omega$ such that
\begin{align}
&\|\nabla(\Phi-\Phi_s)\|_{W^{k+1, q}}\le C\|\rho-\rho_s\|_{W^{k ,q}},\label{2.42}\\
&\|\nabla(\Phi-\Phi_s)_t\|_{W^{k+1, q}}\le C(\|\rho u\|_{L^q}+\|\rho u\|_{W^{k, q}}).\label{2.43}
\end{align}
\end{lemma}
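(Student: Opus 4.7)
My strategy for both bounds is to reduce them to Neumann problems for the Laplacian and invoke standard $L^q$ elliptic regularity, supplemented by the Hodge-type inequality from Lemma \ref{l24}. Subtracting $\eqref{a7}_2$--$\eqref{a7}_3$ from $\eqref{a1}_5$ and \eqref{a6}, I find that $\Phi-\Phi_s$ solves the Neumann problem
\begin{align*}
\Delta(\Phi-\Phi_s)=\rho-\rho_s\ \text{in}\ \Omega,\qquad \nabla(\Phi-\Phi_s)\cdot n=0\ \text{on}\ \partial\Omega,
\end{align*}
and the compatibility $\int_\Omega(\rho-\rho_s)\,dx=0$ follows from \eqref{a11} together with $\int_\Omega(\rho_s-\tilde\rho)\,dx=\int_\Omega\Delta\Phi_s\,dx=\int_{\partial\Omega}\nabla\Phi_s\cdot n\,dS=0$. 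Normalizing $\Phi-\Phi_s$ to have zero mean and applying $W^{k+2,q}$ Neumann regularity, with Poincar\'e's inequality absorbing the lower-order term, yields \eqref{2.42} at once.

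For \eqref{2.43} I would differentiate $\eqref{a1}_5$ in time and substitute $\eqref{a1}_1$ to obtain
\begin{align*}
\Delta(\Phi-\Phi_s)_t=-\divv(\rho u)\ \text{in}\ \Omega,\qquad \nabla(\Phi-\Phi_s)_t\cdot n=0\ \text{on}\ \partial\Omega,
\end{align*}
whose compatibility $\int_\Omega\divv(\rho u)\,dx=\int_{\partial\Omega}\rho u\cdot n\,dS=0$ is automatic from $u\cdot n=0$. The decisive trick is to recast this second-order scalar problem as a first-order Hodge system: introduce $V\triangleq\nabla(\Phi-\Phi_s)_t+\rho u$, so that $\divv V=0$ and $V\cdot n=0$ on $\partial\Omega$ (both summands being tangential), while $\curl V=\curl(\rho u)$. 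Lemma \ref{l24} then delivers
\begin{align*}
\|V\|_{W^{k+1,q}}\le C\big(\|\divv V\|_{W^{k,q}}+\|\curl V\|_{W^{k,q}}\big)=C\|\curl(\rho u)\|_{W^{k,q}},
\end{align*}
and the identity $\nabla(\Phi-\Phi_s)_t=V-\rho u$ combined with the Helmholtz-type bound $\|\nabla(\Phi-\Phi_s)_t\|_{L^q}\le C\|\rho u\|_{L^q}$ (obtained by testing the PDE against $(\Phi-\Phi_s)_t$ itself after mean normalization, or equivalently by noting that $\nabla(\Phi-\Phi_s)_t$ is the gradient part in the Helmholtz decomposition of $-\rho u$) produces the right-hand side of \eqref{2.43}.

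The main technical nuisance is verifying the compatibility of the two Neumann problems and correctly fixing the additive constants so that Poincar\'e's inequality can close the lowest-order bound; once these preliminaries are dispatched, the $V$-decomposition reduces everything to a direct application of Lemma \ref{l24}, and no genuinely nonlinear estimate is required.
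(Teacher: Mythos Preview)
Your argument is correct and closely parallels the paper's. For \eqref{2.42} you do exactly what the paper does: reduce to the Neumann problem for $\Phi-\Phi_s$ and invoke elliptic regularity. For \eqref{2.43} the paper simply says ``similar to the proof of Lemma~\ref{l28}'', i.e.\ it applies to $(\Phi-\Phi_s)_t$ the same Neumann--Laplace estimate (from \cite[Lemma~4.27]{NS04}) that was used for $F_1$ and $F_2$. Your route via the auxiliary divergence-free tangential field $V=\nabla(\Phi-\Phi_s)_t+\rho u$ and Lemma~\ref{l24} is an explicit, self-contained unpacking of precisely that black-box estimate; it has the merit of staying entirely within the div--curl toolkit already assembled in the paper, whereas the paper's version is terser but relies on an external reference.

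Two small remarks. First, both your argument and the paper's template \eqref{z2.28} actually yield
\[
\|\nabla(\Phi-\Phi_s)_t\|_{W^{k+1,q}}\le C\big(\|\rho u\|_{L^q}+\|\nabla(\rho u)\|_{W^{k,q}}\big),
\]
so the printed right-hand side $\|\rho u\|_{W^{k,q}}$ in \eqref{2.43} is one derivative short and is evidently a typographical slip (for $k=0$ it would give $\|\nabla^2\Phi_t\|_{L^q}\le C\|\rho u\|_{L^q}$, which fails in general). Your final sentence should be read as producing this corrected bound, not the literal one. Second, the parenthetical ``testing against $(\Phi-\Phi_s)_t$'' only delivers the $L^2$ case; for general $1<q<\infty$ you need the Helmholtz-projection interpretation you give as the alternative, so that is the justification to keep.
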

\begin{proof}[Proof]
It follows from $\eqref{a1}_5$, \eqref{a6}, and \eqref{a7} that
\begin{align}\label{2.44}
\begin{cases}
\Delta(\Phi-\Phi_s)=\rho-\rho_s &in ~\Omega,\\
\frac{\partial(\Phi-\Phi_s)}{\partial n}=0 &on~\partial\Omega,
\end{cases}
\end{align}
which together with the classical regularity theory for the Neumann problem of elliptic equation \cite{BB74} implies \eqref{2.42}.
By $\eqref{a1}_1$ and \eqref{2.44}, one has
\begin{align}
\begin{cases}
\Delta(\Phi-\Phi_s)=-{\rm div}\,(\rho u) &in ~\Omega,\\
\frac{\partial(\Phi-\Phi_s)_t}{\partial n}=0 &on~\partial\Omega.
\end{cases}
\end{align}
Similar to the proof of Lemma \ref{l28}, we obtain that, for $1<q<\infty$,
\begin{align}
\|\nabla(\Phi-\Phi_s)_t\|_{L^q}\le C\|\rho u\|_{L^q}.
\end{align}
Moreover, for $k\ge 0$,
\begin{align}
\|\nabla(\Phi-\Phi_s)_t\|_{W^{k+1, q}}\le C\big(\|\rho u\|_{L^q}+\|\rho u\|_{W^{k, q}}\big).
\end{align}
The conclusion follows.
\end{proof}

The following Beale-Kato-Majda type inequality (see \cite[Lemma 2.7]{CJ21}) with respect to the slip boundary condition \eqref{a6} will be used to estimate $\|\nabla u\|_{L^\infty}$.
\begin{lemma}\label{l211}
Let $\Omega$ be a bounded simply connected domain in $\mathbb{R}^3$ with smooth boundary. Assume that $v\in W^{2, q}(\Omega)\ (3<q<\infty)$ satisfies that $v\cdot n=0$
and $\curl v\times n=0$ on $\partial\Omega$, then there exists a constant $C=C(q, \Omega)$ such that
\begin{align*}
\|\nabla v\|_{L^\infty}
\le C\big(\|\divv v\|_{L^\infty}+\|\curl v\|_{L^\infty}\big)\ln\big(e+\|\nabla^2 v\|_{L^q}\big)+C\|\nabla v\|_{L^2}+C.
\end{align*}
\end{lemma}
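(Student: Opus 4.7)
The plan is to reduce the inequality to the classical Beale-Kato-Majda estimate in $\mathbb{R}^3$ via localization together with a reflection that is compatible with the slip boundary conditions, and then to absorb lower-order remainders by using the elliptic theory for vector fields from Lemma \ref{l25} and Poincaré's inequality (Lemma \ref{l21}).

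\textbf{Step 1 (Logarithmic interpolation).} First I would invoke a Brezis-Gallouet-Wainger type inequality: for any scalar $f\in W^{1,q}(\Omega)$ with $q>3$,
\[
\|f\|_{L^\infty} \le C + C\|f\|_{BMO}\log\bigl(e + \|f\|_{W^{1,q}}\bigr) + C\|f\|_{L^2}.
\]
Applied componentwise to $f=\nabla v$, this reduces the proof to establishing the $BMO$ bound
\[
\|\nabla v\|_{BMO} \le C\bigl(\|\divv v\|_{L^\infty} + \|\curl v\|_{L^\infty}\bigr) + C\|\nabla v\|_{L^2} + C,
\]
together with the routine estimate $\|\nabla v\|_{L^q}\le C\|\nabla v\|_{L^2}+C\|\nabla^2 v\|_{L^q}$, so that the logarithm in Step 1 takes the desired form $\log(e+\|\nabla^2 v\|_{L^q})$.

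\textbf{Step 2 (Interior piece via Riesz transforms).} I would cover $\overline{\Omega}$ by a finite family of charts with a subordinate partition of unity $\{\zeta_j\}$: one $\zeta_0$ compactly supported in $\Omega$ and the remaining ones supported in boundary charts that straighten $\partial\Omega$. Extending $\zeta_0 v$ by zero to $\mathbb{R}^3$, one can express $\nabla(\zeta_0 v)$ via Riesz transforms of $\divv(\zeta_0 v)$ and $\curl(\zeta_0 v)$; Calderón-Zygmund theory then yields a $BMO$ bound by $\|\divv v\|_{L^\infty}+\|\curl v\|_{L^\infty}$, while the product-rule commutators $\nabla\zeta_0\cdot v$ contribute only $\|v\|_{L^\infty}$-type terms, which are in turn controlled using Gagliardo-Nirenberg together with Lemma \ref{l25} and Lemma \ref{l21}.

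\textbf{Step 3 (Boundary patches via reflection).} On each boundary chart, I straighten $\partial\Omega$ to a piece of $\{x_3=0\}$ and extend $\zeta_j v$ to the full half-space. The slip conditions $v\cdot n=0$ and $\curl v\times n=0$ translate, up to curvature corrections produced by the flattening diffeomorphism, into conditions compatible with the reflection that makes the normal component odd and the tangential components even. This produces a $W^{2,q}$ vector field on all of $\mathbb{R}^3$, to which the same Riesz-transform argument applies, up to lower-order errors coming from the Jacobian of the diffeomorphism.

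\textbf{Main obstacle.} The most delicate point is the boundary reflection: one must verify that the divergence and curl of the reflected-and-pulled-back vector field remain bounded in $L^\infty$ by $\|\divv v\|_{L^\infty}$, $\|\curl v\|_{L^\infty}$, and genuinely lower-order quantities. The curvature of $\partial\Omega$ together with the chart Jacobians creates commutator terms of the schematic form $A(x)\nabla v+B(x)v$ with $A,B$ smooth; these are not top-order in the $L^\infty$ sense and can be absorbed into $C\|\nabla v\|_{L^2}+C$ via Lemma \ref{l25} and Lemma \ref{l21} without any circularity. Plugging the resulting $BMO$ estimate from Steps 2-3 into the interpolation of Step 1 then produces the stated Beale-Kato-Majda type inequality.
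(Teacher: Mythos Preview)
The paper does not prove this lemma; it simply cites \cite[Lemma 2.7]{CJ21}. So there is no in-paper proof to compare against, and your proposal must stand on its own.

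Your overall strategy (localization, slip-compatible reflection, whole-space singular-integral estimate, logarithmic interpolation) is the standard one and is essentially what the cited reference does. There is, however, a real gap in Step~3. The curvature commutators you identify are of the form $A(x)\nabla v + B(x)v$, and you claim these ``are not top-order in the $L^\infty$ sense and can be absorbed into $C\|\nabla v\|_{L^2}+C$.'' That is false: $A(x)\nabla v$ is exactly top order in $L^\infty$, and neither Lemma~\ref{l25} nor Lemma~\ref{l21} lets you bound $\|A\nabla v\|_{L^\infty}$ by $\|\nabla v\|_{L^2}$. What actually saves the argument is that the boundary charts can be chosen small enough that $\|A\|_{L^\infty}\le\epsilon$, so the error is $\epsilon\|\nabla v\|_{L^\infty}$ and can be absorbed into the left-hand side.

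A second, more structural issue: with your two-stage scheme (prove the $BMO$ bound first, then apply Brezis--Gallouet--Wainger), the $\epsilon\|\nabla v\|_{L^\infty}$ error enters the $BMO$ bound and therefore re-emerges \emph{multiplied by} $\log(e+\|\nabla^2 v\|_{L^q})$ after Step~1; it can then no longer be absorbed for large data. The way this is actually closed in \cite{CJ21} (and in the original BKM argument) is to bypass the separate $BMO$ step and estimate $\|\nabla v\|_{L^\infty}$ directly from the singular-integral representation with a scale cutoff $\delta$: the near-field gives $C(\|\divv v\|_{L^\infty}+\|\curl v\|_{L^\infty})\log(1/\delta)$, the far-field gives $C\delta^{-\alpha}\|\nabla v\|_{L^2}$ or a $\|\nabla^2 v\|_{L^q}$ term, and the chart errors contribute $\epsilon\|\nabla v\|_{L^\infty}$ \emph{without} a logarithm. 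After absorbing the small term, one optimizes $\delta$ to obtain the stated inequality. Reorganizing your proof along these lines removes the circularity.
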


Finally, the following Zlotnik inequality (see \cite[Lemma 1.3]{Z00}) will be used to get the uniform-in-time upper bound of the density.
\begin{lemma}\label{l210}
Suppose the function $y$ satisfy
\begin{align*}
y'(t)=g(y)+b'(t)~on~[0, T], \quad y(0)=y^0,
\end{align*}
with $g\in C(R)$ and $y, b\in W^{1, 1}(0, T)$. If $g(\infty)=-\infty$ and
\begin{align*}
b(t_2)-b(t_1)\le N_0+N_1(t_2-t_1)
\end{align*}
for all $0\le t<t_2\le T$ with some $N_0\ge 0$ and $N_1\ge 0$, then
\begin{align*}
y(t)\le \max\{y^0, \xi_0\}+N_0<\infty~on~[0, T],
\end{align*}
where $\xi_0$ is a constant such that
\begin{align*}
g(\xi)\le -N_1, \quad for\quad \xi\ge \xi_0.
\end{align*}
\end{lemma}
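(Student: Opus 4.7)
The plan is to exploit the sign condition $g(\xi)\le -N_1$ for $\xi\ge \xi_0$ together with the controlled growth of $b$ to trap $y$ below the barrier $\max\{y^0,\xi_0\}+N_0$. The central observation is that whenever $y$ stays above $\xi_0$ throughout some subinterval $[s_1,t]$, one can integrate the ODE and use both hypotheses together:
\begin{equation*}
y(t)-y(s_1)=\int_{s_1}^{t} g(y(\tau))\,d\tau+\bigl(b(t)-b(s_1)\bigr)\le -N_1(t-s_1)+N_0+N_1(t-s_1)=N_0.
\end{equation*}
Thus the total rise of $y$ on any such interval is at most $N_0$, independently of its length, which is exactly the type of a priori bound we need.

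To turn this into the stated inequality I would perform a standard "last-crossing" argument. Since $y\in W^{1,1}(0,T)$ is absolutely continuous, work with its continuous representative. Fix $t\in[0,T]$ and set $E_t:=\{s\in[0,t]:y(s)\le \xi_0\}$. Two cases arise. If $E_t=\emptyset$, then $y(s)>\xi_0$ on all of $[0,t]$, so $g(y(s))\le -N_1$ a.e.\ there and the displayed integration on $[0,t]$ yields $y(t)\le y^0+N_0\le \max\{y^0,\xi_0\}+N_0$. If $E_t\neq\emptyset$, let $t_1:=\sup E_t$; continuity of $y$ gives $y(t_1)\le \xi_0$, and on $(t_1,t]$ we have $y(s)>\xi_0$ so $g(y(s))\le -N_1$ a.e.\ on $[t_1,t]$. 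Applying the same integration on $[t_1,t]$ gives $y(t)-y(t_1)\le N_0$, hence $y(t)\le \xi_0+N_0\le \max\{y^0,\xi_0\}+N_0$. Combining both cases produces the claimed bound for every $t\in[0,T]$.

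The role of $g(\infty)=-\infty$ is only to guarantee, given $N_1$, the existence of a finite threshold $\xi_0$ with $g(\xi)\le -N_1$ for $\xi\ge \xi_0$; after that, the constants line up by design. The genuinely delicate point—indeed the only one—is the reduction to a subinterval on which $g(y)\le -N_1$ holds pointwise a.e., which is precisely what the last-crossing construction accomplishes. Once that reduction is in place, the calibration between the inequality $b(t_2)-b(t_1)\le N_0+N_1(t_2-t_1)$ and the pointwise bound $g(y)\le -N_1$ causes the length-dependent terms to cancel exactly, so no further Gronwall-type argument is needed; everything reduces to a single line of integration.
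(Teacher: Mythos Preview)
Your argument is correct. The last-crossing reduction is exactly the right idea: on any interval where $y\ge\xi_0$, the integral of $g(y)$ contributes at most $-N_1$ times the length, which cancels against the linear part of the bound on $b(t_2)-b(t_1)$ and leaves only $N_0$. The two cases (either $y>\xi_0$ throughout $[0,t]$, or there is a last time $t_1\le t$ with $y(t_1)\le\xi_0$) cover everything, and the edge case $t_1=t$ is harmless since then $y(t)\le\xi_0$ directly.

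Note, however, that the paper does not actually prove this lemma: it is quoted verbatim as the Zlotnik inequality with a reference to \cite[Lemma 1.3]{Z00}, and no argument is supplied. So there is no ``paper's proof'' to compare against here. Your proof is the standard one and would be an appropriate self-contained substitute for the citation.
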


\section{\textit{A priori} estimates}\label{sec3}
In this section, we will establish some necessary \textit{a priori} bounds for smooth solutions to the problem \eqref{a1}--\eqref{a6} in order to extend the local classical solutions guaranteed by Lemma \ref{l22}. Let $T>0$ be a fixed time and $(\rho, u, w, b, \Phi)$ be a smooth solution to \eqref{a1}--\eqref{a6} in $\Omega\times (0, T]$ with smooth initial data $(\rho_0, u_0, w_0, b_0)$ satisfying \eqref{a10} and \eqref{a12}.

\subsection{Lower-order estimates}\label{sec3.1}
Throughout this subsection, we will use $C$ or $C_i\ (i=1, 2, \cdots)$
to denote the generic positive constants, which may depend on $\mu$, $\lambda$, $u_r$, $c_0$, $c_a$, $c_d$, $\gamma$, $a$, $\hat{\rho}$, $\Omega$, $M_1$, $M_2$, $M_3$, and $\tilde{\rho}$. In particular, they are independent of $T$. Sometimes we use $C(\alpha)$ to emphasize the dependence of $C$ on $\alpha$.

Set $\sigma=\sigma(t)\triangleq\min\{1, t\}$, we define
\begin{align}\label{3.5}
&A_1(T)\triangleq\sup_{0\le t\le T}\big[\sigma(t)\big(\|\nabla u\|_{L^2}^2+\|\nabla w\|_{L^2}^2+\|\nabla b\|_{L^2}^2\big)\big],\\ \label{z3.2}
&A_2(T)\triangleq\sup_{0\le t\le T}\big(\|\nabla u\|_{L^2}^2+\|\nabla w\|_{L^2}^2+\|\nabla b\|_{L^2}^2\big).
\end{align}
The main aim of this subsection is to obtain the following key \textit{a priori} estimates, which gives the uniform upper bound of density.
\begin{proposition}\label{p31}
Under the conditions of Theorem \ref{thm1}, there exist positive constants
$\varepsilon$ and $K$ both depending on $\mu$, $\lambda$, $u_r$, $c_0$, $c_a$, $c_d$,
$\gamma$, $a$, $\bar{\rho}$, $\hat{\rho}$, $\Omega$, $M_1$, $M_2$, and $M_3$ such that if $(\rho, u, w, b, \Phi)$ is a smooth solution of \eqref{a1}--\eqref{a6} in
$\Omega\times (0, T]$ satisfying
\begin{align}\label{3.6}
\sup_{\Omega\times[0, T]}\rho\le 2\hat{\rho}, \quad A_1(T)\le 2E_0^\frac13, \quad A_2(\sigma(T))\le 4K,
\end{align}
then the following estimates hold
\begin{align}\label{3.7}
\sup_{\Omega\times[0, T]}\rho\le \frac74\hat{\rho}, \quad A_1(T)\le E_0^\frac13, \quad A_2(\sigma(T))\le 3K,
\end{align}
provided that $E_0\le\varepsilon$.
\end{proposition}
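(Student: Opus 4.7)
The plan is a standard continuation/bootstrap: assuming the a priori bounds \eqref{3.6} hold on $[0,T]$, I derive the strictly sharper bounds \eqref{3.7} under smallness of $E_0$, so that by continuity of the solution norms the hypothesis can be upgraded and extended. The work is organised in four stages, moving from basic energy to time-weighted derivatives to the pointwise bound on $\rho$.

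\textbf{Stage 1 (basic energy).} I would test $\eqref{a1}_2$ with $u$, $\eqref{a1}_3$ with $w$, and $\eqref{a1}_4$ with $b$, then add the results. The Coulomb term $\rho\nabla\Phi$ is split as $\rho\nabla(\Phi-\Phi_s)+\rho\nabla\Phi_s$; using $\eqref{a1}_1$, the Poisson equation \eqref{2.44}, the stationary identity \eqref{a7}, and the boundary condition $\nabla(\Phi-\Phi_s)\cdot n=0$, one obtains the time derivative of $\tfrac12\|\nabla(\Phi-\Phi_s)\|_{L^2}^2$ on the left. Combining with the pressure potential $G(\rho)$ yields the conservation-type identity that controls $\sup_{[0,T]} E(t)$ plus $\int_0^T\bigl(\|\nabla u\|_{L^2}^2+\|\nabla w\|_{L^2}^2+\|w\|_{L^2}^2+\|\nabla b\|_{L^2}^2\bigr)\,dt\le CE_0$, using Lemma~\ref{l21} where needed. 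The steady-state correction $\|(\rho-\rho_s)\nabla\Phi_s\|_{L^2}$ is absorbed via \eqref{2.2}.

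\textbf{Stage 2 (first derivative, time-weighted).} Next I multiply $\eqref{a1}_2$ by $u_t$ and $\eqref{a1}_3$ by $w_t$, multiply $\eqref{a1}_4$ by $\Delta b$ (after taking its curl), and integrate by parts. The slip condition generates surface terms of the form $\int_{\partial\Omega}u\cdot\nabla u\cdot n\,dS$, handled by the identity \eqref{2.5}: $(v\cdot\nabla v)\cdot n=-v\cdot\nabla n\cdot v$, plus the trace Lemma~\ref{l23}. After multiplying by $\sigma(t)$ and using Young's inequality together with the effective-viscous-flux estimates of Lemma~\ref{l28}, I expect to close an inequality of the form
\begin{align*}
\sigma\bigl(\|\nabla u\|_{L^2}^2+\|\nabla w\|_{L^2}^2+\|\nabla b\|_{L^2}^2\bigr)+\int_0^T\sigma\bigl(\|\sqrt\rho\dot u\|_{L^2}^2+\|\sqrt\rho\dot w\|_{L^2}^2+\|b_t\|_{L^2}^2\bigr)\,dt\le CE_0+C\,(\text{small}),
\end{align*}
which, under $E_0\le\varepsilon$, yields $A_1(T)\le E_0^{1/3}$. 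The analogous unweighted estimate on $[0,\sigma(T)]$, where $\sigma\equiv t$, uses the initial regularity assumptions $M_1,M_2,M_3$ to start and produces $A_2(\sigma(T))\le 3K$.

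\textbf{Stage 3 (material derivative).} To promote the density bound, I apply $\partial_t+\operatorname{div}(u\,\cdot)$ to $\eqref{a1}_2$ and $\eqref{a1}_3$, multiply by $\dot u$, $\dot w$ respectively, and integrate. This produces cubic nonlinear terms (in $\nabla u$, $\nabla w$) and new surface integrals; for the latter the key cancellation is again \eqref{2.5}, applied to $\dot u$ and $\dot w$, which require the lower bound $\rho_0\ge\underline\rho$ to control the boundary integral $\int_{\partial\Omega}\sigma\dot u\cdot(\dot w\times n)\,dS$ (this is the point flagged in Remark~1.3). Combined with Lemma~\ref{l28} to convert $\|\nabla F_i\|_{L^p}$ into controllable terms, and Lemma~\ref{l29} to estimate $\nabla(\Phi-\Phi_s)_t$, one obtains a Gronwall bound for $\sigma\|\sqrt\rho\dot u\|_{L^2}^2+\sigma\|\sqrt\rho\dot w\|_{L^2}^2$ and their time integrals.

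\textbf{Stage 4 (pointwise density bound via Zlotnik).} The continuity equation gives $D_t\rho=-\rho\operatorname{div}u$, where $D_t=\partial_t+u\cdot\nabla$. Using \eqref{2.7} I rewrite
\begin{align*}
(2\mu+\lambda)\operatorname{div}u=F_1+(P-P_s)+\tfrac12|b|^2,
\end{align*}
so $D_t\rho$ becomes an affine function of $\rho$ plus a controllable remainder. I then apply Lemma~\ref{l210} along particle trajectories, with $b(t)=-\frac{1}{2\mu+\lambda}\int_0^t(F_1+\tfrac12|b|^2)\,d\tau$. On $[0,\sigma(T)]$ and on $[\sigma(T),T]$ I estimate the oscillation $b(t_2)-b(t_1)$ separately (the piecewise-estimate method of \cite{YZ17}): Stages 2--3 give $\int|F_1|\,d\tau\le N_0+N_1(t_2-t_1)$ with $N_0$ small in $E_0$, while on the second interval $\sigma\equiv 1$ lets the dissipation absorb everything. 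Choosing $E_0\le\varepsilon$ small enough forces $\rho\le \tfrac74\hat\rho$.

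The main obstacle I expect is Stage 3: handling simultaneously the strongly coupled term $2\mu_r\operatorname{curl}w$ in the $u$-equation and $2\mu_r\operatorname{curl}u$ in the $w$-equation generates cross boundary integrals involving both $\dot u$ and $\dot w$ that have no definite sign and that need the $\rho_0\ge\underline\rho$ hypothesis to be tamed; this is precisely where the micropolar coupling departs from the pure MHD argument in \cite{CHS21} and where the piecewise-in-time method and the delicate $L^p$-analysis of $F_1,F_2$ from Lemma~\ref{l28} must be orchestrated carefully. Once these are closed, the Coulomb terms $\|\rho\nabla(\Phi-\Phi_s)\|_{L^p}$ and $\|(\rho-\rho_s)\nabla\Phi_s\|_{L^p}$ are absorbed by Lemma~\ref{l29} and the uniform bound \eqref{2.2}.
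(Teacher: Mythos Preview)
Your four-stage outline matches the paper's architecture (basic energy $\to$ first-order weighted estimates $\to$ material-derivative estimates $\to$ Zlotnik for $\rho$), and Stages~1, 3, 4 are essentially what Lemmas~\ref{l32}, \ref{l36}--\ref{l37}, and \ref{l38} do. The gap is in Stage~2: multiplying by the single global weight $\sigma(t)$ and integrating over $[0,T]$ does \emph{not} close $A_1(T)\le E_0^{1/3}$ uniformly in $T$. After testing the momentum equation with $\dot u$ (the paper uses $\dot u$, not $u_t$), the right-hand side of the resulting differential inequality contains terms of size $C\sigma(t)E_0$: the direct Coulomb contribution gives $\int\sigma\rho\dot u\cdot\nabla(\Phi-\Phi_s)\,dx\le\tfrac14\sigma\|\sqrt\rho\dot u\|_{L^2}^2+C\sigma E_0$, and the cubic term $\sigma\|\nabla u\|_{L^3}^3$ picks up $C\sigma\|P-P_s\|_{L^3}^3\le C\sigma E_0$ through $\divv u=(2\mu+\lambda)^{-1}(F_1+(P-P_s)+\tfrac12|b|^2)$. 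The basic energy only yields the \emph{pointwise} bounds $\|\nabla(\Phi-\Phi_s)\|_{L^2}^2,\ \|\rho-\rho_s\|_{L^2}^2\le CE_0$, not $L^1_t$ bounds, so integrating over $[0,T]$ produces $CE_0T$. This is precisely the obstruction the paper attributes to the Coulomb force: without it one has exponential decay and the global weight suffices, with it the residuals $\rho-\rho_s$, $P-P_s$ are not time-integrable. Your closing assertion that the Coulomb terms ``are absorbed by Lemma~\ref{l29} and \eqref{2.2}'' addresses only pointwise-in-$t$ control and misses this.

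The paper's remedy is the genuine piecewise-in-time method of \cite{YZ17}, implemented in Lemma~\ref{l35}: for each integer $1\le i\le[T]-1$ take $\eta=\sigma_i:=\sigma(\cdot+1-i)$ and integrate \eqref{3.15} only over the window $[i-1,i+1]$. Since $\sigma_i(i-1)=0$, no data is carried between windows, and the $C\sigma_iE_0$ term contributes at most $2CE_0$ per window; each window then gives $\sup_{i\le t\le i+1}(\|\nabla u\|_{L^2}^2+\|\nabla w\|_{L^2}^2+\|\nabla b\|_{L^2}^2)\le CE_0$ with $C$ independent of $i$, whence $A_1(T)\le CE_0\le E_0^{1/3}$. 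You invoke the phrase ``piecewise-estimate method'' only in Stage~4, and there only as the two-piece split $[0,\sigma(T)]\cup[\sigma(T),T]$; that is a different and insufficient decomposition. The same $\sigma_i$-localisation (with weight $\sigma_i^2$) is also required at the material-derivative level (Lemma~\ref{l36}) to obtain the uniform-in-$T$ bound \eqref{3.75} on $\sigma^2\|\sqrt\rho\dot u\|_{L^2}^2$, which then feeds the Zlotnik estimate on $[\sigma(T),T]$.
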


Before proving Proposition \ref{p31}, we show some necessary \textit{a priori} estimates, see Lemmas \ref{l32}--\ref{l38} below.

\begin{lemma}\label{l32}
Let $(\rho, u, w, b, \Phi)$ be a smooth solution of \eqref{a1}--\eqref{a6} in $\Omega\times [0, T]$, then it holds that
\begin{align}\label{3.8}
&\sup_{0\le t\le T}\Big(\frac12\|\sqrt{\rho}u\|_{L^2}^2+\frac12\|\sqrt{\rho}w\|_{L^2}^2
+\frac12\|b\|_{L^2}^2+\frac12\|\nabla(\Phi-\Phi_s)\|_{L^2}^2+\|G(\rho)\|_{L^1}\Big)\nonumber\\
&\quad+\int_0^T\int\Big(\mu|\nabla u|^2+(\mu+\lambda)({\rm div}\,u)^2+c_d|\nabla w|^2+(c_0+c_d)({\rm div}\,w)^2\nonumber\\
&\quad+c_a|{\rm curl}\,w|^2
+\mu_r|\curl u-2w|^2+|\nabla b|^2\Big)dxdt\le E_0,
\end{align}
and
\begin{align}\label{3.9}
\sup_{0\le t\le T}\|\rho-\rho_s\|_{L^2}^2+\int_0^T\big(\|w\|_{H^1}^2+\|\nabla u\|_{L^2}^2+\|\nabla b\|_{L^2}^2\big)dt\le CE_0.
\end{align}
\end{lemma}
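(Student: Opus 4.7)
The plan is the classical multiplier energy estimate applied to the full system \eqref{a1}, carefully tracking the boundary integrals produced by the slip conditions \eqref{a6}. First I would test the momentum equation by $u$, the micro-rotational equation by $w$, and the induction equation by $b$, sum, and integrate, using mass conservation to produce the kinetic/magnetic total $\frac12\frac{d}{dt}(\|\sqrt\rho u\|_{L^2}^2+\|\sqrt\rho w\|_{L^2}^2+\|b\|_{L^2}^2)$. Rewriting the viscous operator in momentum as $(\lambda+2\mu)\nabla\divv u-(\mu+\mu_r)\nabla\times\curl u$ and integrating by parts, the surface terms vanish thanks to $u\cdot n=0$ and $\curl u\times n=0$. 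The algebraic identity $\|\nabla u\|_{L^2}^2=\|\divv u\|_{L^2}^2+\|\curl u\|_{L^2}^2-\int_{\partial\Omega}(u\cdot\nabla n)\cdot u\,dS$, which follows from integration by parts combined with \eqref{2.5}, then converts $(\lambda+2\mu)\|\divv u\|_{L^2}^2+(\mu+\mu_r)\|\curl u\|_{L^2}^2$ into the dissipation written in \eqref{3.8}, up to a curvature boundary term. The same rewriting handles $w$; for $b$ one uses $\divv b=0$ with $\curl b\times n=0$ to get $-\int b\cdot\nabla\times\curl b\,dx=-\|\nabla b\|_{L^2}^2$.

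The pressure and Coulomb terms must be combined by means of \eqref{a7}. Writing $\int\nabla P\cdot u\,dx = -\int(P-P_s)\divv u\,dx+\int\rho_s\nabla\Phi_s\cdot u\,dx$, the first integral, together with mass conservation and the defining ODE for $G$, yields $\frac{d}{dt}\int G(\rho)\,dx$ plus a residue in $u\cdot\nabla\rho_s$. For the Coulomb work I would split $\int\rho u\cdot\nabla\Phi\,dx = \int\rho u\cdot\nabla(\Phi-\Phi_s)\,dx+\int\rho u\cdot\nabla\Phi_s\,dx$: via mass conservation and the time-differentiated Poisson problem $\Delta(\Phi-\Phi_s)_t=-\divv(\rho u)$ with homogeneous Neumann condition, the first piece equals $-\frac12\frac{d}{dt}\|\nabla(\Phi-\Phi_s)\|_{L^2}^2$, while the second cancels the residual $\int\rho_s\nabla\Phi_s\cdot u\,dx$ coming from the pressure split (the remaining $\int(\rho-\rho_s)\nabla\Phi_s\cdot u\,dx$ is reshaped once more through \eqref{a7} to close the algebra). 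For the magnetic coupling, the identity $b\times(u\times b)=|b|^2u-(b\cdot u)b$ together with $u\cdot n=b\cdot n=0$ forces the boundary term in $\int b\cdot\nabla\times(u\times b)\,dx$ to vanish, leaving $\int((\nabla\times b)\times b)\cdot u\,dx+\int b\cdot\nabla\times(u\times b)\,dx=0$, so the Lorentz force work and the induction term exactly cancel.

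The final ingredient is the micropolar coupling: the cross-terms $2\mu_r\int\curl w\cdot u\,dx$ and $2\mu_r\int\curl u\cdot w\,dx$, together with $\mu_r\|\curl u\|_{L^2}^2$ split off from the momentum viscosity and $4\mu_r\|w\|_{L^2}^2$ from the rotational damping, assemble into $\mu_r\|\curl u-2w\|_{L^2}^2$ modulo the surface integral $2\mu_r\int_{\partial\Omega}(w\times u)\cdot n\,dS$ created when swapping curls, which I would control by the trace inequality (Lemma \ref{l23}) and absorb into the good dissipation. Summing all contributions and integrating over $[0,T]$ gives \eqref{3.8}. The estimate \eqref{3.9} is then a corollary: since $\rho\le 2\hat\rho$ and $\rho_s$ is uniformly bounded below and above, $G(\rho)$ is equivalent to $(\rho-\rho_s)^2$, whence $\|\rho-\rho_s\|_{L^2}^2\le C\|G(\rho)\|_{L^1}\le CE_0$; the time integrals of $\|\nabla u\|_{L^2}^2$, $\|\nabla w\|_{L^2}^2$, $\|\nabla b\|_{L^2}^2$ follow directly from the dissipation in \eqref{3.8} together with Lemma \ref{l24}, and $\|w\|_{L^2}^2$ from the generalized Poincar{\'e} inequality (Lemma \ref{l21}) using $w\cdot n|_{\partial\Omega}=0$. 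The main obstacle is the meticulous cancellation or absorption of the numerous surface integrals arising under the slip conditions, most delicately the curvature term $\int_{\partial\Omega}(u\cdot\nabla n)\cdot u\,dS$ and the $(w\times u)\cdot n$ term, both of which must be reabsorbed into the good dissipation via the trace inequality.
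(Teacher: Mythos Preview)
Your overall strategy—test by $(u,w,b,G'(\rho))$, sum, integrate—coincides with the paper's, and your treatment of pressure, Coulomb force, and the Lorentz/induction cancellation is correct and matches \eqref{3.12}. The gap lies in how you propose to handle the two surface integrals. Inequality \eqref{3.8} is an \emph{exact} energy bound with right side $E_0$, not $CE_0$; there is therefore no room to ``absorb'' boundary terms into the dissipation via the trace inequality, whose constant is domain-dependent and not small. The paper never encounters your curvature term $\int_{\partial\Omega}(u\cdot\nabla n)\cdot u\,dS$: it keeps the dissipation in div--curl form throughout (see \eqref{3.12}), and under the slip conditions $u\cdot n=0$, $\curl u\times n=0$ the integrations by parts in $\int u\cdot\nabla\divv u$ and $\int u\cdot\curl\curl u$ leave no surface residue (the latter because $(\curl u\times u)\cdot n=\curl u\cdot(u\times n)=0$ when $\curl u\parallel n$), and likewise for $w$ and $b$. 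Your curvature term is thus an artifact of the conversion to $\|\nabla u\|_{L^2}^2$, which the paper does not perform inside the proof. For the micropolar cross-term, the paper does not estimate any $(w\times u)\cdot n$ residual either: it simply invokes $\int u\cdot\curl w\,dx=\int w\cdot\curl u\,dx$ so that $-\mu_r\|\curl u\|_{L^2}^2-4\mu_r\|w\|_{L^2}^2+2\mu_r\int u\cdot\curl w+2\mu_r\int w\cdot\curl u$ collapses exactly into $-\mu_r\|\curl u-2w\|_{L^2}^2$.

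For \eqref{3.9} your derivation is essentially right; only note that the paper controls $\|w\|_{L^2}$ not through Poincar\'e but through the splitting $w=\tfrac12\curl u-\tfrac12(\curl u-2w)$, giving $\|w\|_{L^2}^2\le C\|\curl u\|_{L^2}^2+C\|\curl u-2w\|_{L^2}^2$, both already present in the dissipation of \eqref{3.8}.
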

\begin{proof}[Proof]
1. Noting that
\begin{align*}
&-\Delta u=-\nabla{\rm div}\,u+\curl{\rm curl}\,u, \quad -\Delta w=-\nabla{\rm div}\,w+\curl\curl w,\\
&\nabla\times(u\times b)=b\cdot\nabla u-u\cdot\nabla b-b{\rm div}\,u,\\
&(\nabla\times b)\times b=b\cdot\nabla b-\frac12\nabla|b|^2,\quad -\nabla\times(\nabla\times b)=\Delta b,
\end{align*}
and \eqref{a7}, we rewrite $\eqref{a1}_2$ and $\eqref{a1}_3$ as
\begin{align}
&\rho u_t+\rho u\cdot\nabla u-(2\mu+\lambda)\nabla{\rm div}\,u+(\mu+\mu_r)\curl\curl u
+\nabla(P-P_s)\nonumber\\
&=\rho\nabla(\Phi-\Phi_s)+(\rho-\rho_s)\nabla\Phi_s+(\nabla\times b)\times b+2\mu_r{\rm curl}\,w,\label{3.10}\\[3pt]
&\rho w_t+\rho u\cdot\nabla w-(2c_d+c_0)\nabla{\rm div}\,w+(c_d+c_a)\curl{\rm curl}\,w+4\mu_rw=2\mu_r{\rm curl}\,u.\label{3.11}
\end{align}
Multiplying \eqref{3.10}, \eqref{3.11}, $\eqref{a1}_4$, $\eqref{a1}_1$ by $u$, $w$, $b$, $G'(\rho)$, respectively, summing up, and integrating the resulting equality over $\Omega$, one obtains from \eqref{a7} that
\begin{align}\label{3.12}
&\frac{d}{dt}\int\Big(\frac12\rho|u|^2+\frac12\rho|w|^2+\frac12|b|^2
+G(\rho)\Big)dx+(2\mu+\lambda)\int({\rm div}\,u)^2dx+\mu\int|{\rm curl}\,u|^2dx\nonumber\\
&\quad+(c_a+c_d)\int|{\rm curl}\,w|^2dx+(c_0+2c_d)\int({\rm div}\,w)^2dx
+\mu_r\int|\curl u-2w|^2dx+\int|\nabla b|^2dx\nonumber\\
&=-\int{\rm div}\,(\rho u)G'(\rho)dx-\int u\cdot\nabla(P-P_s)dx+\int(\rho-\rho_s)u\cdot\nabla\Phi_sdx+\int\rho u\cdot\nabla(\Phi-\Phi_s)dx\nonumber\\
&=\int \rho u(\nabla Q(\rho)-\nabla Q(\rho_s))-\int u\cdot\nabla Pdx+\int \rho u\cdot\nabla\Phi_sdx
-\int{\rm div}\,(\rho u)(\Phi-\Phi_s)dx\nonumber\\
&=-\int\rho u\cdot\rho_s^{-1}\nabla P_sdx+\int \rho u\cdot\nabla\Phi_sdx+\int\Delta(\Phi-\Phi_s)_t(\Phi-\Phi_s)dx\nonumber\\
&=-\frac12\frac{d}{dt}\int|\nabla(\Phi-\Phi_s)|^2dx,
\end{align}
where we have used
\begin{align*}
\int w\cdot{\rm curl}\,udx=\int u\cdot{\rm curl}\,wdx,\
G'(\rho)=Q(\rho)-Q(\rho_s),\ Q'(\rho)=P'(\rho)/\rho.
\end{align*}
Thus, integrating \eqref{3.12} over $[0, T]$ yields \eqref{3.8}.

2. It is easy to check that there exists a positive constant $C$ which may depend only on $a$, $\gamma$, and $\hat{\rho}$ such that
\begin{align*}
&|P-P_s|\le C|\rho-\rho_s|, \quad C^{-1}(\rho-\rho_s)^2\le G(\rho)\le C(\rho-\rho_s)^2,\\
&\|w\|_{L^2}^2=\frac14\|{\rm curl}\,u-2w-{\rm curl}\,u\|_{L^2}^2
\leq C\|\curl u-2w\|_{L^2}^2+C\|\curl u\|_{L^2}^2,
\end{align*}
which together with \eqref{2.8}, \eqref{2.9}, and \eqref{3.8} gives \eqref{3.9}.
\end{proof}

\begin{lemma}\label{l33}
Let $(\rho, u, w, b, \Phi)$ be a smooth solution
of \eqref{a1}--\eqref{a6} satisfying \eqref{3.6}. Assume that
$\eta(t)\ge 0$ is a piecewise differentiable function, then it holds that
\begin{align}\label{3.15}
&\frac{d}{dt}\Big(\frac{2\mu+\lambda}{2}\eta(t)\|{\rm div}\,u\|_{L^2}^2
+\frac{\mu}{2}\eta(t)\|{\rm curl}\,u\|_{L^2}^2+2\mu_r\eta(t)\Big\|\frac12{\rm curl}\,u-w\Big\|_{L^2}^2
+\frac{2c_d+c_0}{2}\eta(t)\|{\rm div}\,w\|_{L^2}^2\Big)\nonumber\\
&\quad+\frac12\eta(t)\big(\|\sqrt{\rho}\dot{u}\|_{L^2}^2
+\|\sqrt{\rho}\dot{w}\|_{L^2}^2\big)
+\eta(t)\|{\rm curl}^2b\|_{L^2}^2+\eta(t)\|b_t\|_{L^2}^2\nonumber\\
&\le \frac{d}{dt}\int\eta(t)(P-P_s)\divv udx
+\frac{d}{dt}\int\eta(t)(\rho-\rho_s)u\cdot\nabla\Phi_sdx\nonumber\\
&\quad+\frac{d}{dt}\int\eta(t)\Big(b\cdot\nabla b-\frac12\nabla|b|^2\Big)\cdot udx+\Big(CE_0^\frac12+\delta\Big)\eta(t)\|{\rm curl}^2b\|_{L^2}^2
+\delta\eta(t)\|b_t\|_{L^2}^2\nonumber\\
&\quad+C\big(\eta(t)+|\eta'(t)|\big)\|\nabla u\|_{L^2}^2+C|\eta'(t)|\big(\|w\|_{L^2}^2+\|\nabla w\|_{L^2}^2+\|\nabla b\|_{L^2}^2+\|\nabla u\|_{L^2}^4\big)\nonumber\\
&\quad+C\eta(t)\big(\|\nabla u\|_{L^3}^3+\|\nabla w\|_{L^3}^3+\|\nabla u\|_{L^2}^4+\|\nabla b\|_{L^2}^4+\|w\|_{H^1}^4+\|\nabla u\|_{L^2}^6\big)\nonumber\\
&\quad+C\eta(t)\big(\|\nabla b\|_{L^2}^6+\|\nabla w\|_{L^2}^6+\|\nabla w\|_{L^2}^2+\|\nabla b\|_{L^2}^2\big)+C\big(\eta(t)+|\eta'(t)|\big)E_0.
\end{align}
Moreover, there exists a positive constant $\varepsilon_1$ such that
\begin{align}\label{3.16}
\sup_{0\le t\le T}\|\nabla b\|_{L^2}^2+\int_0^T\big(\|{\rm curl}^2b\|_{L^2}^2+\|b_t\|_{L^2}^2\big)dt\le C\|\nabla b_0\|_{L^2}^2
\end{align}
provided that $E_0\leq\varepsilon_1$.
\end{lemma}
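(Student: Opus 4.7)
The strategy is to generate the dissipation $\eta(t)(\|\sqrt\rho\dot u\|_{L^2}^2+\|\sqrt\rho\dot w\|_{L^2}^2+\|\curl^2 b\|_{L^2}^2+\|b_t\|_{L^2}^2)$ on the left by testing the rewritten momentum equation \eqref{3.10} with $\eta(t)\dot u$, the angular momentum equation \eqref{3.11} with $\eta(t)\dot w$, and the induction equation $\eqref{a1}_4$ separately with $b_t$ and with $\curl^2 b$ (using $\divv b=0$ so $\Delta b=-\curl^2 b$), then adding the results. Integration by parts turns the viscous contributions into time derivatives: from $\int\nabla\divv u\cdot\dot u\,dx=-\int\divv u\,\divv\dot u\,dx+\int_{\partial\Omega}\divv u\,(\dot u\cdot n)\,dS$ and the identity $\divv\dot u=(\divv u)_t+u\cdot\nabla\divv u+\partial_iu^j\partial_ju^i$, one recovers $\frac{d}{dt}(\tfrac12\eta\|\divv u\|_{L^2}^2)$ up to $\eta'$-terms, cubic nonlinearities in $\nabla u$, and a surface integral controlled via the boundary identity \eqref{2.5}. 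The $\curl\curl$ term yields $\frac{d}{dt}(\tfrac12\eta\|\curl u\|_{L^2}^2)$ with no boundary contribution thanks to \eqref{a6}, and the $w$-equation is handled analogously. The $\mu_r$-coupling coming from $2\mu_r\curl w$, $2\mu_r\curl u$, and $4\mu_r w$ is organized by completing the square, producing $\frac{d}{dt}(2\mu_r\eta\|\tfrac12\curl u-w\|_{L^2}^2)$.

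The three remaining $\frac{d}{dt}$ quantities on the right of \eqref{3.15} are generated by the pressure, Coulomb, and Lorentz terms: $-\int\nabla(P-P_s)\cdot\dot u\,dx=\frac{d}{dt}\int(P-P_s)\divv u\,dx+\text{l.o.t.}$, where the $P_t$-remainder is replaced using continuity; an analogous rearrangement of $\int(\rho\nabla(\Phi-\Phi_s)+(\rho-\rho_s)\nabla\Phi_s)\cdot\dot u\,dx$ combined with Lemma \ref{l29}; and the standard rewrite $(\nabla\times b)\times b=b\cdot\nabla b-\tfrac12\nabla|b|^2$. All boundary integrals produced by $\dot u\cdot n=-u\cdot\nabla n\cdot u$ (and its $w$-analogue) are then estimated via the trace theorem (Lemma \ref{l23}), \eqref{z2.5}, and Lemma \ref{l28}, producing the quartic and sextic nonlinear terms on the right of \eqref{3.15}. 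For the magnetic part, writing $b_t+\curl^2 b=\nabla\times(u\times b)$ and testing separately with $b_t$ and $\curl^2 b$, using $\int b_t\cdot\curl^2 b\,dx=\tfrac12\frac{d}{dt}\|\curl b\|_{L^2}^2$ (no boundary term, by \eqref{a6}) and Cauchy--Schwarz, one gets $\|b_t\|_{L^2}^2+\|\curl^2 b\|_{L^2}^2+\frac{d}{dt}\|\curl b\|_{L^2}^2\lesssim\|\nabla\times(u\times b)\|_{L^2}^2$, after which the expansion $\nabla\times(u\times b)=-b\,\divv u+b\cdot\nabla u-u\cdot\nabla b$ is controlled via Sobolev embedding and Lemma \ref{l28}; a factor of the form $\|b\|_{L^\infty}$ is extracted as $CE_0^{1/2}$ via \eqref{3.6}, matching the $(CE_0^{1/2}+\delta)\eta\|\curl^2 b\|_{L^2}^2+\delta\eta\|b_t\|_{L^2}^2$ terms appearing on the right of \eqref{3.15}.

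The main technical obstacle lies in the bookkeeping of the many surface integrals forced by the slip boundary conditions, together with carefully tracking the long list of mixed-norm nonlinearities via Lemma \ref{l28} and Gagliardo--Nirenberg, rather than in any single sharp cancellation. Once \eqref{3.15} is established, \eqref{3.16} follows by setting $\eta\equiv 1$ and integrating on $[0,T]$: the three $\frac{d}{dt}$ terms on the right are absorbed together with $\|\curl b\|_{L^2}^2\sim\|\nabla b\|_{L^2}^2$ (Lemma \ref{l24}) on the left by Young's inequality and smallness of $E_0$; the remaining right-hand terms are integrable in time thanks to the basic energy identity \eqref{3.8}--\eqref{3.9}, with the quartic/sextic ones dealt with by pulling out one supremum factor of $\|\nabla u\|_{L^2}^2$, $\|\nabla w\|_{L^2}^2$ or $\|\nabla b\|_{L^2}^2$, each bounded by $A_2(\sigma(T))$ or $\sigma^{-1}A_1(T)$ and hence $O(1)$ under \eqref{3.6}. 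Choosing $\varepsilon_1$ small enough that $CE_0^{1/2}<1/2$ ensures that the $\|\curl^2 b\|_{L^2}^2$-term on the right does not disrupt the absorption, yielding \eqref{3.16}.
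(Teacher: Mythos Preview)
Your outline for \eqref{3.15} tracks the paper's proof closely: testing \eqref{3.10} and \eqref{3.11} against $\eta\dot u$ and $\eta\dot w$, integrating by parts the viscous pieces, completing the square in the $\mu_r$-coupling, and extracting the three $\frac{d}{dt}$ contributions from the pressure, Coulomb, and Lorentz terms is exactly what the paper does (the $I_1$--$I_{11}$ decomposition), with the boundary integrals handled via \eqref{2.5}, Lemma~\ref{l23}, and the effective-flux estimates in Lemma~\ref{l28}. One small inaccuracy: the factor $CE_0^{1/2}$ in front of $\|\curl^2 b\|_{L^2}^2$ does \emph{not} arise from the induction equation as you suggest; in the paper it comes from the boundary term in the $\nabla\divv u$ contribution ($I_2$), through the $b\cdot\nabla b$ piece of $\nabla F_1$, while the magnetic inequality \eqref{3.37} itself carries no such factor.

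Your derivation of \eqref{3.16}, however, differs from the paper and would not give the stated bound. The paper does \emph{not} deduce \eqref{3.16} from the full inequality \eqref{3.15}; instead it isolates the magnetic sub-estimate \eqref{3.37},
\[
\frac{d}{dt}\|\curl b\|_{L^2}^2+\tfrac12\|\curl^2 b\|_{L^2}^2+\|b_t\|_{L^2}^2\le C\big(\|\nabla u\|_{L^2}^2+\|\nabla u\|_{L^2}^4\big)\|\nabla b\|_{L^2}^2,
\]
sets $\eta\equiv1$, integrates, pulls out $\sup_t\|\nabla b\|_{L^2}^2$, and uses $\int_0^T(\|\nabla u\|_{L^2}^2+\|\nabla u\|_{L^2}^4)\,dt\le C(K)E_0$ (from \eqref{3.8} and \eqref{3.6}) to absorb. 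This is a closed estimate for $b$ alone and yields exactly $C\|\nabla b_0\|_{L^2}^2$. Your route via \eqref{3.15} with $\eta\equiv 1$ would instead pick up initial contributions $\|\nabla u_0\|_{L^2}^2+\|\nabla w_0\|_{L^2}^2$ from the left-hand time derivative, and would force you to control $\int_0^T(\|\nabla u\|_{L^3}^3+\|\nabla w\|_{L^3}^3)\,dt$ over the \emph{full} interval $[0,T]$---a nontrivial step that in the paper is deferred to Lemmas~\ref{l34}--\ref{l35} via \eqref{3.66}--\eqref{3.67}. Even if carried out, the resulting bound would be $\le C$ (depending on $M_1,M_2,M_3$), not $\le C\|\nabla b_0\|_{L^2}^2$; in particular it fails when $\nabla b_0\equiv 0$. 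The fix is simple: prove \eqref{3.16} directly from the magnetic inequality \eqref{3.37} before combining with the mechanical part.
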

\begin{proof}[Proof]
1. Multiplying $\eqref{3.10}$ by $\eta(t)\dot{u}$ and $\eqref{3.11}$ by $\eta(t)\dot{w}$, respectively, summing up, and integrating the resulting equality over $\Omega$ leads to
\begin{align}\label{3.17}
\int\eta(t)\big(\rho|\dot{u}|^2+\rho|\dot{w}|^2\big)dx
&=-\int\eta(t)\dot{u}\cdot\nabla(P-P_s)dx+(2\mu+\lambda)\int\eta(t)\nabla{\rm div}\,u\cdot\dot{u}dx\nonumber\\
&\quad-(\mu+\mu_r)\int\eta(t)\curl{\rm curl}\,u\cdot\dot{u}dx+2\mu_r\int\eta(t)\curl w\cdot\dot{u}dx\nonumber\\
&\quad+(2c_d+c_0)\int\eta(t)\nabla{\rm div}\,w\cdot\dot{w}dx-(c_d+c_a)\int\eta(t)\curl\curl w\cdot\dot{w}dx\nonumber\\
&\quad+2\mu_r\int\eta(t){\rm curl}\,u\cdot\dot{w}dx-4\mu_r\int\eta(t)\dot{w}\cdot wdx\nonumber\\
&\quad+\int\eta(t)\rho\dot{u}\cdot\nabla(\Phi-\Phi_s)dx
+\int\eta(t)(\rho-\rho_s)\dot{u}\cdot\nabla\Phi_sdx\nonumber\\
&\quad+\int\eta(t)\Big(b\cdot\nabla b-\frac12\nabla|b|^2\Big)\cdot\dot{u}dx\triangleq\sum_{i=1}^{11}I_i.
\end{align}
By $\eqref{a1}_1$ and $P=a\rho^\gamma$, we have
\begin{align}\label{z3.13}
(P-P_s)_t+\divv(Pu)+(\gamma-1)P\divv u=0,
\end{align}
which together with integration by parts and \eqref{3.6} shows that
\begin{align}\label{3.20}
I_1&=-\int\eta(t)u_t\cdot\nabla(P-P_s)dx-\int\eta(t)u\cdot\nabla u\cdot\nabla (P-P_s)dx\nonumber\\
&=\frac{d}{dt}\int\eta(t)(P-P_s){\rm div}\,udx-\eta'(t)\int(P-P_s){\rm div}\,udx-\int\eta(t){\rm div}\,u(P-P_s)_tdx\nonumber\\
&\quad-\int\eta(t)u\cdot\nabla u\cdot\nabla (P-P_s)dx\nonumber\\
&=\frac{d}{dt}\int\eta(t)(P-P_s){\rm div}\,udx-\eta'(t)\int(P-P_s){\rm div}\,udx+\int\eta(t){\rm div}\,u{\rm div}\,(Pu)dx\nonumber\\
&\quad+(\gamma-1)\int\eta(t)P({\rm div}\,u)^2dx-\int\eta(t)u\cdot\nabla u\cdot\nabla (P-P_s)dx\nonumber\\
&=\frac{d}{dt}\int\eta(t)(P-P_s){\rm div}\,udx-\eta'(t)\int(P-P_s){\rm div}\,udx
+\int\eta(t)P\nabla u:\nabla udx\nonumber\\
&\quad+(\gamma-1)\int\eta(t)P({\rm div}\,u)^2dx-\int_{\partial\Omega}\eta(t)Pu\cdot\nabla u\cdot ndS
+\int\eta(t)u\cdot\nabla u\cdot\nabla P_sdx\nonumber\\
&\le \frac{d}{dt}\int\eta(t)(P-P_s){\rm div}\,udx+C\eta(t)\|\nabla u\|_{L^2}^2
+|\eta'(t)|\|P-P_s\|_{L^2}\|\nabla u\|_{L^2}\nonumber\\
&\le \frac{d}{dt}\int\eta(t)(P-P_s){\rm div}\,udx+C(\eta(t)+|\eta'(t)|)\|\nabla u\|_{L^2}^2+C|\eta'(t)|\|P-P_s\|_{L^2}^2\nonumber\\
&\le \frac{d}{dt}\int\eta(t)(P-P_s){\rm div}\,udx+C(\eta(t)+|\eta'(t)|)\|\nabla u\|_{L^2}^2+C|\eta'(t)|E_0,
\end{align}
where we have used
\begin{align*}
&\int\eta(t){\rm div}\,(Pu){\rm div}\,udx \\
&=-\int\eta(t) Pu^j\partial_{ji}u^idx
=\int\eta(t)\partial_i(Pu^j)\partial_ju^idx\nonumber\\
&=-\int_{\partial\Omega}\eta(t)Pu\cdot\nabla u\cdot ndS
+\int\eta(t)\partial_iPu^j\partial_ju^idx+\int\eta(t)P\partial_iu^j\partial_ju^idx\nonumber\\
&=-\int_{\partial\Omega}\eta(t)Pu\cdot\nabla u\cdot ndS+\int\eta(t) u\cdot\nabla u\cdot\nabla Pdx+\int\eta(t) P\nabla u:\nabla udx,
\end{align*}
and
\begin{align}\label{z3.21}
-\int_{\partial\Omega}\eta(t)Pu\cdot\nabla u\cdot ndS&=\int_{\partial\Omega}\eta(t)Pu\cdot\nabla n\cdot udS
\le C\eta(t)\int_{\partial\Omega}|u|^2dS\le C\eta(t)\|\nabla u\|_{L^2}^2,
\end{align}
due to \eqref{2.5}, \eqref{3.6}, Lemma \ref{l23}, and \eqref{z2.5}. Here and in what follows, we use the Einstein convention that the repeated indices denote the summation.

2. By \eqref{a6} and \eqref{2.5}, we derive from integration by parts that
\begin{align*}
I_2&=(2\mu+\lambda)\int_{\partial\Omega}\eta(t){\rm div}\,u(\dot{u}\cdot n)dS-(2\mu+\lambda)\int\eta(t){\rm div}\,u{\rm div}\,\dot{u}dx\nonumber\\
&=(2\mu+\lambda)\int_{\partial\Omega}\eta(t){\rm div}\,u(u\cdot\nabla u\cdot n)dS-\frac{2\mu+\lambda}{2}\frac{d}{dt}\int\eta(t)({\rm div}\,u)^2dx\nonumber\\
&\quad-(2\mu+\lambda)\int\eta(t){\rm div}\,u{\rm div}\,(u\cdot\nabla u)dx+\frac{2\mu+\lambda}{2}\eta'(t)\int({\rm div}u)^2dx\nonumber\\
&=-\frac{2\mu+\lambda}{2}\frac{d}{dt}\int\eta(t)(\divv u)^2dx
-(2\mu+\lambda)\int_{\partial\Omega}\eta(t)\divv u(u\cdot\nabla n\cdot u)dS\nonumber\\
&\quad-(2\mu+\lambda)\int\eta(t){\rm div}\,u\partial_i(u^j\partial_ju^i)dx+\frac{2\mu+\lambda}{2}\eta'(t)\int({\rm div}\,u)^2dx\nonumber\\
&=-\frac{2\mu+\lambda}{2}\frac{d}{dt}\int\eta(t)({\rm div}\,u)^2dx
-(2\mu+\lambda)\int_{\partial\Omega}\eta(t){\rm div}\,u(u\cdot\nabla n\cdot u)dS\nonumber\\
&\quad-(2\mu+\lambda)\int\eta(t){\rm div}\,u\nabla u:\nabla udx-(2\mu+\lambda)\int\eta(t){\rm div}\,uu^j\partial_{ji}u^idx+\frac{2\mu+\lambda}{2}\eta'(t)\int({\rm div}\,u)^2dx\nonumber\\
&=-\frac{2\mu+\lambda}{2}\frac{d}{dt}\int\eta(t)({\rm div}\,u)^2dx
-(2\mu+\lambda)\int_{\partial\Omega}\eta(t){\rm div}\,u(u\cdot\nabla n\cdot u)dS\nonumber\\
&\quad-(2\mu+\lambda)\int\eta(t){\rm div}\,u\nabla u:\nabla udx+\frac{2\mu+\lambda}{2}\int\eta(t)({\rm div}\,u)^3dx
+\frac{2\mu+\lambda}{2}\eta'(t)\int({\rm div}\,u)^2dx\nonumber\\
&\le -\frac{2\mu+\lambda}{2}\frac{d}{dt}\int\eta(t)({\rm div}\,u)^2dx
+\frac14\eta(t)\|\sqrt{\rho}\dot{u}\|_{L^2}^2+CE_0^\frac12\eta(t)\|{\rm curl}^2b\|_{L^2}^2\nonumber\\
&\quad+C|\eta'(t)|\|\nabla u\|_{L^2}^2+C\eta(t)\big(\|\nabla u\|_{L^3}^3+\|\nabla u\|_{L^2}^4+\|\nabla b\|_{L^2}^4+\|\nabla u\|_{L^2}^2\big)\nonumber\\
&\quad+C\eta(t)\big(\|\nabla w\|_{L^2}^2+\|\nabla b\|_{L^2}^2\big),
\end{align*}
where we have used
\begin{align*}
\int\eta(t){\rm div}\,uu^j\partial_{ji}u^idx
&=-\int\eta(t)\partial_j(\partial_ku^ku^j)\partial_iu^idx\nonumber\\
&=-\int\eta(t)\partial_{jk}u^ku^j\partial_iu^idx
-\int\eta(t)\divv u\partial_ju^j\partial_iu^idx\nonumber\\
&=-\int\eta(t)\partial_{ji}u^iu^j\divv udx
-\int\eta(t)({\rm div}\,u)^3dx,
\end{align*}
and
\begin{align}\label{3.24}
&\Big|-(2\mu+\lambda)\int_{\partial\Omega}{\rm div}\,u(u\cdot\nabla n\cdot u)dS\Big|\nonumber\\
&=\Big|-\int_{\partial\Omega}\Big(F_1+(P-P_s)+\frac12|b|^2\Big)(u\cdot\nabla n\cdot u)dS\Big|\nonumber\\
&\le \Big|\int_{\partial\Omega}F_1(u\cdot\nabla n\cdot u)dS\Big|+\Big|\int_{\partial\Omega}(P-P_s)(u\cdot\nabla n\cdot u)dS\Big|
+\frac12\Big|\int_{\partial\Omega}|b|^2(u\cdot\nabla n\cdot u)dS\Big|\nonumber\\
&\le C\int_{\partial\Omega}|F_1||u|^2dS+C\int_{\partial\Omega}|u|^2dS
+C\int_{\partial\Omega}|b|^2|u|^2dS
\nonumber\\
&\le C\big(\|\nabla F_1\|_{L^2}\|u\|_{L^4}^2+\|F_1\|_{L^6}\|u\|_{L^3}\|\nabla u\|_{L^2}
+\|F_1\|_{L^2}\|u\|_{L^4}^2\big)+C\|\nabla u\|_{L^2}^2\nonumber\\
&\quad+C\big(\|\nabla b\|_{L^2}\|b\|_{L^6}\|u\|_{L^6}^2+\|b\|_{L^6}^2\|u\|_{L^6}\|\nabla u\|_{L^2}+\|b\|_{L^4}^2\|u\|_{L^4}^2\big)\nonumber\\
&\le C\|F_1\|_{H^1}\|u\|_{H^1}^2+C\|\nabla u\|_{L^2}^2+C\|\nabla u\|_{L^2}^4+C\|\nabla b\|_{L^2}^4\nonumber\\
&\le \frac{1}{2}\|\sqrt{\rho}\dot{u}\|_{L^2}^2+\delta\|{\rm curl}^2b\|_{L^2}^2
+C\big(\|\nabla u\|_{L^2}^4+\|\nabla b\|_{L^2}^4+\|\nabla u\|_{L^2}^2+\|\nabla w\|_{L^2}^2+\|\nabla b\|_{L^2}^2\big),
\end{align}
due to Lemma \ref{l23}, \eqref{2.3}, Lemma \ref{l28}, and
\begin{align*}
\|b\cdot\nabla b\|_{L^2}
&\le C\|b\|_{L^6}\|\nabla b\|_{L^3} \\
&\le C\|b\|_{H^1}\|\nabla b\|_{L^2}^\frac12\|\nabla b\|_{L^6}^\frac12 \\
& \le C\|\nabla b\|_{L^2}^\frac32\|\nabla b\|_{L^6}^\frac12\ \ (\text{by Lemma } \ref{l21}) \\
&\le C\|\nabla b\|_{L^2}^\frac32\big(\|{\rm curl}^2b\|_{L^2}+\|\nabla b\|_{L^2}\big)^\frac12\ \ (\text{by Lemma } \ref{l25}) \\
&\le C\|\nabla b\|_{L^2}^\frac32\|{\rm curl}^2b\|_{L^2}^\frac12+\|\nabla b\|_{L^2}^2.
\end{align*}

3. Noting that
\begin{align*}
\int{\rm curl}\,u\cdot(u^i\partial_i{\rm curl}\,u)dx=-\int{\rm curl}\,u\cdot(u^i\partial_i{\rm curl}\,u)dx-\int|{\rm curl}\,u|^2{\rm div}\,udx.
\end{align*}
Thus, we have
\begin{align*}
\int{\rm curl}\,u\cdot(u^i\partial_i{\rm curl}\,u)dx
=-\frac12\int|{\rm curl}\,u|^2{\rm div}\,udx.
\end{align*}
This implies that
\begin{align*}
&(\mu+\mu_r)\int{\rm curl}\,u\cdot{\rm curl}\,(u\cdot\nabla u)dx\nonumber\\
&=(\mu+\mu_r)\int{\rm curl}\,u\cdot{\rm curl}\,(u^i\partial_i u)dx\nonumber\\
&=(\mu+\mu_r)\int{\rm curl}\,u\cdot\big(u^i{\rm curl}\,\partial_iu+\nabla u^i\times\partial_iu\big)dx\nonumber\\
&=-(\mu+\mu_r)\int\partial_i({\rm curl}\,u u^i){\rm curl}\,udx+(\mu+\mu_r)\int(\nabla u^i\times\partial_iu)\cdot{\rm curl}\,udx\nonumber\\
&=-\frac{\mu+\mu_r}{2}\int|{\rm curl}\,u|^2{\rm div}\,udx+(\mu+\mu_r)\int(\nabla u^i\times\partial_iu)\cdot{\rm curl}\,udx,
\end{align*}
which combined with \eqref{a6} and integration by parts leads to
\begin{align}\label{3.26}
I_3&=-(\mu+\mu_r)\int\eta(t){\rm curl}\,u\cdot{\rm curl}\,\dot{u}dx\nonumber\\
&=-\frac{\mu+\mu_r}{2}\frac{d}{dt}\int\eta(t)|{\rm curl}\,u|^2dx
+\frac{\mu+\mu_r}{2}\eta'(t)\int|{\rm curl}\,u|^2dx\nonumber\\
&\quad-(\mu+\mu_r)\int\eta(t){\rm curl}\,u\cdot{\rm curl}\,(u\cdot\nabla u)dx\nonumber\\
&=-\frac{\mu+\mu_r}{2}\frac{d}{dt}\int\eta(t)|{\rm curl}\,u|^2dx
+\frac{\mu+\mu_r}{2}\eta'(t)\int|{\rm curl}\,u|^2dx\nonumber\\
&\quad-(\mu+\mu_r)\int\eta(t)(\nabla u^i\times\partial_iu)\cdot{\rm curl}\,udx
+\frac{\mu+\mu_r}{2}\int\eta(t)|{\rm curl}\,u|^2{\rm div}\,udx\nonumber\\
&\le -\frac{\mu+\mu_r}{2}\frac{d}{dt}\int\eta(t)|{\rm curl}\,u|^2dx
+C|\eta'(t)|\|\nabla u\|_{L^2}^2+C\eta(t)\|\nabla u\|_{L^3}^3.
\end{align}

4. Integration by parts together with H{\"o}lder's inequality and \eqref{2.3} gives that
\begin{align*}
I_4+I_7&=2\mu_r\int\eta(t)\curl u_t\cdot wdx+2\mu_r\int\eta(t)u\cdot\nabla u\cdot {\rm curl}\,wdx\nonumber\\
&\quad+2\mu_r\int\eta(t)\curl u\cdot w_tdx+2\mu_r\int\eta(t)u\cdot\nabla w\cdot {\rm curl}\,udx\nonumber\\
&=2\mu_r\frac{d}{dt}\int\eta(t)\curl u\cdot wdx-2\mu_r\eta'(t)\int\curl u\cdot wdx
\nonumber\\
&\quad+2\mu_r\int\eta(t)u\cdot\nabla u\cdot{\rm curl}\,wdx+2\mu_r\int\eta(t)u\cdot\nabla w\cdot {\rm curl}\,udx\nonumber\\
&\leq 2\mu_r\frac{d}{dt}\int\eta(t)\curl u\cdot wdx+C|\eta'(t)|\|{\rm curl}\,u\|_{L^2}\|w\|_{L^2}\nonumber\\
&\quad+C\eta(t)\|u\|_{L^6}\|\nabla u\|_{L^3}\|{\rm curl}\,w\|_{L^2}+C\eta(t)\|u\|_{L^6}\|\nabla w\|_{L^3}\|{\rm curl}\,u\|_{L^2}\nonumber\\
&\le 2\mu_r\frac{d}{dt}\int\eta(t)\curl u\cdot wdx+C|\eta'(t)|\big(\|\nabla u\|_{L^2}^2+\|w\|_{L^2}^2\big)\nonumber\\
&\quad+C\eta(t)\big(\|\nabla w\|_{L^2}^2+\|\nabla u\|_{L^2}^2+\|\nabla u\|_{L^2}^6+\|\nabla w\|_{L^2}^6+\|\nabla u\|_{L^3}^3
+\|\nabla w\|_{L^3}^3\big).
\end{align*}

5. By \eqref{a6} and \eqref{2.5}, one has
\begin{align*}
I_5&=(2c_d+c_0)\int_{\partial\Omega}\eta(t){\rm div}\,w(\dot{w}\cdot n)dS-(2c_d+c_0)\int\eta(t)\divv w\divv\dot{w}dx\nonumber\\
&=-(2c_d+c_0)\int_{\partial\Omega}\eta(t){\rm div}\,w(u\cdot\nabla n\cdot w)dS
-\frac{2c_d+c_0}{2}\frac{d}{dt}\int\eta(t)({\rm div}\,w)^2dx\nonumber\\
&\quad+\frac{2c_d+c_0}{2}\eta'(t)\int({\rm div}\,w)^2dx-(2c_d+c_0)\int\eta(t){\rm div}\,w\partial_i(u^j\partial_jw^i)dx\nonumber\\
&=-(2c_d+c_0)\int_{\partial\Omega}\eta(t){\rm div}\,w(u\cdot\nabla n\cdot w)dS
-\frac{2c_d+c_0}{2}\frac{d}{dt}\int\eta(t)({\rm div}\,w)^2dx\nonumber\\
&\quad+\frac{2c_d+c_0}{2}\eta'(t)\int({\rm div}\,w)^2dx-(2c_d+c_0)\int\eta(t){\rm div}\,w\partial_iu^j\partial_jw^idx\nonumber\\
&\quad+\frac{2c_d+c_0}{2}\int\eta(t)({\rm div}\,w)^2{\rm div}\,udx\nonumber\\
&\le -\frac{2c_d+c_0}{2}\frac{d}{dt}\int\eta(t)({\rm div}\,w)^2dx+C|\eta'(t)|\|\nabla w\|_{L^2}^2
+C\eta(t)\|\nabla u\|_{L^3}^3\nonumber\\
&\quad+C\eta(t)\|\nabla w\|_{L^3}^3+\frac14\eta(t)\|\sqrt{\rho}\dot{w}\|_{L^2}^2+C\eta(t)
\big(\|\nabla u\|_{L^2}^2+\|w\|_{H^1}^2+\|w\|_{H^1}^4\big),
\end{align*}
where we have used
\begin{align}\label{z3.19}
&\Big|-(2c_d+c_0)\int_{\partial\Omega}{\rm div}\,w(u\cdot\nabla n\cdot w)dS\Big|\nonumber\\
&=\Big|-\int_{\partial\Omega}F_2(u\cdot\nabla n\cdot w)dS\Big| \\
& \le C\int_{\partial\Omega}|F_2||u||w|dS\nonumber\\
& \le C\||F_2||u||w|\|_{W^{1,1}}\\
&\le C\big(\|F_2\|_{L^2}\|u\|_{L^4}\|w\|_{L^4}+\|\nabla F_1\|_{L^2}\|u\|_{L^4}\|w\|_{L^4}+\|F_2\|_{L^6}\|\nabla u\|_{L^2}\|w\|_{L^3}+\|F_2\|_{L^6}\|u\|_{L^3}\|\nabla w\|_{L^2}\big)\nonumber\\
&\le C\|F_2\|_{H^1}\|u\|_{H^1}\|w\|_{H^1}\nonumber\\
&\le C\big(\|\rho\dot{w}\|_{L^2}+\|\nabla u\|_{L^2}+\|w\|_{H^1}\big)\|\nabla u\|_{L^2}\|w\|_{H^1}\nonumber\\
&\le \frac12\|\sqrt{\rho}\dot{w}\|_{L^2}^2+C\big(\|\nabla u\|_{L^2}^2+\|\nabla u\|_{L^2}^4+\|w\|_{H^1}^2+\|w\|_{H^1}^4\big),
\end{align}
due to Lemma \ref{l23}, H{\"o}lder's inequality, \eqref{z2.5}, Lemma \ref{l28}, and \eqref{3.6}.

6. Similarly to \eqref{3.26}, we deduce that
\begin{align*}
I_6&=-(c_d+c_a)\int\eta(t){\rm curl}\,w\cdot{\rm curl}\,\dot{w}dx\nonumber\\
&=-\frac{c_d+c_a}{2}\frac{d}{dt}\int\eta(t)|{\rm curl}\,w|^2dx
+\frac{c_d+c_a}{2}\eta'(t)\int|{\rm curl}\,w|^2dx\nonumber\\
&\quad-(c_d+c_a)\int\eta(t){\rm curl}\,w\cdot{\rm curl}\,(u\cdot\nabla w)dx\nonumber\\
&=-\frac{c_d+c_a}{2}\frac{d}{dt}\int\eta(t)|{\rm curl}\,w|^2dx
+\frac{c_d+c_a}{2}\eta'(t)\int|{\rm curl}\,u|^2dx\nonumber\\
&\quad-(c_d+c_a)\int\eta(t)(\nabla u^i\times\partial_iw)\cdot{\rm curl}\,wdx
+\frac{c_d+c_a}{2}\int\eta(t)|{\rm curl}\,w|^2{\rm div}\,udx\nonumber\\
&\le -\frac{c_d+c_a}{2}\frac{d}{dt}\int\eta(t)|{\rm curl}\,w|^2dx
+C|\eta'(t)|\|\nabla w\|_{L^2}^2
+C\eta(t)\big(\|\nabla u\|_{L^3}^3+\|\nabla w\|_{L^3}^3\big).
\end{align*}
By H{\"o}lder's inequality, Sobolev's inequality, and Lemma \ref{l25}, we find that
\begin{align*}
I_8&=-4\mu_r\int\eta(t)w_t\cdot wdx-4\mu_r\int\eta(t)u\cdot\nabla w\cdot wdx\nonumber\\
&=-2\mu_r\frac{d}{dt}\int\eta(t)|w|^2dx+2\mu_r\eta'(t)\int|w|^2dx-4\mu_r\int\eta(t)u\cdot\nabla w\cdot wdx\nonumber\\
&\le -2\mu_r\frac{d}{dt}\int\eta(t)|w|^2dx+2\mu_r|\eta'(t)|\|w\|_{L^2}^2+C\eta(t)\|u\|_{L^6}\|\nabla w\|_{L^3}\|w\|_{L^2}\nonumber\\
&\le -2\mu_r\frac{d}{dt}\int\eta(t)|w|^2dx+C|\eta'(t)|\|w\|_{L^2}^2+C\eta(t)\big(\|\nabla u\|_{L^2}^3+\|\nabla w\|_{L^3}^3\big).
\end{align*}
It follows from \eqref{3.6} and \eqref{3.8} that
\begin{align*}
I_9\le C(\hat{\rho})\eta(t)\|\sqrt{\rho}\dot{u}\|_{L^2}\|\nabla(\Phi-\Phi_s)\|_{L^2}
\le \frac14\eta(t)\|\sqrt{\rho}\dot{u}\|_{L^2}^2+C\eta(t)E_0.
\end{align*}
Similarly to the derivation of \eqref{3.20}, we get from \eqref{3.9} and Lemma \ref{l29} that
\begin{align*}
I_{10}&=\frac{d}{dt}\int\eta(t)(\rho-\rho_s)u\cdot\nabla\Phi_sdx-\eta'(t)\int(\rho-\rho_s)u\cdot\nabla\Phi_sdx\nonumber\\
&\quad-\eta(t)\int(\rho-\rho_s) u\cdot\nabla^2(\Phi-\Phi_s)\cdot udx+\eta(t)\int{\rm div}\,(\rho_su)u\cdot\nabla\Phi_sdx\nonumber\\
&\le \frac{d}{dt}\int\eta(t)(\rho-\rho_s)u\cdot\nabla\Phi_sdx+C|\eta'(t)|\|\nabla u\|_{L^2}\|\rho-\rho_s\|_{L^2}\|\nabla\Phi_s\|_{L^3}\nonumber\\
&\quad+C\eta(t)\big(\|\rho-\rho_s\|_{L^6}+\|\nabla\rho_s\|_{L^2}
+\|\rho_s\|_{L^\infty}\big)\|\nabla u\|_{L^2}^2\|\nabla\Phi_s\|_{H^1}\nonumber\\
&\le \frac{d}{dt}\int\eta(t)(\rho-\rho_s)u\cdot\nabla\Phi_sdx
+C(\hat{\rho})\big(\eta(t)+|\eta'(t)|\big)\|\nabla u\|_{L^2}^2+C(\hat{\rho})|\eta'(t)|E_0,
\end{align*}
where we have used
\begin{align*}
(\rho-\rho_s)_t+{\rm div}((\rho-\rho_s)u)+{\rm div}(\rho_su)=0.
\end{align*}

7. By \eqref{a6} and \eqref{2.23}, it indicates that
\begin{align*}
I_{11}&=\int\eta(t)\Big(b\cdot\nabla b-\frac12\nabla|b|^2\Big)\cdot u_tdx+\int\eta(t)\Big(b\cdot\nabla b-\frac12\nabla|b|^2\Big)\cdot u\cdot\nabla udx\nonumber\\
&=\frac{d}{dt}\int\eta(t)\Big(b\cdot\nabla b-\frac12\nabla|b|^2\Big)\cdot udx
-\eta'(t)\int\Big(b\otimes b:\nabla u-\frac12\nabla|b|^2{\rm div}\,u\Big)dx\nonumber\\
&\quad+\eta(t)\int\Big((b\otimes b)_t:\nabla u-\frac12(|b|^2)_t{\rm div}\,u\Big)dx
+\eta(t)\int\Big(b\cdot\nabla b-\frac12\nabla|b|^2\Big)\cdot u\cdot\nabla udx\nonumber\\
&\le \frac{d}{dt}\int\eta(t)\Big(b\cdot\nabla b-\frac12\nabla|b|^2\Big)\cdot udx
+C|\eta'(t)|\|b\|_{L^4}^2\|\nabla u\|_{L^2}\nonumber\\
&\quad+C\eta(t)\|\nabla u\|_{L^3}\|b_t\|_{L^2}\|b\|_{L^6}+C\eta(t)\|b\|_{L^6}\big(\|{\rm curl}^2b\|_{L^2}+\|\nabla b\|_{L^2}\big)\|\nabla u\|_{L^2}\|u\|_{L^6}\nonumber\\
&\le \frac{d}{dt}\int\eta(t)\Big(b\cdot\nabla b-\frac12\nabla|b|^2\Big)\cdot udx
+C|\eta'(t)|\big(\|\nabla b\|_{L^2}^2+\|\nabla u\|_{L^2}^4\big)\nonumber\\
&\quad+C\delta\eta(t)(\|b_t\|_{L^2}^2+\|{\rm curl}^2b\|_{L^2}^2)+C\eta(t)\big(\|\nabla b\|_{L^2}^2\|\nabla u\|_{L^2}^2+\|\nabla u\|_{L^3}^3\big)\nonumber\\
&\quad+C\eta(t)\big(\|\nabla b\|_{L^2}^6+\|\nabla b\|_{L^2}^2\|\nabla u\|_{L^2}^4\big).
\end{align*}
Putting the above estimates on $I_i\ (i=1, 2, \cdots, 11)$ into \eqref{3.17}, one obtains that
\begin{align}\label{2.35}
&\frac{d}{dt}\Big(\frac{2\mu+\lambda}{2}\eta(t)\|{\rm div}\,u\|_{L^2}^2
+\frac{\mu}{2}\eta(t)\|{\rm curl}\,u\|_{L^2}^2+2\mu_r\eta(t)\Big\|\frac12{\rm curl}\,u-w\Big\|_{L^2}^2
+\frac{2c_d+c_0}{2}\eta(t)\|{\rm div}\,w\|_{L^2}^2\Big)\nonumber\\
&\quad+\frac12\eta(t)\big(\|\sqrt{\rho}\dot{u}\|_{L^2}^2
+\|\sqrt{\rho}\dot{w}\|_{L^2}^2\big)\nonumber\\
&\le \frac{d}{dt}\int\eta(t)(P-P_s){\rm div}\,udx
+\frac{d}{dt}\int\eta(t)(\rho-\rho_s)u\cdot\nabla\Phi_sdx\nonumber\\
&\quad+\frac{d}{dt}\int\eta(t)\Big(b\cdot\nabla b-\frac12\nabla|b|^2\Big)\cdot udx+\Big(CE_0^\frac12+\delta\Big)\eta(t)\|{\rm curl}^2b\|_{L^2}^2
+\delta\eta(t)\|b_t\|_{L^2}^2\nonumber\\
&\quad+C\big(\eta(t)+|\eta'(t)|\big)\|\nabla u\|_{L^2}^2+C|\eta'(t)|\big(\|w\|_{L^2}^2+\|\nabla w\|_{L^2}^2+\|\nabla b\|_{L^2}^2+\|\nabla u\|_{L^2}^4\big)\nonumber\\
&\quad+C\eta(t)\big(\|\nabla u\|_{L^3}^3+\|\nabla w\|_{L^3}^3+\|\nabla u\|_{L^2}^4+\|\nabla b\|_{L^2}^4+\|w\|_{H^1}^4+\|\nabla u\|_{L^2}^6\big)\nonumber\\
&\quad+C\eta(t)\big(\|\nabla b\|_{L^2}^6+\|\nabla w\|_{L^2}^6+\|\nabla w\|_{L^2}^2+\|\nabla b\|_{L^2}^2\big)+C\big(\eta(t)+|\eta'(t)|\big)E_0.
\end{align}

8. In view of $\eqref{a1}_4$, one easily deduces from \eqref{2.3}, \eqref{a6}, and \eqref{2.10} that
\begin{align*}
&\frac{d}{dt}\|{\rm curl}\,b\|_{L^2}^2+\|{\rm curl}^2b\|_{L^2}^2+\|b_t\|_{L^2}^2\nonumber\\
&=\int|b_t-\curl\curl b|^2dx=\int|b\cdot\nabla u-u\cdot\nabla b-b\divv u|^2dx\nonumber\\
&\le C\|\nabla u\|_{L^2}^2\|b\|_{L^\infty}^2+C\|u\|_{L^6}^2\|\nabla b\|_{L^3}^2\nonumber\\
&\le C\|\nabla u\|_{L^2}^2\|\nabla b\|_{L^2}\|{\rm curl}^2b\|_{L^2}+C\|\nabla u\|_{L^2}^2\|\nabla b\|_{L^2}^2\nonumber\\
&\le \frac12\|{\rm curl}^2b\|_{L^2}^2+C\big(\|\nabla u\|_{L^2}^2+\|\nabla u\|_{L^2}^4\big)\|\nabla b\|_{L^2}^2,
\end{align*}
which yields that
\begin{align}\label{3.37}
&\frac{d}{dt}\big(\eta(t)\|{\rm curl}\,b\|_{L^2}^2\big)+\eta(t)\|{\rm curl}^2b\|_{L^2}^2+\eta(t)\|b_t\|_{L^2}^2\nonumber\\
&\le \eta'(t)\|{\rm curl}\,b\|_{L^2}^2+C\eta(t)\big(\|\nabla u\|_{L^2}^2+\|\nabla u\|_{L^2}^4\big)\|\nabla b\|_{L^2}^2.
\end{align}
This together with \eqref{2.35} gives
\eqref{3.15}.

9. Choosing $\eta(t)=1$, then we deduce from \eqref{3.37} that
\begin{align}\label{z3.10}
&\sup_{0\le t\le T}\|\nabla b\|_{L^2}^2+\int_0^T\big(\|b_t\|_{L^2}^2+\|{\rm curl}^2b\|_{L^2}^2\big)dt\nonumber\\
&\le C\|\nabla b_0\|_{L^2}^2+C\sup_{0\le t\le T}\|\nabla b\|_{L^2}^2\int_0^T\big(\|\nabla u\|_{L^2}^2+\|\nabla u\|_{L^2}^4\big)dt.
\end{align}
It follows from \eqref{3.6} and \eqref{3.8} that
\begin{align*}
\int_0^T\|\nabla u\|_{L^2}^4dt&=\int_0^{\sigma(T)}\|\nabla u\|_{L^2}^4dt+\int_{\sigma(T)}^T\|\nabla u\|_{L^2}^4dt\nonumber\\
&\le \sup_{0\le t\le \sigma(T)}\|\nabla u\|_{L^2}^2\int_0^{\sigma(T)}\|\nabla u\|_{L^2}^2dt
+\sup_{\sigma(T)\le t\le T}\big(\sigma\|\nabla u\|_{L^2}^2\big)\int_{\sigma(T)}^T\|\nabla u\|_{L^2}^2dt\nonumber\\
&\le C(K)E_0,
\end{align*}
which combined with \eqref{z3.10} and \eqref{3.8} yields that
\begin{align*}
\sup_{0\le t\le T}\|\nabla b\|_{L^2}^2+\int_0^T\big(\|b_t\|_{L^2}^2+\|{\rm curl}^2b\|_{L^2}^2\big)dt\le C\|\nabla b_0\|_{L^2}^2
+C_1E_0\sup_{0\le t\le T}\|\nabla b\|_{L^2}^2.
\end{align*}
This implies \eqref{3.16} provided that $E_0<\frac{1}{2C_1}$.
\end{proof}

\begin{lemma}
Let $(\rho, u, w, b, \Phi)$ be a smooth solution
of \eqref{a1}--\eqref{a6} satisfying \eqref{3.6} and $\eta(t)$ be as in Lemma \ref{l33}, then it holds that
\begin{align}\label{z3.18}
&\frac{d}{dt}\Big[\frac{\eta(t)}{2}\big(\|\sqrt{\rho}\dot{u}\|_{L^2}^2
+\|\sqrt{\rho}\dot{w}\|_{L^2}^2+\|b_t\|_{L^2}^2\big)\Big]
+(2\mu+\lambda)\eta(t)\|{\rm div}\,\dot{u}\|_{L^2}^2+\mu\eta(t)\|{\rm curl}\,\dot{u}\|_{L^2}^2\nonumber\\
&\quad+(2c_d+c_0)\eta(t)\|{\rm div}\,\dot{w}\|_{L^2}^2+(c_d+c_a)\eta(t)\|{\rm curl}\,\dot{w}\|_{L^2}
+4\mu_r\eta(t)\Big\|\frac12{\rm curl}\,\dot{u}-\dot{w}\Big\|_{L^2}^2+\eta(t)\|{\rm curl}\,b_t\|_{L^2}^2\nonumber\\
&\le -\frac{d}{dt}\int_{\partial\Omega}\eta(t)(u\cdot\nabla n\cdot u)F_1dS-\frac{d}{dt}\int_{\partial\Omega}\eta(t)(w\cdot\nabla n\cdot w)F_2dS
+ C\delta \eta(t)\big(\|\nabla\dot{u}\|_{L^2}^2+\|\nabla\dot{w}\|_{L^2}^2\big)\nonumber\\
&\quad+C|\eta'(t)|\big(\|\sqrt{\rho}\dot{u}\|_{L^2}^2+\|\sqrt{\rho}\dot{w}\|_{L^2}^2+\|\nabla u\|_{L^2}^2+\|\nabla w\|_{L^2}^2+\|\nabla b\|_{L^2}^2+\|\nabla u\|_{L^2}^4
\big)\nonumber\\
&\quad+C|\eta'(t)|\big(\|{\rm curl}^2b\|_{L^2}^2+\|b_t\|_{L^2}^2+\|\nabla u\|_{L^2}^4\|\nabla b\|_{L^2}^2+\|\nabla u\|_{L^2}^2\|\nabla b\|_{L^2}^2
+\|\nabla w\|_{L^2}^4\big)\nonumber\\
&\quad+C\eta(t)\big(\|\sqrt{\rho}\dot{u}\|_{L^2}^2\|\nabla u\|_{L^2}^2+\|\nabla u\|_{L^2}^6+\|\nabla u\|_{L^4}^4
+\|\nabla u\|_{L^2}^2+\|\nabla w\|_{L^2}^2
+\|\nabla b\|_{L^2}^2\big)\nonumber\\
&\quad+C\eta(t)\big(\|\nabla u\|_{L^2}^4+\|\nabla w\|_{L^2}^4+\|\nabla b\|_{L^2}^4+\|{\rm curl}^2b\|_{L^2}^2\|\nabla b\|_{L^2}\|\nabla u\|_{L^2}^2
+\|\nabla b\|_{L^2}^6\big)\nonumber\\
&\quad+C\eta(t)\big(\|\nabla u\|_{L^2}^4\|\sqrt{\rho}\dot{u}\|_{L^2}^2+\|\nabla w\|_{L^2}^6
+\|\nabla u\|_{L^2}^4\|{\rm curl}^2b\|_{L^2}^2+\|b_t\|_{L^2}^2\|\nabla b\|_{L^2}^4\big)\nonumber\\
&\quad+C\delta\eta(t)\|\nabla b\|_{L^2}\|{\rm curl}^2b\|_{L^2}^3
+C\eta(t)\big(\|\nabla w\|_{L^4}^4+\|\nabla b\|_{L^2}^2\|{\rm curl}^2b\|_{L^2}^2\big)\nonumber\\
&\quad+C\eta(t)\big(\|\nabla b\|_{L^2}^4+\|\nabla u\|_{L^2}^4\big)\|{\rm curl}^2b\|_{L^2}^2
+C\eta(t)E_0^\frac12\|\nabla b\|_{L^2}\|\nabla b_t\|_{L^2}^2\nonumber\\
&\quad+C\eta(t)(\|\dot{u}\|_{L^2}^2+\|\dot{w}\|_{L^2}^2),
\end{align}
provided that $E_0\leq\varepsilon_1$.
\end{lemma}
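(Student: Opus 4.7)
My aim is a second-tier energy identity controlling $\|\sqrt\rho\dot u\|_{L^2}^2$, $\|\sqrt\rho\dot w\|_{L^2}^2$, and $\|b_t\|_{L^2}^2$ simultaneously. The strategy is: apply the convective operator $\partial_t + \divv(\cdot\,u)$ to the momentum and angular-momentum equations \eqref{3.10} and \eqref{3.11} and test against $\eta(t)\dot u$ and $\eta(t)\dot w$; differentiate $\eqref{a1}_4$ in $t$ and test against $\eta(t) b_t$; then add the three resulting identities.

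Thanks to $\eqref{a1}_1$, one has $\partial_t(\rho\dot u)+\divv(\rho\dot u\otimes u)=\rho\ddot u$, so pairing with $\dot u$ produces $\frac{d}{dt}(\frac{\eta}{2}\|\sqrt\rho\dot u\|_{L^2}^2)$ modulo $\eta'$-terms. Integration by parts of the elliptic part against $\dot u$ yields the dissipation $(2\mu+\lambda)\|\divv\dot u\|_{L^2}^2+\mu\|\curl\dot u\|_{L^2}^2$ plus commutators of the form $\nabla u\,\nabla\dot u$ and $(\nabla u)^3$, handled via H\"older's inequality, Lemma \ref{l28}, and \eqref{3.8}. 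The coupling terms $2\mu_r\curl w$, $2\mu_r\curl u$, and $4\mu_r w$ symmetrize into $4\mu_r\|\tfrac12\curl\dot u-\dot w\|_{L^2}^2$, exactly as in the derivation of \eqref{3.15}. The Coulomb forcing $\rho\nabla(\Phi-\Phi_s)+(\rho-\rho_s)\nabla\Phi_s$ is controlled through Lemma \ref{l29}, while the Lorentz term $(\nabla\times b)\times b$, after using Lemma \ref{l25} and \eqref{3.16} to bound $\|\nabla^2 b\|_{L^2}$ in terms of $\|\curl^2 b\|_{L^2}$, generates the mixed high-power tails on the right of \eqref{z3.18}. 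For the magnetic part, testing the time-differentiated $\eqref{a1}_4$ against $\eta b_t$ gives $\frac{d}{dt}(\frac{\eta}{2}\|b_t\|_{L^2}^2)+\eta\|\curl b_t\|_{L^2}^2$ on the left; splitting $u_t=\dot u-u\cdot\nabla u$ in the right-hand forcing $\int(\nabla\times(u\times b))_t\cdot b_t\,dx$ converts $u_t$-pieces into $\|\nabla\dot u\|_{L^2}^2$-quantities (absorbable at rate $C\delta$) together with lower-order terms, producing the $E_0^{1/2}\|\nabla b\|_{L^2}\|\nabla b_t\|_{L^2}^2$ factor through Cauchy--Schwarz, Sobolev embedding, and \eqref{3.16}.

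The principal obstacle is the surface contribution. Since $\dot u\cdot n|_{\partial\Omega}=-u\cdot\nabla n\cdot u$ by \eqref{a6} and \eqref{2.5} (and analogously for $\dot w$), integration by parts of $\int\nabla(\divv u)_t\cdot\dot u\,dx$ generates a nonzero surface integral with a time derivative under the integrand. Writing $(2\mu+\lambda)(\divv u)_t=(F_1+(P-P_s)+\tfrac12|b|^2)_t$ and then integrating by parts in $t$ isolates the total time derivative $-\frac{d}{dt}\int_{\partial\Omega}\eta(u\cdot\nabla n\cdot u)F_1\,dS$ appearing on the right of \eqref{z3.18}, while the residual surface and volume pieces are controlled by the trace theorem (Lemma \ref{l23}), the $L^p$-estimates for $F_1$ in Lemma \ref{l28}, the Poincar\'e inequality \eqref{z2.5}, and Gagliardo--Nirenberg \eqref{2.3}; an analogous computation handles the $F_2$ boundary term. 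Throughout, the smallness of $E_0$ together with the uniform bound \eqref{3.16} is essential for absorbing the $b_t$-dissipation and the $\curl^2 b$-tails produced at rate $E_0^{1/2}$.
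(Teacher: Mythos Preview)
Your overall strategy matches the paper's: apply $\partial_t+\divv(\cdot\,u)$ to the momentum equations (the paper starts from the $\nabla F_1,\nabla F_2$ form \eqref{3.41}--\eqref{3.42} rather than \eqref{3.10}--\eqref{3.11}, but this is cosmetic), test against $\eta\dot u,\eta\dot w$, time-differentiate $\eqref{a1}_4$ and test against $\eta b_t$, then add. Your treatment of the principal boundary term via $\dot u\cdot n|_{\partial\Omega}=-u\cdot\nabla n\cdot u$ and the $F_{1t}$ decomposition is correct and is exactly what the paper does in \eqref{z3.22}--\eqref{3.45} and \eqref{z3.24}.

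There is, however, one genuine gap. The micropolar coupling does \emph{not} symmetrize ``exactly as in the derivation of \eqref{3.15}''. In Lemma~\ref{l33} the cross terms $I_4+I_7$ produce a time derivative $\frac{d}{dt}\int\curl u\cdot w\,dx$ because $u\cdot n=w\cdot n=0$ on $\partial\Omega$; here, when you convert $\int\eta\,\dot u\cdot\curl\dot w\,dx$ into $\int\eta\,\dot w\cdot\curl\dot u\,dx$ to build the square $4\mu_r\|\tfrac12\curl\dot u-\dot w\|_{L^2}^2$, the integration by parts leaves the surface term $-2\mu_r\int_{\partial\Omega}\eta\,\dot u\cdot(\dot w\times n)\,dS$, which does not vanish since neither $\dot u\cdot n$ nor $\dot w\times n$ is zero on $\partial\Omega$. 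The paper bounds this via the trace theorem (see \eqref{t3.36}) by $\delta\eta(\|\nabla\dot u\|_{L^2}^2+\|\nabla\dot w\|_{L^2}^2)+C\eta(\|\dot u\|_{L^2}^2+\|\dot w\|_{L^2}^2)$, and the residual $C\eta(\|\dot u\|_{L^2}^2+\|\dot w\|_{L^2}^2)$ is precisely the final line of \eqref{z3.18}. This is the unique place in the entire argument that forces the assumption $\rho_0\ge\underline\rho>0$ (cf.\ Remark~1.3), so you should not suppress it.
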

\begin{proof}[Proof]
1. By \eqref{2.7} and \eqref{a7}, we rewrite $\eqref{a1}_2$ and $\eqref{a1}_3$ as the followings
\begin{align}
&\rho\dot{u}=\nabla F_1-(\mu+\mu_r)\curl{\rm curl}\,u+\rho\nabla(\Phi-\Phi_s)+(\rho-\rho_s)\nabla\Phi_s+(\nabla\times b)\times b+2\mu_r{\rm curl}\,w,\label{3.41}\\
&\rho\dot{w}=\nabla F_2-4\mu_rw-(c_a+c_d)\curl{\rm curl}\,w+2\mu_r{\rm curl}\,u.\label{3.42}
\end{align}
Applying $\eta(t)\dot{u}^j[\partial/\partial t+{\rm div}(u\cdot)]$ to $\eqref{3.41}^j$
and $\eta(t)\dot{w}^j[\partial/\partial t+{\rm div}(u\cdot)]$ to $\eqref{3.42}^j$, respectively,
summing up, and then integrating the resulting equality over $\Omega$, we get that
\begin{align}\label{3.44}
&\frac12\frac{d}{dt}\int\eta(t)\big(\rho|\dot{u}|^2+\rho|\dot{w}|^2\big)dx
-\eta'(t)\int\big(\rho|\dot{u}|^2+\rho|\dot{w}|^2\big)dx\nonumber\\
&=\int\eta(t)\big(\dot{u}\cdot\nabla F_{1t}+\dot{u}^j{\rm div}\,(u\partial_jF_1)\big)dx
+\int\eta(t)\big(\dot{w}\cdot\nabla F_{2t}+\dot{w}^j{\rm div}\,(u\partial_jF_2)\big)dx\nonumber\\
&\quad-(\mu+\mu_r)\int\eta(t)\big(\dot{u}\cdot\curl{\rm curl}\,u_t+\dot{u}^j{\rm div}\,((\curl{\rm curl}\,u)^ju)\big)dx\nonumber\\
&\quad-(c_a+c_d)\int\eta(t)\big(\dot{w}\cdot\curl{\rm curl}\,w_t+\dot{w}^j{\rm div}\,((\curl{\rm curl}\,w)^ju)\big)dx\nonumber\\
&\quad+2\mu_r\int\eta(t)\big(\dot{u}\cdot\curl w_t+\dot{u}\cdot\partial_j(u^j{\rm curl}\,w)\big)dx\nonumber\\
&\quad+2\mu_r\int\eta(t)\big(\dot{w}\cdot\curl u_t+\dot{w}\partial_j(u^j{\rm curl}\,u)\big)dx-4\mu_r\int\eta(t)\dot{w}^j\big(\dot{w}^j+{\rm div}\,(w^ju)\big)dx\nonumber\\
&\quad+\int\eta(t)\big(\dot{u}\cdot{\rm div}\,(b\otimes b)_t+\dot{u}^j{\rm div}\,({\rm div}\,(b b^j)u)\big)dx\nonumber\\
&\quad+\int\eta(t)\big(\dot{u}\cdot\nabla\Phi_s(\rho-\rho_s)_t+\dot{u}^j{\rm div}\,((\rho-\rho_s)\partial_j\Phi_su)\big)dx\nonumber\\
&\quad+\int\eta(t)\big[\dot{u}\cdot(\rho\nabla(\Phi-\Phi_s))_t+\dot{u}^j{\rm div}\,(\rho\partial_j(\Phi-\Phi_s)u)\big]dx\triangleq\sum_{i=1}^{10}J_i.
\end{align}
We denote by $h\triangleq u\cdot(\nabla n+(\nabla n)^{tr})$ and $u^\bot\triangleq-u\times n$, then it deduces from Lemma \ref{l23} that
\begin{align}\label{z3.22}
&-\int_{\partial\Omega}\eta(t)F_{1t}(u\cdot\nabla n\cdot u)dS\nonumber\\
&=-\frac{d}{dt}\int_{\partial\Omega}\eta(t)(u\cdot\nabla n\cdot u)F_1dS
+\int_{\partial\Omega}\eta(t)F_1h\cdot\dot{u}dS-\int_{\partial\Omega}\eta(t)
F_1h\cdot(u\cdot\nabla u)dS\nonumber\\
&\quad+\eta'(t)\int_{\partial\Omega}(u\cdot\nabla n\cdot u)F_1dS\nonumber\\
&=-\frac{d}{dt}\int_{\partial\Omega}\eta(t)(u\cdot\nabla n\cdot u)F_1dS
+\int_{\partial\Omega}\eta(t)F_1h\cdot\dot{u}dS+\eta'(t)\int_{\partial\Omega}(u\cdot\nabla n\cdot u)F_1dS\nonumber\\
&\quad-\int_{\partial\Omega}\eta(t)F_1h^i(\nabla u^i\times u^\bot)\cdot ndS\nonumber\\
&=-\frac{d}{dt}\int_{\partial\Omega}\eta(t)(u\cdot\nabla n\cdot u)F_1dS
+\int_{\partial\Omega}\eta(t)F_1h\cdot\dot{u}dS
+\eta'(t)\int_{\partial\Omega}(u\cdot\nabla n\cdot u)F_1dS\nonumber\\
&\quad-\int\eta(t)\nabla u^i\times u^\bot\cdot\nabla(F_1h^i)dx+\int\eta(t)F_1h^i\nabla\times u^\bot\cdot\nabla u^idx\nonumber\\
&\le -\frac{d}{dt}\int_{\partial\Omega}\eta(t)(u\cdot\nabla n\cdot u)F_1dS
+C\eta(t)\|\nabla F_1\|_{L^2}\|u\|_{L^3}\|\dot{u}\|_{L^6}\nonumber\\
&\quad+C\eta(t)\big(\|F_1\|_{L^3}\|u\|_{L^6}\|\nabla\dot{u}\|_{L^2}
+\|F_1\|_{L^3}\|u\|_{L^6}\|\dot{u}\|_{L^2}
+\|F_1\|_{L^3}\|\nabla u\|_{L^2}\|\dot{u}\|_{L^6}\big)\nonumber\\
&\quad+C\eta(t)\big(\|\nabla u\|_{L^2}\|u\|_{L^6}^2\|\nabla F_1\|_{L^6}+\|\nabla u\|_{L^4}^2\|u\|_{L^6}\|F_1\|_{L^3}\big)\nonumber\\
&\quad+|\eta'(t)|\big(\|\nabla u\|_{L^2}\|u\|_{L^6}\|F_1\|_{L^3}+\|u\|_{L^4}^2\|F_1\|_{L^2}
+\|\nabla F_1\|_{L^2}\|u\|_{L^4}^2\big)\nonumber\\
&\le -\frac{d}{dt}\int_{\partial\Omega}\eta(t)(u\cdot\nabla n\cdot u)F_1dS+C|\eta'(t)|\|\nabla u\|_{L^2}^2\big(\|\rho\dot{u}\|_{L^2}+\|\nabla w\|_{L^2}+\|\nabla u\|_{L^2}+\|P-P_s\|_{L^2}\big)\nonumber\\
&\quad+C|\eta'(t)|\|\nabla u\|_{L^2}^2\big(\|b\|_{L^6}\|\nabla b\|_{L^3}+\|\rho\nabla(\Phi-\Phi_s)\|_{L^2}+\|(\rho-\rho_s)\nabla\Phi_s\|_{L^2}+\|b\|_{L^2}^\frac12\|\nabla b\|_{L^2}^\frac32\big)\nonumber\\
&\quad+C\eta(t)\big(\|\rho\dot{u}\|_{L^2}+\|P-P_s\|_{L^2}+\|\nabla u\|_{L^2}+\|\nabla w\|_{L^2}+\|b\|_{L^3}\|\nabla b\|_{L^6}+\|\nabla u\|_{L^2}^2
+\|b\|_{L^4}^2\nonumber\\
&\quad+\|\rho\nabla(\Phi-\Phi_s)\|_{L^2}+\|(\rho-\rho_s)\nabla\Phi_s\|_{L^2}\big)\|\nabla u\|_{L^2}\big(\|\nabla\dot{u}\|_{L^2}+\|\nabla u\|_{L^2}^2+\|\nabla u\|_{L^4}^2\big)\nonumber\\
&\quad+C\eta(t)\|\nabla u\|_{L^2}^3\|\nabla F_1\|_{L^6}\nonumber\\
&\le -\frac{d}{dt}\int_{\partial\Omega}\eta(t)(u\cdot\nabla n\cdot u)F_1dS
+C|\eta'(t)|\big(\|\sqrt{\rho}\dot{u}\|_{L^2}^2+\|\nabla u\|_{L^2}^2+\|\nabla w\|_{L^2}^2+\|\nabla b\|_{L^2}^2+\|\nabla u\|_{L^2}^4\big)\nonumber\\
&\quad+C|\eta'(t)|\big(\|{\rm curl}^2b\|_{L^2}^2+\|\nabla u\|_{L^2}^4\|\nabla b\|_{L^2}^2+\|\nabla u\|_{L^2}^2\|\nabla b\|_{L^2}^2\big)
+C\eta(t)\|\nabla u\|_{L^2}^3\|\nabla F_1\|_{L^6}\nonumber\\
&\quad+\delta\eta(t)\|\nabla\dot{u}\|_{L^2}^2+C\eta(t)\big(\|\sqrt{\rho}\dot{u}\|_{L^2}^2\|\nabla u\|_{L^2}^2+\|\nabla u\|_{L^2}^6+\|\nabla u\|_{L^4}^4
+\|\nabla u\|_{L^2}^2+\|\nabla w\|_{L^2}^2
\big)\nonumber\\
&\quad+C\eta(t)\big(\|\nabla u\|_{L^2}^4+\|\nabla w\|_{L^2}^4+\|\nabla b\|_{L^2}^4+\|{\rm curl}^2b\|_{L^2}^2\|\nabla b\|_{L^2}\|\nabla u\|_{L^2}^2
+\|\nabla b\|_{L^2}^6\big),
\end{align}
due to
\begin{align*}
&{\rm div}(\nabla u^i\times u^\bot)=u^\bot\cdot\curl\nabla u^i-\nabla u^i\cdot\curl u^\bot=-\nabla u^i\cdot\curl u^\bot,\\
&\|\sqrt{\rho}\dot{u}\|_{L^2}+\|\sqrt{\rho}\dot{w}\|_{L^2}\le C\big(\|\dot{u}\|_{L^2}+\|\dot{w}\|_{L^2}\big)
\le C\big(\|\nabla\dot{u}\|_{L^2}+\|\nabla\dot{w}\|_{L^2}\big),\\
&\|\dot{u}\|_{L^6}\le C\big(\|\nabla\dot{u}\|_{L^2}+\|\nabla u\|_{L^2}^2\big)\ \ (\text{see}\ \eqref{3.2}),
\end{align*}
and
\begin{align*}
\|\nabla F_1\|_{L^6}&\le C\big(\|\rho\dot{u}\|_{L^6}+\|{\rm curl}\, w\|_{L^6}+\|b\cdot\nabla b\|_{L^6}
+\|\rho\nabla(\Phi-\Phi_s)\|_{L^6}+\|(\rho-\rho_s)\nabla\Phi_s\|_{L^6}\big)\nonumber\\
&\le C(\hat{\rho})\big(\|\dot{u}\|_{L^6}+\|b\|_{L^\infty}\|\nabla b\|_{L^6}+\|\rho\dot{u}\|_{L^2}+\|\rho\dot{w}\|_{L^2}
+\|b\|_{L^6}\|\nabla b\|_{L^3}\big)\nonumber\\
&\quad+C(\hat{\rho})\big(\|\nabla u\|_{L^2}+\|\nabla w\|_{L^2}+\|w\|_{L^2}+1\big)\nonumber\\
&\le C(\hat{\rho})\Big(\|\nabla\dot{u}\|_{L^2}+\|\nabla\dot{w}\|_{L^2}
+\|\nabla b\|_{L^2}^\frac12\|{\rm curl}^2b\|_{L^2}^\frac32
+\|\nabla b\|_{L^2}^\frac32\|{\rm curl}^2b\|_{L^2}^\frac12\Big)\nonumber\\
&\quad+C(\hat{\rho})\big(\|\nabla b\|_{L^2}^2+\|\nabla u\|_{L^2}^2+\|\nabla w\|_{L^2}+\|w\|_{L^2}+1\big).
\end{align*}
Thus, it follows from integration by parts, \eqref{a6}, \eqref{2.5}, \eqref{3.6}, H{\"o}lder's inequality, \eqref{2.3}, Lemma \ref{l28}, and \eqref{z3.22} that
\begin{align}\label{3.45}
J_1&=\int_{\partial\Omega}\eta(t)F_{1t}\dot{u}\cdot ndS-\int\eta(t)F_{1t}{\rm div}\,\dot{u}dx
-\int\eta(t)u\cdot\nabla\dot{u}\cdot\nabla F_1dx\nonumber\\
&=-\int_{\partial\Omega}\eta(t)F_{1t}(u\cdot\nabla n\cdot u)dS-(2\mu+\lambda)\int\eta(t)({\rm div}\,\dot{u})^2dx
+(2\mu+\lambda)\int\eta(t){\rm div}\,\dot{u}\nabla u:\nabla udx\nonumber\\
&\quad-\gamma\int\eta(t) P{\rm div}\,\dot{u}{\rm div}\,udx
+\int\eta(t){\rm div}\,\dot{u}u\cdot\nabla F_1dx-\int\eta(t)u\cdot\nabla\dot{u}\cdot\nabla F_1dx\nonumber\\
&\quad-\int\eta(t){\rm div}\,\dot{u}u\cdot\nabla P_sdx+\int\eta(t){\rm div}\,\dot{u} b\cdot b_tdx\nonumber\\
&\le -\int_{\partial\Omega}\eta(t)F_{1t}(u\cdot\nabla n\cdot u)dS
-(2\mu+\lambda)\int\eta(t)({\rm div}\,\dot{u})^2dx+\delta\eta(t)\|\nabla\dot{u}\|_{L^2}^2\nonumber\\
&\quad+C(\delta)\eta(t)\big(\|\nabla u\|_{L^2}^2\|\nabla F_1\|_{L^3}^2+\|\nabla u\|_{L^4}^4+\|\nabla u\|_{L^2}^2
+\|b_t\|_{L^3}^2\|b\|_{L^6}^2\big)\nonumber\\
&\le  -\int_{\partial\Omega}\eta(t)F_{1t}(u\cdot\nabla n\cdot u)dS
-(2\mu+\lambda)\int\eta(t)({\rm div}\,\dot{u})^2dx+\delta\eta(t)\|\nabla\dot{u}\|_{L^2}^2\nonumber\\
&\quad+\delta\eta(t)\|\nabla b_t\|_{L^2}^2+C\eta(t)\big(\|\nabla u\|_{L^2}^2\|\nabla F_1\|_{L^2}\|\nabla F_1\|_{L^6}+\|\nabla u\|_{L^4}^4+\|\nabla u\|_{L^2}^2
+\|b_t\|_{L^2}^2\|\nabla b\|_{L^2}^4\big)\nonumber\\
&\le  -\frac{d}{dt}\int_{\partial\Omega}\eta(t)(u\cdot\nabla n\cdot u)F_1dS-(2\mu+\lambda)\int\eta(t)({\rm div}\,\dot{u})^2dx+\delta\eta(t)\|\nabla\dot{u}\|_{L^2}^2\nonumber\\
&\quad+C|\eta'(t)|\big(\|\sqrt{\rho}\dot{u}\|_{L^2}^2+\|\nabla u\|_{L^2}^2+\|\nabla w\|_{L^2}^2+\|\nabla b\|_{L^2}^2+\|\nabla u\|_{L^2}^4
+\|\nabla u\|_{L^2}^2\|\nabla b\|_{L^2}^2\big)\nonumber\\
&\quad+C|\eta'(t)|\big(\|{\rm curl}^2b\|_{L^2}^2+\|\nabla u\|_{L^2}^4\|\nabla b\|_{L^2}^2\big)+C\eta(t)\big(\|\nabla u\|_{L^2}^3+\|\nabla u\|_{L^2}^2\|\nabla F_1\|_{L^2}\big)\|\nabla F_1\|_{L^6}
\nonumber\\
&\quad+C\eta(t)\big(\|\sqrt{\rho}\dot{u}\|_{L^2}^2\|\nabla u\|_{L^2}^2+\|\nabla u\|_{L^2}^6+\|\nabla u\|_{L^4}^4
+\|\nabla u\|_{L^2}^2+\|\nabla w\|_{L^2}^2
\big)\nonumber\\
&\quad+C\eta(t)\big(\|\nabla u\|_{L^2}^4+\|\nabla w\|_{L^2}^4+\|\nabla b\|_{L^2}^4+\|{\rm curl}^2b\|_{L^2}^2\|\nabla b\|_{L^2}\|\nabla u\|_{L^2}^2
+\|\nabla b\|_{L^2}^6\big)\nonumber\\
&\quad+C\eta(t)\big(\|\nabla u\|_{L^2}^4\|\rho\dot{u}\|_{L^2}^2+\|\nabla w\|_{L^2}^6
+\|\nabla u\|_{L^2}^4\|{\rm curl}^2b\|_{L^2}^2+\|b_t\|_{L^2}^2\|\nabla b\|_{L^2}^4\big)\nonumber\\
&\le  -\frac{d}{dt}\int_{\partial\Omega}\eta(t)(u\cdot\nabla n\cdot u)F_1dS-(2\mu+\lambda)\int\eta(t)({\rm div}\,\dot{u})^2dx+\delta\eta(t)\|\nabla\dot{u}\|_{L^2}^2\nonumber\\
&\quad+C\eta(t)\big(\|\sqrt{\rho}\dot{u}\|_{L^2}^2\|\nabla u\|_{L^2}^2+\|\nabla u\|_{L^2}^6+\|\nabla u\|_{L^4}^4
+\|\nabla u\|_{L^2}^2+\|\nabla w\|_{L^2}^2
\big)\nonumber\\
&\quad+C\eta(t)\big(\|\nabla u\|_{L^2}^4+\|\nabla w\|_{L^2}^4+\|\nabla b\|_{L^2}^4+\|{\rm curl}^2b\|_{L^2}^2\|\nabla b\|_{L^2}\|\nabla u\|_{L^2}^2
+\|\nabla b\|_{L^2}^6\big)\nonumber\\
&\quad+C\eta(t)\big(\|\nabla u\|_{L^2}^4\|\rho\dot{u}\|_{L^2}^2+\|\nabla w\|_{L^2}^6
+\|\nabla u\|_{L^2}^4\|{\rm curl}^2b\|_{L^2}^2+\|b_t\|_{L^2}^2\|\nabla b\|_{L^2}^4\big)\nonumber\\
&\quad+C|\eta'(t)|\big(\|\sqrt{\rho}\dot{u}\|_{L^2}^2+\|\nabla u\|_{L^2}^2+\|\nabla w\|_{L^2}^2+\|\nabla b\|_{L^2}^2+\|\nabla u\|_{L^2}^4
+\|\nabla u\|_{L^2}^2\|\nabla b\|_{L^2}^2\big)\nonumber\\
&\quad+C|\eta'(t)|\big(\|{\rm curl}^2b\|_{L^2}^2+\|\nabla u\|_{L^2}^4\|\nabla b\|_{L^2}^2\big)+C\eta(t)\big(\|\nabla u\|_{L^2}^4\|{\rm curl}^2b\|_{L^2}^2
+\|\nabla b\|_{L^2}^2\big)\nonumber\\
&\quad+C\delta\eta(t)\|\nabla b\|_{L^2}\|{\rm curl}^2b\|_{L^2}^3,
\end{align}
where we have used
\begin{align*}
F_{1t}&=(2\mu+\lambda){\rm div}\,u_t-(P-P_s)_t-\frac12(|b|^2)_t\nonumber\\
&=(2\mu+\lambda){\rm div}\,\dot{u}-(2\mu+\lambda){\rm div}\,(u\cdot\nabla u)+u\cdot\nabla(P-P_s)+\gamma P{\rm div}\,u+u\cdot\nabla P_s
-b\cdot b_t\nonumber\\
&=(2\mu+\lambda){\rm div}\,\dot{u}-(2\mu+\lambda)u\cdot\nabla{\rm div}\,u
-(2\mu+\lambda)\nabla u:\nabla u+u\cdot\nabla(P-P_s)\nonumber\\
&\quad+\gamma P{\rm div}\,u+u\cdot\nabla P_s-b\cdot b_t\nonumber\\
&=(2\mu+\lambda){\rm div}\,\dot{u}
-(2\mu+\lambda)\nabla u:\nabla u+\gamma P{\rm div}\,u-u\cdot\nabla F_1+u\cdot\nabla P_s-b\cdot b_t.
\end{align*}

2. Denoting $\tilde{h}\triangleq w\cdot(\nabla n+(\nabla n)^{tr})$, we get that
\begin{align}\label{z3.24}
&-\int_{\partial\Omega}\eta(t)F_{2t}(w\cdot\nabla n\cdot w)dS\nonumber\\
&=-\frac{d}{dt}\int_{\partial\Omega}\eta(t)(w\cdot\nabla n\cdot w)F_2dS+\int_{\partial\Omega}\eta(t)F_2\tilde{h}\cdot\dot{w}dS
-\int_{\partial\Omega}\eta(t)F_2\tilde{h}\cdot(u\cdot\nabla w)dS\nonumber\\
&\quad+\eta'(t)\int_{\partial\Omega}(w\cdot\nabla n\cdot w)F_2dS\nonumber\\
&=-\frac{d}{dt}\int_{\partial\Omega}\eta(t)(w\cdot\nabla n\cdot w)F_2dS+\int_{\partial\Omega}F_2\tilde{h}\cdot\dot{w}dS
+\eta'(t)\int_{\partial\Omega}(w\cdot\nabla n\cdot w)F_2dS\nonumber\\
&\quad-\int_{\partial\Omega}F_2\tilde{h}\nabla w^i\times u^\bot\cdot ndS\nonumber\\
&=-\frac{d}{dt}\int_{\partial\Omega}\eta(t)(w\cdot\nabla n\cdot w)F_2dS+\int_{\partial\Omega}F_2\tilde{h}\cdot\dot{w}dS
+\eta'(t)\int_{\partial\Omega}(w\cdot\nabla n\cdot w)F_2dS\nonumber\\
&\quad-\int\eta(t)\nabla w^i\times u^\bot\cdot\nabla(F_2\tilde{h}^i)dx
+\int\eta(t)F_2\tilde{h}^i\nabla\times u^\bot\cdot\nabla w^idx\nonumber\\
&\le -\frac{d}{dt}\int_{\partial\Omega}\eta(t)(w\cdot\nabla n\cdot w)F_2dS
+C\eta(t)\|\nabla F_2\|_{L^2}\|w\|_{L^6}\|\dot{w}\|_{L^3}\nonumber\\
&\quad+C\eta(t)\big(\|F_2\|_{L^3}\|w\|_{L^6}\|\nabla\dot{w}\|_{L^2}
+\|F_2\|_{L^3}\|w\|_{L^6}\|\dot{w}\|_{L^2}
+\|F_2\|_{L^3}\|\nabla w\|_{L^2}\|\dot{w}\|_{L^6}\big)\nonumber\\
&\quad+C\eta(t)\big(\|\nabla w\|_{L^2}\|w\|_{L^6}\|u\|_{L^6}\|\nabla F_2\|_{L^6}+\|\nabla u\|_{L^4}\|\nabla w\|_{L^4}\|w\|_{L^6}\|F_2\|_{L^3}\big)\nonumber\\
&\quad+|\eta'(t)|\big(\|\nabla w\|_{L^2}\|w\|_{L^6}\|F_2\|_{L^3}+\|w\|_{L^4}^2\|F_2\|_{L^2}
+\|\nabla F_2\|_{L^2}\|w\|_{L^4}^2\big)\nonumber\\
&\le-\frac{d}{dt}\int_{\partial\Omega}\eta(t)(w\cdot\nabla n\cdot w)F_2dS
+C|\eta'(t)|\big(\|\rho\dot{w}\|_{L^2}^2+\|\nabla u\|_{L^2}^2+\|w\|_{H^1}^2+\|\nabla w\|_{L^2}^4\big)\nonumber\\
&\quad+\delta\eta(t)\|\nabla\dot{w}\|_{L^2}^2
+C\eta(t)\big(\|\rho\dot{w}\|_{L^2}^2\|\nabla w\|_{L^2}^2+\|\nabla u\|_{L^2}^6+\|\nabla w\|_{L^2}^6+\|\nabla u\|_{L^2}^4+\|w\|_{H^1}^4\big)\nonumber\\
&\quad+C\eta(t)\big(\|\nabla u\|_{L^4}^4+\|\nabla w\|_{L^4}^4\big)
+C\eta(t)\big(\|\nabla w\|_{L^2}^3+\|\nabla u\|_{L^2}^3\big)\|\|\nabla F_2\|_{L^6},
\end{align}
due to
\begin{align}
\|\nabla F_2\|_{L^6}&\le C\big(\|\rho\dot{w}\|_{L^6}+\|{\rm curl}\,u\|_{L^6}+\|w\|_{L^6}\big)\nonumber\\
&\le C\big(\|\rho\dot{w}\|_{L^6}+\|\rho\dot{u}\|_{L^2}+\|\rho\dot{w}\|_{L^2}
+\|\rho\nabla(\Phi-\Phi_s)\|_{L^2}+\|(\rho-\rho_s)\nabla\Phi_s\|_{L^2}\big)\nonumber\\[3pt]
&\quad+C\big(\|b\cdot\nabla b\|_{L^2}+\|\nabla u\|_{L^2}+\|\nabla w\|_{L^2}+\|w\|_{L^2}\big)\nonumber\\
&\le C\big(\|\nabla\dot{w}\|_{L^2}+\|\nabla\dot{w}\|_{L^2}+\|\nabla b\|_{L^2}^\frac32\|{\rm curl}^2b\|_{L^2}^\frac12\big)\nonumber\\
&\quad+C\big(\|\nabla b\|_{L^2}^2+\|\nabla u\|_{L^2}^2+\|\nabla w\|_{L^2}^2+\|w\|_{L^2}+1\big).
\end{align}
We infer from \eqref{z3.24} that
\begin{align}
J_2&=\int_{\partial\Omega}\eta(t)F_{2t}\dot{w}\cdot ndS-\int\eta(t)F_{2t}{\rm div}\,\dot{w}dx
-\int\eta(t)u\cdot\nabla\dot{w}\cdot\nabla F_2dx\nonumber\\
&=-\int_{\partial\Omega}\eta(t)F_{2t}(w\cdot\nabla n\cdot w)dS
-(2c_d+c_0)\int\eta(t)({\rm div}\,\dot{w})^2dx+(2c_d+c_0)\int\eta(t){\rm div}\,\dot{w}\nabla u:\nabla wdx\nonumber\\
&\quad+(2c_d+c_0)\int\eta(t){\rm div}\,\dot{w}u\cdot\nabla F_2dx
-\int\eta(t)u\cdot\nabla\dot{w}\cdot\nabla F_2dx\nonumber\\
&\le -\int_{\partial\Omega}\eta(t)F_{2t}(w\cdot\nabla n\cdot w)dS
-(2c_d+c_0)\int\eta(t)({\rm div}\,\dot{w})^2dx+\delta\eta(t)\|\nabla\dot{w}\|_{L^2}^2\nonumber\\
&\quad+C\eta(t)\big(\|\nabla u\|_{L^4}^4+\|\nabla w\|_{L^4}^4+\|\nabla u\|_{L^2}^2\|\nabla F_2\|_{L^3}^2\big)\nonumber\\
&\le -\frac{d}{dt}\int_{\partial\Omega}\eta(t)(w\cdot\nabla n\cdot w)F_2dS-(2c_d+c_0)\int\eta(t)({\rm div}\,\dot{w})^2dx
+\delta\eta(t)\|\nabla\dot{w}\|_{L^2}^2\nonumber\\
&\quad+C|\eta'(t)|\big(\|\rho\dot{w}\|_{L^2}^2+\|\nabla u\|_{L^2}^2+\|w\|_{H^1}^2+\|\nabla w\|_{L^2}^4\big)
+C\eta(t)\big(\|\nabla u\|_{L^4}^4+\|\nabla w\|_{L^4}^4+\|\rho\dot{w}\|_{L^2}^2\|\nabla w\|_{L^2}^2\big)\nonumber\\
&\quad+C\eta(t)\big(\|\nabla u\|_{L^2}^2\|\nabla F_2\|_{L^2}\|\nabla F_2\|_{L^6}
+\|\nabla u\|_{L^2}^6+\|\nabla w\|_{L^2}^6+\|\nabla u\|_{L^2}^4+\|w\|_{H^1}^4\big)\nonumber\\
&\le  -\frac{d}{dt}\int_{\partial\Omega}\eta(t)(w\cdot\nabla n\cdot w)F_2dS-(2c_d+c_0)\int\eta(t)({\rm div}\,\dot{w})^2dx
+\delta\eta(t)\big(\|\nabla\dot{w}\|_{L^2}^2+\|\nabla\dot{u}\|_{L^2}^2\big)\nonumber\\
&\quad+C|\eta'(t)|\big(\|\rho\dot{w}\|_{L^2}^2+\|\nabla u\|_{L^2}^2+\|w\|_{H^1}^2+\|\nabla w\|_{L^2}^4\big)
+C\eta(t)\big(\|\rho\dot{w}\|_{L^2}^2\|\nabla w\|_{L^2}^2+\|\nabla u\|_{L^2}^4\|\rho\dot{w}\|_{L^2}^2\big)\nonumber\\
&\quad+C\eta(t)\big(\|\nabla u\|_{L^2}^6+\|\nabla w\|_{L^2}^6+\|\nabla b\|_{L^2}^6
+\|\nabla u\|_{L^4}^4+\|\nabla w\|_{L^4}^4+\|\nabla b\|_{L^2}^4+\|\nabla u\|_{L^2}^4+\|w\|_{H^1}^4\big)\nonumber\\
&\quad+C\eta(t)\big(\|\nabla u\|_{L^2}^2+\|\nabla w\|_{L^2}^2+\|\nabla b\|_{L^2}^2\|{\rm curl}^2b\|_{L^2}^2\big),
\end{align}
owing to
\begin{align}
F_{2t}&=(2c_d+c_0){\rm div}\,w_t\nonumber\\
&=(2c_d+c_0){\rm div}\,\dot{w}-(2c_d+c_0){\rm div}\,(u\cdot\nabla w)\nonumber\\
&=(2c_d+c_0){\rm div}\,\dot{w}-(2c_d+c_0)u\cdot\nabla{\rm div}\,w-(2c_d+c_0)\nabla u:\nabla w\nonumber\\
&=(2c_d+c_0){\rm div}\,\dot{w}-(2c_d+c_0)\nabla u:\nabla w-\nabla F_2\cdot u.
\end{align}

3. By a direct calculation, one obtains that
\begin{align}
J_3&=-(\mu+\mu_r)\int\eta(t)\dot{u}\cdot(\curl{\rm curl}\,u_t)dx-(\mu+\mu_r)\int\eta(t)\dot{u}\cdot(\curl{\rm curl}\,u){\rm div}\,udx\nonumber\\
&\quad-(\mu+\mu_r)\int\eta(t)u^i\dot{u}\cdot\curl(\partial_i{\rm curl}\,u)dx\nonumber\\
&=-(\mu+\mu_r)\int\eta(t)|{\rm curl}\,\dot{u}|^2dx+(\mu+\mu_r)\int\eta(t){\rm curl}\,\dot{u}\cdot{\rm curl}\,(u\cdot\nabla u)dx\nonumber\\
&\quad+(\mu+\mu_r)\int\eta(t)({\rm curl}\,u\times\dot{u})\cdot\nabla{\rm div}\,udx
-(\mu+\mu_r)\int\eta(t){\rm div}\,u({\rm curl}\,u\cdot{\rm curl}\,\dot{u})dx\nonumber\\
&\quad-(\mu+\mu_r)\int\eta(t)u^i{\rm div}\,(\partial_i{\rm curl}\,u\times\dot{u})dx
-(\mu+\mu_r)\int\eta(t)u^i\partial_i{\rm curl}\,u\cdot{\rm curl}\,\dot{u}dx\nonumber\\
&=-(\mu+\mu_r)\int\eta(t)|{\rm curl}\,\dot{u}|^2dx+(\mu+\mu_r)\int\eta(t){\rm curl}\,\dot{u}\partial_iu\times\nabla u^idx\nonumber\\
&\quad+(\mu+\mu_r)\int\eta(t)({\rm curl}\,u\times\dot{u})\cdot\nabla{\rm div}\,udx
-(\mu+\mu_r)\int\eta(t){\rm div}\,u({\rm curl}\,u\cdot{\rm curl}\,\dot{u})dx\nonumber\\
&\quad-(\mu+\mu_r)\int\eta(t)u^i{\rm div}\,(\partial_i{\rm curl}\,u\times\dot{u})dx\nonumber\\
&=-(\mu+\mu_r)\int\eta(t)|{\rm curl}\,\dot{u}|^2dx+(\mu+\mu_r)\int\eta(t){\rm curl}\,\dot{u}\partial_iu\times\nabla u^idx\nonumber\\
&\quad+(\mu+\mu_r)\int\eta(t)({\rm curl}\,u\times\dot{u})\cdot\nabla{\rm div}\,udx
-(\mu+\mu_r)\int\eta(t){\rm div}\,u({\rm curl}\,u\cdot{\rm curl}\,\dot{u})dx\nonumber\\
&\quad-(\mu+\mu_r)\int\eta(t) u\cdot\nabla{\rm div}\,({\rm curl}\,u\times\dot{u})dx
+(\mu+\mu_r)\int\eta(t) u^i{\rm div}\,({\rm curl}\,u\times\partial_i\dot{u})dx\nonumber\\
&=-(\mu+\mu_r)\int\eta(t)|{\rm curl}\,\dot{u}|^2dx+(\mu+\mu_r)\int\eta(t){\rm curl}\,\dot{u}\nabla_iu\times\nabla u^idx\nonumber\\
&\quad-(\mu+\mu_r)\int\eta(t){\rm div}\,u({\rm curl}\,u\cdot{\rm curl}\,\dot{u})dx
-(\mu+\mu_r)\int\eta(t)\nabla u^i\cdot({\rm curl}\,u\times\partial_i\dot{u})dx\nonumber\\
&\le \delta\eta(t)\|\nabla\dot{u}\|_{L^2}^2+C\eta(t)\|\nabla u\|_{L^4}^4-(\mu+\mu_r)\eta(t)\|{\rm curl}\,\dot{u}\|_{L^2}^2,
\end{align}
due to
\begin{align*}
\curl(\dot{u}{\rm div}\,u)&={\rm div}\,u{\rm curl}\,\dot{u}+\nabla{\rm div}\,u\times\dot{u},\\
{\rm div}\,(\partial_i{\rm curl}\,u\times\dot{u})&=\dot{u}\cdot\curl(\partial_i{\rm curl}\,u)-\partial_i{\rm curl}\,u\cdot\curl\dot{u},\\
\int{\rm curl}\,u\cdot(\nabla{\rm div}\,u\times\dot{u})dx&=-\int({\rm curl}\,u\times\dot{u})\cdot\nabla{\rm div}\,udx,\\
\int{\rm curl}\,\dot{u}\cdot{\rm curl}\,(u\cdot\nabla u)dx&=\int{\rm curl}\,\dot{u}\cdot{\rm curl}\,(u^i\partial_iu)dx \\
& =\int{\rm curl}\,\dot{u}\big(u^i{\rm curl}\,\partial_iu+\partial_iu\cdot\nabla u^i\big)dx\nonumber\\
&=\int u^i\partial_i{\rm curl}\,u\cdot{\rm curl}\,\dot{u}dx+\int{\rm curl}\,\dot{u}\partial_iu\times\nabla u^idx,
\end{align*}
and
\begin{align*}
\int u\cdot\nabla{\rm div}\,({\rm curl}\,u\times\dot{u})dx
&=\int u^i\partial_i{\rm div}\,({\rm curl}\,u\times\dot{u})dx\nonumber\\
&=\int u^i\partial_i(\dot{u}\cdot\curl{\rm curl}\,u-{\rm curl}\,\dot{u}\cdot{\rm curl}\,u)dx\nonumber\\
&=\int u^i(\dot{u}\cdot\partial_i\curl{\rm curl}\,u-{\rm curl}\,\dot{u}\cdot\partial_i{\rm curl}\,u)dx\nonumber\\
&\quad+\int u^i(\partial_i\dot{u}\cdot\curl{\rm curl}\,udx-\partial_i{\rm curl}\,\dot{u}\cdot{\rm curl}\,u)dx\nonumber\\
&=\int u^i{\rm div}\,(\partial_i{\rm curl}\,u\times\dot{u})dx+\int u^i{\rm div}\,({\rm curl}\,u\times\partial_i\dot{u})dx.
\end{align*}

4. Similarly, we get that
\begin{align}
J_4&=-(c_a+c_d)\int\eta(t)\dot{w}\cdot(\curl{\rm curl}\,w_t)dx-(c_a+c_d)\int\eta(t)\dot{w}\cdot(\curl{\rm curl}\,w){\rm div}\,udx\nonumber\\
&\quad-(c_a+c_d)\int\eta(t)u^i\dot{w}\cdot\curl(\partial_i{\rm curl}\,w)dx\nonumber\\
&=-(c_a+c_d)\int\eta(t)|{\rm curl}\,\dot{w}|^2dx+(c_a+c_d)\int\eta(t){\rm curl}\,\dot{w}{\rm curl}\,(u\cdot\nabla w)dx\nonumber\\
&\quad+(c_a+c_d)\int\eta(t)({\rm curl}\,w\times\dot{w})\cdot\nabla{\rm div}\,udx
-(c_a+c_d)\int\eta(t){\rm div}\,u{\rm curl}\,\dot{w}\cdot{\rm curl}\,wdx\nonumber\\
&\quad-(c_a+c_d)\int\eta(t)u^i{\rm div}\,(\partial_i{\rm curl}\,w\times\dot{w})dx
-(c_a+c_d)\int\eta(t)u^i\partial_i{\rm curl}\,w\cdot\curl\dot{w}dx\nonumber\\
&=-(c_a+c_d)\int\eta(t)|{\rm curl}\,\dot{w}|^2dx+(c_a+c_d)\int\eta(t){\rm curl}\,\dot{w}\cdot(\nabla u^i\times\partial_iw)dx\nonumber\\
&\quad+(c_a+c_d)\int\eta(t){\rm curl}\,w\times\nabla u^i\cdot\partial_i\dot{w}dx
-(c_a+c_d)\int\eta(t){\rm div}\,u{\rm curl}\,\dot{w}\cdot{\rm curl}\,wdx\nonumber\\
&\le \delta\eta(t)\|\nabla\dot{w}\|_{L^2}^2+C\eta(t)\big(\|\nabla w\|_{L^4}^4+\|\nabla u\|_{L^4}^4\big)
-(c_a+c_d)\int\eta(t)|{\rm curl}\,\dot{w}|^2dx.
\end{align}

5. By Lemma \ref{l23}, H{\"o}lder's inequality, Sobolev's inequality, \eqref{3.2}, and \eqref{z2.5}, we have
\begin{align}\label{t3.36}
&\Big|-2\mu_r\int_{\partial\Omega}\eta(t)\dot{u}\cdot(\dot{w}\times n)dS\Big|\nonumber \\
&\le C\eta(t)\int_{\partial\Omega}|\dot{u}||\dot{w}|dS\nonumber \\
&\le C\eta(t)\||\dot{u}||\dot{w}|\|_{W^{1,1}}\nonumber \\
&\le C\eta(t)\big(\|\dot{u}\|_{L^2}\|\nabla\dot{w}\|_{L^2}
+\|\nabla\dot{u}\|_{L^2}\|\dot{w}\|_{L^2}+\|\dot{u}\|_{L^2}\|\dot{w}\|_{L^2}\big) \nonumber\\
&\le \delta\eta(t)(\|\nabla\dot{u}\|_{L^2}^2+\|\nabla\dot{w}\|_{L^2}^2)+C\eta(t)(\|\dot{u}\|_{L^2}^2+\|\dot{w}\|_{L^2}^2),
\end{align}
which together with \eqref{a6}, H\"older's inequality, and integration by parts over $\Omega$ implies that
\begin{align}
J_5+J_6&=2\mu_r\int\eta(t)\dot{u}\cdot({\rm curl}\,w_t+\partial_j(u^j{\rm curl}\,w))dx
+2\mu_r\int\eta(t)\dot{w}\cdot({\rm curl}\,w_t+\partial_j(u^j{\rm curl}\,u))dx\nonumber\\
&=2\mu_r\int\eta(t)(\dot{u}\cdot{\rm curl}\,\dot{w}+\dot{w}\cdot{\rm curl}\,\dot{u})dx-2\mu_r\int\eta(t)\dot{u}\cdot{\rm curl}\,(u\cdot\nabla w)dx\nonumber\\
&\quad
+2\mu_r\int\eta(t)\dot{u}\cdot\partial_j(u^j{\rm curl}\,w)dx-2\mu_r\int\eta(t)\dot{w}\cdot{\rm curl}(u\cdot\nabla u)dx\nonumber\\
&\quad
+2\mu_r\int\eta(t)\dot{w}\cdot\partial_j(u^j{\rm curl}\,u)dx\nonumber\\
&=2\mu_r\int\eta(t)(\dot{u}\cdot{\rm curl}\,\dot{w}+\dot{w}\cdot{\rm curl}\,\dot{u})dx-2\mu_r\int\eta(t)\dot{u}\cdot{\rm curl}\,(u^j\partial_j w)dx\nonumber\\
&\quad
+2\mu_r\int\eta(t)\dot{u}\cdot\partial_j(u^j{\rm curl}\,w)dx-2\mu_r\int\eta(t)\dot{w}\cdot{\rm curl}\,(u^j\partial_j u)dx\nonumber\\
&\quad
+2\mu_r\int\eta(t)\dot{w}\cdot\partial_j(u^j{\rm curl}\,u)dx\nonumber\\
&=2\mu_r\int\eta(t)(\dot{u}\cdot{\rm curl}\,\dot{w}+\dot{w}\cdot{\rm curl}\,\dot{u})dx-2\mu_r\int\eta(t)\dot{u}\cdot(\nabla u^j\times\partial_jw+u\cdot\nabla{\rm curl}\,w)dx\nonumber\\
&\quad+2\mu_r\int\eta(t)\dot{u}\cdot(\divv u{\rm curl}\,w+u\cdot\nabla{\rm curl}w)dx-2\mu_r\int\eta(t)\dot{w}\cdot(\nabla u^j\times\partial_ju+u\cdot\nabla{\rm curl}\,u)dx\nonumber\\
&\quad+2\mu_r\int\eta(t)\dot{w}\cdot(\divv u{\rm curl}\,u+u\cdot\nabla{\rm curl}\,u)dx\nonumber\\
&=4\mu_r\int\eta(t)\dot{w}\cdot{\rm curl}\,\dot{u}dx-2\mu_r\int_{\partial\Omega}\eta(t)\dot{u}\cdot(\dot{w}\times n)dS\nonumber\\
&\quad+2\mu_r\int\eta(t)\dot{u}\cdot(\divv u{\rm curl}\,w-\nabla u^j\times\partial_jw)dx+2\mu_r\int\eta(t)\dot{w}\cdot(\divv u{\rm curl}\,u-\nabla u^j\times\partial_ju)dx\nonumber\\
&\le 4\mu_r\int\eta(t)\dot{w}\cdot{\rm curl}\,\dot{u}dx+\delta\eta(t)(\|\nabla\dot{u}\|_{L^2}^2
+\|\nabla\dot{w}\|_{L^2}^2)+C\eta(t)\|\dot{w}\|_{L^2}^2+C\eta(t)\|\dot{u}\|_{L^2}^2\nonumber\\
&\quad+C\eta(t)\|\nabla u\|_{L^4}^4+C\eta(t)\|\nabla w\|_{L^4}^4.
\end{align}

6. Based on H\"older's inequality and integration by parts, we get that
\begin{align}
J_7&=-4\mu_r\int\eta(t)\dot{w}^j\big(\dot{w}^j-u\cdot\nabla w^j+{\rm div}\,(uw^j)\big)dx\nonumber\\
&=-4\mu_r\int\eta(t)|\dot{w}|^2dx-4\mu_r\int\eta(t)\dot{w}\cdot w{\rm div}\,udx\nonumber\\
&\le -4\mu_r\int\eta(t)|\dot{w}|^2dx+\delta\eta(t)\|\dot{w}\|_{L^2}^2
+C\|\nabla u\|_{L^4}^4+C\|\nabla w\|_{L^2}^4.
\end{align}
By H\"older's inequality, \eqref{3.2}, Sobolev's inequality, \eqref{z2.5}, and \eqref{3.8}, we derive from Lemmas \ref{l24} and \ref{l25} that
\begin{align}
J_8&\le C\eta(t)\|\dot{u}\|_{L^6}\|b\|_{L^3}\|\nabla b_t\|_{L^2}
+C\eta(t)\|\nabla\dot{u}\|_{L^2}\|b\|_{L^6}\|\nabla b\|_{L^6}\|u\|_{L^6}\nonumber\\
&\le C\eta(t)\big(\|\nabla\dot{u}\|_{L^2}+\|\nabla u\|_{L^2}^2\big)
\|b\|_{L^2}^{\frac12}\|b\|_{L^6}^{\frac12}\|\nabla b_t\|_{L^2} \notag \\
& \quad +C\eta(t)\|\nabla\dot{u}\|_{L^2}\big(\|\nabla b\|_{L^2}+\|{\rm curl}^2b\|_{L^2}\big)\|\nabla u\|_{L^2}\nonumber\\
&\le \delta\eta(t)\|\nabla\dot{u}\|_{L^2}^2+C\eta(t)E_0^\frac12\|\nabla b\|_{L^2}\|\nabla b_t\|_{L^2}^2+C\eta(t)\|\nabla u\|_{L^2}^4
\nonumber\\
&\quad+C\eta(t)\|\nabla b\|_{L^2}^4+C\eta(t)\|\nabla u\|_{L^2}^2\|{\rm curl}^2b\|_{L^2}^2+C\eta(t)\|\nabla b\|_{L^2}^4\|\nabla u\|_{L^2}^2.
\end{align}
Using \eqref{2.2} and \eqref{3.9}, we see that
\begin{align}
J_9&=\int\eta(t)(\rho-\rho_s)u\cdot\nabla^2\Phi_s\cdot\dot{u}dx
-\int\eta(t)\dot{u}\cdot\nabla\Phi_s{\rm div}\,(\rho_su)dx\nonumber\\
&\le C\eta(t)\big(\|\dot{u}\|_{L^6}\|\rho-\rho_s\|_{L^2}\|\nabla u\|_{L^2}+\|\nabla\dot{u}\|_{L^2}\|\nabla u\|_{L^2}\big)\|\nabla\Phi_s\|_{H^2}\nonumber\\
&\le \delta\eta(t)\|\nabla\dot{u}\|_{L^2}^2+C\eta(t)\|\rho-\rho_s\|_{L^2}\|\nabla u\|_{L^2}^3+C\eta(t)\|\nabla u\|_{L^2}^2\nonumber\\
&\le \delta\eta(t)\|\nabla\dot{u}\|_{L^2}^2+C\eta(t)\|\nabla u\|_{L^2}^2+C\eta(t)\|\nabla u\|_{L^2}^4,
\end{align}
and similarly, by \eqref{3.2}, \eqref{3.9}, and Lemma \ref{l29}, one deduces that
\begin{align}
J_{10}&=\int\eta(t)\rho\dot{u}\cdot\nabla(\Phi-\Phi_s)dx+\int\eta(t)\rho u\cdot\nabla^2(\Phi-\Phi_s)\cdot\dot{u}dx\nonumber\\
&\le C\eta(t)\big(\|\sqrt{\rho}\dot{u}\|_{L^2}\|\nabla(\Phi-\Phi_s)_t\|_{L^2}
+\|\nabla^2(\Phi-\Phi_s)\|_{L^2}\|\sqrt{\rho}u\|_{L^3}\|\dot{u}\|_{L^6}\big)\nonumber\\
&\le C\eta(t)\|\sqrt{\rho}\dot{u}\|_{L^2}\|\rho u\|_{L^2}
+C\eta(t)\|\rho-\rho_s\|_{L^2}\|\nabla u\|_{L^2}\big(\|\nabla\dot{u}\|_{L^2}+\|\nabla u\|_{L^2}^2\big)\nonumber\\
&\le \delta\eta(t)\|\nabla\dot{u}\|_{L^2}^2
+C\eta(t)\|\nabla u\|_{L^2}^2+C\eta(t)\|\nabla u\|_{L^2}^4.
\end{align}
Putting above estimates on $J_i\ (i=1, 2,\cdots, 10)$ into \eqref{3.44}, we obtain after choosing $\delta$ suitably small that
\begin{align}\label{3.59}
&\frac{d}{dt}\Big[\frac{\eta(t)}{2}\big(\|\sqrt{\rho}\dot{u}\|_{L^2}^2
+\|\sqrt{\rho}\dot{w}\|_{L^2}^2\big)\Big]
+(2\mu+\lambda)\eta(t)\|{\rm div}\,\dot{u}\|_{L^2}^2+\mu\eta(t)\|{\rm curl}\,\dot{u}\|_{L^2}^2\nonumber\\
&\quad+(2c_d+c_0)\eta(t)\|{\rm div}\,\dot{w}\|_{L^2}^2+(c_d+c_a)\eta(t)\|{\rm curl}\,\dot{w}\|_{L^2}
+4\mu_r\eta(t)\Big\|\frac12{\rm curl}\,\dot{u}-\dot{w}\Big\|_{L^2}^2\nonumber\\
&\le -\frac{d}{dt}\int_{\partial\Omega}\eta(t)(u\cdot\nabla n\cdot u)F_1dS-\frac{d}{dt}\int_{\partial\Omega}\eta(t)(w\cdot\nabla n\cdot w)F_2dS
+ C\delta\eta(t)\big(\|\nabla\dot{u}\|_{L^2}^2+\|\nabla\dot{w}\|_{L^2}^2\big)\nonumber\\
&\quad+C|\eta'(t)|\big(\|\sqrt{\rho}\dot{u}\|_{L^2}^2+\|\sqrt{\rho}\dot{w}\|_{L^2}^2+\|\nabla u\|_{L^2}^2+\|\nabla w\|_{L^2}^2+\|\nabla b\|_{L^2}^2+\|\nabla u\|_{L^2}^4
\big)\nonumber\\
&\quad+C|\eta'(t)|\big(\|{\rm curl}^2b\|_{L^2}^2+\|\nabla u\|_{L^2}^4\|\nabla b\|_{L^2}^2+\|\nabla u\|_{L^2}^2\|\nabla b\|_{L^2}^2
+\|\nabla w\|_{L^2}^4\big)\nonumber\\
&\quad+C\eta(t)\big(\|\sqrt{\rho}\dot{u}\|_{L^2}^2\|\nabla u\|_{L^2}^2+\|\nabla u\|_{L^2}^6+\|\nabla u\|_{L^4}^4
+\|\nabla u\|_{L^2}^2+\|\nabla w\|_{L^2}^2
+\|\nabla b\|_{L^2}^2\big)\nonumber\\
&\quad+C\eta(t)\big(\|\nabla u\|_{L^2}^4+\|\nabla w\|_{L^2}^4+\|\nabla b\|_{L^2}^4+\|{\rm curl}^2b\|_{L^2}^2\|\nabla b\|_{L^2}\|\nabla u\|_{L^2}^2
+\|\nabla b\|_{L^2}^6\big)\nonumber\\
&\quad+C\eta(t)\big(\|\nabla u\|_{L^2}^4\|\rho\dot{u}\|_{L^2}^2+\|\nabla w\|_{L^2}^6
+\|\nabla u\|_{L^2}^4\|{\rm curl}^2b\|_{L^2}^2+\|b_t\|_{L^2}^2\|\nabla b\|_{L^2}^4\big)\nonumber\\
&\quad+C\delta\eta(t)\|\nabla b\|_{L^2}\|{\rm curl}^2b\|_{L^2}^3
+C\eta(t)\big(\|\nabla w\|_{L^4}^4+\|\nabla b\|_{L^2}^2\|{\rm curl}^2b\|_{L^2}^2\big)\nonumber\\
&\quad+C\eta(t)\|\nabla b\|_{L^2}^4\|{\rm curl}^2b\|_{L^2}^2+C\eta(t)E_0^\frac12\|\nabla b\|_{L^2}\|\nabla b_t\|_{L^2}^2
+C\eta(t)(\|\dot{u}\|_{L^2}^2+\|\dot{w}\|_{L^2}^2),
\end{align}
due to
\begin{align*}
-\mu_r\int\eta(t)|{\rm curl}\,\dot{u}|^2dx+4\mu_r\int\eta(t)\dot{w}\cdot{\rm curl}\,\dot{u}dx-4\mu_r\int\eta(t)|\dot{w}|^2dx
=-4\mu_r\int\eta(t)\Big|\frac12{\rm curl}\,\dot{u}-\dot{w}\Big|^2dx.
\end{align*}

7. Noticing that
\begin{align}\label{z3.36}
\begin{cases}
b_{tt}-\curl{\rm curl}\,b_t=\big(b\cdot\nabla u-u\cdot\nabla b-b\cdot\nabla u\big)_t \quad&{\rm in}~\Omega,\\
b_t\cdot n=0, \quad {\rm curl}\,b_t\times n=0 \quad&{\rm on}~\partial\Omega.
\end{cases}
\end{align}
Multiplying \eqref{z3.36}$_1$ by $\eta(t)b_t$ and integration by parts, we find that
\begin{align}\label{3.61}
&\frac{d}{dt}\Big(\frac{\eta(t)}{2}\|b_t\|_{L^2}^2\Big)
+\eta(t)\|{\rm curl}\,b_t\|_{L^2}^2-\frac12\eta'(t)\|b_t\|_{L^2}^2\nonumber\\
&=\int\eta(t)(b_t\cdot\nabla u-u\cdot\nabla b_t-b_t{\rm div}\,u)\cdot b_tdx+\int\eta(t)(b\cdot\nabla\dot{u}-\dot{u}\cdot\nabla b-b{\rm div}\,\dot{u})\cdot b_tdx\nonumber\\
&\quad-\int\eta(t)(b\cdot\nabla(u\cdot\nabla u)-u\cdot\nabla u\cdot\nabla b-b{\rm div}\,(u\cdot\nabla u))\cdot b_tdx\triangleq\sum_{i=1}^3R_i.
\end{align}
It follows from H\"older's inequality, \eqref{2.3}, \eqref{a6}, and Young's inequality that
\begin{align*}
R_1&\le C\eta(t)\big(\|b_t\|_{L^3}\|b_t\|_{L^6}\|\nabla u\|_{L^2}+\|u\|_{L^6}\|b_t\|_{L^3}\|\nabla b_t\|_{L^2}\big)\nonumber\\
&\le C\eta(t)\|b_t\|_{L^2}^{\frac12}\|\nabla b_t\|_{L^2}^{\frac32}\|\nabla u\|_{L^2}\nonumber\\
&\le \delta\eta(t)\|\nabla b_t\|_{L^2}^2+C\eta(t)\|\nabla u\|_{L^2}^4\|b_t\|_{L^2}^2,\\
R_2&\le C\eta(t)\|b\|_{L^6}\|b_t\|_{L^3}\|\nabla\dot{u}\|_{L^2}+C\eta(t)\|\dot{u}\|_{L^6}\|\nabla b\|_{L^2}\|b_t\|_{L^3}\nonumber\\
&\le C\eta(t)\big(\|\nabla\dot{u}\|_{L^2}
+\|\nabla u\|_{L^2}^2\big)\|\nabla b\|_{L^2}\|b_t\|_{L^2}^\frac12\|\nabla b_t\|_{L^2}^\frac12\nonumber\\
&\le \delta\eta(t)\big(\|\nabla\dot{u}\|_{L^2}^2+\|\nabla b_t\|_{L^2}^2\big)+C\eta(t)\|\nabla u\|_{L^2}^4
+C\eta(t)\|\nabla b\|_{L^2}^4\|b_t\|_{L^2}^2,\\
R_3&=-\int_{\partial\Omega}\eta(t)b\cdot b_t(u\cdot\nabla u\cdot n)dS+\int\eta(t)b\cdot\nabla b_t\cdot(u\cdot\nabla u)dx-\int\eta(t)(u\cdot\nabla u)\cdot\nabla b\cdot b_tdx\nonumber\\
&\quad+\int\eta(t)u\cdot\nabla u\cdot\nabla b_t\cdot bdx+\int\eta(t)u\cdot\nabla u\cdot\nabla b\cdot b_tdx\nonumber\\
&\le \int_{\partial\Omega}\eta(t)b\cdot b_t(u\cdot\nabla n\cdot u)dS
+2\eta(t)\|b\|_{L^{12}}\|\nabla b_t\|_{L^2}\|u\|_{L^6}\|\nabla u\|_{L^4}\nonumber\\
&\le \int_{\partial\Omega}\eta(t)b\cdot b_t(u\cdot\nabla n\cdot u)dS
+\delta\eta(t)\|\nabla b_t\|_{L^2}^2+C\eta(t)\|\nabla u\|_{L^2}^4\|{\rm curl}^2b\|_{L^2}^2\nonumber\\
&\quad+C\eta(t)\|\nabla u\|_{L^4}^4+C\eta(t)\big(\|\nabla u\|_{L^2}^4+\|\nabla b\|_{L^2}^4\big),
\end{align*}
where we have used \eqref{2.5}, \eqref{3.16}, and
\begin{align*}
\|b\|_{L^{12}}& \leq C\||b|^2\|_{H^1}^{\frac12}\le C\|b\|_{L^4}+C\|\nabla |b|^2\|_{L^2}^{\frac12} \\
& \le C\|\nabla b\|_{L^2}+C\|b\|_{L^6}^{\frac12}\|\nabla b\|_{L^3}^{\frac12} \\
& \le C\|\nabla b\|_{L^2}+C\|\nabla b\|_{L^2}^\frac34\|\nabla b\|_{L^6}^\frac14 \\
& \le C\|\nabla b\|_{L^2}+C\|\nabla b\|_{L^2}^2+C\|\nabla b\|_{L^2}^\frac34\|{\rm curl}^2b\|_{L^2}^\frac14,
\end{align*}
due to Sobolev's inequality, Lemma \ref{l21}, and Lemma \ref{l25}.
Thus, substituting the above estimates on $R_i\ (i=1, 2, 3)$ into \eqref{3.61} shows that
\begin{align}\label{3.62}
&\frac{d}{dt}\Big(\frac{\eta(t)}{2}\|b_t\|_{L^2}^2\Big)
+\eta(t)\|{\rm curl}\, b_t\|_{L^2}^2-\frac12\eta'(t)\|b_t\|_{L^2}^2\nonumber\\
&\le \int_{\partial\Omega}\eta(t)(b\cdot b_t)(u\cdot\nabla n\cdot u)dS+ C\delta \eta(t)\big(\|\nabla\dot{u}\|_{L^2}^2+\|\nabla b_t\|_{L^2}^2\big)+C\eta(t)\|\nabla b\|_{L^2}^4\|b_t\|_{L^2}^2\nonumber\\
&\quad+C\eta(t)\|\nabla u\|_{L^4}^4+C\eta(t)\|\nabla u\|_{L^2}^4\|{\rm curl}^2b\|_{L^2}^2+C\eta(t)\big(\|\nabla u\|_{L^2}^4+\|\nabla b\|_{L^2}^4\big).
\end{align}
By Lemma \ref{l23}, H{\"o}lder's inequality, Sobolev's inequality, and \eqref{z2.5}, we arrive at
\begin{align*}
\left|\int_{\partial\Omega}(b\cdot b_t)(u\cdot\nabla n\cdot u)dS\right|
&\le C\int_{\partial\Omega}|u|^2|b||b_t|dS\le C\||u|^2|b||b_t|\|_{W^{1,1}} \\
&\le C\big(\|u\|_{L^4}^2\|b\|_{L^3}\|b_t\|_{L^6}
+\|u\|_{L^6}\|\nabla u\|_{L^2}\|b\|_{L^6}\|b_t\|_{L^6}\big) \\
&\quad+C\big(\|u\|_{L^6}^2\|\nabla b_t\|_{L^2}\|b\|_{L^6}+\|u\|_{L^6}^2\|\nabla b\|_{L^2}\|b_t\|_{L^6}\big) \\
&\le C\|u\|_{H^1}^2\|b\|_{H^1}\|b_t\|_{H^1}
\le C\|\nabla u\|_{L^2}^2\|\nabla b\|_{L^2}\|\nabla b_t\|_{L^2} \\
&\le \delta\|\nabla b_t\|_{L^2}^2+C\|\nabla u\|_{L^2}^4\|\nabla b\|_{L^2}^2.
\end{align*}
This together with \eqref{3.62} yields that
\begin{align*}
&\frac{d}{dt}\Big(\frac{\eta(t)}{2}\|b_t\|_{L^2}^2\Big)
+\eta(t)\|{\rm curl}\,b_t\|_{L^2}^2-\frac12\eta'(t)\|b_t\|_{L^2}^2\nonumber\\
&\le C\delta \eta(t)\big(\|\nabla\dot{u}\|_{L^2}^2+\|\nabla b_t\|_{L^2}^2\big)
+C\eta(t)\|\nabla b\|_{L^2}^4\|b_t\|_{L^2}^2+C\eta(t)\|\nabla u\|_{L^2}^4\|{\rm curl}^2b\|_{L^2}^2\nonumber\\
&\quad
+C\eta(t)\big(\|\nabla u\|_{L^2}^4+\|\nabla b\|_{L^2}^4\big)
+C\eta(t)\|\nabla u\|_{L^4}^4+C\eta(t)\|\nabla u\|_{L^2}^4\|\nabla b\|_{L^2}^2,
\end{align*}
which combined with \eqref{3.59} leads to \eqref{z3.18}.
\end{proof}

\begin{lemma}\label{l34}
Let the assumptions of Proposition \ref{p31} hold. Then there exists a positive constant $\varepsilon_2$ such that
\begin{align}\label{3.65}
A_2(\sigma(T))+\int_0^{\sigma(T)}\big(\|\sqrt{\rho}\dot{u}\|_{L^2}^2
+\|\sqrt{\rho}\dot{w}\|_{L^2}^2+\|b_t\|_{L^2}^2
+\|{\rm curl}^2b\|_{L^2}^2\big)dt\le 3K
\end{align}
provided that $E_0\le \varepsilon_2$.
\end{lemma}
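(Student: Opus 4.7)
The strategy is to invoke the differential inequalities \eqref{3.15} and \eqref{z3.18} with the trivial weight $\eta(t)\equiv 1$, so that $\eta'(t)=0$ annihilates the large block of $|\eta'(t)|$-terms on the right-hand sides. Adding the two inequalities yields a control of the schematic form
\[
\frac{d}{dt}\mathcal F(t)+\tfrac{1}{2}\|\sqrt{\rho}\dot u\|_{L^2}^2+\tfrac{1}{2}\|\sqrt{\rho}\dot w\|_{L^2}^2+\|b_t\|_{L^2}^2+\|\curl^2 b\|_{L^2}^2\le \mathcal R(t),
\]
where $\mathcal F(t)$ aggregates the positive quadratic quantities on the left-hand sides of \eqref{3.15} and \eqref{z3.18} together with the exact-derivative terms (the $\int(P-P_s)\divv u$, $\int(\rho-\rho_s)u\cdot\nabla\Phi_s$, magnetic, and boundary $F_1,F_2$ integrals), and $\mathcal R(t)$ contains only polynomial nonlinearities in $\|\nabla u\|_{L^2}$, $\|\nabla w\|_{L^2}$, $\|\nabla b\|_{L^2}$, $\|\sqrt{\rho}\dot u\|_{L^2}$, $\|\sqrt{\rho}\dot w\|_{L^2}$, and $\|\curl^2 b\|_{L^2}$. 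Integrating from $0$ to $\sigma(T)\le 1$, the initial value $\mathcal F(0)$ is bounded purely in terms of $M_1,M_2,M_3$ through the hypotheses \eqref{a10} and \eqref{a12}.

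I would first absorb the exact-derivative pieces that were grouped into $\mathcal F(t)$. The bulk integrals $\int(P-P_s)\divv u$ and $\int(\rho-\rho_s)u\cdot\nabla\Phi_s$ are dominated by $\tfrac18\|\nabla u\|_{L^2}^2+CE_0$ through Young's inequality combined with the energy estimates \eqref{3.8}--\eqref{3.9}; the magnetic term $\int(b\cdot\nabla b-\tfrac12\nabla|b|^2)\cdot u$ is bounded by $C\|b\|_{L^4}^2\|\nabla u\|_{L^2}$, which is controllable via \eqref{3.16}; and the boundary integrals $\int_{\partial\Omega}(u\cdot\nabla n\cdot u)F_1\,dS$ and its $(w,F_2)$ analogue are treated exactly as in the derivation of \eqref{3.24} using the trace theorem (Lemma \ref{l23}) and the effective viscous flux estimates of Lemma \ref{l28}. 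After these absorptions one recovers a coercive lower bound $\mathcal F(t)\gtrsim \|\nabla u\|_{L^2}^2+\|\nabla w\|_{L^2}^2+\|\nabla b\|_{L^2}^2 - CE_0$.

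Next I would dispose of the remainder $\int_0^{\sigma(T)}\mathcal R(t)\,dt$. Under the a priori hypotheses \eqref{3.6}, in particular $A_2(\sigma(T))\le 4K$ and $\sup\rho\le 2\hat\rho$, every high-order monomial $\|\nabla u\|_{L^2}^{2k}$, $\|\nabla w\|_{L^2}^{2k}$, $\|\nabla b\|_{L^2}^{2k}$ with $k\ge 2$ gains a prefactor $(4K)^{k-1}$ in front of $\|\nabla u\|_{L^2}^2$, and \eqref{3.9} gives $\int_0^{\sigma(T)}\|\nabla u\|_{L^2}^2\,dt\le CE_0$; hence these time integrals are $\le C(K)E_0$. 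The cubic $L^3$ and quartic $L^4$ gradient terms are interpolated via \eqref{2.3} and then reduced by Lemma \ref{l28} (in particular \eqref{2.16}, \eqref{2.17}, \eqref{2.22}) to polynomials in $\|\sqrt{\rho}\dot u\|_{L^2}$, $\|\sqrt{\rho}\dot w\|_{L^2}$, $\|\curl^2 b\|_{L^2}$ with coefficients carrying positive powers of $E_0$ or $\|\nabla u\|_{L^2}$; they are then absorbed into the LHS, while the magnetic contributions are controlled using \eqref{3.16}.

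The main obstacle is closing the loop between $\|\sqrt{\rho}\dot u\|_{L^2}^2$ and $\|\curl^2 b\|_{L^2}^2$ terms which appear on both sides: Lemma \ref{l28} trades $\|\nabla u\|_{L^6}$ for $\|\sqrt{\rho}\dot u\|_{L^2}$ plus small remainders, so bounding $\|\nabla u\|_{L^4}^4$ through Gagliardo--Nirenberg necessarily reintroduces material-derivative norms. A careful accounting of the exponents, using that $\sigma(t)\le 1$ on the interval of interest (so no artificial time weight is needed) and that the raw bound $A_2\le 4K$ suffices, shows that the coefficient in front of the reabsorbed quantities is of size $C(K)\bigl(E_0^{1/2}+\sup_{[0,\sigma(T)]}\|\nabla u\|_{L^2}^2\bigr)$, which is strictly smaller than $1/2$ once $\varepsilon_2$ is chosen small. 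Finally, I would define $K$ as (twice) the value of the bound on $\mathcal F(0)+\|\nabla b_0\|_{L^2}^2$ coming from $M_1,M_2,M_3$ and the constants in \eqref{3.16}, and then pick $\varepsilon_2\le\min\{\varepsilon_1,c(K)\}$ small enough so that the residual estimate, combined with a Gr\"onwall argument on $[0,\sigma(T)]$, yields the desired bound $A_2(\sigma(T))+\int_0^{\sigma(T)}(\|\sqrt{\rho}\dot u\|_{L^2}^2+\|\sqrt{\rho}\dot w\|_{L^2}^2+\|b_t\|_{L^2}^2+\|\curl^2 b\|_{L^2}^2)\,dt\le 3K$.
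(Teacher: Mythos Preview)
Your proposal is more elaborate than necessary and contains a concrete error. The paper's proof uses \emph{only} inequality \eqref{3.15} with $\eta(t)\equiv 1$; it never touches \eqref{z3.18} at this stage. With $\eta'\equiv 0$ the right-hand side of \eqref{3.15} reduces to terms of the form $\|\nabla u\|_{L^3}^3+\|\nabla w\|_{L^3}^3$ plus polynomials in $\|\nabla u\|_{L^2},\|\nabla w\|_{L^2},\|\nabla b\|_{L^2}$ and the harmless exact-derivative integrals. The cubic $L^3$ terms are the only nontrivial ones, and the paper disposes of them by a direct application of the effective-viscous-flux estimates in Lemma~\ref{l28}, giving \eqref{3.66}--\eqref{3.68}: one obtains $\|\nabla u\|_{L^3}^3\le \delta(\|\sqrt{\rho}\dot u\|_{L^2}^2+\|\sqrt{\rho}\dot w\|_{L^2}^2)+CE_0^{1/2}\|\curl^2 b\|_{L^2}^2+\text{(lower order)}$, which is absorbed into the left-hand side. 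After integration in time, the bootstrap hypothesis $A_2(\sigma(T))\le 4K$ together with \eqref{3.8}--\eqref{3.9} closes the estimate algebraically as in \eqref{3.70}; no Gr\"onwall argument is used.

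By contrast, adding \eqref{z3.18} forces your functional $\mathcal F(t)$ to include $\|\sqrt{\rho}\dot u\|_{L^2}^2+\|\sqrt{\rho}\dot w\|_{L^2}^2+\|b_t\|_{L^2}^2$, and then your claim that $\mathcal F(0)$ is ``bounded purely in terms of $M_1,M_2,M_3$'' is false: at $t=0$ these quantities are governed by the compatibility data $g_1,g_2$ in \eqref{a12} and by $\|b_0\|_{H^2}$, none of which is controlled by $M_1,M_2,M_3$. This would force $K$ to depend on $\|g_1\|_{L^2},\|g_2\|_{L^2}$, contradicting the dependence stated in Proposition~\ref{p31}. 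Moreover, \eqref{z3.18} drags in the quartic $\|\nabla u\|_{L^4}^4,\|\nabla w\|_{L^4}^4$ terms, the boundary integrals $\int_{\partial\Omega}(u\cdot\nabla n\cdot u)F_1\,dS$, and the undamped $\|\dot u\|_{L^2}^2+\|\dot w\|_{L^2}^2$ terms (see \eqref{t3.36}), all of which the paper deliberately postpones to Lemmas~\ref{l36}--\ref{l37} and~\ref{zl39} where they are handled with time weights $\sigma,\sigma^2$ and with constants depending on $\underline{\rho}$. In short: drop \eqref{z3.18} here; the lemma follows from \eqref{3.15} alone once you carry out the $L^3$ interpolation \eqref{3.66}--\eqref{3.68} carefully.
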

\begin{proof}[Proof]
In view of Lemma \ref{l28}, we have
\begin{align}\label{3.66}
\|\nabla u\|_{L^3}^3&\le C\big(\|\sqrt{\rho}\dot{u}\|_{L^2}+\|\sqrt{\rho}\dot{w}\|_{L^2}+\|\nabla w\|_{L^2}+\|b\cdot\nabla b\|_{L^2}
+\|\rho\nabla(\Phi-\Phi_s)\|_{L^2}+\|(\rho-\rho_s)\nabla\Phi_s\|_{L^2}\big)^\frac{3}{2}\nonumber\\
&\quad\times\big(\|\nabla u\|_{L^2}+\|P-P_s\|_{L^2}+\|b\|_{L^4}^2\big)^\frac32+C\big(\|\nabla u\|_{L^2}^3+\|P-P_s\|_{L^3}^3+\|b\|_{L^6}^6\big)\nonumber\\
&\le  C\big(\|\sqrt{\rho}\dot{u}\|_{L^2}+\|\sqrt{\rho}\dot{w}\|_{L^2}+\|\nabla w\|_{L^2}+\|b\cdot\nabla b\|_{L^2}
+\|\rho\nabla(\Phi-\Phi_s)\|_{L^2}+\|(\rho-\rho_s)\nabla\Phi_s\|_{L^2}\big)^\frac{3}{2}\nonumber\\
&\quad\times\big(\|\nabla u\|_{L^2}+\|\rho-\rho_s\|_{L^2}+\|b\|_{L^4}^2\big)^\frac32+C\big(\|\nabla u\|_{L^2}^3+\|b\|_{L^6}^6
+\|\rho-\rho_s\|_{L^2}^2\big)\nonumber\\
&\le \delta\big(\|\sqrt{\rho}\dot{u}\|_{L^2}^2+\|\sqrt{\rho}\dot{w}\|_{L^2}^2\big)+C\big(\|\nabla w\|_{L^2}^2+\|b\cdot\nabla b\|_{L^2}^2
+\|\nabla u\|_{L^2}^6+\|b\|_{L^4}^{12}\big)\nonumber\\
&\quad+C\big(\|\nabla u\|_{L^2}^2+\|\nabla u\|_{L^2}^4+\|\nabla b\|_{L^2}^6\big)+CE_0\nonumber\\
&\le \delta\big(\|\sqrt{\rho}\dot{u}\|_{L^2}^2+\|\sqrt{\rho}\dot{w}\|_{L^2}^2\big)+C\|\nabla w\|_{L^2}^2+CE_0^\frac12\|{\rm curl}^2b\|_{L^2}^2
+C\|\nabla b\|_{L^2}^4\nonumber\\
&\quad+C\big(\|\nabla u\|_{L^2}^2+\|\nabla u\|_{L^2}^4+\|\nabla b\|_{L^2}^6\big)+CE_0,
\end{align}
due to
\begin{align*}
\|P-P_s\|_{L^3}\le C\|P-P_s\|_{L^\infty}^\frac13\Big(\int|P-P_s|^2dx\Big)^\frac13\le C(\hat{\rho})\|\rho-\rho_s\|_{L^2}^\frac23.
\end{align*}
Similarly, by \eqref{2.22}, we have
\begin{align}\label{3.67}
\|\nabla w\|_{L^3}^3&\le C\big(\|\sqrt{\rho}\dot{u}\|_{L^2}+\|\sqrt{\rho}\dot{w}\|_{L^2}
+\|\rho\nabla(\Phi-\Phi_s)\|_{L^2}+\|(\rho-\rho_s)\nabla\Phi_s\|_{L^2}\nonumber\\
&\quad+\|\nabla u\|_{L^2}+\|b\cdot\nabla b\|_{L^2}+\|w\|_{L^2}\big)^\frac32\|\nabla w\|_{L^2}^\frac32+C\|\nabla w\|_{L^2}^3\nonumber\\
&\le \delta\big(\|\sqrt{\rho}\dot{w}\|_{L^2}^2+\|\sqrt{\rho}\dot{u}\|_{L^2}^2\big)+C\|\nabla w\|_{L^2}^2+C\|\nabla u\|_{L^2}^2+
CE_0^\frac12\|{\rm curl}^2b\|_{L^2}^2\nonumber\\
&\quad+C\big(\|\nabla w\|_{L^2}^4+\|\nabla w\|_{L^2}^6\big)+CE_0^2.
\end{align}
By \eqref{3.66}, \eqref{3.67}, and Lemma \ref{l33}, one can check that
\begin{align}\label{3.68}
\int_0^{\sigma(T)}\big(\|\nabla u\|_{L^3}^3+\|\nabla w\|_{L^3}^3\big)dt
&\le \delta C(\hat{\rho})\int_0^{\sigma(T)}\big(\|\sqrt{\rho}\dot{w}\|_{L^2}^2
+\|\sqrt{\rho}\dot{u}\|_{L^2}^2\big)dt\nonumber\\
& \quad +C\int_0^{\sigma(T)}\big(\|\nabla w\|_{L^2}^4+\|\nabla u\|_{L^2}^4+\|\nabla b\|_{L^2}^4\big)dt\nonumber\\
&\quad+C\int_0^{\sigma(T)}\big(\|\nabla w\|_{L^2}^6+\|\nabla b\|_{L^2}^4\big)dt+CE_0^\frac12.
\end{align}
Taking $\eta(t)=1$ and integrating \eqref{3.15} over $[0, t]$ for $0<t\le \sigma(T)$, we deduce from \eqref{3.5},
\eqref{3.6}, and \eqref{3.68} that
\begin{align*}
&A_2(t)+\int_0^t\big(\|\sqrt{\rho}\dot{u}\|_{L^2}^2+\|\sqrt{\rho}\dot{w}\|_{L^2}^2
+\|{\rm curl}^2b\|_{L^2}^2+\|b_t\|_{L^2}^2\big)d\tau\nonumber\\
&\le \int(P-P_s){\rm div}\,udx\Big|_{t=0}^t+\int(\rho-\rho_s)u\cdot\nabla\Phi_sdx\Big|_{t=0}^t
-\int\Big(b\otimes b:\nabla u-\frac12|b|^2{\rm div}\,u\Big)dx\Big|_{t=0}^t\nonumber\\
&\quad+\big(CC_0+\delta\big)\int_0^t\big(\|{\rm curl}^2b\|_{L^2}^2+\|b_t\|_{L^2}^2\big)d\tau+C\int_0^t\big(\|\nabla u\|_{L^3}^3+\|\nabla w\|_{L^3}^3\big)d\tau\nonumber\\
&\quad+C\int_0^t\big(\|\nabla u\|_{L^2}^4+\|\nabla b\|_{L^2}^4+\|\nabla w\|_{L^2}^4+\|\nabla u\|_{L^2}^6+\|\nabla w\|_{L^2}^6
+\|\nabla b\|_{L^2}^6\big)d\tau+CE_0\nonumber\\
&\le K+\frac14A_2(t)+CE_0A_2(t)+CE_0A_2^2(t)+CE_0^\frac12,
\end{align*}
which leads to
\begin{align}\label{3.70}
&A_2(\sigma(T))+\int_0^{\sigma(T)}\big(\|\sqrt{\rho}\dot{u}\|_{L^2}^2+\|\sqrt{\rho}\dot{w}\|_{L^2}^2+\|{\rm curl}^2b\|_{L^2}^2+\|b_t\|_{L^2}^2\big)dt\nonumber\\
&\le 2K+CE_0A_2^2(t)+CE_0^\frac12\le \frac52K+CE_0KA_2(\sigma(T))
\end{align}
provided that $E_0$ is suitably small. We immediately obtain \eqref{3.65} from \eqref{3.70}.
\end{proof}

Next, motivated by \cite{YZ17}, we give the upper bounds of $A_1(T)$.
\begin{lemma}\label{l35}
Let the assumptions of Proposition \ref{p31} hold. For $\sigma_i\triangleq\sigma(t+1-i)$ with $i$ being an integer satisfying $1\le i\le [T]-1$, then there exists a positive constant $\varepsilon_3$ such that
\begin{align}\label{z3.44}
A_1(T)+\int_{i-1}^{i+1}\sigma_i\big(\|\sqrt{\rho}\dot{u}\|_{L^2}^2+\|\sqrt{\rho}\dot{w}\|_{L^2}^2+\|{\rm curl}^2b\|_{L^2}^2+\|b_t\|_{L^2}^2\big)dt\le E_0^\frac13
\end{align}
provided that $E_0\le \varepsilon_3$.
\end{lemma}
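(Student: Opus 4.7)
The plan is to apply the weighted estimate \eqref{3.15} of Lemma \ref{l33} with the piecewise multiplier $\eta(t)=\sigma_i(t)=\sigma(t+1-i)$, integrate over $[i-1,i+1]$, and then absorb/close using the bootstrap assumption $A_1(T)\le 2E_0^{1/3}$ and the already established bounds from Lemmas \ref{l32}, \ref{l33} and \ref{l34}. The choice of $\sigma_i$ is convenient because $\sigma_i(i-1)=0$ kills the boundary contribution at the left endpoint, and $|\sigma_i'(t)|\le 1$ is supported on $[i-1,i]$, so every term involving $|\eta'(t)|$ on the right of \eqref{3.15} will be integrable against the basic energy bounds $\int_0^T(\|\nabla u\|_{L^2}^2+\|\nabla w\|_{L^2}^2+\|\nabla b\|_{L^2}^2)\,dt\le CE_0$ from \eqref{3.9} and \eqref{3.16}.

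After integrating \eqref{3.15} over $[i-1,i+1]$, the LHS produces exactly the quantity we want to control: $\sup_{t\in[i-1,i+1]}\sigma_i(\|\nabla u\|_{L^2}^2+\|\nabla w\|_{L^2}^2+\|\nabla b\|_{L^2}^2)$ plus $\int_{i-1}^{i+1}\sigma_i(\|\sqrt\rho\dot u\|_{L^2}^2+\|\sqrt\rho\dot w\|_{L^2}^2+\|\curl^2 b\|_{L^2}^2+\|b_t\|_{L^2}^2)\,dt$. The boundary-integrated terms on the right of \eqref{3.15} (those with $\frac{d}{dt}\int(P-P_s)\divv u$, $\frac{d}{dt}\int(\rho-\rho_s)u\cdot\nabla\Phi_s$, $\frac{d}{dt}\int(b\cdot\nabla b-\tfrac12\nabla|b|^2)\cdot u$) are bounded by H\"older, \eqref{3.6}, and \eqref{3.8}, giving something like $CA_1(T)+CE_0$, where the $A_1(T)$ part can be absorbed since we will subsequently use Young's inequality and smallness of $E_0$. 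The term $(CE_0^{1/2}+\delta)\eta(t)\|\curl^2 b\|_{L^2}^2+\delta\eta(t)\|b_t\|_{L^2}^2$ is absorbed into the LHS once $E_0$ and $\delta$ are small enough.

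The main work is in the remaining volume integrals involving the cubic and higher powers $\|\nabla u\|_{L^3}^3$, $\|\nabla w\|_{L^3}^3$, $\|\nabla u\|_{L^2}^4$, $\|\nabla b\|_{L^2}^4$, $\|\nabla w\|_{L^2}^6$, $\|\nabla b\|_{L^2}^6$, etc. I will apply the sharp $L^3$ bound \eqref{3.66}, \eqref{3.67} already proved in Lemma \ref{l34} to convert $\|\nabla u\|_{L^3}^3+\|\nabla w\|_{L^3}^3$ into $\delta(\|\sqrt\rho\dot u\|_{L^2}^2+\|\sqrt\rho\dot w\|_{L^2}^2)+CE_0^{1/2}\|\curl^2 b\|_{L^2}^2+\text{l.o.t.}$ Under the bootstrap hypothesis $A_1(T)\le 2E_0^{1/3}$ one has on $[i-1,i+1]$ the inequalities $\sigma_i\|\nabla u\|_{L^2}^4\le 2E_0^{1/3}\|\nabla u\|_{L^2}^2$, $\sigma_i\|\nabla u\|_{L^2}^6\le 4E_0^{2/3}\|\nabla u\|_{L^2}^2$, and similarly for $\nabla w$ and $\nabla b$ (using \eqref{3.16}). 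Hence integrating against \eqref{3.9} and \eqref{3.16} yields contributions of size $CE_0^{4/3}$ or smaller, which combined with the raw $CE_0$ term leaves a total $\le C(1+K)E_0\le E_0^{1/3}/2$ for $E_0$ sufficiently small (since $E_0^{2/3}\to 0$). The piecewise sup over all $i=1,\ldots,[T]-1$ recovers $A_1(T)$ because at any $t\ge 1$ we may choose $i$ with $i\le t\le i+1$, giving $\sigma_i(t)=1=\sigma(t)$; for $t\in[0,1]$ the estimate on $[0,2]$ already controls $\sigma(t)(\cdots)$.

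The main obstacle is the interplay between the boundary surface terms (which force the appearance of $F_1,F_2$ and hence of $\dot u,\dot w$) and the high powers of $\|\nabla b\|_{L^2}^2$ in \eqref{3.15}; to close the estimate one must confirm that every such term, after using \eqref{3.66}--\eqref{3.67} together with the smallness of $E_0$ and the bootstrap bound, can be absorbed by either the good $\|\sqrt\rho\dot u\|_{L^2}^2$, $\|\sqrt\rho\dot w\|_{L^2}^2$, $\|\curl^2 b\|_{L^2}^2$, $\|b_t\|_{L^2}^2$ terms on the LHS or by the time-integrable quantity $E_0$. This works precisely because the cubic/sextic density of the forcing is always of the form $\|\nabla\cdot\|_{L^2}^{\ge 2}\cdot(\text{bootstrap small})$, so one gains a factor $E_0^{1/3}$ per extra power, and therefore choosing $\varepsilon_3\le \min\{\varepsilon_1,\varepsilon_2,(2C)^{-3}\}$ suffices to obtain the claimed bound $E_0^{1/3}$.
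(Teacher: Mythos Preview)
Your approach is essentially the same as the paper's: you take $\eta(t)=\sigma_i$ in \eqref{3.15}, integrate over $[i-1,i+1]$, kill the left endpoint via $\sigma_i(i-1)=0$, control the $L^3$-terms through \eqref{3.66}--\eqref{3.67}, and absorb the higher powers using the bootstrap $A_1(T)\le 2E_0^{1/3}$ together with the bounds of Lemmas \ref{l32}--\ref{l34}. One small imprecision: the $|\eta'(t)|\|\nabla u\|_{L^2}^4$ term in \eqref{3.15} is not literally ``integrable against the basic energy bound'' alone---you also need the pointwise sup control $\|\nabla u\|_{L^2}^2\le A_1(T)+A_2(\sigma(T))$ on $[i-1,i]$ (the second bound coming from Lemma \ref{l34} for $i=1$)---but this is exactly what the paper does and is implicit in your reference to Lemma \ref{l34}.
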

\begin{remark}
For simplicity, we only prove the case that $T>2$. Otherwise, the same thing can be done by choosing a suitably small step size.
\end{remark}
\begin{proof}[Proof]
For integer $i\ (1\le i\le [T]-1)$, taking $\eta(t)=\sigma_i$ and integrating \eqref{3.15} over $(i-1, i+1]$, we obtain after choosing $\delta$ suitably small that
\begin{align}\label{3.72}
&\sup_{i-1\le t\le i+1}\big(\sigma_i\|\nabla u\|_{L^2}^2
+\sigma_i\|\nabla w\|_{L^2}^2+\sigma_i\|\nabla b\|_{L^2}^2\big)\nonumber\\
&\quad+\int_{i-1}^{i+1}\sigma_i\big(\|\sqrt{\rho}\dot{u}\|_{L^2}^2+\|\sqrt{\rho}\dot{w}\|_{L^2}^2+\|{\rm curl}^2b\|_{L^2}^2+\|b_t\|_{L^2}^2\big)dt\nonumber\\
&\le \sigma_i\int(P-P_s){\rm div}\,udx+\sigma_i\int(\rho-\rho_s)u\cdot\nabla\Phi_sdx\nonumber\\
&\quad-\sigma_i\int\Big(b\otimes b:\nabla u-\frac12|b|^2{\rm div}\,u\Big)dx+C\int_{i-1}^{i+1}\big(\sigma_i+\sigma_i'\big)\|\nabla u\|_{L^2}^2dt\nonumber\\
&\quad+C\int_{i-1}^{i+1}\sigma_i'\big(\|w\|_{L^2}^2+\|\nabla w\|_{L^2}^2+\|\nabla b\|_{L^2}^2+\|\nabla u\|_{L^2}^4\big)dt\nonumber\\
&\quad+C\int_{i-1}^{i+1}\sigma_i\big(\|\nabla u\|_{L^3}^3+\|\nabla w\|_{L^3}^3\big)dt
+C\int_{i-1}^{i+1}\sigma_i\big(\|\nabla w\|_{L^2}^2+\|\nabla b\|_{L^2}^2\big)dt\nonumber\\
&\quad+C\int_{i-1}^{i+1}\sigma_i\big(\|\nabla u\|_{L^2}^4+\|\nabla b\|_{L^2}^4+\|\nabla w\|_{L^2}^4\big)dt\nonumber\\
&\quad+C\int_{i-1}^{i+1}\sigma_i\big(\|\nabla u\|_{L^2}^6+\|\nabla b\|_{L^2}^6+\|\nabla w\|_{L^2}^6\big)dt+CE_0\nonumber\\
&\quad+\delta\int_{i-1}^{i+1}\sigma_i\big(\|\sqrt{\rho}\dot{u}\|_{L^2}^2
+\|\sqrt{\rho}\dot{w}\|_{L^2}^2\big)dt
+(CE_0+\delta)\int_{i-1}^{i+1}\sigma_i\big(\|{\rm curl}^2b\|_{L^2}^2+\|b_t\|_{L^2}^2\big)dt\nonumber\\
&\le \frac12\sup_{i-1\le t\le i+1}\big(\sigma_i\|\nabla u\|_{L^2}^2\big)+C\sup_{i-1\le t\le i+1}\big(\sigma_i\|b\|_{L^4}^4\big)
+C\int_{i-1}^{i+1}\|\nabla u\|_{L^2}^2dt\nonumber\\
&\quad+C\sup_{i-1\le t\le i+1}\big(\|\nabla u\|_{L^2}^2
+\|\nabla w\|_{L^2}^2+\|\nabla b\|_{L^2}^2\big)\int_{i-1}^{i+1}\big(\|\nabla u\|_{L^2}^2+\|\nabla w\|_{L^2}^2+\|\nabla b\|_{L^2}^2\big)dt\nonumber\\
&\quad+C\sup_{i-1\le t\le i+1}\big(\|\nabla u\|_{L^2}^4
+\|\nabla w\|_{L^2}^4+\|\nabla b\|_{L^2}^4\big)\int_{i-1}^{i+1}\big(\|\nabla u\|_{L^2}^2+\|\nabla w\|_{L^2}^2+\|\nabla b\|_{L^2}^2\big)dt\nonumber\\
&\quad+C\sup_{i-1\le t\le i}\|\nabla u\|_{L^2}^2\int_{i-1}^i\|\nabla u\|_{L^2}^2dt+CE_0+\delta\int_{i-1}^{i+1}\sigma_i\big(\|\sqrt{\rho}\dot{u}\|_{L^2}^2
+\|\sqrt{\rho}\dot{w}\|_{L^2}^2\big)dt\nonumber\\
&\quad
+(CE_0+\delta)\int_{i-1}^{i+1}\sigma_i\big(\|{\rm curl}^2b\|_{L^2}^2+\|b_t\|_{L^2}^2\big)dt\nonumber\\
&\le  \frac12\sup_{i-1\le t\le i+1}\big(\sigma_i\|\nabla u\|_{L^2}^2\big)+CE_0^\frac12\sup_{i-1\le t\le i+1}\big(\sigma_i\|\nabla b\|_{L^2}^2\big)\nonumber\\
&\quad+CE_0\big(A_1(T)+A_2(\sigma(T))\big)+CE_0\big(A_1^2(T)+A_2^2(\sigma(T))\big)\nonumber\\
&\quad+(CE_0+\delta)\int_{i-1}^{i+1}\sigma_i\big(\|{\rm curl}^2b\|_{L^2}^2+\|b_t\|_{L^2}^2\big)dt
+\delta\int_{i-1}^{i+1}\sigma_i\big(\|\sqrt{\rho}\dot{u}\|_{L^2}^2
+\|\sqrt{\rho}\dot{w}\|_{L^2}^2\big)dt+CE_0\nonumber\\
&\le  \frac12\sup_{i-1\le t\le i+1}\big(\sigma_i\|\nabla u\|_{L^2}^2\big)+CE_0^\frac12\sup_{i-1\le t\le i+1}\big(\sigma_i\|\nabla b\|_{L^2}^2\big)\nonumber\\
&\quad+(CE_0+\delta)\int_{i-1}^{i+1}\sigma_i\big(\|{\rm curl}^2b\|_{L^2}^2+\|b_t\|_{L^2}^2\big)dt
+\delta\int_{i-1}^{i+1}\sigma_i\big(\|\sqrt{\rho}\dot{u}\|_{L^2}^2
+\|\sqrt{\rho}\dot{w}\|_{L^2}^2\big)dt+CE_0,
\end{align}
where we have used \eqref{3.6}, \eqref{3.8}, and \eqref{3.16}. Choosing $\delta$ and $E_0$ to be suitably small, we deduce from \eqref{3.72} that
\begin{align}\label{3.73}
\sup_{0\le t\le \sigma(T)}\big(\sigma\|\nabla u\|_{L^2}^2
+\sigma\|\nabla w\|_{L^2}^2+\sigma\|\nabla b\|_{L^2}^2\big)\le CE_0\le E_0^\frac13,
\end{align}
due to $\sigma_1(t)=\sigma(t)$ and
\begin{align}\label{3.74}
&\sup_{i\le t\le i+1}\big(\sigma_i\|\nabla u\|_{L^2}^2
+\sigma_i\|\nabla w\|_{L^2}^2+\sigma_i\|\nabla b\|_{L^2}^2\big)\nonumber\\
&\quad+\int_{i-1}^{i+1}\sigma_i\big(\|\sqrt{\rho}\dot{u}\|_{L^2}^2
+\|\sqrt{\rho}\dot{w}\|_{L^2}^2+\|{\rm curl}^2b\|_{L^2}^2+\|b_t\|_{L^2}^2\big)dt\le C_2E_0\le E_0^\frac13
\end{align}
provided that $E_0$ is properly small. Note that the constant $C_2$ is independent of $i$.
Thus, the desired \eqref{z3.44} follows from \eqref{3.73} and \eqref{3.74}.
\end{proof}

\begin{lemma}\label{l36}
Let the assumptions of Proposition \ref{p31} hold. Then there exists a positive constant $\varepsilon_4$ such that, for $0\le t_1<t_2\le T$,
\begin{align}\label{3.75}
\sup_{0\le t\le T}\big[\sigma^2\big(\|\sqrt{\rho}\dot{u}\|_{L^2}^2+\|\sqrt{\rho}\dot{w}\|_{L^2}^2
+\|b_t\|_{L^2}^2+\|{\rm curl}^2b\|_{L^2}^2\big)\big]\le C(\underline{\rho})E_0^\frac13,
\end{align}
and
\begin{align}\label{3.76}
\int_{t_1}^{t_2}\sigma^2\big(\|\nabla\dot{u}\|_{L^2}^2+\|\nabla\dot{w}\|_{L^2}^2+\|\dot{w}\|_{L^2}^2+\|\nabla b_t\|_{L^2}^2\big)dt
\le C(\underline{\rho})E_0^\frac13+C(\underline{\rho})E_0(t_2-t_1),
\end{align}
provided that $E_0\le \varepsilon_4$.
\end{lemma}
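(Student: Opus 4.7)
\medskip

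\noindent\textbf{Proof proposal.}
The plan is to take the weighted differential inequality \eqref{z3.18} derived in the previous lemma and insert the multiplier $\eta(t)=\sigma^2(t)$. Because $\sigma'(t)=0$ for $t\ge 1$ and $\sigma'(t)=1\le 2\sigma^{1/2}$ for $t\le 1$, every term with $|\eta'(t)|$ on the right--hand side is pointwise dominated by $C\sigma(t)$ times the corresponding factor, and hence (after an integration in time) is controlled by the already established bounds $A_1(T)\le 2E_0^{1/3}$, $A_2(\sigma(T))\le 4K$, \eqref{3.16}, \eqref{3.65} and \eqref{z3.44}. On the left--hand side, combining the coercive quantities $\sigma^2\|\divv\dot u\|_{L^2}^2$, $\sigma^2\|\curl\dot u\|_{L^2}^2$, $\sigma^2\|\divv\dot w\|_{L^2}^2$, $\sigma^2\|\curl\dot w\|_{L^2}^2$ and $\sigma^2\|\curl b_t\|_{L^2}^2$ with the auxiliary estimates \eqref{z2.5}, \eqref{z2.9}--\eqref{z2.10} and Lemma \ref{l24}, we can replace them by the full $H^1$--norms $\sigma^2(\|\nabla\dot u\|_{L^2}^2+\|\nabla\dot w\|_{L^2}^2+\|\nabla b_t\|_{L^2}^2)$, up to absorbing lower--order terms of the form $\sigma^2\|\nabla u\|_{L^4}^4$ into the right--hand side.

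Next, I would integrate in time from $0$ to $t$ for any $t\in[0,T]$. The boundary terms
\[
\int_{\partial\Omega}\sigma^2(u\cdot\nabla n\cdot u)F_1\,dS\quad\text{and}\quad \int_{\partial\Omega}\sigma^2(w\cdot\nabla n\cdot w)F_2\,dS
\]
appearing on the right--hand side are estimated via the trace inequality of Lemma \ref{l23}, Lemma \ref{l21}, and the $L^p$--bounds \eqref{2.18}--\eqref{2.19} on the effective viscous fluxes, so that they are absorbed by the dissipation after multiplying by small $E_0$. All cubic and sextic polynomial terms in $\|\nabla u\|_{L^2}$, $\|\nabla w\|_{L^2}$, $\|\nabla b\|_{L^2}$ on the right--hand side are estimated by extracting $\sup_{[0,t]}\sigma^2(\|\sqrt\rho\dot u\|_{L^2}^2+\|\sqrt\rho\dot w\|_{L^2}^2+\|b_t\|_{L^2}^2)$ and using the uniform bounds above; the non--integral $E_0$--factors make these contributions absorbable. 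Since $\|\curl^2 b\|_{L^2}^2$ and $\|b_t\|_{L^2}^2$ are jointly coercive through the $b$--equation (as in \eqref{3.37}), both appear controlled on the left--hand side.

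The pointwise estimate \eqref{3.75} will then follow from Gronwall's inequality, after noting that the initial datum $\sigma^2|_{t=0}=0$, and using $\|\sqrt\rho\dot u\|_{L^2}^2\ge\underline\rho\|\dot u\|_{L^2}^2$ together with Lemma \ref{l21} to close the argument. The time--integrated estimate \eqref{3.76} follows directly from integrating the already obtained differential inequality over $[t_1,t_2]$: the coercive terms give $\int_{t_1}^{t_2}\sigma^2(\|\nabla\dot u\|_{L^2}^2+\|\nabla\dot w\|_{L^2}^2+\|\nabla b_t\|_{L^2}^2)\,dt$, the $\|\dot w\|_{L^2}^2$ contribution comes from the $4\mu_r$ micro--rotation dissipation term in \eqref{z3.18}, and the additional $E_0(t_2-t_1)$ term arises from the $E_0$--sized inhomogeneities $\eta(t)(\|\nabla u\|_{L^2}^2+\|\nabla w\|_{L^2}^2+\cdots)$ integrated over a long time window where one merely has $\sigma\le 1$.

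The main obstacle, as flagged in Remark 1.3, is the boundary coupling term $\int_{\partial\Omega}\sigma^2\dot u\cdot(\dot w\times n)\,dS$ arising from the micro--rotation cross--terms in $J_5+J_6$: using the trace theorem crudely produces $\|\dot u\|_{L^2}\|\dot w\|_{L^2}$, neither of which is coercive in the bulk. This is precisely where the assumption $\rho\ge\underline\rho>0$ enters, since it converts $\|\dot u\|_{L^2}^2+\|\dot w\|_{L^2}^2\le \underline\rho^{-1}(\|\sqrt\rho\dot u\|_{L^2}^2+\|\sqrt\rho\dot w\|_{L^2}^2)$ and allows absorption into the already--controlled energy on the left, at the price of the factor $C(\underline\rho)$ appearing in \eqref{3.75}--\eqref{3.76}.
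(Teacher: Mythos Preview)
Your proposal captures the overall structure---inserting $\eta(t)=\sigma^2$ into \eqref{z3.18}, handling the boundary terms via Lemma \ref{l23}, and using $\rho\ge\underline{\rho}$ to convert $\|\dot u\|_{L^2}^2+\|\dot w\|_{L^2}^2$ into $\underline{\rho}^{-1}(\|\sqrt\rho\dot u\|_{L^2}^2+\|\sqrt\rho\dot w\|_{L^2}^2)$---but it contains a genuine gap in the derivation of \eqref{3.75}.

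The difficulty is that the constants in this subsection must be \emph{independent of $T$}. After your conversion, the right-hand side of \eqref{z3.18} contains the term
\[
C(\underline{\rho})\,\sigma^2\big(\|\sqrt{\rho}\dot u\|_{L^2}^2+\|\sqrt{\rho}\dot w\|_{L^2}^2\big),
\]
with an $O(1)$ coefficient (it comes from the trace estimate \eqref{t3.36}, not from any $E_0$-small factor). If you integrate from $0$ to $t$ and apply Gronwall, you pick up $e^{C(\underline{\rho})t}$; if instead you bound the integral directly, the best available control is \eqref{3.83}, namely $\int_0^t\sigma(\|\sqrt\rho\dot u\|_{L^2}^2+\cdots)\,ds\le CE_0^{1/3}+CE_0\,t$. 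Either way the resulting bound on $\sup_{[0,T]}\sigma^2(\|\sqrt\rho\dot u\|_{L^2}^2+\cdots)$ carries a $T$-dependent factor, so you do not obtain the uniform estimate $C(\underline{\rho})E_0^{1/3}$ claimed in \eqref{3.75}. This uniformity is not cosmetic: it feeds into Lemma \ref{l38}, where the Zlotnik argument for the density upper bound on $[\sigma(T),T]$ requires constants independent of $T$.

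The paper avoids this by the \emph{piecewise-estimate method}: one takes $\eta(t)=\sigma_i^2$ with $\sigma_i(t)=\sigma(t+1-i)$ and integrates \eqref{z3.18} only over $(i-1,i+1]$ for each integer $1\le i\le[T]-1$. On such a window of fixed length $2$, Lemma \ref{l35} gives $\int_{i-1}^{i+1}\sigma_i(\|\sqrt\rho\dot u\|_{L^2}^2+\|\sqrt\rho\dot w\|_{L^2}^2+\|b_t\|_{L^2}^2)\,dt\le E_0^{1/3}$ with a constant independent of $i$; this simultaneously disposes of the dangerous term above and of the cubic contributions from $\|\nabla u\|_{L^4}^4+\|\nabla w\|_{L^4}^4$. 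Taking the supremum over $i$ then yields \eqref{3.75} uniformly in $T$. Your argument for \eqref{3.76} is essentially correct and matches the paper's \eqref{3.83}--\eqref{3.84}, but it presupposes \eqref{3.75}, so the piecewise step for \eqref{3.75} cannot be bypassed.
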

\begin{proof}[Proof]
1. We get from Lemma \ref{l28} that
\begin{align*}
\|\nabla u\|_{L^4}^4&\le C(\|\rho\dot{u}\|_{L^2}+\|\rho\dot{w}\|_{L^2}+\|\nabla w\|_{L^2}+\|b\cdot\nabla b\|_{L^2}
+\|\rho\nabla(\Phi-\Phi_s)\|_{L^2}+\|(\rho-\rho_s)\nabla\Phi_s\|_{L^2})^3\nonumber\\
&\quad\times(\|\nabla u\|_{L^2}+\|P-P_s\|_{L^2}+\|b\|_{L^4}^2)+C(\|\nabla u\|_{L^2}^4+\|P-P_s\|_{L^4}^4+\||b|^2\|_{L^4}^4),\\
\|\nabla w\|_{L^4}^4&\le C(\|\rho\dot{u}\|_{L^2}+\|\rho\dot{w}\|_{L^2}
+\|\rho\nabla(\Phi-\Phi_s)\|_{L^2}+\|(\rho-\rho_s)\nabla\Phi_s\|_{L^2}\nonumber\\
&\quad+\|\nabla u\|_{L^2}+\|b\cdot\nabla b\|_{L^2}+\|w\|_{L^2})^3\|\nabla w\|_{L^2}+C\|\nabla w\|_{L^2}^4,
\end{align*}
which implies that
\begin{align}
&\int_{i-1}^{i+1}\sigma_i^2\big(\|\nabla u\|_{L^4}^4+\|\nabla w\|_{L^4}^4\big)dt\nonumber\\
&\le C\int_{i-1}^{i+1}\sigma_i^2\Big(\|\nabla u\|_{L^2}+E_0^\frac12\Big)\big(\|\sqrt{\rho}\dot{u}\|_{L^2}^3
+\|\sqrt{\rho}\dot{w}\|_{L^2}^3+\|b\|_{L^3}^3\|\nabla b\|_{L^6}^3\big)dt\nonumber\\
&\quad+C\int_{i-1}^{i+1}\sigma_i^2\big(\|\nabla u\|_{L^2}^4+\|\nabla b\|_{L^2}^4+\|\nabla w\|_{L^2}^4+\|b\|_{L^8}^8\big)dt\nonumber\\
&\le C\sup_{i-1\le t\le i+1}\Big(\|\nabla u\|_{L^2}+E_0^\frac12\Big)\int_{i-1}^{i+1}\sigma_i^2\big(
\|\sqrt{\rho}\dot{u}\|_{L^2}^3
+\|\sqrt{\rho}\dot{w}\|_{L^2}^3+\|{\rm curl}^2b\|_{L^2}^3\big)dt\nonumber\\
&\quad+C\int_{i-1}^{i+1}\sigma_i^2\big(\|\nabla u\|_{L^2}^4+\|\nabla b\|_{L^2}^4+\|\nabla w\|_{L^2}^4\big)dt
+C\int_{i-1}^{i+1}\sigma_i^2\|b\|_{L^\infty}^4\|b\|_{L^4}^4dt\nonumber\\
&\le  C\sup_{i-1\le t\le i+1}\Big(\|\nabla u\|_{L^2}+E_0^\frac12\Big)\int_{i-1}^{i+1}\sigma_i^2\big(
\|\sqrt{\rho}\dot{u}\|_{L^2}^3
+\|\sqrt{\rho}\dot{w}\|_{L^2}^3+\|{\rm curl}^2b\|_{L^2}^3\big)dt\nonumber\\
&\quad+C\int_{i-1}^{i+1}\sigma_i^2\big(\|\nabla u\|_{L^2}^4+\|\nabla b\|_{L^2}^4+\|\nabla w\|_{L^2}^4\big)dt
+CE_0^\frac12\int_{i-1}^{i+1}\sigma_i^2\|{\rm curl}^2b\|_{L^2}^2dt.
\end{align}
For any integer $1\le i\le [T]-1$, integrating \eqref{z3.18} with $\eta(t)=\sigma_i^2$ over $(i-1, i+1]$,
we deduce from \eqref{3.6}, \eqref{3.9}, \eqref{3.65}, \eqref{z3.44}, and Young's inequality that
\begin{align}\label{3.78}
&\sup_{i-1\le t\le i+1}\big[\sigma_i^2\big(\|\sqrt{\rho}\dot{u}\|_{L^2}^2
+\|\sqrt{\rho}\dot{w}\|_{L^2}^2+\|b_t\|_{L^2}^2\big)\big]\nonumber\\
&\quad+\int_{i-1}^{i+1}\sigma_i^2\big(\|\nabla\dot{u}\|_{L^2}^2
+\|\nabla\dot{w}\|_{L^2}^2+\|\dot{w}\|_{L^2}^2+\|\nabla b_t\|_{L^2}^2\big)dt\nonumber\\
&\le -\int_{\partial\Omega}\sigma_i^2(u\cdot\nabla n\cdot u)F_1dS\Big|_{i-1}^{i+1}
-\int_{\partial\Omega}\sigma_i^2(w\cdot\nabla n\cdot w)F_2dS\Big|_{i-1}^{i+1}
+\delta C\sigma_i^2\int_{i-1}^{i+1}\big(\|\nabla\dot{u}\|_{L^2}^2
+\|\nabla\dot{w}\|_{L^2}^2\big)dt\nonumber\\
&\quad+C\int_{i-1}^{i+1}\sigma_i\sigma_i'\big(\|\sqrt{\rho}\dot{u}\|_{L^2}^2
+\|\sqrt{\rho}\dot{w}\|_{L^2}^2+\|\nabla u\|_{L^2}^2
+\|\nabla w\|_{L^2}^2+\|\nabla b\|_{L^2}^2+\|\nabla u\|_{L^2}^4\big)dt\nonumber\\
&\quad+C\int_{i-1}^{i+1}\sigma_i\sigma_i'\big(\|{\rm curl}^2b\|_{L^2}^2+\|b_t\|_{L^2}^2+\|\nabla u\|_{L^2}^4\|\nabla b\|_{L^2}^2+\|\nabla u\|_{L^2}^2\|\nabla b\|_{L^2}^2
+\|\nabla w\|_{L^2}^4\big)dt\nonumber\\
&\quad+C\int_{i-1}^{i+1}\sigma_i^2\big(\|\sqrt{\rho}\dot{u}\|_{L^2}^2\|\nabla u\|_{L^2}^2+\|\nabla u\|_{L^2}^6
+\|\nabla u\|_{L^2}^2+\|\nabla w\|_{L^2}^2
+\|\nabla b\|_{L^2}^2\big)dt\nonumber\\
&\quad+C\int_{i-1}^{i+1}\sigma_i^2\big(\|\nabla u\|_{L^2}^4+\|\nabla w\|_{L^2}^4+\|\nabla b\|_{L^2}^4
+\|{\rm curl}^2b\|_{L^2}^2\|\nabla u\|_{L^2}^2
+\|\nabla b\|_{L^2}^6\big)dt\nonumber\\
&\quad+C\int_{i-1}^{i+1}\sigma_i^2\big(\|\nabla u\|_{L^2}^4\|\rho\dot{u}\|_{L^2}^2+\|\nabla w\|_{L^2}^6
+\|\nabla u\|_{L^2}^4\|{\rm curl}^2b\|_{L^2}^2+\|b_t\|_{L^2}^2\|\nabla b\|_{L^2}^4\big)dt\nonumber\\
&\quad+\delta C\int_{i-1}^{i+1}\sigma_i^2\|\nabla b\|_{L^2}\|{\rm curl}^2b\|_{L^2}^3dt
+C\int_{i-1}^{i+1}\sigma_i^2\|\nabla b\|_{L^2}^2\|{\rm curl}^2b\|_{L^2}^2dt\nonumber\\
&\quad+C\int_{i-1}^{i+1}\sigma_i^2\big(\|\nabla b\|_{L^2}^4+\|\nabla u\|_{L^2}^4\big)\|{\rm curl}^2b\|_{L^2}^2dt
+CE_0^\frac12\int_{i-1}^{i+1}\sigma_i^2\|\nabla b_t\|_{L^2}^2dt\nonumber\\
&\quad+C\int_{i-1}^{i+1}\sigma_i^2\big(\|\nabla u\|_{L^4}^4+\|\nabla w\|_{L^4}^4\big)dt
+C\int_{i-1}^{i+1}\sigma_i^2\big(\|\dot{u}\|_{L^2}^2+\|\dot{w}\|_{L^2}^2\big)dt\nonumber\\
&\le \frac14\sup_{i-1\le t\le i+1}\sigma_i^2\big(\|\sqrt{\rho}\dot{u}\|_{L^2}^2
+\|\sqrt{\rho}\dot{w}\|_{L^2}^2\big)
+\delta\sup_{i-1\le t\le i+1}\big(\sigma_i^2\|{\rm curl}^2b\|_{L^2}^2\big)
\nonumber\\
&\quad+C\int_{i-1}^{i+1}\sigma_i^2\|{\rm curl}^2b\|_{L^2}^3dt+C\sup_{i-1\le t\le i+1}\big(\sigma_i^2\|\sqrt{\rho}\dot{u}\|_{L^2}^2\big)
\int_{i-1}^{i+1}\big(\|\nabla u\|_{L^2}^4
+\|\nabla u\|_{L^2}^2\big)dt\nonumber\\
&\quad+C\sup_{i-1\le t\le i+1}\big(\sigma_i^2\|{\rm curl}^2b\|_{L^2}^2\big)
\int_{i-1}^{i+1}\big(\|\nabla u\|_{L^2}^2+\|\nabla b\|_{L^2}^2+\|\nabla u\|_{L^2}^4+\|\nabla b\|_{L^2}^4\big)dt\nonumber\\
&\quad+ C\sup_{i-1\le t\le i+1}\Big(\|\nabla u\|_{L^2}+E_0^\frac12\Big)\int_{i-1}^{i+1}\sigma_i^2\big(
\|\sqrt{\rho}\dot{u}\|_{L^2}^3
+\|\sqrt{\rho}\dot{w}\|_{L^2}^3+\|{\rm curl}^2b\|_{L^2}^3\big)dt\nonumber\\
&\quad+C\int_{i-1}^{i+1}\sigma_i^2\big(\|\nabla u\|_{L^2}^4+\|\nabla b\|_{L^2}^4+\|\nabla w\|_{L^2}^4\big)dt
+CE_0^\frac12\sup_{i-1\le t\le i+1}\big(\sigma_i^2\|{\rm curl}^2b\|_{L^2}^2\big)\nonumber\\
&\quad+CE_0^\frac12\int_{i-1}^{i+1}\sigma_i^2\|\nabla b_t\|_{L^2}^2dt+C(\underline{\rho})E_0^\frac13\nonumber\\
&\le  \frac14\sup_{i-1\le t\le i+1}\sigma_i^2\big(\|\sqrt{\rho}\dot{u}\|_{L^2}^2
+\|\sqrt{\rho}\dot{w}\|_{L^2}^2\big)
+\Big(\delta+CE_0^\frac12\Big)\sup_{i-1\le t\le i+1}\big(\sigma_i^2\|b_t\|_{L^2}^2\big)
\nonumber\\
&\quad+C\sup_{i-1\le t\le i+1}\big(\sigma_i\|\sqrt{\rho}\dot{u}\|_{L^2}
+\sigma_i\|\sqrt{\rho}\dot{w}\|_{L^2}+\sigma_i\|b_t\|_{L^2}\big)
\int_{i-1}^{i+1}\sigma_i\big(\|\sqrt{\rho}\dot{u}\|_{L^2}^2
+\|\sqrt{\rho}\dot{w}\|_{L^2}^2+\|b_t\|_{L^2}^2\big)dt\nonumber\\
&\quad+C\sup_{i-1\le t\le i+1}\big(\sigma_i\|b_t\|_{L^2}\big)\int_{i-1}^{i+1}\sigma_i\|b_t\|_{L^2}^2dt
+CE_0^\frac12\int_{i-1}^{i+1}\sigma_i^2\|\nabla b_t\|_{L^2}^2dt+C(\underline{\rho})E_0^\frac13\nonumber\\
&\le  \frac14\sup_{i-1\le t\le i+1}\big[\sigma_i^2\big(\|\sqrt{\rho}\dot{u}\|_{L^2}^2
+\|\sqrt{\rho}\dot{w}\|_{L^2}^2\big)\big]
+\Big(\delta+CE_0^\frac12\Big)\sup_{i-1\le t\le i+1}
\big(\sigma_i^2\|b_t\|_{L^2}^2\big)
\nonumber\\
&\quad+CE_0^\frac13\sup_{i-1\le t\le i+1}\big(\sigma_i\|\sqrt{\rho}\dot{u}\|_{L^2}
+\sigma_i\|\sqrt{\rho}\dot{w}\|_{L^2}+\sigma_i\|b_t\|_{L^2}\big)\nonumber\\
&\quad+CE_0^\frac12\int_{i-1}^{i+1}\sigma_i^2\|\nabla b_t\|_{L^2}^2dt+C(\underline{\rho})E_0^\frac13\nonumber\\
&\le \frac12\sup_{i-1\le t\le i+1}\big(\sigma_i^2\|\sqrt{\rho}\dot{u}\|_{L^2}^2
+\sigma_i^2\|\sqrt{\rho}\dot{w}\|_{L^2}^2+\sigma_i^2\|b_t\|_{L^2}^2\big)
+CE_0^\frac12\int_{i-1}^{i+1}\sigma_i^2\|\nabla b_t\|_{L^2}^2dt+C(\underline{\rho})E_0^\frac13,
\end{align}
where we have used
\begin{align*}
&\int_{\partial\Omega}(u\cdot\nabla n\cdot u)F_1dS+\int_{\partial\Omega}(w\cdot\nabla n\cdot w)F_2dS\nonumber\\
&\le C\||u|^2|F_1|\|_{W^{1,1}}+\||w|^2|F_2|\|_{W^{1,1}}\nonumber\\
&\le C\|\nabla u\|_{L^2}^2\|F_1\|_{H^1}+\|\nabla w\|_{L^2}^2\|F_2\|_{H^1}\nonumber\\
&\le \frac{1}{4}\|\sqrt{\rho}\dot{u}\|_{L^2}^2+\frac14\|\sqrt{\rho}\dot{w}\|_{L^2}^2+\delta\|{\rm curl}^2b\|_{L^2}^2
+C\big(\|\nabla u\|_{L^2}^4+\|\nabla b\|_{L^2}^4\big)\nonumber\\
&\quad+C\big(\|\nabla u\|_{L^2}^2+\|\nabla w\|_{L^2}^2+\|\nabla b\|_{L^2}^2\big),
\end{align*}
and
\begin{align}\label{3.80}
\|{\rm curl}^2b\|_{L^2}&\le C\Big(\|b_t\|_{L^2}+\|{\rm curl}^2b\|_{L^2}^\frac12\|\nabla b\|_{L^2}^\frac12\|\nabla u\|_{L^2}
+\|\nabla b\|_{L^2}\|\nabla u\|_{L^2}\Big)\nonumber\\
& \le \frac12\|{\rm curl}^2b\|_{L^2}^2+C\big(\|b_t\|_{L^2}+\|\nabla b\|_{L^2}\|\nabla u\|_{L^2}^2
+\|\nabla b\|_{L^2}\|\nabla u\|_{L^2}\big).
\end{align}
According to \eqref{3.78}, we get that
\begin{align}\label{3.81}
\sup_{0\le t\le \sigma(T)}\big[\sigma_i^2\big(\|\sqrt{\rho}\dot{u}\|_{L^2}^2
+\|\sqrt{\rho}\dot{w}\|_{L^2}^2+\|b_t\|_{L^2}^2\big)\big]\le C(\underline{\rho})E_0^\frac13,
\end{align}
and
\begin{align}\label{3.82}
\sup_{i\le t\le i+1}\big(\|\sqrt{\rho}\dot{u}\|_{L^2}^2
+\|\sqrt{\rho}\dot{w}\|_{L^2}^2+\|b_t\|_{L^2}^2\big)\le C(\underline{\rho})E_0^\frac13.
\end{align}
Hence, we deduce \eqref{3.75} from \eqref{3.6}, \eqref{3.80}, \eqref{3.81}, and \eqref{3.82}.

2. We integrate \eqref{3.15} over $[t_1, t_2]\subseteq[0, T]$ and take $\eta(t)=\sigma$ to obtain, from \eqref{3.6}
and \eqref{3.9} that
\begin{align}\label{3.83}
&\int_{t_1}^{t_2}\sigma\big(\|\sqrt{\rho}\dot{u}\|_{L^2}^2+\|\sqrt{\rho}\dot{w}\|_{L^2}^2+\|{\rm curl}^2b\|_{L^2}^2+\|b_t\|_{L^2}^2\big)dt\nonumber\\
&\le C(E_0+A_1(T))+CE_0(t_2-t_1)+C\int_{t_1}^{t_2}\sigma\big(\|\nabla u\|_{L^2}^6+\|\nabla b\|_{L^2}^6+\|\nabla w\|_{L^2}^6\big)dt\nonumber\\
&\quad+C\int_{t_1}^{t_2}\sigma\big(\|\nabla u\|_{L^2}^4+\|\nabla b\|_{L^2}^4+\|\nabla w\|_{L^2}^4\big)dt\nonumber\\
&\le CE_0^\frac13+CE_0(t_2-t_1)+C\int_{t_1}^{t_2}\big(\|\nabla u\|_{L^2}^2
+\|\nabla b\|_{L^2}^2+\|\nabla w\|_{L^2}^2\big)dt\nonumber\\
&\le CE_0^\frac13+C(\underline{\rho})E_0(t_2-t_1).
\end{align}
Similarly to \eqref{3.78}, integrating \eqref{z3.18} over $[t_1, t_2]$ and taking $\eta=\sigma^2$, we find that
\begin{align}\label{3.84}
&\int_{t_1}^{t_2}\sigma^2\big(\|\nabla\dot{u}\|_{L^2}^2+\|\nabla\dot{w}\|_{L^2}^2+\|\dot{w}\|_{L^2}^2+\|\nabla b_t\|_{L^2}^2\big)dt\nonumber\\
&\le C\sup_{t_1\le t\le t_2}\big(\sigma\|\sqrt{\rho}\dot{u}\|_{L^2}
+\sigma\|\sqrt{\rho}\dot{w}\|_{L^2}+\sigma\|b_t\|_{L^2}\big)
\int_{t_1}^{t_2}\sigma\big(\|\sqrt{\rho}\dot{u}\|_{L^2}^2
+\|\sqrt{\rho}\dot{w}\|_{L^2}^2+\|b_t\|_{L^2}^2\big)dt
+C(\underline{\rho})E_0^\frac13\nonumber\\
&\le C(\underline{\rho})E_0^\frac13+CE_0^\frac16\int_{t_1}^{t_2}
\sigma\big(\|\sqrt{\rho}\dot{u}\|_{L^2}^2+\|\sqrt{\rho}\dot{w}\|_{L^2}^2
+\|b_t\|_{L^2}^2\big)dt\nonumber\\
&\le C(\underline{\rho})E_0^\frac13+CE_0(t_2-t_1),
\end{align}
owing to \eqref{3.6}, \eqref{3.9}, \eqref{3.75}, and \eqref{3.83}.
\end{proof}

We still need the following result before showing the upper bounds of the density.
\begin{lemma}\label{l37}
Let the assumption of Proposition \ref{p31} hold. Then there exists a positive constant $\varepsilon_5$ such that
\begin{align}\label{3.85}
&\sup_{0\le t\le \sigma(T)}\big[\sigma\big(\|\sqrt{\rho}\dot{u}\|_{L^2}^2+
\|\sqrt{\rho}\dot{w}\|_{L^2}^2+\|b_t\|_{L^2}^2+\|{\rm curl}^2b\|_{L^2}^2\big)\big]\nonumber\\
&\quad+\int_0^{\sigma(T)}\sigma
\big(\|\nabla\dot{u}\|_{L^2}^2+\|\nabla\dot{w}\|_{L^2}^2+\|\dot{w}\|_{L^2}^2+\|\nabla b_t\|_{L^2}^2\big)dt\le C(\underline{\rho}),
\end{align}
provided that $E_0\le \varepsilon_5$.
\end{lemma}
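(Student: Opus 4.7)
The plan is to apply the differential inequality \eqref{z3.18} with the linear weight $\eta(t)=\sigma(t)$, which satisfies $\eta(0)=0$ and $|\eta'(t)|\le 1$ on $(0,1)$, and integrate the result from $0$ to $\sigma(T)$. Because $\sigma(0)=0$, the contribution from the initial time on the left-hand side vanishes, so we do not need to invoke the compatibility conditions \eqref{a12} to control $\|\sqrt{\rho_0}\dot u_0\|_{L^2}^2$ and $\|\sqrt{\rho_0}\dot w_0\|_{L^2}^2$. Also, since $\sigma(t)\le 1$ on $[0,\sigma(T)]$, the boundary terms $\int_{\partial\Omega}\sigma(u\cdot\nabla n\cdot u)F_1\,dS$ and $\int_{\partial\Omega}\sigma(w\cdot\nabla n\cdot w)F_2\,dS$ at the endpoint are handled in exactly the same manner as in \eqref{3.78}, producing terms that are either absorbed or bounded by the already established energy quantities.

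After integration, the $|\eta'(t)|$-weighted terms reduce to integrals of the \emph{unweighted} quantities $\|\sqrt\rho\dot u\|_{L^2}^2$, $\|\sqrt\rho\dot w\|_{L^2}^2$, $\|\nabla u\|_{L^2}^2$, $\|\nabla w\|_{L^2}^2$, $\|\nabla b\|_{L^2}^2$, $\|\mathrm{curl}^2 b\|_{L^2}^2$, $\|b_t\|_{L^2}^2$ and their low powers over $[0,\sigma(T)]$, all of which are controlled by constants via Lemmas \ref{l32} and \ref{l34}, in particular \eqref{3.8}--\eqref{3.9} and \eqref{3.65}. The $\eta(t)$-weighted high-order nonlinear terms (such as $\sigma\|\nabla u\|_{L^2}^6$, $\sigma\|\nabla b\|_{L^2}^4\|\mathrm{curl}^2 b\|_{L^2}^2$, $\sigma\|\nabla u\|_{L^4}^4$, etc.) are dealt with by splitting off the supremum against the time-integrable factor and using the a priori assumption \eqref{3.6} and the bound \eqref{z3.44}, yielding contributions of the form $C\sup_{[0,\sigma(T)]}(\|\nabla u\|_{L^2}^2+\cdots)\cdot K\le C K$. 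The $\delta$-small terms and the $CE_0^{1/2}\int\sigma\|\nabla b_t\|_{L^2}^2\,dt$ term are absorbed into the left-hand side for $\delta$ and $E_0$ sufficiently small.

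The principal obstacle is the term $C\eta(t)(\|\dot u\|_{L^2}^2+\|\dot w\|_{L^2}^2)$ on the right-hand side of \eqref{z3.18}, which originates from the boundary integral \eqref{t3.36}. Since $\dot u$ and $\dot w$ fail to satisfy a homogeneous boundary condition, one cannot directly apply a Poincar\'e-type inequality to convert $\|\dot u\|_{L^2}$ into $\|\nabla\dot u\|_{L^2}$. Instead, one appeals to the positive lower bound of $\rho$: starting from $\rho_0\ge\underline\rho>0$, the continuity equation together with a bound on $\int_0^{\sigma(T)}\|\mathrm{div}\,u\|_{L^\infty}\,dt$ (which follows from Lemma \ref{l28}, \eqref{3.65}, \eqref{z3.44} and the fact that $\sigma(T)\le 1$) yields $\rho\ge c(\underline\rho)>0$ on $[0,\sigma(T)]$. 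Consequently $\|\dot u\|_{L^2}^2\le C(\underline\rho)\|\sqrt\rho\dot u\|_{L^2}^2$ and analogously for $\dot w$, so
\begin{align*}
\int_0^{\sigma(T)}\sigma\big(\|\dot u\|_{L^2}^2+\|\dot w\|_{L^2}^2\big)\,dt
\le C(\underline\rho)\int_0^{\sigma(T)}\big(\|\sqrt\rho\dot u\|_{L^2}^2+\|\sqrt\rho\dot w\|_{L^2}^2\big)\,dt\le C(\underline\rho)K,
\end{align*}
which explains the $\underline\rho$-dependence in \eqref{3.85}.

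Combining all the estimates, absorbing the small terms into the left-hand side, and finally invoking \eqref{3.80} to recover the $\sigma\|\mathrm{curl}^2 b\|_{L^2}^2$ contribution from $\sigma\|b_t\|_{L^2}^2$ plus lower-order quantities, delivers the desired bound \eqref{3.85}, provided that $E_0$ is chosen no larger than some threshold $\varepsilon_5$ determined by the various absorbing arguments.
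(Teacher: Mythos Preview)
Your overall plan matches the paper: set $\eta(t)=\sigma$ in \eqref{z3.18}, integrate over $[0,\sigma(T)]$, control the $|\sigma'|$-weighted terms by \eqref{3.65}, absorb the $\delta$- and $E_0^{1/2}$-small pieces, and at the end recover $\sigma\|\curl^2 b\|_{L^2}^2$ via \eqref{3.80}. One refinement worth stating explicitly: the quartic terms $\sigma\|\nabla u\|_{L^4}^4$ and $\sigma\|\nabla w\|_{L^4}^4$ (through Lemma~\ref{l28}) generate cubic contributions of the type $\sigma\|\sqrt\rho\,\dot u\|_{L^2}^3$, which the paper handles by factoring as $\sup_{[0,\sigma(T)]}\sigma^{1/2}\|\sqrt\rho\,\dot u\|_{L^2}\cdot\int_0^{\sigma(T)}\|\sqrt\rho\,\dot u\|_{L^2}^2\,dt$ and then applying Young's inequality to reabsorb $\tfrac12\sup\sigma\|\sqrt\rho\,\dot u\|_{L^2}^2$ into the left-hand side. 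Your ``splitting off the supremum'' remark points in this direction but does not make the reabsorption step visible; without it the argument does not close.

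There is a genuine gap in your treatment of $C\int_0^{\sigma(T)}\sigma(\|\dot u\|_{L^2}^2+\|\dot w\|_{L^2}^2)\,dt$. You propose to secure $\rho\ge c(\underline\rho)$ on $[0,\sigma(T)]$ by bounding $\int_0^{\sigma(T)}\|\divv u\|_{L^\infty}\,dt$, citing Lemma~\ref{l28}, \eqref{3.65}, and \eqref{z3.44}. But $\|\divv u\|_{L^\infty}$ requires $\|F_1\|_{L^\infty}$, which by Gagliardo--Nirenberg needs $\|\nabla F_1\|_{L^q}$ for some $q>3$; by \eqref{2.11} this forces $\|\rho\dot u\|_{L^6}$ and hence, via \eqref{3.2}, control of $\|\nabla\dot u\|_{L^2}$. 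Neither \eqref{3.65} nor \eqref{z3.44} furnishes any bound on $\|\nabla\dot u\|_{L^2}$---its $\sigma$-weighted time integral is precisely the quantity Lemma~\ref{l37} is meant to establish---so the justification is circular. (The paper is equally terse here, simply recording a constant $C(\underline\rho)$ without deriving a quantitative lower bound for $\rho$; but your explicit route through $\int\|\divv u\|_{L^\infty}\,dt$ does not stand as written.)
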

\begin{proof}[Proof]
Taking $\eta(t)=\sigma$ and integrating \eqref{z3.18} over $[0, \sigma(T)]$, we get from \eqref{3.6}, \eqref{3.9}, \eqref{3.65}, and Young's inequality that
\begin{align*}
&\sup_{0\le t\le \sigma(T)}\big[\sigma\big(\|\sqrt{\rho}\dot{u}\|_{L^2}^2+
\|\sqrt{\rho}\dot{w}\|_{L^2}^2+\|b_t\|_{L^2}^2\big)\big]
+\int_0^{\sigma(T)}\sigma
\big(\|\nabla\dot{u}\|_{L^2}^2+\|\nabla\dot{w}\|_{L^2}^2+\|\dot{w}\|_{L^2}^2+\|\nabla b_t\|_{L^2}^2\big)dt\nonumber\\
&\le C\int_0^{\sigma(T)}\sigma'\big(\|\sqrt{\rho}\dot{u}\|_{L^2}^2+\|\sqrt{\rho}\dot{w}\|_{L^2}^2
+\|b_t\|_{L^2}^2+\|{\rm curl}^2b\|_{L^2}^2\big)dt+C(\underline{\rho})E_0^\frac13\nonumber\\
&\quad+C\int_0^{\sigma(T)}\sigma\big(\|\nabla u\|_{L^2}+\|\nabla b\|_{L^2}+E_0^\frac12\big)
\big(\|\sqrt{\rho}\dot{u}\|_{L^2}^3
+\|\sqrt{\rho}\dot{w}\|_{L^2}^3+\|b_t\|_{L^2}^3\big)dt\nonumber\\
&\le CE_0^\frac16\sup_{0\le t\le \sigma(T)}\big[\sigma^\frac12\big(\|\sqrt{\rho}\dot{u}\|_{L^2}
+\|\sqrt{\rho}\dot{w}\|_{L^2}+\|b_t\|_{L^2}\big)\big]\nonumber\\
&\quad\times\int_0^{\sigma(T)}
\big(\|\sqrt{\rho}\dot{u}\|_{L^2}^2
+\|\sqrt{\rho}\dot{w}\|_{L^2}^2+\|b_t\|_{L^2}^2\big)dt+C(K,\underline{\rho})\nonumber\\
&\le C(K)+C(K)\sup_{0\le t\le \sigma(T)}\big[\sigma^\frac12\big(\|\sqrt{\rho}\dot{u}\|_{L^2}
+\|\sqrt{\rho}\dot{w}\|_{L^2}+\|b_t\|_{L^2}\big)\big]\nonumber\\
&\le \frac12\sup_{0\le t\le \sigma(T)}\big(\sigma\|\sqrt{\rho}\dot{u}\|_{L^2}^2
+\sigma\|\sqrt{\rho}\dot{w}\|_{L^2}^2+\sigma\|b_t\|_{L^2}^2\big)+C(\underline{\rho}).
\end{align*}
This together with \eqref{3.80} and \eqref{3.6} leads to \eqref{3.85}.
\end{proof}

With Lemmas \ref{l36} and \ref{l37} at hand, we derive the uniform upper bounds of the density, which is the key to obtain
all the higher-order estimates and thus to extend the classical solution globally.
\begin{lemma}\label{l38}
There exists a positive constant $\varepsilon$ as in Theorem \ref{thm1} such that if $(\rho, u, w, b, \Phi)$ is a smooth solution of \eqref{a1}--\eqref{a6} in $\Omega\times(0, T]$ satisfying \eqref{3.6}, then
\begin{align}\label{3.87}
\sup_{0\le t\le T}\|\rho(t)\|_{L^\infty}\le \frac74\hat{\rho}
\end{align}
provided that $E_0\le\varepsilon\triangleq\min\{\varepsilon_1, \varepsilon_2, \varepsilon_3, \varepsilon_4, \varepsilon_5\}$.
\end{lemma}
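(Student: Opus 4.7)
\textbf{Proof plan for Lemma \ref{l38}.} The argument follows the classical Hoff--Zlotnik strategy adapted to our setting with non-flat doping profile, magnetic pressure and rotation effect. The starting point is to rewrite the continuity equation $\eqref{a1}_1$ along particle trajectories. Fix $x \in \Omega$ and let $X(s;x,t)$ solve $\partial_s X = u(X,s)$ with $X(t) = x$; then $D_t\rho = -\rho\,\divv u$. Using the definition \eqref{2.7} of $F_1$, we have
\begin{equation*}
(2\mu+\lambda)\divv u = F_1 + (P(\rho) - P(\rho_s)) + \tfrac{1}{2}|b|^2,
\end{equation*}
so, setting $M \triangleq \bar{\rho}+1 \ge \|\rho_s\|_{L^\infty}$ and $g(\rho) \triangleq -\frac{a\rho}{2\mu+\lambda}(\rho^\gamma - M^\gamma)$, we obtain along trajectories
\begin{equation*}
\frac{d}{ds}\rho = g(\rho) + b'(s), \qquad b'(s) \triangleq \frac{a\rho}{2\mu+\lambda}(\rho_s^\gamma - M^\gamma) - \frac{\rho}{2\mu+\lambda}\Bigl(F_1 + \tfrac{1}{2}|b|^2\Bigr).
\end{equation*}
Note $g(\infty) = -\infty$ and, since $\rho_s \le M$, the first piece of $b'(s)$ contributes nonpositively and is trivially bounded.

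The remaining task is to verify the hypothesis $b(t_2)-b(t_1) \le N_0 + N_1(t_2-t_1)$ of the Zlotnik inequality (Lemma \ref{l210}) with $N_1$ chosen less than the threshold $|g(\xi)|$ at $\xi = \tfrac{7}{4}\hat\rho$. Since $\rho \le 2\hat\rho$ by \eqref{3.6}, it suffices to control $\int_{t_1}^{t_2}(\|F_1\|_{L^\infty} + \|b\|_{L^\infty}^2)\,ds$. I split $[0,T] = [0,\sigma(T)] \cup [\sigma(T),T]$ and estimate each piece separately. On $[0,\sigma(T)]$, I use Gagliardo--Nirenberg, say $\|F_1\|_{L^\infty} \le C\|F_1\|_{L^6}^{1/2}\|\nabla F_1\|_{L^6}^{1/2}$, together with the $L^p$-bounds from Lemma \ref{l28} that express these norms in terms of $\|\sqrt{\rho}\dot u\|_{L^2}$, $\|\nabla\dot u\|_{L^2}$, $\|\curl w\|$, $\|b\cdot\nabla b\|$, etc. Feeding in the $\sigma$-weighted integrals from Lemmas \ref{l34} and \ref{l37}, together with \eqref{3.16}, produces
\begin{equation*}
\int_0^{\sigma(T)}\bigl(\|F_1\|_{L^\infty} + \|b\|_{L^\infty}^2\bigr)\,ds \le CE_0^\alpha
\end{equation*}
for some $\alpha > 0$, which serves as the $N_0$ contribution. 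For the magnetic term I use the Sobolev-type bound $\|b\|_{L^\infty}^2 \le C\|\nabla b\|_{L^2}(\|{\rm curl}^2 b\|_{L^2} + \|\nabla b\|_{L^2})$ (via Lemma \ref{l25}) followed by Cauchy--Schwarz in time and Lemma \ref{l33}.

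For $[\sigma(T),T]$, I exploit the time-weighted bounds \eqref{3.75}--\eqref{3.76} from Lemma \ref{l36}: on this interval $\sigma \equiv 1$, so $\|\sqrt{\rho}\dot u\|_{L^2}^2 + \|\sqrt{\rho}\dot w\|_{L^2}^2 + \|b_t\|_{L^2}^2 + \|{\rm curl}^2 b\|_{L^2}^2 \le CE_0^{1/3}$ pointwise, while time integrals grow at most linearly with a coefficient $CE_0$. Combining with Lemma \ref{l28} and Gagliardo--Nirenberg, one obtains
\begin{equation*}
\int_{t_1}^{t_2}\bigl(\|F_1\|_{L^\infty} + \|b\|_{L^\infty}^2\bigr)\,ds \le CE_0^{\alpha} + CE_0^{\beta}(t_2 - t_1)
\end{equation*}
for $t_1, t_2 \in [\sigma(T),T]$ with $\alpha,\beta > 0$. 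Taking $E_0 \le \varepsilon$ small enough makes $N_1$ smaller than any prescribed threshold. Applying Lemma \ref{l210} with $y^0 = \rho_0 \le \hat\rho$ and $\xi_0$ chosen so that $g(\xi_0) \le -N_1$ yields $\rho \le \max\{\hat\rho,\xi_0\} + N_0$, which is at most $\tfrac{7}{4}\hat\rho$ after shrinking $\varepsilon$.

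The main technical obstacle is obtaining the small-time bound on $\int_0^{\sigma(T)}\|F_1\|_{L^\infty}\,ds$: near $t = 0$ the material derivatives $\dot u, \dot w$ are not controlled uniformly, so one must absorb the singularity through the $\sigma$-weight in Lemma \ref{l37} while simultaneously extracting a small $E_0$-power. A second delicate point is that the magnetic pressure $\tfrac{1}{2}|b|^2$ does not benefit from any pressure-like monotonicity and must be bounded purely by the regularity of $b$; fortunately \eqref{3.16} gives a uniform-in-time $H^1$ bound on $b$ depending only on $M_3$, and the $\|{\rm curl}^2 b\|_{L^2(0,T;L^2)}^2$ bound closes the estimate after Cauchy--Schwarz in time.
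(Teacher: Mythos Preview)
Your plan is correct and follows essentially the same Hoff--Zlotnik strategy as the paper: rewrite $D_t\rho$ using the effective viscous flux $F_1$, split $[0,T]$ at $\sigma(T)$, and verify the Zlotnik hypothesis by controlling $\int(\|F_1\|_{L^\infty}+\|b\|_{L^\infty}^2)\,dt$ on each piece via Lemma \ref{l28} together with Lemmas \ref{l34}, \ref{l37} (short time) and Lemma \ref{l36} (long time). Your use of the constant $M=\bar\rho+1$ in place of the spatially varying $\rho_s$ when defining $g$ is a harmless (indeed slightly cleaner) variant of the paper's choice $g(\rho)=-\tfrac{a\rho}{2\mu+\lambda}(\rho^\gamma-\rho_s^\gamma)$, and your interpolation $\|F_1\|_{L^\infty}\lesssim\|F_1\|_{L^6}^{1/2}\|\nabla F_1\|_{L^6}^{1/2}$ differs only in exponents from the paper's $\|F_1\|_{L^2}^{1/4}\|\nabla F_1\|_{L^6}^{3/4}$; both close the estimate.
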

\begin{proof}[Proof]
1. We rewrite $\eqref{a1}_1$ as
\begin{align}
D_t\rho=g(\rho)+h'(t),
\end{align}
where
\begin{align*}
D_t\rho=\rho_t+u\cdot\nabla\rho, \quad g(\rho)=-\frac{a\rho(P-P_s)}{2\mu+\lambda},
\quad h(t)=-\frac{1}{2\mu+\lambda}\int_0^t\rho \Big(\frac12|b|^2+F_1\Big)d\tau.
\end{align*}
For $t\in [0, \sigma(T)]$, we deduce from H\"older's inequality, Lemma \ref{l28}, and \eqref{3.65} that, for
$0\le t_1<t_2\le \sigma(T)$,
\begin{align}
&|h(t_2)-h(t_1)|\nonumber\\
&\le C(\hat{\rho})\int_0^{\sigma(T)}\big(\|F_1\|_{L^\infty}+\|b\|_{L^\infty}^2\big)dt\nonumber\\
&\le C\int_0^{\sigma(T)}\|F_1\|_{L^2}^\frac14\|\nabla F_1\|_{L^6}^\frac34dt+C\int_0^{\sigma(T)}\|F_1\|_{L^2}dt
+C\int_0^{\sigma(T)}\|\nabla b\|_{L^2}\big(\|{\rm curl}^2b\|_{L^2}+\|\nabla b\|_{L^2}\big)dt\nonumber\\
&\le C\int_0^{\sigma(T)}\Big(
\|\nabla\dot{u}\|_{L^2}+\|\nabla\dot{w}\|_{L^2}
+\|\nabla b\|_{L^2}^\frac12\|{\rm curl}^2b\|_{L^2}^\frac32
+\|\nabla b\|_{L^2}^\frac32\|{\rm curl}^2b\|_{L^2}^\frac12+\|\nabla b\|_{L^2}^2\nonumber\\
&\quad+\|\nabla u\|_{L^2}^2+\|\nabla w\|_{L^2}+E_0^\frac16\Big)^\frac34\Big(\|\nabla u\|_{L^2}^\frac14
+\|P-P_s\|_{L^2}^\frac14+\||b|^2\|_{L^2}^\frac14\Big)dt\nonumber\\
&\quad+C\int_0^{\sigma(T)}(\|\nabla u\|_{L^2}+\|P-P_s\|_{L^2}+\|b\|_{L^4}^2)dt
+C\int_0^{\sigma(T)}\|\nabla b\|_{L^2}\|{\rm curl}^2b\|_{L^2}dt\nonumber\\
&\quad+C\int_0^{\sigma(T)}\|\nabla b\|_{L^2}^2dt\nonumber\\
&\le CE_0^\frac18+CE_0^\frac14\Big(\int_0^{\sigma(T)}\|{\rm curl}^2b\|_{L^2}^2dt\Big)^\frac12
+CE_0^\frac16\int_0^{\sigma(T)}\|\nabla u\|_{L^2}^\frac32dt\nonumber\\
&\quad+C\int_0^{\sigma(T)}\Big(\sigma^{-\frac12}(\sigma^\frac12\|\nabla u\|_{L^2})^\frac14+E_0^\frac16\sigma^{-\frac38}\Big)
\big(\sigma\|\nabla\dot{u}\|_{L^2}^2+\sigma\|\nabla\dot{w}\|_{L^2}^2\big)^\frac38dt\nonumber\\
&\quad+C\int_0^{\sigma(T)}\Big(\sigma^{-\frac18}(\sigma\|\nabla u\|_{L^2}^2)^\frac18+E_0^\frac16\Big)\|{\rm curl}^2b\|_{L^2}^\frac98dt\nonumber\\
&\quad+C\int_0^{\sigma(T)}\Big(\sigma^{-\frac18}(\sigma\|\nabla u\|_{L^2}^2)^\frac18
+E_0^\frac16\Big)\big(\|{\rm curl}^2b\|_{L^2}^2\big)^\frac{3}{16}dt\nonumber\\
&\quad+C\int_0^{\sigma(T)}\sigma^{-\frac18}\big(\sigma\|\nabla u\|_{L^2}^2\big)^\frac18dt+C\int_0^{\sigma(T)}\big(\|\nabla u\|_{L^2}^2\big)^\frac74dt
\nonumber\\
&\le CE_0^\frac18+CE_0^\frac14\Big(\int_0^{\sigma(T)}\|{\rm curl}^2b\|_{L^2}^2dt\Big)^\frac12
+CE_0\Big(\int_0^{\sigma(T)}\|\nabla u\|_{L^2}^2dt\Big)^\frac34\nonumber\\
&\quad+C\left[E_0^\frac{1}{24}\Big(\int_0^{\sigma(T)}
\sigma^{-\frac45}dt\Big)^\frac58
+E_0^\frac16\Big(\int_0^{\sigma(T)}\sigma^{-\frac35}dt\Big)^\frac58\right]
\left[\int_0^{\sigma(T)}\big(\sigma\|\nabla\dot{u}\|_{L^2}^2
+\sigma\|\nabla\dot{w}\|_{L^2}^2\big)dt\right]^\frac38\nonumber\\
&\quad+C\left[E_0^\frac16+E_0^\frac{1}{24}
\Big(\int_0^{\sigma(T)}\sigma^{-\frac{2}{7}}dt\Big)^\frac{7}{16}\right]
\Big(\int_0^{\sigma(T)}\|{\rm curl}^2b\|_{L^2}^2\Big)^\frac{9}{16}\nonumber\\
&\quad+C\left[E_0^\frac16+E_0^\frac{1}{24}
\Big(\int_0^{\sigma(T)}\sigma^{-\frac{2}{13}}dt\Big)^\frac{13}{16}\right]
\Big(\int_0^{\sigma(T)}\|{\rm curl}^2b\|_{L^2}^2\Big)^\frac{3}{16}\nonumber\\
&\le C(\underline{\rho})E_0^\frac{1}{24},
\end{align}
provided that $E_0\le \varepsilon$. Thus, for $t\in [0, \sigma(T)]$, we can choose $N_0$ and $N_1$ in Lemma \ref{l210} as
\begin{align}
N_1=0, \quad N_0=CE_0^\frac{1}{24},
\end{align}
and
$\xi_0=0$ in Lemma \ref{l210}. Then,
\begin{align}
g(\xi)=-\frac{a\xi}{2\mu+\lambda}(\xi^\gamma-\rho_s^\gamma)\le -N_1=0 \quad {\rm for~all}~\xi\ge \xi_0=\hat{\rho},
\end{align}
we thus deduce from Lemma \ref{l210} that
\begin{align}\label{3.92}
\sup_{0\le t\le \sigma(T)}\|\rho\|_{L^\infty}\le \max\{\hat{\rho}, \xi_0\}+N_0\le \hat{\rho}+C(\underline{\rho})E_0^\frac{1}{24}\le \frac{3\hat{\rho}}{2},
\end{align}
provided that $E_0\le \varepsilon$.

2. For $t\in [\sigma(T), T]$, we derive from Lemma \ref{l28}, \eqref{3.6}, \eqref{3.9}, \eqref{3.16}, and \eqref{3.76} that, for $\sigma(T)\le t_1<t_2\le T$,
\begin{align}
|h(t_2)-h(t_1)|&\le C(\hat{\rho})\int_{t_1}^{t_2}\big(\|F_1\|_{L^\infty}+\|b\|_{L^\infty}^2\big)dt\nonumber\\
&\le C\int_{t_1}^{t_2}\|F_1\|_{L^\infty}^\frac83dt
+\frac{a}{4\mu+2\lambda}(t_2-t_1)+CE_0^\frac12\nonumber\\
&\le C\int_{t_1}^{t_2}\|F_1\|_{L^2}^\frac23\|\nabla F_1\|_{L^6}^2dt+\int_{t_1}^{t_2}\|F_1\|_{L^2}^\frac83dt
+\frac{a}{4\mu+2\lambda}(t_2-t_1)+CE_0^\frac12\nonumber\\
&\le C\int_{t_1}^{t_2}\Big(
\|\nabla\dot{u}\|_{L^2}^2+\|\nabla\dot{w}\|_{L^2}^2
+\|\nabla b\|_{L^2}\|{\rm curl}^2b\|_{L^2}^3
+\|\nabla b\|_{L^2}^3\|{\rm curl}^2b\|_{L^2}\nonumber\\
&\quad+\|\nabla b\|_{L^2}^4+\|\nabla u\|_{L^2}^4+\|\nabla w\|_{L^2}^2
+E_0^\frac13\Big)\Big(\|\nabla u\|_{L^2}^\frac23
+\|P-P_s\|_{L^2}^\frac23+\||b|^2\|_{L^2}^\frac23\Big)dt\nonumber\\
&\quad+C\int_{t_1}^{t_2}\Big(\|\nabla u\|_{L^2}^\frac83+\|P-P_s\|_{L^2}^\frac83+\|b\|_{L^4}^\frac{16}{3}\Big)dt
+\frac{a}{4\mu+2\lambda}(t_2-t_1)+CE_0^\frac12\nonumber\\
&\le \Big(\frac{a}{4\mu+2\lambda}+CE_0^\frac49\Big)(t_2-t_1)+CE_0^\frac49
+CE_0^\frac19\int_{t_1}^{t_2}\big(\|\nabla\dot{u}\|_{L^2}^2+\|\nabla\dot{w}\|_{L^2}^2\big)dt\nonumber\\
&\quad+CE_0^\frac49\Big(\int_{t_1}^{t_2}\|\nabla b\|_{L^2}^2\Big)^\frac12
\Big(\int_{t_1}^{t_2}\|{\rm curl}^2b\|_{L^2}^2dt\Big)^\frac12+CE_0^\frac49\int_{t_1}^{t_2}\big(\|\nabla b\|_{L^2}^2+\|\nabla u\|_{L^2}^2\big)dt\nonumber\\
&\le \Big(\frac{a}{4\mu+2\lambda}+C(\underline{\rho})E_0^\frac49+CE_0^\frac{10}{9}\Big)(t_2-t_1)
+CE_0^\frac49\nonumber\\
&\le \frac{a}{2\mu+\lambda}(t_2-t_1)+CE_0^\frac49,
\end{align}
provided that $E_0\le \varepsilon$. Thus, for $t\in [\sigma(T), T]$, we can choose $N_0$, $N_1$, and $\xi_0$ in Lemma \ref{l210} as follows:
\begin{align*}
N_0=CE_0^\frac49, \quad N_1=\frac{a}{2\mu+\lambda}, \quad \xi_0=\frac{3\hat{\rho}}{2}.
\end{align*}
Since for all $\xi\ge \xi_0=\frac{3\hat{\rho}}{2}>\rho_s+1$,
\begin{align}
g(\xi)=-\frac{a\xi}{2}(\xi^\gamma-\rho_s^\gamma)\le -N_1 \quad {\rm for~all}~\xi\ge \xi_0=\frac{3\hat{\rho}}{2}.
\end{align}
Thus, due to Lemma \ref{l210}, we arrive at
\begin{align}\label{3.95}
\sup_{\sigma(T)\le t\le T}\|\rho\|_{L^\infty}\le \frac{3\hat{\rho}}{2}+N_0\le \frac{3\hat{\rho}}{2}+CE_0^\frac49\le \frac{7\hat{\rho}}{4},
\end{align}
provided that $E_0\le \varepsilon$. The combination of \eqref{3.92} and \eqref{3.95}, we obtain \eqref{3.87}.
\end{proof}

Now, we are ready to prove Proposition \ref{p31}.
\begin{proof}[Proof of Proposition \ref{p31}]
Proposition \ref{p31} follows from Lemma \ref{l34}, Lemma \ref{l35}, and Lemma \ref{l38}.
\end{proof}

\subsection{Higher-order estimates}\label{sec3.2}

In this subsection, we establish the time-dependent higher-order estimates of solutions, which are necessary for the global existence of
classical solutions. In what follows, we denote by $C$ or $C_i\ (i=1, 2,\ldots)$ the various positive constants, which may depend on the initial data, $\mu$, $\lambda$, $u_r$, $c_0$, $c_a$, $c_d$,
$\gamma$, $a$, $\hat{\rho}$, $\Omega$, $M_1$, $M_2$, $M_3$, $\tilde{\rho}$, $\underline{\rho}$, and $T$ as well.
\begin{lemma}\label{zl39}
Under the conditions of Theorem \ref{thm1}, it holds that
\begin{align}\label{z3.69}
&\sup_{0\le t\le \sigma(T)}(\|\sqrt{\rho}\dot{u}\|_{L^2}^2+
\|\sqrt{\rho}\dot{w}\|_{L^2}^2+\|b_t\|_{L^2}^2+\|\curl^2b\|_{L^2}^2)\nonumber\\
&\quad+\int_0^{\sigma(T)}(\|\nabla\dot{u}\|_{L^2}^2+\|\nabla\dot{w}\|_{L^2}^2
+\|\dot{w}\|_{L^2}^2+\|\nabla b_t\|_{L^2}^2)dt\le C.
\end{align}
\end{lemma}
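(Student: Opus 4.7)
The aim is to strip the $\sigma(t)$-weight that appears in Lemma \ref{l37}, which becomes possible precisely because the compatibility conditions \eqref{a12} furnish the missing initial-time information on $\rho\dot u$, $\rho\dot w$ and the magnetic variables. The argument naturally splits into three stages.

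\medskip
\noindent\textbf{Step 1 (initial-time bounds).} Evaluating $\eqref{a1}_2$ at $t=0$ and substituting $\eqref{a12}_1$, one obtains the identity
\begin{align*}
\rho_0\dot u(0)=-\sqrt{\rho_0}\,g_1+\rho_0\nabla\Phi_0,
\end{align*}
so that $\|\sqrt{\rho_0}\dot u(0)\|_{L^2}\le C$ by \eqref{a10}, \eqref{2.2} and Lemma \ref{l29}. Likewise $\eqref{a1}_3$ combined with $\eqref{a12}_2$ yields $\rho_0\dot w(0)=-\sqrt{\rho_0}\,g_2$, giving $\|\sqrt{\rho_0}\dot w(0)\|_{L^2}\le\|g_2\|_{L^2}\le C$. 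Finally, from $\eqref{a1}_4$ and $\divv b_0=0$ we read $b_t(0)=b_0\cdot\nabla u_0-u_0\cdot\nabla b_0-b_0\,\divv u_0+\Delta b_0$, which together with $(u_0,b_0)\in H^2_\omega$ delivers $\|b_t(0)\|_{L^2}+\|\curl^2 b(0)\|_{L^2}\le C$.

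\medskip
\noindent\textbf{Step 2 (energy estimate with $\eta\equiv 1$).} I would re-run the derivation of the differential inequality \eqref{z3.18}, now choosing $\eta(t)\equiv 1$ (so $\eta'\equiv 0$), and integrate it over $[0,\sigma(T)]$. All $|\eta'(t)|$-terms on the right drop out; the surface integrals $\int_{\partial\Omega}(u\cdot\nabla n\cdot u)F_1\,dS$ and $\int_{\partial\Omega}(w\cdot\nabla n\cdot w)F_2\,dS$ at $t=0$ are bounded via the trace theorem (Lemma \ref{l23}) and Lemma \ref{l28} applied to the initial data, using Step 1 inside $F_1(0)$ and $F_2(0)$; the corresponding integrals at $t=\sigma(T)$ are absorbed by Young's inequality as in \eqref{3.78}. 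Choosing $\delta$ small enough absorbs $\delta(\|\nabla\dot u\|_{L^2}^2+\|\nabla\dot w\|_{L^2}^2+\|\nabla b_t\|_{L^2}^2)$ into the coercive left-hand side.

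\medskip
\noindent\textbf{Step 3 (closing via Proposition \ref{p31}).} On $[0,\sigma(T)]$, Proposition \ref{p31} together with Lemma \ref{l34} provide the uniform control
\begin{align*}
\sup_{0\le t\le\sigma(T)}\bigl(\|\nabla u\|_{L^2}^2+\|\nabla w\|_{L^2}^2+\|\nabla b\|_{L^2}^2\bigr)\le 3K
\end{align*}
and the time-integrated bound \eqref{3.65} for $\sqrt\rho\dot u,\sqrt\rho\dot w,\curl^2 b,b_t$. Setting $\Phi(t)=\|\sqrt\rho\dot u\|_{L^2}^2+\|\sqrt\rho\dot w\|_{L^2}^2+\|b_t\|_{L^2}^2$ and $\Psi(t)=\|\nabla\dot u\|_{L^2}^2+\|\nabla\dot w\|_{L^2}^2+\|\dot w\|_{L^2}^2+\|\nabla b_t\|_{L^2}^2$, the integrated inequality takes the schematic form
\begin{align*}
\sup_{[0,\sigma(T)]}\Phi+\int_0^{\sigma(T)}\Psi\,d\tau\le C+CE_0^{1/6}\Bigl(\sup_{[0,\sigma(T)]}\Phi^{1/2}\Bigr)\int_0^{\sigma(T)}\Psi\,d\tau,
\end{align*}
which closes by Young's inequality for $E_0\le\varepsilon$ sufficiently small. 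The bound on $\|\curl^2b\|_{L^2}$ then follows from \eqref{3.80}.

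\medskip
\noindent\textbf{Expected obstacle.} The delicate point is the control of the cubic right-hand side contributions such as $\|\sqrt\rho\dot u\|_{L^2}^3$, $\|{\curl}^2b\|_{L^2}^3$ in \eqref{z3.18}, which in Lemma \ref{l37} were tamed by the smallness of $\sup_{[0,\sigma(T)]}\sigma^{1/2}\|\sqrt\rho\dot u\|_{L^2}$ together with the $\sigma$-weighted bound \eqref{3.65}. Without the weight, one must instead exploit an $L^\infty_t$ control of $\Phi$ that is uniform on the full slab $[0,\sigma(T)]$; this is obtained by patching the interior bound \eqref{3.85} with the initial-time bounds of Step 1, and it is precisely here that the compatibility hypotheses \eqref{a12} and the upper bound $\rho_0\le\hat\rho$ enter in an essential way.
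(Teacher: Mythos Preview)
Your approach is essentially the same as the paper's: set $\eta(t)\equiv 1$ in \eqref{z3.18}, integrate over $[0,\sigma(T)]$, use the compatibility conditions \eqref{a12} to control $\|\sqrt{\rho_0}\dot u(0)\|_{L^2}$ and $\|\sqrt{\rho_0}\dot w(0)\|_{L^2}$, and close via \eqref{3.65} and Young's inequality.

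However, your Step 3 schematic and the ``expected obstacle'' paragraph misidentify the closing mechanism. The cubic terms arising from $\|\nabla u\|_{L^4}^4+\|\nabla w\|_{L^4}^4$ in \eqref{z3.18} contribute, after integration, a quantity of the form
\[
C\sup_{[0,\sigma(T)]}\bigl(\|\nabla u\|_{L^2}+E_0^{1/2}\bigr)\int_0^{\sigma(T)}\Phi^{3/2}\,dt
\;\le\;
C(K)\,\sup_{[0,\sigma(T)]}\Phi^{1/2}\int_0^{\sigma(T)}\Phi\,dt,
\]
and the key is that $\int_0^{\sigma(T)}\Phi\,dt\le 3K$ is \emph{already} supplied by \eqref{3.65}. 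Thus the right-hand side is bounded by $C(K)\sup\Phi^{1/2}\le \tfrac12\sup\Phi+C$, and the estimate closes by this single application of Young --- no smallness of $E_0$ is invoked at this stage, and no ``patching'' with the $\sigma$-weighted bound \eqref{3.85} is needed (indeed, appealing to \eqref{3.85} here would be circular, since it only controls $\sigma\Phi$, not $\Phi$ itself near $t=0$). Your schematic places $\int\Psi$ rather than $\int\Phi$ on the right, which is the wrong quantity and would indeed require an additional smallness argument; the paper avoids this entirely.
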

\begin{proof}[Proof]
Taking $\eta(t)=1$ in \eqref{z3.18} and integrating the resulting equation over $(0, \sigma(T)]$, we
deduce from \eqref{3.65} and Young's inequality that
\begin{align*}
&\int_0^{\sigma(T)}\big(\|\nabla\dot{u}\|_{L^2}^2+\|\nabla\dot{w}\|_{L^2}^2+\|\dot{w}\|_{L^2}^2+\|\nabla b_t\|_{L^2}^2\big)dt\nonumber\\
&\le C\sup_{0\le t\le \sigma(T)}\big(\|\sqrt{\rho}\dot{u}\|_{L^2}
+\|\sqrt{\rho}\dot{w}\|_{L^2}+\|b_t\|_{L^2}\big)
\int_{0}^{\sigma(T)}\big(\|\sqrt{\rho}\dot{u}\|_{L^2}^2
+\|\sqrt{\rho}\dot{w}\|_{L^2}^2+\|b_t\|_{L^2}^2\big)dt\nonumber\\
&\quad+\|\sqrt{\rho_0}g_1\|_{L^2}^2+\|\sqrt{\rho_0}g_2\|_{L^2}^2+CE_0^\frac13\nonumber\\
&\le C+C\sup_{0\le t\le \sigma(T)}\big(\|\sqrt{\rho}\dot{u}\|_{L^2}
+\|\sqrt{\rho}\dot{w}\|_{L^2}+\|b_t\|_{L^2}\big)\nonumber\\
&\le \frac12\sup_{0\le t\le \sigma(T)}\big(\|\sqrt{\rho}\dot{u}\|_{L^2}^2
+\|\sqrt{\rho}\dot{w}\|_{L^2}^2+\|b_t\|_{L^2}^2\big)+C.
\end{align*}
This along with \eqref{3.80} and \eqref{3.65} implies \eqref{z3.69}.
\end{proof}

\begin{lemma}\label{l30}
Under the conditions of Theorem \ref{thm1}, it holds that
\begin{align}
&\sup_{0\le t\le T}(\|\sqrt{\rho}\dot{u}\|_{L^2}^2+
\|\sqrt{\rho}\dot{w}\|_{L^2}^2+\|b_t\|_{L^2}^2+\|\curl^2b\|_{L^2}^2)\nonumber\\
&\quad+\int_0^T(\|\nabla\dot{u}\|_{L^2}^2+\|\nabla\dot{w}\|_{L^2}^2
+\|\dot{w}\|_{L^2}^2+\|\nabla b_t\|_{L^2}^2)dt\le C,\label{z3.71}\\
&\sup_{0\le t\le T}(\|\nabla\rho\|_{L^6}+\|u\|_{H^2}+\|w\|_{H^2})\nonumber\\
&\quad+\int_0^T(\|\nabla u\|_{L^\infty}+\|\nabla w\|_{L^\infty}+\|\nabla^2u\|_{L^6}^2
+\|\nabla^2w\|_{L^6}^2)dt\le C.\label{z3.72}
\end{align}
\end{lemma}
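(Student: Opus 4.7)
The plan splits naturally into two parts. For \eqref{z3.71}, I would first extend the estimates of Lemma \ref{zl39} from $[0,\sigma(T)]$ to $[0,T]$: on the remaining interval $[\sigma(T),T]$ one has $\sigma(t)\equiv 1$, so the bounds \eqref{3.75} and \eqref{3.76} of Lemma \ref{l36} deliver directly the required pointwise and integrated estimates without the $\sigma$ weight. The bound on $\|\curl^2 b\|_{L^2}^2$ then follows from \eqref{3.80} once $\|b_t\|_{L^2}$, $\|\nabla b\|_{L^2}$, and $\|\nabla u\|_{L^2}$ are known to be bounded, which they are via \eqref{3.16} and \eqref{3.8}.

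Turning to \eqref{z3.72}, the $H^2$ bounds on $u$ and $w$ follow from \eqref{z2.42} applied at $p=2$, whose right-hand side is now entirely under control by \eqref{z3.71}, Lemmas \ref{l28}, \ref{l29}, \eqref{3.16}, and the lower-order energy estimate. The crux is the $L^\infty_t$ bound on $\|\nabla\rho\|_{L^6}$. Differentiating $\eqref{a1}_1$ and testing the resulting transport equation for $\nabla\rho$ against $|\nabla\rho|^4\nabla\rho$ yields
\begin{equation*}
\frac{d}{dt}\|\nabla\rho\|_{L^6}\le C\bigl(1+\|\nabla u\|_{L^\infty}\bigr)\|\nabla\rho\|_{L^6}+C\|\nabla^2 u\|_{L^6}.
\end{equation*}
Applying Lemma \ref{l211} to $u$, writing $\divv u=(F_1+(P-P_s)+\tfrac12|b|^2)/(2\mu+\lambda)$, and controlling $\|F_1\|_{L^\infty}$ and $\|\curl u\|_{L^\infty}$ by $\|F_1\|_{W^{1,q}}$ and $\|\curl u\|_{W^{1,q}}$ via Sobolev embedding and Lemma \ref{l28} for some $q\in(3,6)$, one obtains
\begin{equation*}
\|\nabla u\|_{L^\infty}\le C\bigl(1+h(t)\bigr)\ln\bigl(e+\|\nabla^2 u\|_{L^q}\bigr)+C,
\end{equation*}
with $h\in L^2(0,T)$ thanks to \eqref{z3.71}. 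Since \eqref{z2.42} at this $q$ yields $\|\nabla^2 u\|_{L^q}\le C(1+\|\nabla\rho\|_{L^6}+\|\nabla\dot u\|_{L^2})$, combining the above two displays with the transport estimate produces a logarithmic Gronwall inequality for $\Psi(t)=e+\|\nabla\rho\|_{L^6}$ with coefficients in $L^1(0,T)$, which closes to give $\sup_{[0,T]}\|\nabla\rho\|_{L^6}\le C$.

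Reinserting this into the BKM-type inequality gives $\int_0^T\|\nabla u\|_{L^\infty}dt\le C$, and the parallel argument applied to $w$ (using Lemma \ref{l211} and $\divv w=F_2/(2c_d+c_0)$) handles $\int_0^T\|\nabla w\|_{L^\infty}dt$. Finally, \eqref{z2.42} at $p=6$ together with \eqref{3.2}, its analogue \eqref{z2.9} for $\dot w$, and the just-proven $L^\infty_t$ bound on $\|\nabla\rho\|_{L^6}$ yields $\|\nabla^2 u\|_{L^6}^2+\|\nabla^2 w\|_{L^6}^2\le C(1+\|\nabla\dot u\|_{L^2}^2+\|\nabla\dot w\|_{L^2}^2)$, whose time integral is controlled by \eqref{z3.71}. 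The main obstacle is the log-Gronwall closure itself: the coefficient of $\ln\Psi$ is only in $L^2_t$ and $\|\nabla\dot u\|_{L^2}$ appears both multiplicatively and inside the logarithm, so one must apply Gronwall to $\ln\Psi$ (rather than to $\Psi$) and exploit Young's inequality together with the $L^2_t$-integrability provided by \eqref{z3.71} to absorb the awkward terms without producing blow-up worse than double-exponential in $T$.
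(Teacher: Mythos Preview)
Your proposal is correct and follows exactly the route the paper sketches (it cites Lemmas \ref{l36} and \ref{zl39} for \eqref{z3.71} and defers to \cite{CJ21} for the Beale--Kato--Majda plus log-Gronwall argument behind \eqref{z3.72}). One minor point of order: the pointwise $H^2$ bound on $u,w$ via \eqref{z2.42} at $p=2$ cannot be claimed first, since its right-hand side carries $\|\nabla(P-P_s)\|_{L^2}\le C\|\nabla\rho\|_{L^2}+C$, which is not yet controlled; simply postpone that step until after your log-Gronwall has closed $\sup_t\|\nabla\rho\|_{L^6}$, exactly as your later bound $\|\nabla^2 u\|_{L^q}\le C(1+\|\nabla\rho\|_{L^6}+\|\nabla\dot u\|_{L^2})$ already acknowledges.
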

\begin{proof}[Proof]
The estimate \eqref{z3.71} follows directly from Lemmas \ref{l36} and \ref{zl39}. Furthermore, based on the Beale-Kato-Majda type
inequality (see Lemma \ref{l211}), we can obtain the estimates on the gradient of density and velocity. Since the arguments are similar to \cite{CJ21},
we omit the details for simplicity.
\end{proof}

\begin{lemma}\label{l31}
Under the conditions of Theorem \ref{thm1}, it holds that
\begin{align}
&\sup_{0\le t\le T}\big(\|\sqrt{\rho}u_t\|_{L^2}^2+\|\sqrt{\rho}w_t\|_{L^2}^2\big)
+\int_0^T(\|\nabla u_t\|_{L^2}^2
+\|\nabla w_t\|_{L^2}^2)dt\le C,\label{z3.73}\\
&\sup_{0\le t\le T}\big(\|\rho\|_{H^2}+\|P\|_{H^2}
+\|\rho_t\|_{H^1}+\|P_t\|_{H^1}\big)+\int_0^T(\|\rho_{tt}\|_{L^2}^2+\|P_{tt}\|_{L^2}^2)dt\le C,\label{z3.74}\\
&\sup_{0\le t\le T}\big[\sigma\big(\|\nabla u_t\|_{L^2}^2+\|\nabla w_t\|_{L^2}^2+\|\nabla b_t\|_{L^2}^2\big)\big]
+\int_0^T\sigma\big(\|\sqrt{\rho}u_{tt}\|_{L^2}^2
+\|\sqrt{\rho}w_{tt}\|_{L^2}^2+\|b_{tt}\|_{L^2}^2\big)dt\le C,\label{z3.75}\\[3pt]
&\sup_{0\le t\le T}\big(\|\nabla(\Phi-\Phi_s)\|_{H^3}+\|\nabla\Phi_t\|_{H^2}+\|\nabla \Phi_{tt}\|_{L^2}\big)\le C.\label{z3.76}
\end{align}
\end{lemma}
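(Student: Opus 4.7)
\textbf{Proof proposal for Lemma \ref{l31}.} The plan is to derive the four estimates in sequence, each feeding into the next, using the low-order estimates from Lemmas \ref{l30} and \ref{zl39} as a base. First, for \eqref{z3.73} I would simply write $u_t=\dot u-u\cdot\nabla u$ and $w_t=\dot w-u\cdot\nabla w$. The pointwise bound on $\rho$, together with $\|\sqrt\rho\dot u\|_{L^2},\|\sqrt\rho\dot w\|_{L^2}\le C$ from \eqref{z3.71} and $\|u\|_{L^6},\|\nabla u\|_{L^3},\|w\|_{H^2}\le C$ from \eqref{z3.72}, immediately gives the supremum piece. For the time-integral piece, $\|\nabla u_t\|_{L^2}^2\le C\|\nabla\dot u\|_{L^2}^2+C\|\nabla u\|_{L^4}^4+C\|u\|_{L^\infty}^2\|\nabla^2 u\|_{L^2}^2$, each term of which is in $L^1(0,T)$ by \eqref{z3.71}--\eqref{z3.72} (and similarly for $w$).

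Next, for \eqref{z3.74} I would carry out an $L^2$--energy estimate on the equation for $\nabla^2\rho$ obtained by applying $\nabla^2$ to $\eqref{a1}_1$. After grouping terms, one is left with the Gronwall form
\[
\tfrac{d}{dt}\|\nabla^2\rho\|_{L^2}^2 \le C\|\nabla u\|_{L^\infty}\|\nabla^2\rho\|_{L^2}^2 + C\|\nabla^2 u\|_{L^3}\|\nabla\rho\|_{L^6}\|\nabla^2\rho\|_{L^2} + C\|\rho\|_{H^2}\|\nabla^3 u\|_{L^2}\|\nabla^2\rho\|_{L^2},
\]
where every factor outside $\|\nabla^2\rho\|_{L^2}$ lies in $L^2(0,T)$ thanks to $\int_0^T\|\nabla u\|_{L^\infty}dt<\infty$, $\|\nabla^2 u\|_{L^3}\le C\|\nabla^2 u\|_{L^2}^{1/2}\|\nabla^2 u\|_{L^6}^{1/2}$, \eqref{z3.72}, and interpolation. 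The same argument handles $P$ via $\eqref{z3.13}$. The estimates on $\rho_t,P_t$ in $H^1$ and $\rho_{tt},P_{tt}$ in $L^2(0,T;L^2)$ then follow from $\rho_t=-\operatorname{div}(\rho u)$, $\rho_{tt}=-\operatorname{div}(\rho u_t)-\operatorname{div}(\rho_t u)$, and the already-controlled norms of $u$, $u_t$, $\rho$.

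The main obstacle is \eqref{z3.75}. The strategy is to differentiate \eqref{3.10} in time and test against $\sigma u_{tt}$, and analogously for $w$, $b$. This produces the good terms $\sigma\|\sqrt\rho u_{tt}\|_{L^2}^2$ and $\frac{d}{dt}\bigl(\sigma\|\nabla u_t\|_{L^2}^2\bigr)$; the delicate part lies in (i) the boundary integrals created by $\partial_t\curl\curl u$ and $\partial_t\nabla\divv u$, which one rewrites using $u\cdot n=0$ on $\partial\Omega$ (via $(u_{tt}\cdot n)|_{\partial\Omega}=-2(u_t\cdot\nabla n\cdot u)|_{\partial\Omega}-(u\cdot\nabla n\cdot u)_t|_{\partial\Omega}$) so that every boundary contribution can be absorbed after using the trace theorem and \eqref{z2.5}; (ii) the coupling terms $\mu_r\curl w_t$ and $\mu_r\curl u_t$, handled by summing the $u$ and $w$ identities so that the skew-symmetric pair cancels into $\|\tfrac12\curl u_t-w_t\|_{L^2}^2$; (iii) the Maxwell term $\partial_t\bigl((\curl b)\times b\bigr)$, estimated by splitting $b_t\cdot\nabla b$ and $b\cdot\nabla b_t$ and using $\|b_t\|_{L^2},\|\nabla b_t\|_{L^2}\in L^2(0,T)$ from \eqref{z3.71}; (iv) the Coulomb term $\rho\nabla(\Phi-\Phi_s)$, whose time derivative is controlled by Lemma \ref{l29} applied to $\nabla(\Phi-\Phi_s)_t$ together with the bound on $\|\rho u\|_{H^1}$. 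For $b_t$, the equation $\partial_t b_t-\Delta b_t=(\operatorname{curl}(u\times b))_t$ is tested against $\sigma b_{tt}$; the RHS is controlled by $\|\nabla u_t\|_{L^2}\|b\|_{L^\infty}+\|\nabla u\|_{L^\infty}\|b_t\|_{L^2}+\|u_t\|_{L^6}\|\nabla b\|_{L^3}+\|u\|_{L^\infty}\|\nabla b_t\|_{L^2}$, each factor either uniformly bounded or in $L^2(0,T)$.

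Finally, \eqref{z3.76} is an elliptic/Poisson consequence: from $\Delta(\Phi-\Phi_s)=\rho-\rho_s$ with Neumann data zero, the $W^{3,q}$-regularity of the Neumann problem gives $\|\nabla(\Phi-\Phi_s)\|_{H^3}\le C\|\rho-\rho_s\|_{H^2}$, bounded by \eqref{z3.74}; differentiating once more yields $\Delta(\Phi-\Phi_s)_t=-\operatorname{div}(\rho u)$, so Lemma \ref{l29} combined with $\|\rho u\|_{H^2}\le C$ (derivable from \eqref{z3.72}--\eqref{z3.74}) controls $\|\nabla\Phi_t\|_{H^2}$; one more differentiation together with $\|\rho_{tt}\|_{L^2}\le C$ from \eqref{z3.74} and standard $H^1$ elliptic regularity yields the $\nabla\Phi_{tt}$ bound.
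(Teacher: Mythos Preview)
Your overall strategy matches the paper's: \eqref{z3.73}--\eqref{z3.74} are deduced from Lemma~\ref{l30} (the paper simply cites \cite{CJ21} for the details), \eqref{z3.76} follows from elliptic regularity for the Neumann problem exactly as you describe, and \eqref{z3.75} is obtained by differentiating $\eqref{a1}_{2,3,4}$ in $t$ and testing against $u_{tt},w_{tt},b_{tt}$, then applying Gronwall with the weight $\sigma$.

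There is, however, a conceptual error in your treatment of the boundary terms in step~(i) for \eqref{z3.75}. The identity you wrote,
\[
(u_{tt}\cdot n)|_{\partial\Omega}=-2(u_t\cdot\nabla n\cdot u)|_{\partial\Omega}-(u\cdot\nabla n\cdot u)_t|_{\partial\Omega},
\]
is false: since $u\cdot n=0$ on $\partial\Omega$ holds for all $t$ and $n$ is time-independent, one simply has $u_t\cdot n=u_{tt}\cdot n=0$ on $\partial\Omega$, and likewise $\curl u_t\times n=0$ by differentiating $\curl u\times n=0$. (You seem to be importing the trick \eqref{2.5}, which is needed for the \emph{material} derivative $\dot u$ in Lemmas~\ref{l33}--\ref{l36}, not for the partial time derivative here.) Consequently, when you integrate $\nabla\divv u_t\cdot u_{tt}$ and $\curl\curl u_t\cdot u_{tt}$ by parts, \emph{no} boundary integrals survive; the dissipation emerges cleanly as in the paper's functional
\[
\mathcal A(t)=(2\mu+\lambda)\|\divv u_t\|_{L^2}^2+\mu\|\curl u_t\|_{L^2}^2+4\mu_r\Big\|w_t-\tfrac12\curl u_t\Big\|_{L^2}^2+\cdots,
\]
and Lemma~\ref{l24} then gives $\|\nabla u_t\|_{L^2}^2+\|\nabla w_t\|_{L^2}^2+\|\nabla b_t\|_{L^2}^2\le C\mathcal A(t)$. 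Once you drop the spurious boundary manipulation, the rest of your outline (points (ii)--(iv) and the $b_t$ equation) is correct and coincides with the paper's argument.
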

\begin{proof}[Proof]
Based on Lemmas \ref{l28} and \ref{l30}, \eqref{z3.73} and \eqref{z3.74} can be obtained by the same method as that of
in \cite{CJ21}.
From $\eqref{a1}_1$, we have
\begin{align*}
\Delta\Phi_t=\rho_t,\quad \Delta\Phi_{tt}=-\divv(\rho_tu+\rho u_t).
\end{align*}
This together with \eqref{z3.74}, \eqref{2.42}, and \eqref{2.43} leads to \eqref{z3.76}.

To prove \eqref{z3.75}, we introduce the function
\begin{align*}
\mathcal A(t)&\triangleq(2\mu+\lambda)\int(\divv u_t)^2dx+\mu\int|\curl u_t|^2dx
+4\mu_r\int\Big(w_t-\frac{\curl u_t}{2}\Big)^2dt\nonumber\\
&\quad+(2c_d+c_0)\int(\divv w_t)^2dt+(c_0+c_a)\int|\curl w_t|^2dx+\int|\curl b_t|^2dx.
\end{align*}
Since $u_t\cdot n=0$, $w_t\cdot n=0$, and $b_t\cdot n=0$ on $\partial\Omega$, we infer from Lemma \ref{l24} that
\begin{align}\label{z3.78}
\|\nabla u_t\|_{L^2}^2+\|\nabla w_t\|_{L^2}^2+\|\nabla b_t\|_{L^2}^2\le C(\Omega)\mathcal A(t).
\end{align}
Differentiating $\eqref{a1}_{2,3,4}$ with respect to $t$, and multiplying the resultant by $u_{tt}$, $w_{tt}$, and $b_{tt}$, respectively,
and integrating them by parts over $\Omega$, we obtain after summing up that
\begin{align}\label{z3.79}
&\frac{d}{dt}\mathcal A(t)+2\int\big(\rho|u_{tt}|^2+\rho|w_{tt}|^2+|b_{tt}|^2\big)dx\nonumber\\
&=\frac{d}{dt}\int\big(-\rho_t|u_t|^2-2\rho_tu\cdot\nabla u\cdot u_t+2P_t\divv u_t-\rho_t|w_t|^2-2\rho_tu\cdot\nabla w\cdot w_t\big)dx\nonumber\\
&\quad-\frac{d}{dt}\int\big(2(b\otimes b)_t:\nabla u_t-|b_t|^2\divv u_t\big)dx+\int\rho_{tt}|u_t|^2dx+2\int(\rho_tu\cdot\nabla u)_t\cdot u_tdx\nonumber\\
&\quad
-2\int\rho u_t\cdot\nabla u\cdot u_{tt}dx-2\int\rho u\cdot\nabla u_t\cdot u_{tt}dx-2\int P_{tt}\divv u_tdx+\int\rho_{tt}|w_t|^2dx\nonumber\\
&\quad+2\int(\rho_tu\cdot\nabla w)_t\cdot w_tdx
-2\int\rho u_t\cdot\nabla w\cdot w_{tt}dx-2\int\rho u\cdot\nabla w_t\cdot w_{tt}dx\nonumber\\
&\quad
+\int(2(b\otimes b)_{tt}:\nabla u_t-|b|_{tt}^2\divv u_t)dx
+2\int(b\cdot\nabla u-u\cdot\nabla b-b\divv u)_t\cdot b_{tt}dx\nonumber\\
&\quad+2\int(\rho_t\nabla(\Phi-\Phi_s)+\rho\nabla(\Phi-\Phi_s)_t+\rho_t\nabla \Phi_s)\cdot u_{tt}dx\nonumber\\
&\le \frac{d}{dt}\mathcal A_0+\frac12\|b_{tt}\|_{L^2}^2
+\frac12\|\sqrt{\rho} u_{tt}\|_{L^2}^2+\frac12\|\sqrt{\rho} w_{tt}\|_{L^2}^2
+C(\|\nabla u_t\|_{L^2}^4+\|\nabla w_t\|_{L^2}^4
+\|\nabla b_t\|_{L^2}^4)\nonumber\\
&\quad+C(1+\|\rho_{tt}\|_{L^2}^2+\|P_{tt}\|_{L^2}^2+\|\nabla u_t\|_{L^2}^2+\|\nabla w_t\|_{L^2}^2+\|\nabla b_t\|_{L^2}^2),
\end{align}
where
\begin{align*}
\mathcal A_0&\le \Big|\int\divv(\rho u)(|u_t|^2+|w_t|^2)dx\Big|+C\|\rho_t\|_{L^3}\|u\|_{L^\infty}\|\nabla u\|_{L^2}\|u_t\|_{L^6}
+C\|P_t\|_{L^2}\|\nabla u_t\|_{L^2}\nonumber\\
&\quad+C\|\rho_t\|_{L^3}\|u\|_{L^\infty}\|\nabla w\|_{L^2}\|w_t\|_{L^6}
+C\|b\|_{L^\infty}\|b_t\|_{L^2}\|\nabla u_t\|_{L^2}\nonumber\\
&\le C\int|u||\rho u_t||\nabla u_t|dx+C\int|u||\rho w_t||\nabla w_t|dx
+C\|\nabla u_t\|_{L^2}\nonumber\\
&\le C\|u\|_{L^6}\|\sqrt{\rho}u_t\|_{L^2}^\frac12\|u_t\|_{L^6}^\frac12\|\nabla u_t\|_{L^2}+C\|\nabla u_t\|_{L^2}
+C\|u\|_{L^6}\|\sqrt{\rho}w_t\|_{L^2}^\frac12\|w_t\|_{L^6}^\frac12\|\nabla w_t\|_{L^2}\nonumber\\
&\le \frac12\mathcal A(t)+C,
\end{align*}
due to $\eqref{a1}_1$, \eqref{z3.73}, \eqref{z3.74}, \eqref{z3.72}, \eqref{z3.78}, and  H\"older's, Sobolev's, Poincar\'e's inequalities.
We deduce after multiplying \eqref{z3.79} by $\sigma$, integrating it over $[0, T]$, and using \eqref{z3.71}, \eqref{z3.73}, \eqref{z3.74},
and Gronwall's inequality that
\begin{align*}
\sup_{0\le t\le T}(\sigma\mathcal A(t))+\int_0^T\sigma\big(\|\sqrt{\rho}u_{tt}\|_{L^2}^2+\|\sqrt{\rho}w_{tt}\|_{L^2}^2
+\|b_{tt}\|_{L^2}^2\big)dt\le C,
\end{align*}
which combined with \eqref{z3.78} leads to \eqref{z3.75}.
\end{proof}

\begin{lemma}\label{l312}
Under the conditions of Theorem \ref{thm1}, it holds that, for any $q\in (3, 6)$,
\begin{align}
&\sup_{0\le t\le T}\big[\sigma\big(\|\nabla u\|_{H^2}^2+\|\nabla b\|_{H^2}^2+\|\nabla w\|_{H^2}^2\big)\big]\nonumber\\
&\quad+\int_0^T\big(\|\nabla u\|_{H^2}^2+\|\nabla b\|_{H^2}^2+\|\nabla w\|_{H^2}^2+\sigma\|\nabla u_t\|_{H^1}^2
+\sigma\|\nabla w_t\|_{H^1}^2+\|\nabla^2u\|_{W^{1, q}}^{p_0}
+\|\nabla^2w\|_{W^{1, q}}^{p_0}\big)dt\nonumber\\
&\le C,\\
&\sup_{0\le t\le T}\big(\|\rho-\rho_s\|_{W^{2, q}}+\|P-P_s\|_{W^{2, q}}+\|\nabla(\Phi-\Phi_s)\|_{W^{3, q}}\big)\le C,\label{z3.87}
\end{align}
where $p_0=\frac{9q-6}{10q-12}\in (1, \frac76)$.
\end{lemma}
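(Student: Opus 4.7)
The strategy is to close the higher-order estimates by a bootstrap between a transport-type estimate for the density and the elliptic bounds from Lemma~\ref{l28}. First, I would differentiate the continuity equation $\rho_t+\divv(\rho u)=0$ twice in space and multiply by $|\nabla^2\rho|^{q-2}\nabla^2\rho$; after integration by parts and controlling the commutators, this yields
\[
\frac{d}{dt}\|\nabla^2\rho\|_{L^q}^q\le C\|\nabla u\|_{L^\infty}\|\nabla^2\rho\|_{L^q}^q+C\big(\|\nabla\rho\|_{L^\infty}\|\nabla^2 u\|_{L^q}+\|\rho\|_{L^\infty}\|\nabla^3 u\|_{L^q}\big)\|\nabla^2\rho\|_{L^q}^{q-1}.
\]
Since $q>3$, Sobolev embedding gives $\|\nabla\rho\|_{L^\infty}\le C(1+\|\nabla^2\rho\|_{L^q})$ using the $L^6$-bound on $\nabla\rho$ from Lemma~\ref{l30}. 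Combined with $\int_0^T\|\nabla u\|_{L^\infty}dt\le C$ from the same lemma, a Gronwall argument will reduce the $W^{2,q}$-estimate on $\rho$ to establishing $\|\nabla^2 u\|_{L^q}\in L^1(0,T)$ and $\|\nabla^3 u\|_{L^q}\in L^{p_0}(0,T)$ with the precise exponent in the statement.

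For the velocity and micro-rotational fields I would apply the elliptic bounds \eqref{z2.42} and \eqref{z2.43}. Taking $p=6$ in \eqref{z2.42}, together with $\|\dot u\|_{L^6}\le C(\|\nabla\dot u\|_{L^2}+\|\nabla u\|_{L^2}^2)$ from \eqref{3.2} and the weighted control on $\sigma\|\nabla u_t\|_{L^2}^2$ from Lemma~\ref{l31}, yields $\sigma(\|\nabla^2 u\|_{L^6}^2+\|\nabla^2 w\|_{L^6}^2)\le C$. For the full $H^3$-bound I invoke \eqref{z2.43} with $p=2$; the principal new term $\|\nabla(\rho\dot u)\|_{L^2}$ is majorised by $\|\nabla\rho\|_{L^6}\|\dot u\|_{L^3}+\|\rho\|_{L^\infty}\|\nabla\dot u\|_{L^2}$, each factor being under control by Lemmas~\ref{l30}--\ref{l31}. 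This produces $\sigma(\|\nabla u\|_{H^2}^2+\|\nabla w\|_{H^2}^2)\le C$; the non-weighted time-integrated bound follows from the unweighted $L^2_t$-bound of $\|\nabla\dot u\|_{L^2}$ in \eqref{z3.73}.

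The $W^{3,q}$-control required to close the density estimate comes from \eqref{z2.43} at exponent $q$. The delicate term is $\|\nabla\dot u\|_{L^q}$, which by Gagliardo--Nirenberg interpolates as $\|\nabla\dot u\|_{L^q}\le C\|\nabla\dot u\|_{L^2}^{1-s}\|\nabla^2\dot u\|_{L^2}^{s}$ with $s=3/2-3/q$; expressing $\nabla^2\dot u$ in terms of $\nabla^2 u_t$ modulo controllable convective corrections and applying Hölder in time against the $\sigma^{-1/2}$-blow-up of $\|\nabla u_t\|_{H^1}$ together with the $L^2_t$-bound on $\|\nabla\dot u\|_{L^2}$ produces exactly the exponent $p_0=(9q-6)/(10q-12)\in(1,7/6)$ needed for integrability near $t=0$. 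This simultaneously gives $\int_0^T\sigma\|\nabla u_t\|_{H^1}^2dt\le C$ (and analogously for $w$) as a by-product. For the magnetic field, applying $\curl$ to $\eqref{a1}_4$ gives a parabolic system for $\curl b$; combined with the boundary condition $\curl b\times n=0$, Lemma~\ref{l25}, and the velocity bounds just established, standard energy arguments yield $\sigma\|\nabla b\|_{H^2}^2\le C$ together with $\int_0^T\|\nabla b\|_{H^2}^2dt\le C$.

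Once $\rho\in L^\infty(0,T;W^{2,q})$ is in hand, Lemma~\ref{l29} immediately gives $\|\nabla(\Phi-\Phi_s)\|_{W^{3,q}}\le C\|\rho-\rho_s\|_{W^{2,q}}\le C$, and the pressure bound follows from $P=a\rho^\gamma$ via the chain rule. The main obstacle will be the simultaneous closure of the $W^{2,q}$-estimate on $\rho$ and the $W^{3,q}$-estimate on $u$: the Gronwall argument for the density closes only if the driving term from $\nabla^3 u$ belongs to $L^{p_0}(0,T)$, and achieving this integrability near $t=0$ demands a careful Gagliardo--Nirenberg interpolation that balances the $\sigma^{-1/2}$-blow-up of $\|\nabla u_t\|_{H^1}$ against the $L^2_t$-bound of $\|\nabla\dot u\|_{L^2}$, producing the subcritical exponent $p_0\in(1,7/6)$ that appears in the statement.
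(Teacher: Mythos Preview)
Your strategy matches the paper's almost step for step: transport estimate for $\|\nabla^2\rho\|_{L^q}$ (the paper works with $P$ via \eqref{z3.13}, then says ``similarly'' for $\rho$), elliptic bounds \eqref{z2.42}--\eqref{z2.43} for $u,w$, the magnetic equation plus \eqref{z2.41} for $b$, Lemma~\ref{l29} for $\Phi$, and a Gagliardo--Nirenberg interpolation to land on the exponent $p_0$. The paper interpolates $\|\nabla u_t\|_{L^q}$ between $L^2$ and $L^6$ rather than $\|\nabla\dot u\|_{L^q}$ between $L^2$ and $H^1$, but this is cosmetic.

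There is one genuine logical inversion in your outline. You write that the bound $\int_0^T\sigma\|\nabla u_t\|_{H^1}^2\,dt\le C$ comes out ``as a by-product'' of the $W^{3,q}$ argument, and you appeal to the ``$\sigma^{-1/2}$-blow-up of $\|\nabla u_t\|_{H^1}$'' inside that argument. In fact this bound is an \emph{input}, not an output, and at this stage there is no pointwise control on $\sigma\|\nabla u_t\|_{H^1}^2$ (that only appears later in Lemma~\ref{l313}). The paper obtains the time-integrated bound separately, \emph{before} the interpolation step: it differentiates the momentum equations in $t$ and applies elliptic regularity under the slip boundary conditions to get
\[
\|\nabla^2 u_t\|_{L^2}+\|\nabla^2 w_t\|_{L^2}\le C\|\sqrt{\rho}u_{tt}\|_{L^2}+C\|\sqrt{\rho}w_{tt}\|_{L^2}+C\|\nabla u_t\|_{L^2}+C\|\nabla w_t\|_{L^2}+C\|\nabla b_t\|_{L^2}+C,
\]
and then uses $\int_0^T\sigma\|\sqrt{\rho}u_{tt}\|_{L^2}^2\,dt\le C$ from Lemma~\ref{l31}. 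Your proposal never says how $\|\nabla^2 u_t\|_{L^2}$ (equivalently $\|\nabla^2\dot u\|_{L^2}$, after peeling off convective terms) is controlled; without this step the interpolation that produces $p_0$ cannot be carried out, and the Gronwall for $\|\nabla^2\rho\|_{L^q}$ does not close. Insert this elliptic estimate on the time-differentiated system as a separate step and the rest of your argument goes through.
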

\begin{proof}[Proof]
1. By Lemma \ref{l30}, Poincar\'e's inequality, and Sobolev's inequality, one can check that
\begin{align}
\|\nabla(\rho\dot{u})\|_{L^2}&\le \||\nabla\rho||u_t|\|_{L^2}
+\|\rho|\nabla u_t|\|_{L^2}+\||\nabla\rho||u||\nabla u|\|_{L^2}
+\|\rho|\nabla u|^2\|_{L^2}+\|\rho|u||\nabla^2u|\|_{L^2}\nonumber\\
&\le C\|\nabla u_t\|_{L^2}+C,\label{z2.87}\\
\|\nabla(\rho\dot{w})\|_{L^2}&\le \||\nabla\rho||w_t|\|_{L^2}+\|\rho|\nabla w_t|\|_{L^2}
+\||\nabla\rho||u||\nabla w|\|_{L^2}+\|\rho|\nabla u||\nabla w|\|_{L^2}+\|\rho|u||\nabla^2w|\|_{L^2}\nonumber\\
&\le C\|\nabla w_t\|_{L^2}+C.\label{z2.88}
\end{align}
These together with \eqref{z2.42}, \eqref{z2.43}, \eqref{z2.87}, \eqref{z2.88}, Lemma \ref{l28}, and Lemma \ref{l30} yields that
\begin{align}\label{z2.89}
\|\nabla^2u\|_{H^1}+\|\nabla^2w\|_{H^1}
&\le C\big(\|\rho\dot{u}\|_{H^1}+\|\rho\dot{w}\|_{H^1}+\|b\cdot\nabla b\|_{H^1}+\|P-P_s\|_{H^1}+\||b|^2\|_{H^1}\big)\nonumber\\[3pt]
&\quad+C\big(\|\nabla u\|_{L^2}+\|\nabla w\|_{L^2}
+\|\rho\nabla(\Phi-\Phi_s)\|_{H^1}+\|P-P_s\|_{H^1}\big)\nonumber\\
&\le C\|\nabla u_t\|_{L^2}+\|\nabla w_t\|_{L^2}+C.
\end{align}
It follows from \eqref{z2.89}, \eqref{z3.73}, \eqref{z3.75}, and \eqref{z3.76} that
\begin{align}
\sup_{0\le t\le T}\big[\sigma\big(\|\nabla u\|_{H^2}^2+\|\nabla w\|_{H^2}^2\big)\big]
+\int_0^T\big(\|\nabla u\|_{H^2}^2+\|\nabla w\|_{H^2}^2\big)dt\le C.
\end{align}
By $\eqref{a1}_4$ and \eqref{z2.41}, we have
\begin{align}\label{z3.92}
\|\nabla^2b\|_{H^1}&\le C\big(\|b_t\|_{H^1}+\|u\cdot\nabla b\|_{H^1}+\|b\cdot\nabla u\|_{H^1}
+\|b\divv u\|_{H^1}+\|\nabla b\|_{L^2}\big)\nonumber\\[3pt]
&\le C\|\nabla b_t\|_{L^2}+C,
\end{align}
which together with \eqref{3.15}, \eqref{z3.72}, and \eqref{z2.87} implies that
\begin{align}
\sup_{0\le t\le T}\big(\sigma\|\nabla b\|_{H^2}\big)+\int_0^T\|\nabla b\|_{H^2}^2dt\le C.
\end{align}

2. We deduce from Lemmas \ref{l30} and \ref{l31} that
\begin{align}\label{z3.90}
&\|\nabla^2 u_t\|_{L^2}+\|\nabla^2 w_t\|_{L^2}\nonumber\\
&\le C(\|(\rho\dot{u})_t\|_{L^2}+\|(\rho\dot{w})_t\|_{L^2}+\|\nabla P_t\|_{L^2}+\|(\rho\Phi)_t\|_{L^2})
+\|((\nabla\times b)\times b)_t\|_{L^2}\nonumber\\[3pt]
&\quad+C(\|\curl u_t\|_{L^2}+\|w_t\|_{L^2}+\|\curl w_t\|_{L^2}+\|u_t\|_{L^2})\nonumber\\
&\le C\|\sqrt{\rho}u_{tt}\|_{L^2}+C\|\sqrt{\rho}w_{tt}\|_{L^2}+C\|\nabla u_t\|_{L^2}+C\|\nabla w_t\|_{L^2}+C\|\nabla b_t\|_{L^2}+C,
\end{align}
where we have used the $L^p$-estimate for the following elliptic systems
\begin{align*}
\begin{cases}
(\mu+\mu_r)\Delta u_t+(\mu+\lambda-\mu_r)\nabla{\rm div}\,u_t
=(\rho\dot{u})_t+\nabla P_t+(\rho\nabla\Phi)_t+((\nabla\times b)\times b)_t+2\mu_r\curl w_t &{\rm in}~\Omega,\\[3pt]
u_t\cdot n=0, \quad \curl u\times n=0 &{\rm on}~\partial\Omega,
\end{cases}
\end{align*}
and
\begin{align*}
\begin{cases}
(c_d+c_a)\Delta w_t+(c_d+c_0-c_a)\nabla{\rm div}\,w_t+4\mu_r
=(\rho\dot{w})_t+2\mu_r\curl u_t &{\rm in}~\Omega,\\[3pt]
w_t\cdot n=0, \quad \curl w_t\times n=0 &{\rm on}~\partial\Omega.
\end{cases}
\end{align*}
Combining \eqref{z3.75} and \eqref{z3.90}, one has that
\begin{align}\label{z3.94}
\int_0^T\sigma(\|\nabla u_t\|_{H^1}^2+\|\nabla w_t\|_{H^1}^2)dt\le C.
\end{align}
By Sobolev's inequality, \eqref{3.2},  \eqref{z3.72},
 and \eqref{z3.75}, we get that, for any $q\in (3, 6)$,
\begin{align}\label{z3.95}
\|\nabla(\rho\dot{u})\|_{L^q}&\le C\|\nabla\rho\|_{L^q}(\|\nabla\dot{u}\|_{L^q}+\|\nabla\dot{u}\|_{L^2}
+\|\nabla u\|_{L^2}^2)+C\|\nabla\dot{u}\|_{L^q}\nonumber\\
&\le C(\|\nabla u_t\|_{L^2}+1)+C\|\nabla u_t\|_{L^2}^\frac{6-q}{2q}\|\nabla u_t\|_{L^6}^\frac{3(q-2)}{2q}\nonumber\\
&\quad+C\|u\|_{L^\infty}\|\nabla^2u\|_{L^q}+C\|\nabla u\|_{L^\infty}\|\nabla u\|_{L^q}\nonumber\\
&\le C\sigma^{-\frac12}+C\|\nabla u\|_{H^2}+C\sigma^{-\frac12}(\sigma\|\nabla u_t\|_{H^1}^2)^\frac{3(q-2)}{4q}+C.
\end{align}
Similarly, by \eqref{z2.9}, \eqref{z3.72},
 and \eqref{z3.75}, one obtains that
\begin{align}\label{z3.96}
\|\nabla(\rho\dot{w})\|_{L^q}&\le C\|\nabla\rho\|_{L^q}(\|\nabla\dot{w}\|_{L^q}+\|\nabla\dot{w}\|_{L^2}
+\|\nabla u\|_{L^2}^2+\|\nabla w\|_{L^2}^2)+C\|\nabla\dot{w}\|_{L^q}\nonumber\\
&\le  C\sigma^{-\frac12}+C\|\nabla u\|_{H^2}
+C\|\nabla w\|_{H^2}+C\sigma^{-\frac12}(\sigma\|\nabla w_t\|_{H^1}^2)^\frac{3(q-2)}{4q}+C.
\end{align}
Integrating this inequality over $[0, T]$, by \eqref{z3.90},
\eqref{z3.94}, \eqref{z3.95}, and \eqref{z3.96}, we have
\begin{align}\label{z3.98}
\int_0^T\big(\|\nabla(\rho\dot{u})\|_{L^q}^{p_0}
+\|\nabla(\rho\dot{w})\|_{L^q}^{p_0}\big)dt\le C.
\end{align}

3. It follows from \eqref{z3.13} that
\begin{align}\label{z3.99}
\frac{d}{dt}\|\nabla^2P\|_{L^q}&\le C\|\nabla u\|_{L^\infty}\|\nabla^2P\|_{L^q}+C\|\nabla^2u\|_{W^{1, q}}\nonumber\\
&\le C\big(1+\|\nabla u_t\|_{L^2}+\|\nabla w_t\|_{L^2}
+\|\nabla(\rho\dot{u})\|_{L^q}+\|\nabla(\rho\dot{w})\|_{L^q}\big)\nonumber\\
&\quad+C(1+\|\nabla u\|_{L^\infty})\|\nabla^2P\|_{L^q},
\end{align}
where in the last inequality we have used
\begin{align}\label{z3.100}
&\|\nabla^2u\|_{W^{1, q}}+\|\nabla^2w\|_{W^{1, q}} \notag \\
& \le C(1+\|\nabla u_t\|_{L^2}+\|\nabla w_t\|_{L^2}+\|\nabla b_t\|_{L^2}+\|\nabla(\rho\dot{u})\|_{L^q}
+\|\nabla(\rho\dot{w})\|_{L^q}+\|\nabla^2 P\|_{L^q}),
\end{align}
due to \eqref{z2.42}, \eqref{z2.43}, \eqref{z3.71}, and \eqref{z3.74}.
Hence, applying Gronwall's inequality in \eqref{z3.99}, we deduce from \eqref{z3.72}, \eqref{z3.73},
and \eqref{z3.98} that
\begin{align}
\sup_{0\le t\le T}\|\nabla^2P\|_{L^q}\le C,
\end{align}
which along with \eqref{z3.74}, \eqref{z3.98}, and \eqref{z3.100} implies that
\begin{align}\label{z3.102}
\sup_{0\le t\le T}\|P-P_s\|_{W^{2, q}}+\int_0^T(\|\nabla^2u\|_{W^{1, q}}^{p_0}
+\|\nabla^2w\|_{W^{1, q}}^{p_0})dt\le C.
\end{align}
Similarly, we have
\begin{align*}
\sup_{0\le t\le T}\|\rho-\rho_s\|_{W^{2, q}}\le C,
\end{align*}
which together with \eqref{z3.102} and \eqref{2.42} gives \eqref{z3.87}.
\end{proof}

\begin{lemma}\label{l313}
Under the conditions of Theorem \ref{thm1}, it holds that, for any $q\in (3, 6)$,
\begin{align}\label{z3.104}
&\sup_{0\le t\le T}
\big[\sigma\big(\|\sqrt{\rho}u_{tt}\|_{L^2}+\|\sqrt{\rho}w_{tt}\|_{L^2}
+\|\nabla u_t\|_{H^1}+\|\nabla w_t\|_{H^1}\big)\big]\nonumber\\
&\quad+\sup_{0\le t\le T}\big[\sigma\big(\|b_{tt}\|_{L^2}+\|\nabla b_t\|_{H^1}+\|\nabla^2b\|_{H^2}+\|\nabla u\|_{W^{2, q}}+\|\nabla w\|_{W^{2, q}}\big)\big]\nonumber\\
&\quad+\int_0^T\sigma^2\big(\|\nabla u_{tt}\|_{L^2}^2+\|\nabla w_{tt}\|_{L^2}^2+\|\nabla b_{tt}\|_{L^2}^2\big)dt\le C.
\end{align}
\end{lemma}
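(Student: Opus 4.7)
The plan is to derive a $\sigma^2$-weighted energy estimate for $\|\sqrt\rho u_{tt}\|_{L^2}^2+\|\sqrt\rho w_{tt}\|_{L^2}^2+\|b_{tt}\|_{L^2}^2$, and then transfer it to the other quantities via elliptic regularity. First I would differentiate \eqref{3.10}, \eqref{3.11}, and $\eqref{a1}_4$ in $t$, test with $u_{tt}$, $w_{tt}$, $b_{tt}$ respectively, and integrate over $\Omega$. The slip boundary conditions inherited from \eqref{a6} allow integration by parts to produce the full dissipations $(2\mu+\lambda)\|\divv u_{tt}\|_{L^2}^2+(\mu+\mu_r)\|\curl u_{tt}\|_{L^2}^2$, $(2c_d+c_0)\|\divv w_{tt}\|_{L^2}^2+(c_d+c_a)\|\curl w_{tt}\|_{L^2}^2$, and $\|\curl b_{tt}\|_{L^2}^2$ on the left-hand side, with the Coriolis coupling between $u_{tt}$ and $w_{tt}$ recombining into $4\mu_r\|\tfrac12\curl u_{tt}-w_{tt}\|_{L^2}^2$ exactly as in the identity preceding \eqref{3.59}.

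To close the resulting differential inequality, I would dominate the right-hand side terms — among them $\int \rho_{tt}\,u_t\cdot u_{tt}$, $\int(\rho u\cdot\nabla u)_{tt}\cdot u_{tt}$, $\int\nabla P_{tt}\cdot u_{tt}$, $\int((\nabla\times b)\times b)_{tt}\cdot u_{tt}$, $\int(\rho\nabla\Phi)_{tt}\cdot u_{tt}$, and the analogous terms for the $w_{tt}$ and $b_{tt}$ identities — by a combination of H\"older's and Sobolev's inequalities, Lemma \ref{l28}, and the high-order estimates from Lemmas \ref{l30}--\ref{l312} (especially \eqref{z3.73}--\eqref{z3.76}). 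Small portions of $\|\nabla u_{tt}\|_{L^2}^2+\|\nabla w_{tt}\|_{L^2}^2+\|\nabla b_{tt}\|_{L^2}^2$ are absorbed on the left, and the remainder is bounded by $C(1+\|\nabla u_t\|_{H^1}^2+\|\nabla w_t\|_{H^1}^2+\|\nabla b_t\|_{H^1}^2)(1+\|\sqrt\rho u_{tt}\|_{L^2}^2+\|\sqrt\rho w_{tt}\|_{L^2}^2+\|b_{tt}\|_{L^2}^2)$. Multiplying by $\sigma^2$, observing that the extra term $2\sigma\sigma'(\|\sqrt\rho u_{tt}\|_{L^2}^2+\|\sqrt\rho w_{tt}\|_{L^2}^2+\|b_{tt}\|_{L^2}^2)$ is integrable thanks to \eqref{z3.75}, and applying Gronwall's inequality gives
\[
\sup_{0\le t\le T}\bigl[\sigma^2\bigl(\|\sqrt\rho u_{tt}\|_{L^2}^2+\|\sqrt\rho w_{tt}\|_{L^2}^2+\|b_{tt}\|_{L^2}^2\bigr)\bigr]+\int_0^T\sigma^2\bigl(\|\nabla u_{tt}\|_{L^2}^2+\|\nabla w_{tt}\|_{L^2}^2+\|\nabla b_{tt}\|_{L^2}^2\bigr)\,dt\le C.
\]

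To recover the remaining estimates I would apply elliptic regularity in the style of \eqref{z3.90} to the $t$-differentiated momentum, micro-rotation, and magnetic equations: this converts the $\sigma\|\sqrt\rho u_{tt}\|_{L^2}$, $\sigma\|\sqrt\rho w_{tt}\|_{L^2}$, and $\sigma\|b_{tt}\|_{L^2}$ bounds into $\sigma\|\nabla u_t\|_{H^1}+\sigma\|\nabla w_t\|_{H^1}+\sigma\|\nabla b_t\|_{H^1}\le C$. Elliptic regularity on $\eqref{a1}_4$ together with \eqref{z2.41} and \eqref{z3.92} then yields $\sigma\|\nabla^2 b\|_{H^2}\le C$, and plugging the upgraded $L^q$-bounds for $\|\nabla(\rho\dot u)\|_{L^q}$ and $\|\nabla(\rho\dot w)\|_{L^q}$ (compare \eqref{z3.95}, \eqref{z3.96}) into \eqref{z2.43} produces $\sigma(\|\nabla u\|_{W^{2,q}}+\|\nabla w\|_{W^{2,q}})\le C$. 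The main obstacle I anticipate is the careful treatment of the many boundary integrals and the triple-time-derivative cross terms (in particular those containing $\rho_{tt}$, $P_{tt}$, or $\nabla\Phi_{tt}$ paired with derivatives of $u$, $w$, $b$), which must be handled in the spirit of \eqref{z3.22}, \eqref{z3.24}, and \eqref{t3.36} and then absorbed via the previously established $H^2$-regularity of $(u,w,b)$ and the Coulomb-potential bound \eqref{z3.76}.
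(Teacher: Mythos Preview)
Your proposal matches the paper's proof essentially step for step: differentiate the momentum, micro-rotation, and magnetic equations twice in $t$ (your text says ``in $t$'' but the right-hand side terms you list, such as $\int(\rho u\cdot\nabla u)_{tt}\cdot u_{tt}$ and $\int\nabla P_{tt}\cdot u_{tt}$, make clear you mean twice), test with $(u_{tt},w_{tt},b_{tt})$, absorb the dissipation, multiply by $\sigma^2$, apply Gronwall using \eqref{z3.74}--\eqref{z3.75}, and then upgrade via the elliptic estimates \eqref{z3.90}, \eqref{z3.92}, \eqref{z3.95}--\eqref{z3.96}, \eqref{z3.100}. One minor remark: the boundary-integral obstacle you anticipate does not actually arise here, because $u_{tt}\cdot n=w_{tt}\cdot n=b_{tt}\cdot n=0$ and $\curl u_{tt}\times n=\curl w_{tt}\times n=\curl b_{tt}\times n=0$ on $\partial\Omega$, so all surface terms from the dissipative part vanish directly and the proof is cleaner than the lower-order lemmas; also, be careful that your Gronwall coefficient is phrased so that its time integral is finite (the paper uses additive terms like $\|\nabla u_t\|_{L^2}^3$ rather than $\|\nabla u_t\|_{H^1}^2$ as a multiplicative factor, since only $\int_0^T\sigma\|\nabla u_t\|_{H^1}^2\,dt$ is controlled by \eqref{z3.94}).
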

\begin{proof}[Proof]
1. Differentiating $\eqref{a1}_{2,3,4}$ with respect to $t$ twice, and multiplying
the resultant by $u_{tt}$, $w_{tt}$, $b_{tt}$, respectively, then integrating the resulting equations over $\Omega$,
 we deduce after summing up that
\begin{align}\label{3.105}
&\frac12\frac{d}{dt}\int(\rho|u_{tt}|^2+\rho|w_{tt}|^2+|b_{tt}|^2)dx\nonumber\\
&\quad+(2\mu+\lambda)\int(\divv u_{tt})^2dx+\mu\int|\curl u_{tt}|^2dx+(2c_d+c_0)\int(\divv w_{tt})^2dx\nonumber\\
&\quad+(c_d+c_a)\int|\curl w_{tt}|^2dx+4\mu_r\int\Big(w_{tt}-\frac{\curl w_{tt}}{2}\Big)^2dx
+\int|\curl b_{tt}|^2dx\nonumber\\
&=-4\int\rho u\cdot\nabla u_{tt}\cdot u_{tt}dx-\int(\rho u)_t\cdot[\nabla (u_t\cdot u_{tt})+2\nabla u_t\cdot u_{tt}]dx\nonumber\\
&\quad-\int(\rho_{tt}u+2\rho_tu_t)\cdot\nabla u\cdot u_{tt}dx
-\int\rho u_{tt}\cdot\nabla u\cdot u_{tt}dx+\int P_{tt}\divv u_{tt}dx\nonumber\\
&\quad-4\int\rho u\cdot \nabla w_{tt}\cdot w_{tt}dx-\int (\rho u)_t\cdot[\nabla(w_t\cdot w_{tt})+2\nabla w_t\cdot w_{tt}]dx\nonumber\\
&\quad-\int(\rho_{tt}u+2\rho_tu_t)\cdot\nabla w\cdot w_{tt}dx-\int\rho u_{tt}\cdot\nabla w\cdot w_{tt}dx\nonumber\\
&\quad-\int(b\cdot\nabla b-\nabla|b|^2/2)_{tt}u_{tt}dx+\int(b\cdot\nabla u-u\cdot\nabla b-b\divv u)_{tt}b_{tt}dx\nonumber\\
&\quad-\int(\rho_{tt}\nabla\Phi+2\rho_t\nabla\Phi_t+\rho\Phi_{tt})u_{tt}dx
\triangleq\sum_{i=1}^{12}Q_i.
\end{align}
By virtue of Gagliardo-Nirenberg, Sobolev, and H\"older's inequalities, we obtain from \eqref{z3.72}, \eqref{z3.71}, and \eqref{z3.74} that
\begin{align*}
Q_1&\le C(\hat{\rho})\|u\|_{L^\infty}\|\sqrt{\rho}u_{tt}\|_{L^2}\|\nabla u_{tt}\|_{L^2}
\le \delta\|\nabla u_{tt}\|_{L^2}^2+\|\sqrt{\rho}u_{tt}\|_{L^2}^2,\\
Q_2&\le C\|(\rho u)_t\|_{L^3}(\|u_{tt}\|_{L^6}\|\nabla u_t\|_{L^2}+\|u_t\|_{L^6}\|\nabla u_{tt}\|_{L^2})\nonumber\\
&\le C(\|\sqrt{\rho}u_t\|_{L^2}^\frac12\|u_t\|_{L^6}^\frac12
+\|\rho_t\|_{L^6}\|u\|_{L^6})\|\nabla u_{tt}\|_{L^2}\|\nabla u_t\|_{L^2}\nonumber\\
&\le \delta\|\nabla u_{tt}\|_{L^2}^2+C\|\nabla u_t\|_{L^2}^3+C,\\
Q_3&\le C(\|\rho_{tt}\|_{L^2}\|u\|_{L^\infty}\|\nabla u\|_{L^3}+\|\rho_t\|_{L^6}\|u_t\|_{L^6}\|\nabla u\|_{L^2})\|u_{tt}\|_{L^6}\nonumber\\
&\le \delta\|\nabla u_{tt}\|_{L^2}^2+C\|\rho_{tt}\|_{L^2}^2+C,\\
Q_4+Q_5&\le C(\hat{\rho})\|\sqrt{\rho}u_{tt}\|_{L^2}\|u_{tt}\|_{L^6}\|\nabla u\|_{L^3}+C\|P_{tt}\|_{L^2}\|\nabla u_{tt}\|_{L^2}\nonumber\\
&\le \delta\|\nabla u_{tt}\|_{L^2}^2+C\|\sqrt{\rho}u_{tt}\|_{L^2}^2+C\|P_{tt}\|_{L^2}^2,\\
Q_6&\le C(\hat{\rho})\|u\|_{L^\infty}\|\sqrt{\rho}w_{tt}\|_{L^2}\|\nabla w_{tt}\|_{L^2}
\le \delta\|\nabla w_{tt}\|_{L^2}^2+\|\sqrt{\rho}w_{tt}\|_{L^2}^2,\\
Q_7&\le C\|(\rho u)_t\|_{L^3}(\|w_{tt}\|_{L^6}\|\nabla w_t\|_{L^2}+\|w_t\|_{L^6}\|\nabla w_{tt}\|_{L^2})\nonumber\\
&\le \delta\|\nabla u_{tt}\|_{L^2}^2+C\|\nabla u_t\|_{L^2}^3+C\|\nabla w_t\|_{L^2}^3+C,\\
Q_8&\le C(\|\rho_{tt}\|_{L^2}\|u\|_{L^\infty}\|\nabla w\|_{L^3}+\|\rho_t\|_{L^6}\|u_t\|_{L^6}\|\nabla w\|_{L^2})\|w_{tt}\|_{L^6}\nonumber\\
&\le \delta\|\nabla w_{tt}\|_{L^2}^2+C\|\rho_{tt}\|_{L^2}^2+C,\\
Q_9&\le C(\hat{\rho})\|\sqrt{\rho}u_{tt}\|_{L^2}\|w_{tt}\|_{L^6}\|\nabla w\|_{L^3}\le
\delta\|\nabla w_{tt}\|_{L^2}^2+\|\sqrt{\rho}u_{tt}\|_{L^2}^2,\\
Q_{10}&\le C\|b_{tt}\|_{L^2}\|\nabla b\|_{L^3}\|u_{tt}\|_{L^6}
+C\|b_t\|_{L^3}\|\nabla b_t\|_{L^2}\|u_{tt}\|_{L^6}+\|b\|_{L^\infty}\|\nabla b_{tt}\|_{L^2}\|u_{tt}\|_{L^2}\nonumber\\
&\le \delta(\|\nabla u_{tt}\|_{L^2}^2+\|\nabla b_{tt}\|_{L^2}^2)+C\|b_{tt}\|_{L^2}^2+C\|\nabla b_t\|_{L^2}^3,\\
Q_{11}&\le C(\|b_{tt}\|_{L^2}\|\nabla u\|_{L^3}+\|b_t\|_{L^3}\|\nabla u_t\|_{L^2}
+C\|b\|_{L^3}\|\nabla u_{tt}\|_{L^2})\|b_{tt}\|_{L^6}\nonumber\\
&\quad+C(\|u_{tt}\|_{L^6}\|\nabla b\|_{L^3}+C\|u_t\|_{L^6}\|\nabla b_t\|_{L^2}
+C\|u\|_{L^\infty}\|\nabla b_{tt}\|_{L^2})\|b_{tt}\|_{L^2}\nonumber\\
&\le \delta\|\nabla b_{tt}\|_{L^2}^2+C\|b_{tt}\|_{L^2}^2+C\|\nabla u_t\|_{L^2}^2\|\nabla b_t\|_{L^2}^2
+C\|\nabla u_t\|_{L^2}^2\|\nabla b_t\|_{L^2},\\
Q_{12}&\le C(\|\rho_{tt}\|_{L^2}\|\nabla\Phi\|_{L^3}
+\|\rho_t\|_{L^3}\|\nabla\Phi\|_{L^6})\|u_{tt}\|_{L^6}+C(\hat{\rho})\|\nabla\Phi_{tt}\|_{L^2}\|u_{tt}\|_{L^2}\nonumber\\
&\le \delta\|\nabla u_{tt}\|_{L^2}^2+C\|\rho_{tt}\|_{L^2}^2+C\|\nabla^2\Phi_t\|_{L^2}^2+C\|\nabla\Phi_{tt}\|_{L^2}^2.
\end{align*}
Putting above estimates into \eqref{3.105} and utilizing the facts
\begin{align*}
\|\nabla u_{tt}\|_{L^2}&\le C\big(\|\divv u_{tt}\|_{L^2}+\|\curl u_{tt}\|_{L^2}\big),\\
\|\nabla w_{tt}\|_{L^2}&\le C\big(\|\divv w_{tt}\|_{L^2}+\|\curl w_{tt}\|_{L^2}\big),\\
  \|\nabla b_{tt}\|_{L^2}&\le C\|\curl b_{tt}\|_{L^2},
\end{align*}
due to Lemma \ref{l24} and $(u_{tt}\cdot n,w_{tt}\cdot n,b_{tt}\cdot n)|_{\partial\Omega}=(0,0,0)$, we then get
after choosing $\delta$ suitably small that
\begin{align*}
&\frac{d}{dt}\big(\|\sqrt{\rho}u_{tt}\|_{L^2}^2
+\|\sqrt{\rho}w_{tt}\|_{L^2}^2+\|b_{tt}\|_{L^2}^2\big)
+\|\nabla u_{tt}\|_{L^2}^2+\|\nabla w_{tt}\|_{L^2}^2+\|\nabla b_{tt}\|_{L^2}^2\nonumber\\
&\le C\big(\|\sqrt{\rho}u_{tt}\|_{L^2}^2+\|b_{tt}\|_{L^2}^2+\|\rho_{tt}\|_{L^2}^2+\|P_{tt}\|_{L^2}^2
+\|\nabla u_t\|_{L^2}^3+\|\nabla w_t\|_{L^2}^3+\|\nabla b_t\|_{L^2}^3\big)\nonumber\\
&\quad+C\big(\|b_{tt}\|_{L^2}^2+\|\nabla u_t\|_{L^2}^2\|\nabla b_t\|_{L^2}^2
+\|\nabla u_t\|_{L^2}^2\|\nabla b_t\|_{L^2}+1\big),
\end{align*}
which together with \eqref{z3.74}, \eqref{z3.75}, and Gronwall's inequality yields that
\begin{align}\label{z3.107}
&\sup_{0\le t\le T}\big[\sigma^2\big(\|\sqrt{\rho}u_{tt}\|_{L^2}^2
+\|\sqrt{\rho}w_{tt}\|_{L^2}^2+\|b_{tt}\|_{L^2}^2\big)\big] \notag \\
& \quad +\int_0^T\sigma^2\big(\|\nabla u_{tt}\|_{L^2}^2+\|\nabla w_{tt}\|_{L^2}^2+\|\nabla b_{tt}\|_{L^2}^2\big)dt\le C.
\end{align}

2. It follows from  \eqref{z3.75}, \eqref{z3.90}, and \eqref{z3.107} that
\begin{align}\label{3.108}
&\sup_{0\le t\le T}\big[\sigma\big(\|\nabla^2 u_t\|_{L^2}+\|\nabla^2b_t\|_{L^2}+\|\nabla^2w_t\|_{L^2}\big)\big]\nonumber\\
&\le C\sigma\big(1+\|\sqrt{\rho}u_{tt}\|_{L^2}+\|\sqrt{\rho}w_{tt}\|_{L^2}+\|\nabla u_t\|_{L^2}+\|\nabla w_t\|_{L^2}+\|\nabla b_t\|_{L^2}\big)\le C.
\end{align}
Moreover, we deduce from \eqref{z3.36}, \eqref{z3.90}, \eqref{z3.92},
\eqref{z3.95}, \eqref{z3.96}, \eqref{z3.100}, and \eqref{z3.107} that
\begin{align}\label{3.109}
&\sigma\big(\|\nabla^2u\|_{W^{1, q}}+\|\nabla^2w\|_{W^{1, q}}\big)\nonumber\\
&\le C\sigma\big(1+\|\nabla u_t\|_{L^2}+\|\nabla w_t\|_{L^2}+\|\nabla b_t\|_{L^2}+\|\nabla(\rho\dot{u})\|_{L^q}
+\|\nabla(\rho\dot{w})\|_{L^q}+\|\nabla^2 P\|_{L^q}\big)\nonumber\\
&\le C\Big(1+\sigma\|\nabla u\|_{H^2}+\sigma\|\nabla w\|_{H^2}+\sigma^\frac12(\sigma\|\nabla u_t\|_{H^1}^2)^\frac{3(q-2)}{4q}
+\sigma^\frac12(\sigma\|\nabla w_t\|_{H^1}^2)^\frac{3(q-2)}{4q}\Big)\nonumber\\
&\le C+C\sigma^\frac12(\sigma^{-1})^\frac{3(q-2)}{4q}\le C,
\end{align}
and
\begin{align*}
\sigma\|\nabla^2b\|_{H^2}\le C\sigma\big(1+\|\nabla b_t\|_{H^1}+\|\nabla u\|_{H^2}\|\nabla b\|_{H^2}\big)\le C,
\end{align*}
which together with \eqref{z3.107} and \eqref{3.109} yields \eqref{z3.104}. \end{proof}

\section{Proof of Theorem \ref{thm1}}\label{sec4}
With all the \textit{a priori} estimates in Section 3 at hand, we are going to prove the main result of the paper.

\begin{proof}[Proof of Theorem \ref{thm1}]
By Lemma \ref{l22}, there exists a $T_*>0$ such that the system \eqref{a1}--\eqref{a6} has a unique classical solution
$(\rho, u, w, b, \Phi)$ in $\Omega\times(0, T_*]$.
By the definitions of \eqref{3.5} and \eqref{z3.2}, it is easy to check that
\begin{align*}
0<\underline{\rho}\le \rho_0\le \hat{\rho},\quad A_1(0)=0, \quad A_2(0)\le K.
\end{align*}
Therefore, there exists a $T_1\in (0, T_*]$ such that
\begin{align}\label{z4.1}
0<\underline{\rho}\le \rho_0\le 2\hat{\rho},\quad A_1(T)\le 2E_0^\frac13, \quad A_2(\sigma(T))\le 4K
\end{align}
holds for $T=T_1$.

Next, we set
\begin{align}
T^*=\sup\{T|\eqref{z4.1} ~\text{holds}\}.
\end{align}
Then $T^*\ge T_1>0$. Hence, for any $0<\tau<T\le T^*$ with $T$ finite, it follows from
Lemmas \ref{zl39}--\ref{l313} that
\begin{align}\label{z4.3}
\begin{cases}
(\rho-\rho_s, \nabla(\Phi-\Phi_s))\in C([0, T]; W^{2, q}),\\
(u, w, b)\in C([\tau, T]; H^2), \quad (\nabla u_t, \nabla w_t, \nabla b_t)\in C([\tau, T]; L^q),
\end{cases}
\end{align}
where one has taken advantage of the standard embedding
\begin{align*}
L^\infty(\tau, T; H^1)\cap H^1(\tau, T; H^{-1})\hookrightarrow C([\tau, T]; L^q), \quad {\rm for~any}~q\in (3, 6).
\end{align*}
Moreover, one has
\begin{align}
&\int_\tau^T\Big|\Big(\int\rho|u_t|^2dx\Big)_t+\Big(\int\rho|w_t|^2dx\Big)_t\Big|dt\nonumber\\
&\le \int_\tau^T\big(\|\rho_t|u_t|^2\|_{L^1}+\|\rho_t|w_t|^2\|_{L^1}+2\|\rho u_t\cdot u_{tt}\|_{L^1}
+2\|\rho w_t\cdot w_{tt}\|_{L^1}\big)dt\nonumber\\
&\le C\int_\tau^T\big(\|\sqrt{\rho}u_t\|_{L^2}\|\nabla u\|_{L^\infty}+\|\sqrt{\rho}w_t\|_{L^2}\|\nabla w\|_{L^\infty}
+\|u\|_{L^6}\|\nabla\rho\|_{L^2}\|u_t\|_{L^6}^2\big)dt\nonumber\\
&\quad+C\int_\tau^T\big(\|u\|_{L^6}\|\nabla\rho\|_{L^2}\|w_t\|_{L^6}^2
+\|\sqrt{\rho}u_{tt}\|_{L^2}^2+\|\sqrt{\rho}w_{tt}\|_{L^2}^2
+\|\sqrt{\rho}u_t\|_{L^2}^2+\|\sqrt{\rho}w_t\|_{L^2}^2\big)dt\nonumber\\
&\le C,
\end{align}
which together with \eqref{z4.3} leads to
\begin{align}\label{z4.5}
\sqrt{\rho}u_t, \sqrt{\rho}w_t, \sqrt{\rho}\dot{u}, \sqrt{\rho}\dot{w}\in C([\tau, T]; L^2).
\end{align}

Finally, we claim that
\begin{align}\label{z4.6}
T^*=\infty.
\end{align}
Otherwise, $T^*<\infty$. Then, by Proposition \ref{p31}, it holds that
\begin{align}
0<\rho\le \frac74\hat{\rho}, \quad A_1(T^*)\le E_0^\frac13, \quad A_2(\sigma(T))\le 3K.
\end{align}
It follows from Lemmas \ref{l312}--\ref{l313} and \eqref{z4.5} that $(\rho(x, T^*), u(x, T^*), w(x, T^*), b(x, T^*))$
satisfies the initial condition \eqref{a10}--\eqref{a12} with $g_1(x)\triangleq\sqrt{\rho}\dot{u}(x, T^*)$,
$g_2(x)\triangleq\sqrt{\rho}\dot{w}(x, T^*)$,
$x\in \Omega$. Thus, Lemma \ref{l22} implies that there exists some $T^{**}>T^*$ such that \eqref{z4.1} holds for $T=T^{**}$,
which contradicts the definition of $T^*$. As a result, \eqref{z4.6} follows. By Lemma \ref{l22} and Lemmas \ref{l312}--\ref{l313}, it indicates that $(\rho, u, w, b, \Phi)$ is in fact the unique classical solution defined in
$\Omega\times (0, T]$ for any $0<T<T^*=\infty$. The proof of Theorem \ref{thm1} is finished.
\end{proof}

\section*{Acknowledgments}
The authors would like to express their gratitude to the reviewers for careful reading and helpful suggestions which led to an improvement of the original manuscript.

\end{document}